\tikzset{set label/.style={fill=white}}
\newtheorem{theo}{Theorem}[section]
\newtheorem{lemm}[theo]{Lemma}
\newtheorem{prop}[theo]{Proposition}
\theoremstyle{definition}
\newtheorem{defi}[theo]{Definition}
\newtheorem{rem}[theo]{Remark}
\newtheorem{assum}{Assumption}
\newcommand{\bE}{\mathbb{E}}
\newcommand{\bF}{\mathbb{F}}
\newcommand{\bH}{\mathbb{H}}
\newcommand{\bN}{\mathbb{N}}
\newcommand{\bP}{\mathbb{P}}
\newcommand{\bR}{\mathbb{R}}
\newcommand{\bS}{\mathbb{S}}
\newcommand{\cA}{\mathcal{A}}
\newcommand{\cB}{\mathcal{B}}
\newcommand{\cD}{\mathcal{D}}
\newcommand{\cE}{\mathcal{E}}
\newcommand{\cF}{\mathcal{F}}
\newcommand{\cG}{\mathcal{G}}
\newcommand{\cH}{\mathcal{H}}
\newcommand{\cL}{\mathcal{L}}
\newcommand{\cN}{\mathcal{N}}
\newcommand{\cP}{\mathcal{P}}
\newcommand{\cQ}{\mathcal{Q}}
\newcommand{\cU}{\mathcal{U}}
\newcommand{\cV}{\mathcal{V}}
\newcommand{\sP}{\mathscr{P}}
\newcommand{\sQ}{\mathscr{Q}}
\newcommand{\fg}{\mathfrak{g}}
\newcommand{\fh}{\mathfrak{h}}
\newcommand{\fp}{\mathfrak{p}}
\newcommand{\fq}{\mathfrak{q}}
\newcommand{\fm}{\mathfrak{m}}
\newcommand{\1}{\mbox{\rm{1}}\hspace{-0.25em}\mbox{\rm{l}}} %% indicator function
\newcommand{\ep}{\varepsilon} %% varepsilon
\newcommand{\vth}{\vartheta} %% vartheta
\newcommand{\diff}{\mathrm{d}} %% differential
\newcommand{\dmu}{\,\mu(\diff\theta)} %% integral by the measure $\mu$
\newcommand{\dmumu}{\,\mu(\diff\theta_1)\mu(\diff\theta_2)} %% integral by the measure $\mu^{\otimes2}$
\newcommand{\V}{\mathrm{V}} %% Volterra
\DeclareMathOperator*{\esssup}{ess\,sup} %% essential supremum
\providecommand{\keywords}[1]{\textbf{Keywords:} #1}
\def\widebar{\accentset{{\cc@style\underline{\mskip10mu}}}}
\numberwithin{equation}{section}
\title{Global maximum principle for optimal control of stochastic Volterra equations with singular kernels: An infinite dimensional approach}
\author{
Yushi Hamaguchi\footnote{Department of Mathematics, Kyoto University, Kyoto 606-8502, Japan. Email: \href{mailto:hmgch2950@gmail.com}{hmgch2950@gmail.com}. The author was supported by JSPS KAKENHI Grant Number 22K13958.}
}
\begin{document}
\maketitle

%% Abstract

\begin{abstract}
In this paper, we consider optimal control problems of stochastic Volterra equations (SVEs) with singular kernels, where the control domain is not necessarily convex. We establish a global maximum principle by means of the spike variation technique. To do so, we first show a Taylor type expansion of the controlled SVE with respect to the spike variation, where the convergence rates of the remainder terms are characterized by the singularity of the kernels. Next, assuming additional structure conditions for the kernels, we convert the variational SVEs appearing in the expansion to their infinite dimensional lifts. Then, we derive first and second order adjoint equations in form of infinite dimensional backward stochastic evolution equations (BSEEs) on weighted $L^2$ spaces. Moreover, we show the well-posedness of the new class of BSEEs on weighted $L^2$ spaces in a general setting.
\end{abstract}

%% Keywords

\keywords
global maximum principle; stochastic Volterra equation; infinite dimensional lift; backward stochastic evolution equation.

%% MSC

\textbf{2020 Mathematics Subject Classification}: 93E20; 49K45; 60H20; 45D05.

%93E20 Optimal stochastic control
%49K45 Optimality conditions for problems involving randomness
%60H20 Stochastic integral equations
%45D05 Volterra integral equations

%45G05 Singular nonlinear integral equations
%60H15 Stochastic partial differential equations (aspects of stochastic analysis)
%60G22 Fractional processes, including fractional Brownian motion
%45A05 Linear integral equations
%93B52 Feedback control
%49N15 Duality theory (optimization)
%45B05 Fredholm integral equations
%34A08 Fractional ordinary differential equations and fractional differential inclusions
%26A33 Fractional derivatives and integrals

%% Contents

\tableofcontents

%%%%%%%%%%%%%%%%%%%%%%%%%%%%%%%%%%
%%%%%%%%%%%%%%%%%%%%%%%%%%%%%%%%%%
\section{Introduction}\label{intro}
%%%%%%%%%%%%%%%%%%%%%%%%%%%%%%%%%%
%%%%%%%%%%%%%%%%%%%%%%%%%%%%%%%%%%

Let $W$ be a one-dimensional Brownian motion on a complete probability space $(\Omega,\cF,\bP)$, and let $\bF=(\cF_t)_{t\geq0}$ be the augmentation of the filtration generated by $W$. Fix $T>0$. Let $\cU$ be a set of progressively measurable processes $u=(u_t)_{t\in[0,T]}$ taking values in a topological space $U$. For each $u\in\cU$, consider the following ($n$-dimensional) \emph{controlled stochastic Volterra equation} (SVE):
\begin{equation}\label{intro_eq_cSVE}
	X^u_t=\xi_t+\int^t_0K_b(t-s)b(s,u_s,X^u_s)\,\diff s+\int^t_0K_\sigma(t-s)\sigma(s,u_s,X^u_s)\,\diff W_s,\ t\in[0,T].
\end{equation}
Here, $b,\sigma:\Omega\times[0,T]\times U\times\bR^n\to\bR^n$ are progressively measurable maps, $K_b,K_\sigma:(0,T)\to\bR^{n\times n}$ are matrix valued deterministic functions called kernels, and $\xi:\Omega\times[0,T]\to\bR^n$ is a given adapted process which represents the forcing term. Each $u\in\cU$ is referred to as a control process, and the solution $X^u$ of \eqref{intro_eq_cSVE} is referred to as the state process associated with $u$. Consider a cost functional of the form
\begin{equation}\label{intro_eq_cost}
	J(u):=\bE\left[h(X^u_T)+\int^T_0f(t,u_t,X^u_t)\,\diff t\right],
\end{equation}
where $h:\Omega\times\bR^n\to\bR$ and $f:\Omega\times[0,T]\times U\times\bR^n\to\bR$ stand for the terminal and running costs, respectively. We are concerned with the minimization problem for $J(u)$ over all control processes $u\in\cU$. Specifically, we are interested in the characterization of the (open-loop) optimal control $\hat{u}\in\cU$, which is a control process such that $J(\hat{u})=\inf_{u\in\cU}J(u)$.

If $K_b(t)=K_\sigma(t)=I_{n\times n}$ and if $\xi_t=\xi_0$ for any $t\in[0,T]$, the controlled SVE \eqref{intro_eq_cSVE} reduces to the following standard controlled stochastic differential equation (SDE):
\begin{equation}\label{intro_eq_cSDE}
	\diff X^u_t=b(t,u_t,X^u_t)\,\diff t+\sigma(t,u_t,X^u_t)\,\diff W_t,\ \ t\in[0,T],\ \ X^u_0=\xi_0.
\end{equation}
More generally, the SVE \eqref{intro_eq_cSVE} includes a class of (time-) fractional SDEs of the form
\begin{equation}\label{intro_eq_cFSDE}
	\partial^\beta_t(X^u_t-\xi_t)=b(t,u_t,X^u_t)+\partial^\gamma_t\int^t_0\sigma(s,u_s,X^u_s)\,\diff W_s,\ \ t\in[0,T],\ \ X^u_0=\xi_0.
\end{equation}
Here, for $\alpha\in(0,1)$, $\partial^\alpha_t$ is the Caputo fractional derivative of order $\alpha$ and is defined by
\begin{equation*}
	\partial^\alpha_tf(t):=\frac{1}{\Gamma(1-\alpha)}\frac{\diff}{\diff t}\int^t_0(t-s)^{-\alpha}(f(s)-f(0))\,\diff s
\end{equation*}
for suitable function $f$ such that the integral in the right-hand side is (weakly) differentiable in $t$, where $\Gamma(\lambda):=\int^\infty_0t^{\lambda-1}e^{-t}\,\diff t$, $\lambda>0$, is the Gamma function. It is known that (see \cite{ChKiKi15,LiRoSi18,LoRo19}), when the parameters $\beta,\gamma\in(0,1)$ satisfy $\beta-\gamma>-\frac{1}{2}$, the fractional SDE \eqref{intro_eq_cFSDE} is equivalent to
\begin{equation*}
	X^u_t=\xi_t+\frac{1}{\Gamma(\beta)}\int^t_0(t-s)^{\beta-1}b(s,u_s,X^u_s)\,\diff s+\frac{1}{\Gamma(1+\beta-\gamma)}\int^t_0(t-s)^{\beta-\gamma}\sigma(s,u_s,X^u_s)\,\diff W_s,\ \ t\in[0,T].
\end{equation*}
This equation is a special case of the controlled SVE \eqref{intro_eq_cSVE}, where the kernels $K_b$ and $K_\sigma$ are of the form $K_b(t)=\frac{1}{\Gamma(\beta)}t^{\beta-1}I_{n\times n}$ and $K_\sigma(t)=\frac{1}{\Gamma(1+\beta-\gamma)}t^{\beta-\gamma}I_{n\times n}$, respectively. The fractional SDE captures the memory effect and heredity, and is useful to describe anomalous diffusions exhibiting subdiffusive behavior \cite{ChKiKi15,LiRoSi18,LoRo19}. Fractional calculus has attracted lots of attentions in several fields of applied mathematics such as physics, chemistry, engineering, and mathematical finance. Fractional optimal control problems have been extensively studied in the deterministic setting; see for example \cite{Ag04,BeBo20,Go20,JePe09,Ka14,LiYo20} and references cited therein. It is important to consider the stochastic version of the fractional optimal control when the effect of randomness to the fractional controlled system is crucial. This motivates us to investigate the \emph{stochastic fractional control} or more generally the \emph{stochastic Volterra control with singular kernels}; we say that the kernels $K_b$ and $K_\sigma$ in the controlled SVE \eqref{intro_eq_cSVE} are singular if they are unbounded on a neighbourhood of zero. Notice that the weakest conditions for the singular kernels which make the Lebesgue and stochastic integrals in \eqref{intro_eq_cSVE} well-defined are that $K_b\in L^1(0,T;\bR^{n\times n})$ and $K_\sigma\in L^2(0,T;\bR^{n\times n})$. Typical examples of such singular kernels are the \emph{fractional kernels} of the form $K_b(t)=\frac{1}{\Gamma(\beta_b)}t^{\beta_b-1}I_{n\times n}$ and $K_\sigma(t)=\frac{1}{\Gamma(\beta_\sigma)}t^{\beta_\sigma-1}I_{n\times n}$ with $\beta_b\in(0,1)$ and $\beta_\sigma\in(\frac{1}{2},1)$, which appear in the integral form of the fractional SDE \eqref{intro_eq_cFSDE}.

In this paper, we establish a \emph{global maximum principle} for the stochastic Volterra control problem \eqref{intro_eq_cSVE}--\eqref{intro_eq_cost} with singular kernels and general (not necessarily convex) control domain $U$. The maximum principle is one of the most important principles in (stochastic) control theory. The global/local maximum principle provides a necessary condition for optimality in terms of the global/local maximization of the associated (risk adjusted) Hamiltonian. Now let us briefly review the well-established theory of the maximum principle for the control problem of SDE \eqref{intro_eq_cSDE}--\eqref{intro_eq_cost}, which corresponds to \eqref{intro_eq_cSVE} with $K_b(t)=K_\sigma(t)=I_{n\times n}$ and $\xi_t=\xi_0$. In this SDE setting, Peng \cite{Pe90} established a ``global'' maximum principle, which states that, under suitable assumptions, every optimal control $\hat{u}\in\cU$ of the problem \eqref{intro_eq_cSDE}--\eqref{intro_eq_cost} must satisfy the following variational inequality:
\begin{equation}\label{intro_gMP-SDE}
\begin{split}
	&H(t,\hat{u}_t,\hat{X}_t,\hat{p}_t,\hat{q}_t)-H(t,v,\hat{X}_t,\hat{p}_t,\hat{q}_t)-\frac{1}{2}\big\langle\hat{P}_t\big\{\sigma(t,\hat{u}_t,\hat{X}_t)-\sigma(t,v,\hat{X}_t)\big\},\sigma(t,\hat{u}_t,\hat{X}_t)-\sigma(t,v,\hat{X}_t)\big\rangle\geq0\\
	&\hspace{6cm}\text{for all $v\in U$ a.s.\ for a.e.\ $t\in[0,T]$.}
\end{split}
\end{equation}
Here, $H:\Omega\times[0,T]\times U\times\bR^n\times\bR^n\times\bR^n\to\bR$ is the \emph{Hamiltonian} defined by
\begin{equation}\label{intro_eq_Hamiltonian}
	H(t,u,x,p,q):=\langle p,b(t,u,x)\rangle+\langle q,\sigma(t,u,x)\rangle-f(t,u,x),
\end{equation}
$\hat{X}=X^{\hat{u}}$ is the corresponding (optimal) state process, $\hat{p}$ and $\hat{q}$ are the corresponding \emph{first order adjoint processes}, and $\hat{P}$, together with an auxiliary process $\hat{Q}$, are the corresponding \emph{second order adjoint processes}. The pair $(\hat{p},\hat{q})$ is defined as the adapted solution of the following backward stochastic differential equation (BSDE) on $\bR^n$:
\begin{equation}\label{intro_adeq1-SDE}
	\begin{dcases}
	\diff\hat{p}_t=-H_x(t,\hat{u}_t,\hat{X}_t,\hat{p}_t,\hat{q}_t)^\top\,\diff t+\hat{q}_t\,\diff W_t,\ \ t\in[0,T],\\
	\hat{p}_T=-h_x(\hat{X}_T)^\top.
	\end{dcases}
\end{equation}
The pair $(\hat{P},\hat{Q})$ is defined as the adapted solution of the following BSDE on $\bR^{n\times n}$:
\begin{equation}\label{intro_adeq2-SDE}
	\begin{dcases}
	\diff\hat{P}_t=-\Big\{b_x(t,\hat{u}_t,\hat{X}_t)^\top\hat{P}_t+\hat{P}_tb_x(t,\hat{u}_t,\hat{X}_t)+\sigma_x(t,\hat{u}_t,\hat{X}_t)^\top\hat{P}_t\sigma_x(t,\hat{u}_t,\hat{X}_t)\\
	\hspace{3cm}+\sigma_x(t,\hat{u}_t,\hat{X}_t)^\top\hat{Q}_t+\hat{Q}_t\sigma_x(t,\hat{u}_t,\hat{X}_t)+H_{xx}(t,\hat{u}_t,\hat{X}_t,\hat{p}_t,\hat{q}_t)\Big\}\,\diff t\\
	\hspace{1cm}+\hat{Q}_t\,\diff W_t,\ \ t\in[0,T],\\
	\hat{P}_T=-h_{xx}(\hat{X}_T).
	\end{dcases}
\end{equation}
Here and elsewhere, $\varphi_x$ and $\varphi_{xx}$ denote the derivative and Hessian matrix of a function $\varphi$ with respect to the state variable $x$. For the theory and applications of BSDEs, see the textbook \cite{Zh17}. The BSDEs \eqref{intro_adeq1-SDE} and \eqref{intro_adeq2-SDE} are called the \emph{first order adjoint equation} and \emph{second order adjoint equation}, respectively, associated with the optimal control $\hat{u}$. The first order adjoint processes $(\hat{p},\hat{q})$ can be seen as an analogue of the generalized momenta in the classical mechanics; see \cite[Chapter 5]{YoZh99}. In the variational inequality \eqref{intro_gMP-SDE}, the process $\hat{P}$ plays a role of the risk adjustment for the Hamiltonian maximization, which is related to the second order effect of the control on the diffusion coefficient $\sigma(t,u,x)$ and reflects the controller's risk-averse or risk-seeking attitude; see \cite[Chapter 3]{YoZh99}. In some special cases, the global maximum principle \eqref{intro_gMP-SDE} implies the following consequences.
\begin{itemize}
\item
If the diffusion coefficient $\sigma$ does not depend on the control variable $u$, then the global maximum principle \eqref{intro_gMP-SDE} reduces to the following necessary optimality condition:
\begin{equation*}
	H(t,\hat{u}_t,\hat{X}_t,\hat{p}_t,\hat{q}_t)=\max_{v\in U}H(t,v,\hat{X}_t,\hat{p}_t,\hat{q}_t)\ \ \text{a.s.\ for a.e.\ $t\in[0,T]$.}
\end{equation*}
\item
If the control domain $U$ is a convex body of a Euclidean space and if $b,\sigma,f$ are continuously differentiable with respect to the control variable $u$, then \eqref{intro_gMP-SDE} implies the following weaker version of necessary optimality condition:
\begin{equation*}
	H_u(t,\hat{u}_t,\hat{X}_t,\hat{p}_t,\hat{q}_t)(v-\hat{u}_t)\leq0\ \ \text{for all $v\in U$ a.s.\ for a.e.\ $t\in[0,T]$,}
\end{equation*}
where $H_u$ denotes the derivative of the Hamiltonian $H$ with respect to the control variable $u$. The above is referred to as the ``local'' maximum principle.
\end{itemize}
In the above two special cases, the contribution of the risk adjustment term $\hat{P}$ disappears, and we do not need to introduce the second order adjoint equation \eqref{intro_adeq2-SDE}. Under the convexity assumption on $U$ as in the second bullet point above, the local maximum principle (which is a weaker necessary optimality condition than the global maximum principle) can be directly obtained via the analysis of the first order perturbation of the system in terms of the \emph{convex variation} of the control, which is a variation of the given (optimal) control $\hat{u}$ of the form $\hat{u}+\ep(v-\hat{u})$ with small parameter $\ep\in(0,1)$ and arbitrary control process $v\in\cU$. However, this technique can not derive the global maximum principle and is not applicable when the control domain $U$ is not convex (such as $U=\{0,1\}$). In order to derive the global maximum principle allowing the control domain to be non-convex, one should consider the perturbation of the system in terms of the \emph{spike variation} of the control, which is of the form $\hat{u}\1_{[0,T]\setminus[\tau,\tau+\ep]}+v\1_{[\tau,\tau+\ep]}$ with arbitrary $\tau\in[0,T)$, $v\in\cU$ and small parameter $\ep\in(0,T-\tau]$. For more detailed theory, interpretations and history of the maximum principle for SDEs, see \cite[Chapters 3 and 5]{YoZh99}.

In the literature of deterministic fractional/Volterra optimal control, global maximum principles were established by several authors, for example in \cite{JePe09,Ka14,LiYo20}. A local maximum principle for (non-convolution type) SVEs was first investigated by Yong \cite{Yo06} and rigorously proved by his subsequent paper \cite{Yo08}. He assumed that the kernels are regular (that is, bounded) and that the control domain $U$ is a convex subset of a Euclidean space. In this setting, utilizing the convex variation technique, he obtained the corresponding first order adjoint equation in form of a (Type-II) backward stochastic Volterra integral equation (BSVIE), and established a local maximum principle. After that, generalizations and variants of \cite{Yo06,Yo08} have been established in \cite{AgOk15,ShWaYo15,ShWaYo13,ShWeXi20} for regular kernel cases and in \cite{Ha21,Ha23,WaZh24} for singular kernel cases, among others. These works rely on the convexity assumption on the control domain $U$ and consider only the local maximum principle by means of the convex variation technique. The analysis of the global maximum principle for the stochastic Volterra control problem is quite limited. To the best of the author's knowledge, there are only two essential works \cite{Wa20,WaYo23} in this direction so far (see also \cite{MeShWaZh25} for the application of the technique established in \cite{Wa20,WaYo23} to control problems of SDEs with delay). In \cite{Wa20,WaYo23}, the authors considered stochastic Volterra control problems with (non-convolution type) regular kernels and general (not necessarily convex) control domain. Wang \cite{Wa20} derived a global maximum principle by means of a first order adjoint process solving a Type-II BSVIE and two implicit operator-valued second order adjoint processes. After that, the second order adjoint equation in form of a system of non-standard BSVIEs was derived by Wang and Yong \cite{WaYo23}. However, their works heavily rely on the regularity of the kernels, and their generalization to the singular kernel case is quite non-trivial. In particular, they are not applicable to the stochastic fractional control problem described by \eqref{intro_eq_cFSDE}--\eqref{intro_eq_cost}.

In this paper, we generalize Peng's global maximum principle \eqref{intro_gMP-SDE} to the stochastic Volterra control problem \eqref{intro_eq_cSVE}--\eqref{intro_eq_cost} with singular kernels and general control domain. We utilize the spike variation technique, together with the technique of \emph{infinite dimensional lifts} of SVEs in the spirit of the author's previous works \cite{Ha24,Ha25}. The method in the present paper is completely different from the aforementioned works on maximum principle for fractional optimal control and stochastic Volterra control problems. We provide a new infinite dimensional perspective on the global maximum principle for the stochastic Volterra control problem, allowing the kernels to be singular. Noteworthy is that we do not rely on the analysis of fractional derivatives or BSVIEs.

The main contributions of this paper are the following three points:
\begin{itemize}
\item[(i)]
We first show novel estimates for the Taylor type expansion of the controlled SVE \eqref{intro_eq_cSVE} with respect to the spike variation, where the convergence rates of the remainder terms are characterized by the singularity of the kernels. See \cref{Taylor_prop_SVE-expansion}.
\item[(ii)]
Under additional structure conditions on the kernels, we derive new types of first and second order adjoint equations in form of \emph{infinite dimensional backward stochastic evolution equations (BSEEs) on weighted $L^2$ spaces}, and obtain a global maximum principle. See \cref{MP}.
\item[(iii)]
We prove the well-posedness of a general class of BSEEs on weighted $L^2$ spaces, which is also new and includes the first and second order adjoint equations as special cases. See \cref{MP-adeq} and \cref{BSEE}.
\end{itemize}
The main result is \cref{MP_theo_MP}, that is summarized in the contribution (ii) above. The contributions (i) and (iii) are secondary to the purpose of establishing the global maximum principle for stochastic Volterra control problems with singular kernels, but they provide important and novel findings by their own rights beyond stochastic Volterra control theory. Here, let us make further explanations on the above contributions.

\underline{On the contribution (i).}
In order to derive the global/local maximum principles, the important first step is to investigate the effect of the variation of the control to the state process. If one considers the convex variation $\hat{u}+\ep(v-\hat{u})$ (assuming that the control domain $U$ is convex), the perturbation $X^{\hat{u}+\ep(v-\hat{u})}$ from the optimal state process $\hat{X}:=X^{\hat{u}}$ can be estimated by the standard variational argument; see \cite{Yo08}. This is related to the regularity in the control variable, but the regularity in time is not crucial. Hence, the singularity of the kernels does not matter for deriving the local maximum principle via the convex variation, and this is the reason why parallel arguments of \cite{Yo08} (for the regular kernel cases) are successfully applicable even in the singular kernel cases in \cite{Ha21,Ha23,WaZh24}. However, this is not the case when one wants to derive the global maximum principle via the spike variation $\hat{u}\1_{[0,T]\setminus[\tau,\tau+\ep]}+v\1_{[\tau,\tau+\ep]}$ under general control domain $U$. Unlike the convex variation case, the regularity in time of the controlled system is crucial in the estimate of the perturbation $X^{\hat{u}{\scriptsize\1}_{[0,T]\setminus[\tau,\tau+\ep]}+v{\scriptsize\1}_{[\tau,\tau+\ep]}}$ from $\hat{X}$. Intuitively speaking, in the deterministic ordinary differential equation case, the estimate of the spike variation is related to the time-regularity of the Lebesgue integral of the form $\int^t_0\{\cdots\}\1_{[\tau,\tau+\ep]}(s)\,\diff s$. This is of order $O(\ep)$, and hence the first order variation is sufficient to obtain the order $o(\ep)$ for the remainder term. In the SDE case (with controlled diffusion), in addition to the Lebesgue integral term, we need to estimate the time-regularity of the stochastic integral of the form $\int^t_0\{\cdots\}\1_{[\tau,\tau+\ep]}(s)\,\diff W_s$. The latter is of order $O(\ep^{1/2})$ in $L^p$, and this is the reason why the second order variation is necessary (and sufficient) in this SDE case to obtain the order $o(\ep)$ for the remainder term; see \cite{Pe90} and \cite[Chapter 3]{YoZh99}. Analogously, in the case of stochastic Volterra control problem, the time-regularity of the Lebesgue Volterra integral $\int^t_0K_b(t-s)\{\cdots\}\1_{[\tau,\tau+\ep]}(s)\,\diff s$ and the stochastic Volterra integral $\int^t_0K_\sigma(t-s)\{\cdots\}\1_{[\tau,\tau+\ep]}(s)\,\diff W_s$ are crucial. On the one hand, if the kernels $K_b$ and $K_\sigma$ are sufficiently regular, it turns out that these Volterra type integrals have the same orders as that without kernels; see \cite{Wa20,WaYo23}. On the other hand, if the kernels are singular, the Volterra integrals have lower time-regularity than the standard Lebesgue and stochastic integrals. Consequently, variational estimates with respect to the spike variation are quite sensitive in the singular kernel cases. With these observations in mind, we provide variational estimates in terms of the singularity of the kernels $K_b$ and $K_\sigma$. In particular, it turns out that the remainder term by the second order expansion becomes of order $o(\ep)$ if $K_b\in L^{3/2}(0,T;\bR^{n\times n})$ and $K_\sigma\in L^6(0,T;\bR^{n\times n})$; see \cref{Taylor_prop_SVE-expansion}. This is the first important contribution of the present paper and is crucial for deriving the global maximum principle.

\underline{On the contribution (ii).}
By the Taylor type expansion in (i), one gets a first order expansion of the cost functional in form of $J(\hat{u}\1_{[0,T]\setminus[\tau,\tau+\ep]}+v\1_{[\tau,\tau+\ep]})=J(\hat{u})+J^{1,2,v,\tau,\ep}+o(\ep)$, where $J^{1,2,v,\tau,\ep}$ is defined as a linear-quadratic functional of the solutions $X^{1,v,\tau,\ep}$ and $X^{2,v,\tau,\ep}$ to the first and second order variational (linear) SVEs appearing in the expansion. From this, we immediately obtain a necessary condition $\liminf_{\ep\downarrow0}\frac{1}{\ep}J^{1,2,v,\tau,\ep}\geq0$ for $\hat{u}$ to be optimal. However, this is still implicit and is not easily applicable, since the term $J^{1,2,v,\tau,\ep}$ involves $X^{1,v,\tau,\ep}$ and $X^{2,v,\tau,\ep}$, which implicitly depend on $v$. The next important step to derive the global maximum principle is to introduce precise forms of adjoint equations that enable us to get rid of $X^{1,v,\tau,\ep}$ and $X^{2,v,\tau,\ep}$ from the expression of $J^{1,2,v,\tau,\ep}$ via a kind of duality formula. In the SDE setting, $X^{1,v,\tau,\ep}$ and $X^{2,v,\tau,\ep}$ are It\^{o} processes solving variational SDEs, and hence we can use powerful tools of It\^{o}'s calculus and BSDE theory as demonstrated in \cite{Pe90}. However, this is not the case in the SVE setting. The biggest difficulty in the Volterra case is that the variational processes $X^{1,v,\tau,\ep}$ and $X^{2,v,\tau,\ep}$, solutions to the corresponding variational SVEs, are no longer semimartingales. Hence, we can not use It\^{o}'s calculus or BSDE theory directly. In order to overcome this difficulty, our idea is to utilize the framework of infinite dimensional lifts of SVEs investigated in \cite{Ha24,Ha25}. This makes us possible to express $X^{1,v,\tau,\ep}$ and $X^{2,v,\tau,\ep}$ as super positions of infinitely many semimartingales. By using this idea, we can successfully calculate the term $J^{1,2,v,\tau,\ep}$ and obtain first and second order adjoint equations in the infinite dimensional framework.

Let us briefly summarize the main result (\cref{MP_theo_MP}) related to the contribution (ii). We assume that the kernels $K_b$ and $K_\sigma$ are of the following forms:
\begin{equation}\label{intro_eq_kernel}
	K_b(t)=\int_{\bR_+}e^{-\theta t}M_b(\theta)\dmu\ \ \text{and}\ \ K_\sigma(t)=\int_{\bR_+}e^{-\theta t}M_\sigma(\theta)\dmu,\ \ t\in(0,T),
\end{equation}
for some Borel measure $\mu$ on $\bR_+=[0,\infty)$ and matrix-valued measurable maps $M_b,M_\sigma:\bR_+\to\bR^{n\times n}$ satisfying suitable integrability conditions (see \cref{MP_assum_kernel} for more details). The fractional kernel is an important and typical example satisfying the above structure condition. Under this setting (with suitable conditions), we establish the following global maximum principle: every optimal control $\hat{u}\in\cU$ of the stochastic Volterra control problem \eqref{intro_eq_cSVE}--\eqref{intro_eq_cost} must satisfy the following variational inequality:
\begin{align*}
	&H\big(t,\hat{u}_t,\hat{X}_t,\mu[M_b^\top\hat{p}_t],\mu[M_\sigma^\top\hat{q}_t]\big)-H\big(t,v,\hat{X}_t,\mu[M_b^\top\hat{p}_t],\mu[M_\sigma^\top\hat{q}_t]\big)\\
	&-\frac{1}{2}\Big\langle\mu^{\otimes2}[M_\sigma^\top\hat{P}_tM_\sigma]\big\{\sigma(t,\hat{u}_t,\hat{X}_t)-\sigma(t,v,\hat{X}_t)\big\},\sigma(t,\hat{u}_t,\hat{X}_t)-\sigma(t,v,\hat{X}_t)\Big\rangle\geq0\\
	&\hspace{4cm}\text{for all $v\in U$ a.s.\ for a.e.\ $t\in[0,T]$}.
\end{align*}
Here, $H$ is the Hamiltonian defined by \eqref{intro_eq_Hamiltonian}, which is exactly the same as the one in the classical SDE case. $\hat{X}=X^{\hat{u}}$ is the corresponding optimal state process, that is the solution of the controlled SVE \eqref{intro_eq_cSVE} associated with the optimal control $\hat{u}$. The terms $\mu[M_b^\top\hat{p}_t]$ and $\mu[M_\sigma^\top\hat{q}_t]$ are regarded as the first order adjoint processes, which are defined by
\begin{equation*}
	\mu[M_b^\top\hat{p}_t]:=\int_{\bR_+}M_b(\theta)^\top\hat{p}_t(\theta)\dmu\ \ \text{and}\ \ \mu[M_\sigma^\top\hat{q}_t]:=\int_{\bR_+}M_\sigma(\theta)^\top\hat{q}_t(\theta)\dmu,
\end{equation*}
and the pair of random fields $\hat{p},\hat{q}:\Omega\times[0,T]\times\bR_+\to\bR^n$ is defined as the solution of the following first order adjoint equation:
\begin{equation}\label{intro_eq_adeq1}
	\begin{dcases}
	\diff\hat{p}_t(\theta)=\theta\hat{p}_t(\theta)\,\diff t-H_x\big(t,\hat{u}_t,\hat{X}_t,\mu[M_b^\top\hat{p}_t],\mu[M_\sigma^\top\hat{q}_t]\big)^\top\,\diff t+\hat{q}_t(\theta)\,\diff W_t,\ \ \theta\in\bR_+,\ t\in[0,T],\\
	\hat{p}_T(\theta)=-h_x(\hat{X}_T)^\top,\ \ \theta\in\bR_+.
	\end{dcases}
\end{equation}
The term $\mu^{\otimes2}[M_\sigma^\top\hat{P}_tM_\sigma]$ is regarded as the second order adjoint process or the risk adjustment term, which is defined by
\begin{equation*}
	\mu^{\otimes2}[M_\sigma^\top\hat{P}_tM_\sigma]:=\int_{\bR_+}\int_{\bR_+}M_\sigma(\theta_1)^\top\hat{P}_t(\theta_1,\theta_2)M_\sigma(\theta_2)\dmumu,
\end{equation*}
and the pair of a random field $\hat{P}:\Omega\times[0,T]\times\bR_+^2\to\bR^{n\times n}$ and an auxiliary random field $\hat{Q}:\Omega\times[0,T]\times\bR_+^2\to\bR^{n\times n}$ is defined as the solution of the following second order adjoint equation:
\begin{equation}\label{intro_eq_adeq2}
	\begin{dcases}
	\diff\hat{P}_t(\theta_1,\theta_2)=(\theta_1+\theta_2)\hat{P}_t(\theta_1,\theta_2)\,\diff t\\
	\hspace{2cm}-\Big\{b_x(t,\hat{u}_t,\hat{X}_t)^\top\mu[M_b^\top\hat{P}_t(\cdot,\theta_2)]+\mu[\hat{P}_t(\theta_1,\cdot)M_b]b_x(t,\hat{u}_t,\hat{X}_t)\\
	\hspace{3.5cm}+\sigma_x(t,\hat{u}_t,\hat{X}_t)^\top\mu^{\otimes2}[M_\sigma^\top\hat{P}_tM_\sigma]\sigma_x(t,\hat{u}_t,\hat{X}_t)\\
	\hspace{3.5cm}+\sigma_x(t,\hat{u}_t,\hat{X}_t)^\top\mu[M_\sigma^\top\hat{Q}_t(\cdot,\theta_2)]+\mu[\hat{Q}_t(\theta_1,\cdot)M_\sigma]\sigma_x(t,\hat{u}_t,\hat{X}_t)\\
	\hspace{3.5cm}+H_{xx}\big(t,\hat{u}_t,\hat{X}_t,\mu[M_b^\top\hat{p}_t],\mu[M_\sigma^\top\hat{q}_t]\big)\Big\}\,\diff t\\
	\hspace{2cm}+\hat{Q}_t(\theta_1,\theta_2)\,\diff W_t,\ \ (\theta_1,\theta_2)\in\bR_+^2,\ t\in[0,T],\\
	\hat{P}_T(\theta_1,\theta_2)=-h_{xx}(\hat{X}_T),\ \ (\theta_1,\theta_2)\in\bR_+^2.
	\end{dcases}
\end{equation}
The first and second order adjoint equations \eqref{intro_eq_adeq1} and \eqref{intro_eq_adeq2} above are systems of infinitely many finite dimensional BSDEs, which are parametrized by $\theta$ and $(\theta_1,\theta_2)$, respectively, and coupled via the integration with respect to the measure $\mu$. In another perspective, these are formulated as \emph{infinite dimensional BSEEs on weighted $L^2$ spaces}. The random fields $\hat{p}$ and $\hat{q}$ can be regarded as the \emph{lifted first order adjoint processes}; the first order adjoint processes $\mu[M_b^\top\hat{p}_t]$ and $\mu[M_\sigma^\top\hat{q}_t]$ are realised as their super positions reflecting the first order effects of the kernels $K_b$ and $K_\sigma$. Similarly, the random field $\hat{P}$ can be regarded as the \emph{lifted risk adjustment term}; the risk adjustment term $\mu^{\otimes2}[M_\sigma^\top\hat{P}_tM_\sigma]$ is realized as its superposition reflecting the second order effect of the kernel $K_\sigma$ on the diffusion. Notice the simple analogy of our global maximum principle to the classical one in the SDE setting. In the SDE setting, which corresponds to \eqref{intro_eq_cSVE} with $K_b(t)=K_\sigma(t)=I_{n\times n}$ and $\xi_t=\xi_0$, we can take $\mu=\delta_0$ and $M_b(\theta)=M_\sigma(\theta)=I_{n\times n}$. Then, the first and second order adjoint equations \eqref{intro_eq_adeq1} and \eqref{intro_eq_adeq2} with $\theta=0$ and $(\theta_1,\theta_2)=(0,0)$ reduce to the counterparts \eqref{intro_adeq1-SDE} and \eqref{intro_adeq2-SDE}, respectively, and we immediately recover the classical global maximum principle \eqref{intro_gMP-SDE} in the SDE setting. The adjoint equations \eqref{intro_eq_adeq1} and \eqref{intro_eq_adeq2} are much simpler than that of \cite{WaYo23} obtained in the (non-convolution type) regular kernel cases. The second order adjoint equation derived in \cite{WaYo23} is a quite complicated coupled system of BSVIEs, which consists of 4 equations involving non-standard Volterra structures. These BSVIE-type adjoint equations are actually not straightforward generalizations of the counterparts \eqref{intro_adeq1-SDE} and \eqref{intro_adeq2-SDE} in the SDE setting, and some technical transforms are necessary to recover the classical result; see \cite[Section 6.2]{WaYo23}. Compared to \cite{WaYo23}, our adjoint equations are tractable thanks to the semimartingale structure behind them and are natural generalizations of the counterparts in the SDE setting. As a price, in our study, the structure condition \eqref{intro_eq_kernel} is crucial. In \cref{app-BSVIE}, we investigate relationships between our BSEE-type adjoint equations and the BSVIE-type adjoint equations obtained in \cite{WaYo23}.

The idea of lifting SVEs to infinite dimensional Markovian systems originates from the work of Carmona and Coutin \cite{CaCo98}, where the authors study the Markovian lift of Riemann--Liouville fractional Brownian motion in terms of an infinite dimensional Ornstein--Uhlenbeck type process. Since then, general theory on infinite dimensional lifts of SVEs in the spirit of \cite{CaCo98} has been studied in \cite{AbiJaMiPh21,CuTe19,CuTe20,Ha24,Ha25,HaSt19} in different frameworks. Especially among them, the approach established in the author's previous works \cite{Ha24,Ha25} makes us possible to study the infinite dimensional lifts of SVEs in weighted $L^2$ spaces, which are tractable thanks to the Hilbert space structure. In the present paper, we adopt (and slightly generalize) the idea of \cite{Ha24,Ha25}. It is worth to remark that Abi Jaber, Miller and Pham \cite{AbiJaMiPh21} applied the technique of infinite dimensional lifts in the $L^1$ space to the study of linear-quadratic stochastic Volterra control problem without terminal cost. They characterized the optimal control by means of an infinite dimensional operator-valued Riccati equation (whose well-posedness was shown in their subsequent work \cite{AbiJaMiPh21+}), which has a similar structure to the second order adjoint equation \eqref{intro_eq_adeq2} derived in the present paper.

\underline{On the contribution (iii).}
As mentioned above, the adjoint equations \eqref{intro_eq_adeq1} and \eqref{intro_eq_adeq2} can be formulated as infinite dimensional BSEEs on weighted $L^2$ spaces. Actually, if the kernels are regular, it turns out that the corresponding adjoint equations reduce to ``standard'' BSEEs on Gelfand triplets of Hilbert spaces, and the well-established result in \cite{Pe92} is applicable; see also \cite{HuPe91,HuTa18,LiZh20,MeSh13,Zh92} for related works on infinite dimensional BSEEs on Gelfand triplets. However, a difficulty appears again in the singular kernel case, where the adjoint equations \eqref{intro_eq_adeq1} and \eqref{intro_eq_adeq2} are beyond the existing framework of BSEEs mentioned above; see \cref{BSEE_rem_Gelfand}. In \cref{MP-adeq} and \cref{BSEE}, we formulate a new class of BSEEs on weighted $L^2$ spaces, which is beyond the Gelfand triplet framework and includes the adjoint equations \eqref{intro_eq_adeq1} and \eqref{intro_eq_adeq2} in the singular kernel cases. We show in \cref{BSEE_theo_BSEE} its well-posedness. This result provides an important new finding in theory of BSDEs and BSEEs.

The rest of this paper is organized as follows. In \cref{pre}, we summarize notations we use throughout this paper, provide preliminary results, and formulate the stochastic Volterra control problem \eqref{intro_eq_cSVE}--\eqref{intro_eq_cost} giving precise assumptions. \cref{Taylor} corresponds to the contribution (i) mentioned above, where we show the Taylor type expansion of the controlled SVE \eqref{intro_eq_cSVE} in terms of the spike variation. \cref{MP} corresponds to the contribution (ii), which is the main part of this paper. We introduce the framework of infinite dimensional lifts of variational SVEs following \cite{Ha24} in \cref{MP-lift}, heuristically explain how to derive the adjoint equations \eqref{intro_eq_adeq1} and \eqref{intro_eq_adeq2} in \cref{MP-heuristic}, formulate the new class of BSEEs on weighted $L^2$-spaces including the adjoint equations in \cref{MP-adeq}, and rigorously prove the global maximum principle in \cref{MP-main}. \cref{BSEE} corresponds to the contribution (iii), where we prove well-posedness of general BSEEs on weighted $L^2$ spaces. In \cref{app-measurability}, we make some remarks on measurability issues arising in the study of BSEEs. In \cref{app-BSVIE}, we show some relationships between our BSEE-type adjoint equations and the BSVIE-type adjoint equations obtained in \cite{WaYo23}.

%%%%%%%%%%%%%%%%%%%%%%%%%%%%%%%%%%
%%%%%%%%%%%%%%%%%%%%%%%%%%%%%%%%%%
\section{Notations, preliminaries and problem formulation}\label{pre}
%%%%%%%%%%%%%%%%%%%%%%%%%%%%%%%%%%
%%%%%%%%%%%%%%%%%%%%%%%%%%%%%%%%%%

%%%%%%
\subsection{Notations}
%%%%%%

Throughout this paper, we fix a complete probability space $(\Omega,\cF,\bP)$ and a one-dimensional Brownian motion $W$ on it; the one-dimensionality of Brownian motion $W$ is just for simplicity of notations, and all results in this paper can be easily generalized to the multi-dimensional case. We denote by $\bF=(\cF_t)_{t\geq0}$ the augmentation of the filtration generated by $W$. We also fix $T>0$ and $n\in\bN$, that stand for the terminal time and the dimension of the state process, respectively.

The Euclidean norm and the standard inner product on a Euclidean space are denoted by $|\cdot|$ and $\langle\cdot,\cdot\rangle$, respectively. The expectation operator with respect to $\bP$ is denoted by $\bE[\cdot]$, and the conditional expectation operator associated with $\cF_t$ is denoted by $\bE_t[\cdot]$ for each $t\geq0$. For each set $A$, $\1_A$ denotes the indicator function. Given a measurable space $(A,\cA)$, we say that a map $\varphi$ from $\Omega\times[0,T]\times A$ to another measurable space is progressively measurable if for any $t\in[0,T]$, the restriction of $\varphi$ to $\Omega\times[0,t]\times A$ is $\cF_t\otimes\cB([0,t])\otimes\cA$-measurable.

For each sufficiently smooth function $\varphi:\bR^n\to\bR$, $\varphi_x:\bR^n\to\bR^{1\times n}$ denotes the derivative, and $\varphi_{xx}:\bR^n\to\bR^{n\times n}$ denotes the Hessian matrix. For each $\varphi=(\varphi^1,\dots,\varphi^m)^\top:\bR^n\to\bR^m$ with sufficiently smooth $\varphi^i:\bR^n\to\bR$, we denote
\begin{equation}\label{notation_eq_derivative}
	\varphi_x:=
	\begin{pmatrix}
	\varphi^1_x\\
	\vdots\\
	\varphi^m_x
	\end{pmatrix}:\bR^n\to\bR^{m\times n}\ \ \text{and}\ \ 
	\varphi_{xx}:=
	\begin{pmatrix}
	\varphi^1_{xx}\\
	\vdots\\
	\varphi^m_{xx}
	\end{pmatrix}:\bR^n\to\bR^{m\times(n\times n)}=\cL(\bR^{n\times n};\bR^m).
\end{equation}
For each $M=\begin{pmatrix}M^1\\\vdots\\M^\ell\end{pmatrix}\in\bR^{\ell\times(m\times n)}=\cL(\bR^{m\times n};\bR^\ell)$ with $M^k\in\bR^{m\times n}$, $k=1,\dots,\ell$, and each vector $x\in\bR^n$, $y\in\bR^m$ and $z\in\bR^\ell$, we write
\begin{equation}\label{notation_eq_matrix}
	\langle Mx,y\rangle:=
	\begin{pmatrix}
	\langle M^1x,y\rangle\\
	\vdots\\
	\langle M^\ell x,y\rangle
	\end{pmatrix}
	\in\bR^\ell\ \ \text{and}\ \ \langle z,M\rangle:=\sum^\ell_{k=1}z^kM^k\in\bR^{m\times n}.
\end{equation}
Notice that $\langle z,\langle Mx,y\rangle\rangle=\langle \langle z,M\rangle x,y\rangle$.

Let $E$ be a Euclidean space. For each $p\in[1,\infty]$ and two real numbers $s<t$, $(L^p(s,t;E),\|\cdot\|_{L^p(s,t)})$ denotes the standard Lebesgue space of $L^p$-maps from $(0,T)$ to $E$. Also, for each $t\geq0$, $(L^p_{\cF_t}(\Omega;E),\|\cdot\|_{L^p_{\cF_t}(\Omega)})$ denotes the $L^p$ space (with respect to $\bP$) of equivalence classes of $\cF_t$-measurable maps from $\Omega$ to $E$. We consider the following classes of stochastic processes:
\begin{itemize}
\item
$L^{p,\infty}_\bF(0,T;E)$: the set of (equivalence classes of) progressively measurable maps $\xi:\Omega\times[0,T]\to E$ such that
\begin{equation*}
	\|\xi\|_{L^{p,\infty}_\bF(0,T)}:=\esssup_{t\in[0,T]}\bE\big[|\xi_t|^p\big]^{1/p}<\infty.
\end{equation*}
\item
$C^p_\bF(0,T;E)$: the set of (equivalence classes of) progressively measurable maps $\xi:\Omega\times[0,T]\to E$ such that $\xi_t\in L^p_{\cF_t}(\Omega;E)$ for any $t\in[0,T]$ and that the map $t\mapsto \xi_t$ is continuous with respect to the norm in $L^p_{\cF_T}(\Omega;E)$. Denote
\begin{equation*}
	\|\xi\|_{C^p_\bF(0,T)}:=\sup_{t\in[0,T]}\bE\big[|\xi_t|^p\big]^{1/p}.
\end{equation*}
\end{itemize}

Lastly, we introduce the following unusual notation. Let $q\in[1,\infty]$ and $\ep\in(0,T]$. For each $K\in L^q(0,T;\bR^{n\times n})$, we set
\begin{equation*}
	\|K\|_{q,\ep}:=\sup_{\substack{0\leq s<t\leq T\\t-s\leq\ep}}\|K\|_{L^q(s,t)}.
\end{equation*}
Notice that $\|\cdot\|_{q,\ep}$ is a norm on $L^q(0,T;\bR^{n\times n})$ and equivalent to the original norm $\|\cdot\|_{L^q(0,T)}$. Also, notice that $\|\cdot\|_{q,T}=\|\cdot\|_{L^q(0,T)}$ and $\|\cdot\|_{\infty,\ep}=\|\cdot\|_{L^\infty(0,T)}$. If $K$ is in form of $K(t)=k(t)I_{n\times n}$ for some nonnegative and non-increasing scalar function $k:(0,T)\to\bR_+$ (such as the fractional kernel), then $\|K\|_{q,\ep}=\|k\|_{L^q(0,\ep)}$. The convergence rate of $\|K\|_{q,\ep}\to0$ as $\ep\downarrow0$ corresponds to the regularity and singularity of $K$. The norm $\|\cdot\|_{q,\ep}$ plays a crucial role in the variational argument for the controlled SVE \eqref{intro_eq_cSVE} with singular kernels; see \cref{Taylor_prop_SVE-expansion} below.

%% Remark

\begin{rem}\label{intro_rem_K-ep}
The following properties are trivial but important:
\begin{itemize}
\item
If $K\in L^q(0,T;\bR^{n\times n})$ with $q\in[1,\infty)$, then $\|K\|_{q,\ep}\to0$ as $\ep\downarrow0$. This follows from the uniform continuity of the function $t\mapsto\int^t_0|K(r)|^q\,\diff r$ on $[0,T]$.
\item
If $K\in L^{q'}(0,T;\bR^{n\times n})$ with $1\leq q\leq q'\leq \infty$, then $\|K\|_{q,\ep}\leq\|K\|_{q',\ep}\ep^{1/q-1/{q'}}$, where $1/\infty:=0$. This follows from H\"{o}lder's inequality.
\end{itemize}
\end{rem}

%%%%%%
\subsection{Preliminaries}
%%%%%%

The following lemma is related to the time-regularity of Volterra integrals, which plays a crucial role in \cref{Taylor_prop_SVE-expansion}.

%% Lemma

\begin{lemm}\label{pre_lemm_convolution}
\begin{itemize}
\item[(i)]
Let $K\in L^1(0,T;\bR^{n\times n})$ and $p\in[1,\infty)$. Then, the map $b\mapsto(\int^t_0K(t-s)b_s\,\diff s)_{t\in[0,T]}$ defines a bounded linear operator from $L^{p,\infty}_\bF(0,T;\bR^n)$ to $C^p_\bF(0,T;\bR^n)$. Furthermore, for any $b\in L^{p,\infty}_\bF(0,T;\bR^n)$, $\tau\in[0,T)$ and $\ep\in(0,T-\tau]$, it holds that
\begin{equation}\label{pre_eq_convolution-b}
	\sup_{t\in[0,T]}\bE\left[\left|\int^t_0K(t-s)b_s\1_{[\tau,\tau+\ep]}(s)\,\diff s\right|^p\right]^{1/p}\leq\|K\|_{1,\ep}\|b\|_{L^{p,\infty}_\bF(0,T)}.
\end{equation}
\item[(ii)]
Let $K\in L^2(0,T;\bR^{n\times n})$ and $p\in[2,\infty)$. Then, the map $\sigma\mapsto(\int^t_0K(t-s)\sigma_s\,\diff W_s)_{t\in[0,T]}$ defines a bounded linear operator from $L^{p,\infty}_\bF(0,T;\bR^n)$ to $C^p_\bF(0,T;\bR^n)$. Furthermore, for any $\sigma\in L^{p,\infty}_\bF(0,T;\bR^n)$, $\tau\in[0,T)$ and $\ep\in(0,T-\tau]$, it holds that
\begin{equation}\label{pre_eq_convolution-sigma}
	\sup_{t\in[0,T]}\bE\left[\left|\int^t_0K(t-s)\sigma_s\1_{[\tau,\tau+\ep]}(s)\,\diff W_s\right|^p\right]^{1/p}\leq C_p\|K\|_{2,\ep}\|\sigma\|_{L^{p,\infty}_\bF(0,T)},
\end{equation}
where $C_p$ is a constant depending only on $p$.
\end{itemize}
\end{lemm}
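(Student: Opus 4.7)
The plan is to reduce both parts to deterministic estimates by combining Minkowski's integral inequality with (for part (ii)) the Burkholder–Davis–Gundy (BDG) inequality, and then to exploit the change of variables $u=t-s$ to make the norm $\|K\|_{q,\ep}$ appear naturally.

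For part (i), I would start from the pointwise bound $|\int^t_0 K(t-s)b_s \1_{[\tau,\tau+\ep]}(s)\,\diff s|\leq\int^t_0 |K(t-s)|\,|b_s|\1_{[\tau,\tau+\ep]}(s)\,\diff s$ and apply Minkowski's integral inequality in $L^p(\Omega)$ to get
\begin{equation*}
    \bE\!\left[\left|\int^t_0 K(t-s)b_s\1_{[\tau,\tau+\ep]}(s)\,\diff s\right|^p\right]^{1/p}\leq\int^t_0 |K(t-s)|\,\bE[|b_s|^p]^{1/p}\1_{[\tau,\tau+\ep]}(s)\,\diff s.
\end{equation*}
Bounding $\bE[|b_s|^p]^{1/p}$ by $\|b\|_{L^{p,\infty}_\bF(0,T)}$ and substituting $u=t-s$ converts the remaining deterministic integral into $\int_{\max(0,t-\tau-\ep)}^{t-\tau}|K(u)|\,\diff u$, which is the $L^1$ norm of $K$ over a subinterval of $[0,T]$ of length at most $\ep$, hence bounded by $\|K\|_{1,\ep}$. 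This establishes \eqref{pre_eq_convolution-b}, and the boundedness of the operator follows by taking $\tau=0,\ep=T$ (since $\|K\|_{1,T}=\|K\|_{L^1(0,T)}$).

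For part (ii), the plan is analogous but starts with BDG: for $p\geq2$,
\begin{equation*}
    \bE\!\left[\left|\int^t_0 K(t-s)\sigma_s\1_{[\tau,\tau+\ep]}(s)\,\diff W_s\right|^p\right]^{2/p}\leq C_p^2\,\Big\|\!\int^t_0 |K(t-s)|^2|\sigma_s|^2\1_{[\tau,\tau+\ep]}(s)\,\diff s\Big\|_{L^{p/2}(\Omega)}.
\end{equation*}
Because $p/2\geq1$, Minkowski's integral inequality applies to the right-hand side and yields a deterministic bound $\int^t_0|K(t-s)|^2\bE[|\sigma_s|^p]^{2/p}\1_{[\tau,\tau+\ep]}(s)\,\diff s$. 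Pulling out $\|\sigma\|_{L^{p,\infty}_\bF(0,T)}^2$ and performing the same substitution $u=t-s$ gives the square of an $L^2$ norm of $K$ over an interval of length at most $\ep$, which is bounded by $\|K\|_{2,\ep}^2$. Taking square roots yields \eqref{pre_eq_convolution-sigma}, and again the case $\tau=0,\ep=T$ proves the operator boundedness.

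The remaining task is the $L^p$-continuity in $t$, which gives the $C^p_\bF$-valued regularity. For $0\leq t'<t\leq T$, I would write the difference as a single integral over $[0,T]$ of the integrand $[K(t-s)\1_{[0,t]}(s)-K(t'-s)\1_{[0,t']}(s)]$ and again apply Minkowski (for part (i)) or BDG followed by Minkowski (for part (ii)). This reduces matters to showing that $\|K(t-\cdot)\1_{[0,t]}-K(t'-\cdot)\1_{[0,t']}\|_{L^q(0,T)}\to 0$ as $t'\to t$, with $q=1$ in part (i) and $q=2$ in part (ii); this is the standard continuity of translation in $L^q(\bR;\bR^{n\times n})$ applied to the zero-extension of $K$. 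There is no real obstacle — the only place a singular kernel could cause trouble is in the continuity step, but it is handled uniformly by translation continuity in $L^q$, and the spike estimate is robust because $\|K\|_{q,\ep}$ is tailored precisely to absorb the singularity of $K$ over short intervals.
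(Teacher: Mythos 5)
Your proof is correct and follows essentially the same approach as the paper: Minkowski's integral inequality (combined with the Burkholder--Davis--Gundy inequality for part (ii)) reduces everything to deterministic bounds on $K$, and the operator boundedness is obtained by specializing $\tau=0$, $\ep=T$. The only cosmetic difference is in the $C^p_\bF$-continuity step, where you invoke translation-continuity of the zero-extension of $K$ in $L^q(\bR)$ while the paper splits the difference into the two integrals $\int_t^{t'}|K(t'-s)|\,\diff s$ and $\int_0^t|K(t'-s)-K(t-s)|\,\diff s$; these are the same observation packaged differently.
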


%% Proof

\begin{proof}
First, we show the assertion (i). Suppose that $K\in L^1(0,T;\bR^{n\times n})$ and $p\in[1,\infty)$. Let $b\in L^{p,\infty}_\bF(0,T;\bR^n)$ be fixed. Notice that, for each $0\leq t\leq t'\leq T$,
\begin{align*}
	&\bE\left[\left|\int^{t'}_0K(t'-s)b_s\,\diff s-\int^t_0K(t-s)b_s\,\diff s\right|^p\right]^{1/p}\\
	&\leq\bE\left[\left(\int^{t'}_t|K(t'-s)||b_s|\,\diff s\right)^p\right]^{1/p}+\bE\left[\left(\int^t_0|K(t'-s)-K(t-s)||b_s|\,\diff s\right)^p\right]^{1/p}.
\end{align*}
Using Minkowski's integral inequality, one has
\begin{equation*}
	\bE\left[\left(\int^{t'}_t|K(t'-s)||b_s|\,\diff s\right)^p\right]^{1/p}\leq\int^{t'}_t|K(t'-s)|\bE\big[|b_s|^p\big]^{1/p}\,\diff s\leq\int^{t'}_t|K(t'-s)|\,\diff s\,\|b\|_{L^{p,\infty}_\bF(0,T)}.
\end{equation*}
Similarly,
\begin{equation*}
	\bE\left[\left(\int^t_0|K(t'-s)-K(t-s)||b_s|\,\diff s\right)^p\right]^{1/p}\leq\int^t_0|K(t'-s)-K(t-s)|\,\diff s\,\|b\|_{L^{p,\infty}_\bF(0,T)}.
\end{equation*}
Therefore, we get
\begin{align*}
	&\bE\left[\left|\int^{t'}_0K(t'-s)b_s\,\diff s-\int^t_0K(t-s)b_s\,\diff s\right|^p\right]^{1/p}\\
	&\leq\left\{\int^{t'}_t|K(t'-s)|\,\diff s+\int^t_0|K(t'-s)-K(t-s)|\,\diff s\right\}\|b\|_{L^{p,\infty}_\bF(0,T)}.
\end{align*}
Since $K\in L^1(0,T;\bR^{n\times n})$, the last term above tends to zero as $t'-t\to0$. Therefore, the process $(\int^t_0K(t-s)b_s\,\diff s)_{t\in[0,T]}$ is in $C^p_\bF(0,T;\bR^n)$. The linearity of the map $b\mapsto(\int^t_0K(t-s)b_s\,\diff s)_{t\in[0,T]}$ is clear. Furthermore, for any $t\in[0,T]$, $\tau\in[0,T)$ and $\ep\in(0,T-\tau]$, noting the definition of the norm $\|\cdot\|_{1,\ep}$, we have
\begin{equation*}
	\bE\left[\left|\int^t_0K(t-s)b_s\1_{[\tau,\tau+\ep]}(s)\,\diff s\right|^p\right]^{1/p}\leq\int^t_0|K(t-s)|\1_{[\tau,\tau+\ep]}(s)\,\diff s\|b\|_{L^{p,\infty}_\bF(0,T)}\leq\|K\|_{1,\ep}\|b\|_{L^{p,\infty}_\bF(0,T)}.
\end{equation*}
Hence, the estimate \eqref{pre_eq_convolution-b} holds. In particular, considering $\tau=0$ and $\ep=T$, we see that the map $b\mapsto(\int^t_0K(t-s)b_s\,\diff s)_{t\in[0,T]}$ is a bounded linear operator from $L^{p,\infty}_\bF(0,T;\bR^n)$ to $C^p_\bF(0,T;\bR^n)$.

Next, we show the assertion (ii). Suppose that $K\in L^2(0,T;\bR^{n\times n})$ and $p\in[2,\infty)$. Let $\sigma\in L^{p,\infty}_\bF(0,T;\bR^n)$ be fixed. For each $0\leq t\leq t'\leq T$,
\begin{align*}
	&\bE\left[\left|\int^{t'}_0K(t'-s)\sigma_s\,\diff s-\int^t_0K(t-s)\sigma_s\,\diff W_s\right|^p\right]^{1/p}\\
	&\leq\bE\left[\left|\int^{t'}_tK(t'-s)\sigma_s\,\diff W_s\right|^p\right]^{1/p}+\bE\left[\left|\int^t_0(K(t'-s)-K(t-s))\sigma_s\,\diff W_s\right|^p\right]^{1/p}\\
	&\leq C_p\bE\left[\left(\int^{t'}_t|K(t'-s)|^2|\sigma_s|^2\,\diff s\right)^{p/2}\right]^{1/p}+C_p\bE\left[\left(\int^t_0|K(t'-s)-K(t-s)|^2|\sigma_s|^2\,\diff s\right)^{p/2}\right]^{1/p},
\end{align*}
where we used the Burkholder--Davis--Gundy inequality in the last line. Using Minkowski's integral inequality, one has
\begin{align*}
	\bE\left[\left(\int^{t'}_t|K(t'-s)|^2|\sigma_s|^2\,\diff s\right)^{p/2}\right]^{1/p}&\leq\left(\int^{t'}_t|K(t'-s)|^2\bE\big[|\sigma_s|^p\big]^{2/p}\,\diff s\right)^{1/2}\\
	&\leq\left(\int^{t'}_t|K(t'-s)|^2\,\diff s\right)^{1/2}\|\sigma\|_{L^{p,\infty}_\bF(0,T)}.
\end{align*}
Similarly,
\begin{equation*}
	\bE\left[\left(\int^t_0|K(t'-s)-K(t-s)|^2|\sigma_s|^2\,\diff s\right)^{p/2}\right]^{1/p}\leq\left(\int^t_0|K(t'-s)-K(t-s)|^2\,\diff s\right)^{1/2}\|\sigma\|_{L^{p,\infty}_\bF(0,T)}.
\end{equation*}
Hence,
\begin{align*}
	&\bE\left[\left|\int^{t'}_0K(t'-s)\sigma_s\,\diff W_s-\int^t_0K(t-s)\sigma_s\,\diff W_s\right|^p\right]^{1/p}\\
	&\leq C_p\left\{\left(\int^{t'}_t|K(t'-s)|^2\,\diff s\right)^{1/2}+\left(\int^t_0|K(t'-s)-K(t-s)|^2\,\diff s\right)^{1/2}\right\}\|\sigma\|_{L^{p,\infty}_\bF(0,T)}.
\end{align*}
Since $K\in L^2(0,T;\bR^{n\times n})$, the last term above tends to zero as $t'-t\to0$. Therefore, the process $(\int^t_0K(t-s)\sigma_s\,\diff W_s)_{t\in[0,T]}$ is in $C^p_\bF(0,T;\bR^n)$. The linearity of the map $\sigma\mapsto(\int^t_0K(t-s)\sigma_s\,\diff W_s)_{t\in[0,T]}$ is clear. Furthermore, for any $t\in[0,T]$, $\tau\in[0,T)$ and $\ep\in(0,T-\tau]$, noting the definition of the norm $\|\cdot\|_{2,\ep}$, we have
\begin{align*}
	\bE\left[\left|\int^t_0K(t-s)\sigma_s\1_{[\tau,\tau+\ep]}(s)\,\diff W_s\right|^p\right]^{1/p}&\leq C_p\bE\left[\left(\int^t_0|K(t-s)|^2|\sigma_s|^2\1_{[\tau,\tau+\ep]}(s)\,\diff s\right)^{p/2}\right]^{1/p}\\
	&\leq C_p\left(\int^t_0|K(t-s)|^2\1_{[\tau,\tau+\ep]}(s)\,\diff s\right)^{1/2}\|\sigma\|_{L^{p,\infty}_\bF(0,T)}\\
	&\leq C_p\|K\|_{2,\ep}\|\sigma\|_{L^{p,\infty}_\bF(0,T)}.
\end{align*}
Hence, the estimate \eqref{pre_eq_convolution-sigma} holds. In particular, considering $\tau=0$ and $\ep=T$, we see that the map $\sigma\mapsto(\int^t_0K(t-s)\sigma_s\,\diff W_s)_{t\in[0,T]}$ is a bounded linear operator from $L^{p,\infty}_\bF(0,T;\bR^n)$ to $C^p_\bF(0,T;\bR^n)$.
\end{proof}

The following is a known result on the well-posedness and moment estimate for SVEs with singular kernels; see for example \cite{Zh10}.

%% Lemma

\begin{lemm}\label{pre_lemm_SVE}
Let $p\in[2,\infty)$. Fix $\xi\in C^p_\bF(0,T;\bR^n)$, $K_b\in L^1(0,T;\bR^{n\times n})$ and $K_\sigma\in L^2(0,T;\bR^{n\times n})$. Let $b,\sigma:\Omega\times[0,T]\times\bR^n\to\bR^n$ be progressively measurable maps. Assume that the processes $(b(t,0))_{t\in[0,T]}$ and $(\sigma(t,0))_{t\in[0,T]}$ are in $L^{p,\infty}_\bF(0,T;\bR^n)$. Assume furthermore that there exists a constant $L>0$ such that
\begin{equation*}
	|b(t,x)-b(t,x')|+|\sigma(t,x)-\sigma(t,x')|\leq L|x-x'|
\end{equation*}
for any $x,x'\in\bR^n$ and $(\omega,t)\in\Omega\times[0,T]$. Then, the SVE
\begin{equation*}
	X_t=\xi_t+\int^t_0K_b(t-s)b(s,X_s)\,\diff s+\int^t_0K_\sigma(t-s)\sigma(s,X_s)\diff W_s,\ t\in[0,T],
\end{equation*}
admits a unique solution $X\in C^p_\bF(0,T;\bR^n)$. Furthermore, there exists a constant $C_p>0$, which depends only on $p,T,K_b,K_\sigma$ and $L$, such that
\begin{equation*}
	\|X\|_{C^p_\bF(0,T)}\leq C_p\Big\{\|\xi\|_{C^p_\bF(0,T)}+\|b(\cdot,0)\|_{L^{p,\infty}_\bF(0,T)}+\|\sigma(\cdot,0)\|_{L^{p,\infty}_\bF(0,T)}\Big\}.
\end{equation*}
\end{lemm}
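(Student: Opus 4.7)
The plan is to construct the solution by a Picard fixed-point argument on the Banach space $C^p_\bF(0,T;\bR^n)$, leveraging the localized norms $\|\cdot\|_{q,\ep}$ to absorb the singularity of $K_b$ and $K_\sigma$. Concretely, I would define
$$\Phi(Y)_t := \xi_t + \int^t_0 K_b(t-s) b(s, Y_s)\,\diff s + \int^t_0 K_\sigma(t-s) \sigma(s, Y_s)\,\diff W_s.$$
The Lipschitz assumption and the integrability of $b(\cdot,0)$, $\sigma(\cdot,0)$ imply that $b(\cdot, Y)$ and $\sigma(\cdot, Y)$ belong to $L^{p,\infty}_\bF(0,T;\bR^n)$ whenever $Y \in C^p_\bF(0,T;\bR^n)$. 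Hence \cref{pre_lemm_convolution} shows that $\Phi$ is a well-defined self-map on $C^p_\bF(0,T;\bR^n)$.

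For existence and uniqueness, I would invoke \cref{intro_rem_K-ep} to choose $\ep > 0$ small enough that
$$\rho := L\,\|K_b\|_{1,\ep} + C_p L\,\|K_\sigma\|_{2,\ep} < 1,$$
where $C_p$ is the constant in \cref{pre_lemm_convolution}(ii). Partition $[0,T]$ into finitely many subintervals $[t_k, t_{k+1}]$ of length at most $\ep$. On $[0,t_1]$, \cref{pre_lemm_convolution} (with $\tau = 0$) together with the Lipschitz bound yields
$$\sup_{t \in [0, t_1]} \bE\big[|\Phi(Y)_t - \Phi(Y')_t|^p\big]^{1/p} \leq \rho \sup_{t \in [0, t_1]} \bE\big[|Y_t - Y'_t|^p\big]^{1/p},$$
so the Banach contraction theorem produces a unique fixed point $X$ on $[0,t_1]$. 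On each subsequent subinterval $[t_k, t_{k+1}]$, one regards the already-constructed trajectory as contributing to a modified forcing term $\tilde\xi^{(k)}_t := \xi_t + \int^{t_k}_0 K_b(t-s)b(s, X_s)\,\diff s + \int^{t_k}_0 K_\sigma(t-s)\sigma(s, X_s)\,\diff W_s$ (which is in $C^p_\bF$ by \cref{pre_lemm_convolution}), and applies the same contraction using the localized norm with $\tau = t_k$. Pasting the pieces delivers the global solution $X \in C^p_\bF(0,T;\bR^n)$, and uniqueness follows from the same contraction estimate.

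For the a priori estimate, I would apply \cref{pre_lemm_convolution} to the equation itself, splitting $b(s, X_s) = b(s,0) + (b(s,X_s) - b(s,0))$ and $\sigma(s, X_s) = \sigma(s,0) + (\sigma(s,X_s) - \sigma(s,0))$. On each subinterval $[t_k, t_{k+1}]$ of length at most $\ep$, the Lipschitz bound gives
$$\sup_{t \in [t_k, t_{k+1}]} \bE\big[|X_t|^p\big]^{1/p} \leq C\Big(\|\xi\|_{C^p_\bF(0,T)} + \|b(\cdot,0)\|_{L^{p,\infty}_\bF(0,T)} + \|\sigma(\cdot,0)\|_{L^{p,\infty}_\bF(0,T)} + \sup_{s \in [0,t_k]} \bE\big[|X_s|^p\big]^{1/p}\Big) + \rho \sup_{t \in [t_k, t_{k+1}]} \bE\big[|X_t|^p\big]^{1/p},$$
and since $\rho < 1$ the last term can be absorbed into the left-hand side. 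Iterating this bound across the finitely many subintervals yields the desired estimate with a constant depending only on $p$, $T$, $K_b$, $K_\sigma$ and $L$. The main obstacle is precisely the singularity of the kernels: it prevents a direct global sup-norm contraction, and it is only the device of the localized norms $\|K\|_{q,\ep} \to 0$ (\cref{intro_rem_K-ep}), combined with the finite patching scheme, that allows both the fixed-point and Gronwall-type arguments to close.
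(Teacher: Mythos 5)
Your proof is correct, but it takes a genuinely different route from the paper. The paper does not construct the solution at all: it simply invokes Zhang's well-posedness theorem for abstract stochastic Volterra equations \cite[Theorem 3.1]{Zh10}, applying it with the forcing term $g(t)=\xi_t+\int^t_0K_b(t-s)b(s,0)\,\diff s+\int^t_0K_\sigma(t-s)\sigma(s,0)\,\diff W_s$ (placed in $C^p_\bF$ via \cref{pre_lemm_convolution}) and the Lipschitz integrands $A(t,s,x)=K_b(t-s)(b(s,x)-b(s,0))$, $B(t,s,x)=K_\sigma(t-s)(\sigma(s,x)-\sigma(s,0))$. You instead give a self-contained Picard fixed-point argument: the localized norms $\|K_b\|_{1,\ep}$, $\|K_\sigma\|_{2,\ep}$ are driven below the contraction threshold via \cref{intro_rem_K-ep}, $[0,T]$ is partitioned into subintervals of length at most $\ep$, and the solution is built by contraction on each piece (freezing the already-constructed segment as a modified forcing term, which \cref{pre_lemm_convolution} again places in $C^p_\bF$), with the a priori estimate obtained by the same localized absorption and finite iteration. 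Your approach has the advantage of transparency and self-containment -- it makes visible exactly where the singularity of the kernels enters and why the $\|\cdot\|_{q,\ep}$ device suffices to close both the fixed-point and the Gronwall-type arguments -- whereas the paper's citation is shorter but requires the reader to unpack the abstract setup of \cite{Zh10}. The only point worth polishing in your writeup is to note explicitly that for a fixed point on $[0,t_1]$ the restriction of $\Phi$ to $C^p_\bF(0,t_1;\bR^n)$ is well-defined because $\Phi(Y)_t$ for $t\leq t_1$ depends only on $Y|_{[0,t_1]}$; this is implicit but should be said.
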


%% Proof

\begin{proof}
The assertion follows from \cite[Theorem 3.1]{Zh10} applying to $g(t)=\xi_t+\int^t_0K_b(t-s)b(s,0)\,\diff s+\int^t_0K_\sigma(t-s)\sigma(s,0)\,\diff s$, $A(t,s,x)=K_b(t-s)(b(s,x)-b(s,0))$ and $B(t,s,x)=K_\sigma(t-s)(\sigma(s,x)-\sigma(s,0))$ in its notation, together with \cref{pre_lemm_convolution}.
\end{proof}

%%%%%%
\subsection{Problem formulation}
%%%%%%

We consider the optimal control problem described by the controlled SVE \eqref{intro_eq_cSVE} and the cost functional \eqref{intro_eq_cost}. We impose the following assumptions on the coefficients.

%% Assumption

\begin{assum}\label{control_assum_coefficient}
\begin{itemize}
\item
$\xi\in\bigcap_{p\in[2,\infty)}C^p_\bF(0,T;\bR^n)$, $K_b\in L^1(0,T;\bR^{n\times n})$, $K_\sigma\in L^2(0,T;\bR^{n\times n})$.
\item
The control domain $U$ is a separable topological space. We equip $U$ with the Borel $\sigma$-algebra $\cB(U)$.
\item
The maps $b,\sigma:\Omega\times[0,T]\times U\times\bR^n\to\bR^n$ and $f:\Omega\times[0,T]\times U\times\bR^n\to\bR$ are progressively measurable, and $h:\Omega\times\bR^n\to\bR$ is $\cF_T\otimes\cB(\bR^n)$-measurable. Furthermore, assume that the following hold:
\begin{itemize}
\item
For any $(\omega,t,x)\in\Omega\times[0,T]\times\bR^n$, the maps $u\mapsto b(t,u,x),\sigma(t,u,x),f(t,u,x)$ are continuous.
\item
For any $(\omega,t,u)\in\Omega\times[0,T]\times U$, the maps $x\mapsto b(t,u,x),\sigma(t,u,x),f(t,u,x),h(x)$ are twice continuously differentiable.
\item
There exist constants $\kappa,\gamma>0$ and a progressively measurable map $R:\Omega\times[0,T]\times U\to\bR_+$ such that, for any $(\omega,t,u)\in\Omega\times[0,T]\times U$ and any $x,x'\in\bR^n$,
\begin{align*}
	&|\varphi(t,u,0)|\leq R(t,u),\ \ |\varphi_x(t,u,x)|+|\varphi_{xx}(t,u,x)|\leq \kappa,\\
	&|\varphi_{xx}(t,u,x)-\varphi_{xx}(t,u,x')|\leq R(t,u)\big(1\vee|x|\vee|x'|\big)^\gamma|x-x'|,\ \ \text{for $\varphi\in\{b,\sigma\}$,}
\end{align*}
and
\begin{align*}
	&|f(t,u,x)|+|f_x(t,u,x)|+|f_{xx}(t,u,x)|\leq R(t,u)\big(1\vee|x|\big)^\gamma,\\
	&|f_{xx}(t,u,x)-f_{xx}(t,u,x')|\leq R(t,u)\big(1\vee|x|\vee|x'|\big)^\gamma|x-x'|,\\
	&|h(x)|+|h_x(x)|+|h_{xx}(x)|\leq\kappa\big(1\vee|x|\big)^\gamma,\\
	&|h_{xx}(x)-h_{xx}(x')|\leq\kappa\big(1\vee|x|\vee|x'|\big)^\gamma|x-x'|.
\end{align*}
Moreover, for each $u\in U$,
\begin{equation*}
	(R(t,u))_{t\in[0,T]}\in\bigcap_{p\in[2,\infty)}L^{p,\infty}_\bF(0,T;\bR_+).
\end{equation*}
\end{itemize}
\end{itemize}
\end{assum}

%% Remark

\begin{rem}
The separability of the control domain $U$ and the continuity of $b$, $\sigma$ and $f$ with respect to the control variable $u\in U$ are used only in the last step of the proof of the global maximum principle (\cref{MP_theo_MP}).
\end{rem}

Let \cref{control_assum_coefficient} hold. We denote by $\cU$ the set of all $U$-valued progressively measurable processes $u=(u_t)_{t\in[0,T]}$ such that
\begin{equation*}
	R^u\in\bigcap_{p\in[2,\infty)}L^{p,\infty}_\bF(0,T;\bR_+),\ \ \text{where $R^u_t:=R(t,u_t)$, $t\in[0,T]$}.
\end{equation*}
Each element $u\in\cU$ is referred to as a control process. By the assumption, every constant process $u_t=u_0\in U$, $t\in[0,T]$, belongs to $\cU$. Also, for any $u,v\in\cU$ and any progressively measurable set $A\subset\Omega\times[0,T]$, we have $u\1_A+v\1_{(\Omega\times[0,T])\setminus A}\in\cU$ and $R^{u\scriptsize{\1}_A+v\scriptsize{\1}_{(\Omega\times[0,T])\setminus A}}\leq R^u\vee R^v$. By \cref{pre_lemm_SVE}, for any $u\in\cU$, there exists a unique solution $X^u\in \bigcap_{p\in[2,\infty)}C^p_\bF(0,T;\bR^n)$ to the controlled SVE \eqref{intro_eq_cSVE}, which we call the state process corresponding to $u$. Furthermore, the following estimate holds:
\begin{equation}\label{control_eq_estimate}
	\|X^u\|_{C^p_\bF(0,T)}\leq C_p\Big\{\|\xi\|_{C^p_\bF(0,T)}+\|R^u\|_{L^{p,\infty}_\bF(0,T)}\Big\}<\infty\ \ \text{for any $p\in[2,\infty)$,}
\end{equation}
where $C_p>0$ is a constant depending only on $p,T,K_b,K_\sigma$ and $\kappa$. From this, we see that the cost functional $J(u)$ given by \eqref{intro_eq_cost} is well-defined and finite. We say that a control process $\hat{u}\in\cU$ is an (open-loop) \emph{optimal control} of the stochastic Volterra control problem \eqref{intro_eq_cSVE}--\eqref{intro_eq_cost} if it holds that
\begin{equation*}
	J(\hat{u})=\inf_{u\in\cU}J(u).
\end{equation*}
The purpose of this paper is to derive a \emph{global maximum principle} which provides a necessary condition for a given control process $\hat{u}\in\cU$ to be optimal.

%%%%%%%%%%%%%%%%%%%%%%%%%%%%%%%%%%
%%%%%%%%%%%%%%%%%%%%%%%%%%%%%%%%%%
\section{Taylor expansion of the controlled SVE with respect to the spike variation}\label{Taylor}
%%%%%%%%%%%%%%%%%%%%%%%%%%%%%%%%%%
%%%%%%%%%%%%%%%%%%%%%%%%%%%%%%%%%%

In order to derive the global maximum principle, we first investigate the Taylor type expansion for the controlled SVE \eqref{intro_eq_cSVE} with respect to the spike variation. Let \cref{control_assum_coefficient} hold. Fix a control process $\hat{u}\in\cU$, and denote the corresponding state process by $\hat{X}:=X^{\hat{u}}$. The control process $\hat{u}$ is a candidate of optimal controls, but we do not assume the optimality for a while. We denote $\hat{R}:=R^{\hat{u}}$, and
\begin{align*}
	&\hat{\varphi}_x(t):=\varphi_x(t,\hat{u}_t,\hat{X}_t)\ \ \text{and}\ \ \hat{\varphi}_{xx}(t):=\varphi_{xx}(t,\hat{u}_t,\hat{X}_t)\ \ \text{for $\varphi\in\{b,\sigma,f\}$;}\\
	&\hat{h}_x:=h_x(\hat{X}_T)\ \ \text{and}\ \ \hat{h}_{xx}:=h_{xx}(\hat{X}_T).
\end{align*}
Furthermore, given another control process $v\in\cU$, we denote
\begin{align*}
	&\delta\varphi^v(t):=\varphi(t,v_t,\hat{X}_t)-\varphi(t,\hat{u}_t,\hat{X}_t),\\
	&\delta\varphi^v_x(t):=\varphi_x(t,v_t,\hat{X}_t)-\varphi_x(t,\hat{u}_t,\hat{X}_t),\\
	&\delta\varphi^v_{xx}(t):=\varphi_{xx}(t,v_t,\hat{X}_t)-\varphi_{xx}(t,\hat{u}_t,\hat{X}_t),
\end{align*}
for $\varphi\in\{b,\sigma,f\}$. By \cref{control_assum_coefficient} and the estimate \eqref{control_eq_estimate}, we see that
\begin{equation}\label{Taylor_eq_var-coefficient-state}
\begin{split}
	&|\hat{\varphi}_x|+|\hat{\varphi}_{xx}|\leq\kappa,\ \ |\delta\varphi^v_x|+|\delta\varphi^v_{xx}|\leq2\kappa,\\
	&|\delta\varphi^v|\leq R^v+\hat{R}+2\kappa|\hat{X}|\in\bigcap_{p\in[2,\infty)}L^{p,\infty}_\bF(0,T;\bR_+),\ \ \text{for $\varphi\in\{b,\sigma\}$,}
\end{split}
\end{equation}
and
\begin{equation}\label{Taylor_eq_var-coefficient-cost}
\begin{split}
	&|\hat{f}_x|+|\hat{f}_{xx}|\leq\hat{R}\big(1\vee|\hat{X}|\big)^\gamma\in\bigcap_{p\in[2,\infty)}L^{p,\infty}_\bF(0,T;\bR_+),\\
	&|\delta f^v|+|\delta f^v_x|+|\delta f^v_{xx}|\leq\big(R^v+\hat{R}\big)\big(1\vee|\hat{X}|\big)^\gamma\in\bigcap_{p\in[2,\infty)}L^{p,\infty}_\bF(0,T;\bR_+),\\
	&|\hat{h}_x|+|\hat{h}_{xx}|\leq\kappa\big(1\vee|\hat{X}_T|\big)^\gamma\in\bigcap_{p\in[2,\infty)}L^p_{\cF_T}(\Omega;\bR_+).
\end{split}
\end{equation}

For each $v\in\cU$, $\tau\in[0,T)$ and $\ep\in(0,T-\tau]$, define the \emph{spike variation} $u^{v,\tau,\ep}$ of $\hat{u}$ with respect to $(v,\tau,\ep)$ by
\begin{equation}\label{Taylor_eq_spike-variation}
	u^{v,\tau,\ep}_t:=\hat{u}_t\1_{[0,T]\setminus[\tau,\tau+\ep]}(t)+v_t\1_{[\tau,\tau+\ep]}(t),\ \ t\in[0,T].
\end{equation}
Notice that $u^{v,\tau,\ep}\in\cU$. Let $X^{v,\tau,\ep}:=X^{u^{v,\tau,\ep}}$ be the state process corresponding to the control process $u^{v,\tau,\ep}$. We investigate a Taylor type expansion of $X^{v,\tau,\ep}$ around $\hat{X}$ in the spirit of \cite{Pe90}. For such a purpose, we introduce the following \emph{first order variational SVE}:
\begin{equation}\label{Taylor_eq_vareq-SVE1}
\begin{split}
	X^{1,v,\tau,\ep}_t&=\int^t_0K_b(t-s)\Big\{\hat{b}_x(s)X^{1,v,\tau,\ep}_s+\delta b^v(s)\1_{[\tau,\tau+\ep]}(s)\Big\}\,\diff s\\
	&\hspace{0.5cm}+\int^t_0K_\sigma(t-s)\Big\{\hat{\sigma}_x(s)X^{1,v,\tau,\ep}_s+\delta\sigma^v(s)\1_{[\tau,\tau+\ep]}(s)\Big\}\,\diff W_s,\ \ t\in[0,T],
\end{split}
\end{equation}
and the following \emph{second order variational SVE}:
\begin{equation}\label{Taylor_eq_vareq-SVE2}
\begin{split}
	X^{2,v,\tau,\ep}_t&=\int^t_0K_b(t-s)\Big\{\hat{b}_x(s)X^{2,v,\tau,\ep}_s+\frac{1}{2}\big\langle\hat{b}_{xx}(s)X^{1,v,\tau,\ep}_s,X^{1,v,\tau,\ep}_s\big\rangle+\delta b^v_x(s)X^{1,v,\tau,\ep}_s\1_{[\tau,\tau+\ep]}(s)\Big\}\,\diff s\\
	&\hspace{0.5cm}+\int^t_0K_\sigma(t-s)\Big\{\hat{\sigma}_x(s)X^{2,v,\tau,\ep}_s+\frac{1}{2}\big\langle\hat{\sigma}_{xx}(s)X^{1,v,\tau,\ep}_s,X^{1,v,\tau,\ep}_s\big\rangle+\delta\sigma^v_x(s)X^{1,v,\tau,\ep}_s\1_{[\tau,\tau+\ep]}(s)\Big\}\,\diff W_s,\\
	&\hspace{6cm}t\in[0,T].
\end{split}
\end{equation}
Here, recall the notations \eqref{notation_eq_derivative} and \eqref{notation_eq_matrix}. Noting \eqref{Taylor_eq_var-coefficient-state}, by \cref{pre_lemm_SVE}, there exist unique solutions $X^{1,v,\tau,\ep},X^{2,v,\tau,\ep}\in \bigcap_{p\in[2,\infty)}C^p_\bF(0,T;\bR^n)$ to the above SVEs. Also, notice that $X^{1,v,\tau,\ep}_t=X^{2,v,\tau,\ep}_t=0$ a.s.\ for any $t\in[0,\tau]$. We denote $X^{1,2,v,\tau,\ep}:=X^{1,v,\tau,\ep}+X^{2,v,\tau,\ep}$. Then $X^{1,2,v,\tau,\ep}$ solves the following SVE:
\begin{equation}\label{Taylor_eq_vareq-SVE12}
\begin{split}
	X^{1,2,v,\tau,\ep}_t&=\int^t_0K_b(t-s)\Big\{\hat{b}_x(s)X^{1,2,v,\tau,\ep}_s+\frac{1}{2}\big\langle\hat{b}_{xx}(s)X^{1,v,\tau,\ep}_s,X^{1,v,\tau,\ep}_s\big\rangle\\
	&\hspace{4.5cm}+\Big(\delta b^v(s)+\delta b^v_x(s)X^{1,v,\tau,\ep}_s\Big)\1_{[\tau,\tau+\ep]}(s)\Big\}\,\diff s\\
	&\hspace{0.5cm}+\int^t_0K_\sigma(t-s)\Big\{\hat{\sigma}_x(s)X^{1,2,v,\tau,\ep}_s+\frac{1}{2}\big\langle\hat{\sigma}_{xx}(s)X^{1,v,\tau,\ep}_s,X^{1,v,\tau,\ep}_s\big\rangle\\
	&\hspace{5cm}+\Big(\delta\sigma^v(s)+\delta\sigma^v_x(s)X^{1,v,\tau,\ep}_s\Big)\1_{[\tau,\tau+\ep]}(s)\Big\}\,\diff W_s,\ \ t\in[0,T].
\end{split}
\end{equation}
Define
\begin{align*}
	&\delta X^{v,\tau,\ep}:=X^{v,\tau,\ep}-\hat{X},\\
	&\delta X^{1,v,\tau,\ep}:=X^{v,\tau,\ep}-\hat{X}-X^{1,v,\tau,\ep}=\delta X^{v,\tau,\ep}-X^{1,v,\tau,\ep},\\
	&\delta X^{1,2,v,\tau,\ep}:=X^{v,\tau,\ep}-\hat{X}-X^{1,v,\tau,\ep}-X^{2,v,\tau,\ep}=\delta X^{v,\tau,\ep}-X^{1,2,v,\tau,\ep}=\delta X^{1,v,\tau,\ep}-X^{2,v,\tau,\ep}.
\end{align*}
Also, we set
\begin{equation}\label{Taylor_eq_vareq-cost}
\begin{split}
	J^{1,2,v,\tau,\ep}&:=\bE\left[\hat{h}_xX^{1,2,v,\tau,\ep}_T+\frac{1}{2}\big\langle\hat{h}_{xx}X^{1,v,\tau,\ep}_T,X^{1,v,\tau,\ep}_T\big\rangle\right]\\
	&\hspace{1cm}+\bE\left[\int^T_0\Big\{\hat{f}_x(s)X^{1,2,v,\tau,\ep}_s+\frac{1}{2}\big\langle\hat{f}_{xx}(s)X^{1,v,\tau,\ep}_s,X^{1,v,\tau,\ep}_s\big\rangle\Big\}\,\diff s\right]+\bE\left[\int^{\tau+\ep}_\tau\delta f^v(s)\,\diff s\right].
\end{split}
\end{equation}
Noting \eqref{Taylor_eq_var-coefficient-cost} and $X^1,X^{1,2}\in\bigcap_{p\in[2,\infty)}C^p_\bF(0,T;\bR^n)$, the right-hand side above is well-defined and finite. Finally, we set
\begin{equation*}
	\delta J^{1,2,v,\tau,\ep}:=J(u^{v,\tau,\ep})-J(\hat{u})-J^{1,2,v,\tau,\ep}.
\end{equation*}
Under the above setting, the following important proposition holds.

%% Proposition

\begin{prop}\label{Taylor_prop_SVE-expansion}
Let \cref{control_assum_coefficient} hold. Fix $\hat{u},v\in\cU$. Then, for any $p\in[2,\infty)$, there exists a constant $C_p>0$ such that, for any $\tau\in[0,T)$ and $\ep\in(0,T-\tau]$, the following hold:
\begin{align}
	\label{Taylor_eq_var-delta}
	&\|\delta X^{v,\tau,\ep}\|_{C^p_\bF(0,T)}\leq C_p\big\{\|K_b\|_{1,\ep}+\|K_\sigma\|_{2,\ep}\big\},\\
	\label{Taylor_eq_var-1}
	&\|X^{1,v,\tau,\ep}\|_{C^p_\bF(0,T)}\leq C_p\big\{\|K_b\|_{1,\ep}+\|K_\sigma\|_{2,\ep}\big\},\\
	\label{Taylor_eq_var-delta1}
	&\|\delta X^{1,v,\tau,\ep}\|_{C^p_\bF(0,T)}\leq C_p\big\{\|K_b\|_{1,\ep}+\|K_\sigma\|_{2,\ep}\big\}^2,\\
	\label{Taylor_eq_var-2}
	&\|X^{2,v,\tau,\ep}\|_{C^p_\bF(0,T)}\leq C_p\big\{\|K_b\|_{1,\ep}+\|K_\sigma\|_{2,\ep}\big\}^2,\\
	\label{Taylor_eq_var-delta12}
	&\|\delta X^{1,2,v,\tau,\ep}\|_{C^p_\bF(0,T)}\leq C_p\big\{\|K_b\|_{1,\ep}+\|K_\sigma\|_{2,\ep}\big\}^3.
\end{align}
Furthermore, there exists a constant $C>0$ such that, for any $\tau\in[0,T)$ and $\ep\in(0,T-\tau]$,
\begin{equation}\label{Taylor_eq_var-cost}
	|\delta J^{1,2,v,\tau,\ep}|\leq C\big\{\|K_b\|_{1,\ep}+\|K_\sigma\|_{2,\ep}\big\}^3+C\ep\big\{\|K_b\|_{1,\ep}+\|K_\sigma\|_{2,\ep}\big\}.
\end{equation}
In particular, if $K_b\in L^{3/2}(0,T;\bR^{n\times n})$ and $K_\sigma\in L^6(0,T;\bR^{n\times n})$, then
\begin{equation}\label{Taylor_eq_var-cost'}
	|\delta J^{1,2,v,\tau,\ep}|=o(\ep)\ \ \text{as $\ep\downarrow0$ for any $\tau\in[0,T)$}.
\end{equation}
\end{prop}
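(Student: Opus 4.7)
The plan is to establish the five state-process estimates \eqref{Taylor_eq_var-delta}--\eqref{Taylor_eq_var-delta12} in sequence and then combine them with a Taylor expansion of $h$ and $f$ to prove the cost estimates \eqref{Taylor_eq_var-cost}--\eqref{Taylor_eq_var-cost'}. The two workhorses throughout are \cref{pre_lemm_convolution}, which converts the indicator $\1_{[\tau,\tau+\ep]}$ into the decisive factor $\|K_b\|_{1,\ep}$ or $\|K_\sigma\|_{2,\ep}$, and \cref{pre_lemm_SVE}, which turns a forcing bound into a solution bound for a linear SVE. For \eqref{Taylor_eq_var-delta} I would rewrite the SVE for $\delta X^{v,\tau,\ep}$ as a linear SVE in $\delta X^{v,\tau,\ep}$ with Lipschitz coefficients $\hat b_x,\hat\sigma_x$ plus pure forcings $\delta b^v\1_{[\tau,\tau+\ep]}$ and $\delta\sigma^v\1_{[\tau,\tau+\ep]}$; applying \cref{pre_lemm_convolution} to the forcings and \cref{pre_lemm_SVE} to the resulting SVE yields \eqref{Taylor_eq_var-delta}, and the same argument applied directly to \eqref{Taylor_eq_vareq-SVE1} gives \eqref{Taylor_eq_var-1}.

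To obtain \eqref{Taylor_eq_var-delta1} and \eqref{Taylor_eq_var-2} I would perform a first-order Taylor expansion
\[ b(s,u^{v,\tau,\ep}_s,X^{v,\tau,\ep}_s)-b(s,\hat u_s,\hat X_s) = \hat b_x(s)\delta X^{v,\tau,\ep}_s + \bigl[\delta b^v(s)+\delta b^v_x(s)\delta X^{v,\tau,\ep}_s\bigr]\1_{[\tau,\tau+\ep]}(s) + r_b^{(2)}(s), \]
with $|r_b^{(2)}(s)|\leq C|\delta X^{v,\tau,\ep}_s|^2$, and analogously for $\sigma$. Subtracting \eqref{Taylor_eq_vareq-SVE1}, the SVE for $\delta X^{1,v,\tau,\ep}$ is linear in $\delta X^{1,v,\tau,\ep}$ with forcings $\delta b^v_x\delta X^{v,\tau,\ep}\1_{[\tau,\tau+\ep]}$ (bounded by $C\|K_b\|_{1,\ep}\|\delta X^{v,\tau,\ep}\|_{C^p_\bF(0,T)}$ via \cref{pre_lemm_convolution}) and $r_b^{(2)}$ (whose $L^{p,\infty}_\bF(0,T)$-norm is bounded by $C\|\delta X^{v,\tau,\ep}\|_{C^{2p}_\bF(0,T)}^2$); by \eqref{Taylor_eq_var-delta} both are of order $\{\|K_b\|_{1,\ep}+\|K_\sigma\|_{2,\ep}\}^2$, and \cref{pre_lemm_SVE} yields \eqref{Taylor_eq_var-delta1}. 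The same procedure applied directly to \eqref{Taylor_eq_vareq-SVE2}, with $\tfrac12\langle\hat b_{xx}X^{1,v,\tau,\ep},X^{1,v,\tau,\ep}\rangle$ and $\delta b^v_x X^{1,v,\tau,\ep}\1_{[\tau,\tau+\ep]}$ regarded as pure forcings and bounded through \eqref{Taylor_eq_var-1}, gives \eqref{Taylor_eq_var-2}.

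For \eqref{Taylor_eq_var-delta12} I would repeat one order higher using a full second-order Taylor expansion containing also $\tfrac12\langle\hat b_{xx}\delta X^{v,\tau,\ep},\delta X^{v,\tau,\ep}\rangle$ and $\tfrac12\langle\delta b^v_{xx}\delta X^{v,\tau,\ep},\delta X^{v,\tau,\ep}\rangle\1_{[\tau,\tau+\ep]}$, together with a third-order remainder $r_b^{(3)}$ satisfying $|r_b^{(3)}(s)|\leq CR(s,u^{v,\tau,\ep}_s)(1\vee|\hat X_s|\vee|X^{v,\tau,\ep}_s|)^\gamma|\delta X^{v,\tau,\ep}_s|^3$ by the weighted Lipschitz condition of \cref{control_assum_coefficient}. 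Substituting $\delta X^{v,\tau,\ep}=X^{1,v,\tau,\ep}+X^{2,v,\tau,\ep}+\delta X^{1,2,v,\tau,\ep}$ in the quadratic piece and subtracting \eqref{Taylor_eq_vareq-SVE12}, every surviving forcing—in particular the mixed product $\langle\hat b_{xx}X^{1,v,\tau,\ep},X^{2,v,\tau,\ep}+\delta X^{1,2,v,\tau,\ep}\rangle$—can be shown via \eqref{Taylor_eq_var-delta}--\eqref{Taylor_eq_var-2}, \cref{pre_lemm_convolution} and H\"older (invoking \eqref{control_eq_estimate} at sufficiently large exponents to absorb the polynomial weight in $r_b^{(3)}$) to be of order $\{\|K_b\|_{1,\ep}+\|K_\sigma\|_{2,\ep}\}^3$, and \cref{pre_lemm_SVE} then gives \eqref{Taylor_eq_var-delta12}.

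For the cost expansion I would Taylor expand $h$ and $f$ to second order, substitute $\delta X^{v,\tau,\ep}=X^{1,v,\tau,\ep}+X^{2,v,\tau,\ep}+\delta X^{1,2,v,\tau,\ep}$, and subtract the definition \eqref{Taylor_eq_vareq-cost} of $J^{1,2,v,\tau,\ep}$; the residual $\delta J^{1,2,v,\tau,\ep}$ splits into (i) linear-in-$\delta X^{1,2,v,\tau,\ep}$ pieces, (ii) mixed quadratic cross-terms such as $\langle\hat f_{xx}\delta X^{1,v,\tau,\ep},X^{1,v,\tau,\ep}+\delta X^{v,\tau,\ep}\rangle$, (iii) third-order Taylor remainders of $h$ and $f$, and (iv) the contributions $\delta f^v_x(s)\delta X^{v,\tau,\ep}_s\1_{[\tau,\tau+\ep]}(s)$ and $\tfrac12\langle\delta f^v_{xx}(s)\delta X^{v,\tau,\ep}_s,\delta X^{v,\tau,\ep}_s\rangle\1_{[\tau,\tau+\ep]}(s)$. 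Classes (i)--(iii) are bounded by $C\{\|K_b\|_{1,\ep}+\|K_\sigma\|_{2,\ep}\}^3$ using \eqref{Taylor_eq_var-delta}--\eqref{Taylor_eq_var-delta12}, and class (iv), after integrating over $[\tau,\tau+\ep]$ of length $\ep$ and applying Cauchy--Schwarz together with \eqref{Taylor_eq_var-delta}, produces the $C\ep\{\|K_b\|_{1,\ep}+\|K_\sigma\|_{2,\ep}\}$ summand (plus a smaller $C\ep\{\cdot\}^2$); this gives \eqref{Taylor_eq_var-cost}. Finally \eqref{Taylor_eq_var-cost'} is immediate from \cref{intro_rem_K-ep}: under $K_b\in L^{3/2}$ and $K_\sigma\in L^6$ one has $\|K_b\|_{1,\ep}\leq\|K_b\|_{3/2,\ep}\ep^{1/3}$ and $\|K_\sigma\|_{2,\ep}\leq\|K_\sigma\|_{6,\ep}\ep^{1/3}$ with $\|K_b\|_{3/2,\ep},\|K_\sigma\|_{6,\ep}\to 0$, so the first summand in \eqref{Taylor_eq_var-cost} is $o(\ep)$ and the second trivially so. The main obstacle is the second- and third-order bookkeeping behind \eqref{Taylor_eq_var-delta12} and \eqref{Taylor_eq_var-cost}: after subtracting the pieces already absorbed into $X^{1,v,\tau,\ep}+X^{2,v,\tau,\ep}$ and $J^{1,2,v,\tau,\ep}$, one must verify that every residual term truly decays at the advertised \emph{cubic} rate $\{\|K_b\|_{1,\ep}+\|K_\sigma\|_{2,\ep}\}^3$ rather than the easier quadratic one, and controlling $r_b^{(3)},r_\sigma^{(3)}$ requires applying \eqref{control_eq_estimate} at a sufficiently large exponent to absorb the weight $(1\vee|\hat X|\vee|X^{v,\tau,\ep}|)^\gamma$ via H\"older in $\Omega$.
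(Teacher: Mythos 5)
Your plan reproduces the paper's proof essentially step by step: cast each of $\delta X,X^1,\delta X^1,X^2,\delta X^{1,2}$ as a linear SVE whose forcing gains one additional factor of $\{\|K_b\|_{1,\ep}+\|K_\sigma\|_{2,\ep}\}$ at each stage, estimate forcings with \cref{pre_lemm_convolution} and solutions with \cref{pre_lemm_SVE}, bound the third-order Taylor remainders via the weighted H\"older condition on $\varphi_{xx}$ together with \eqref{control_eq_estimate} and H\"older in $\Omega$, and close with \cref{intro_rem_K-ep}. One small correction: in the SVE for $\delta X$ the linear coefficient that leaves a pure forcing is the mean-value derivative $\tilde b_x(s)=\int_0^1 b_x(s,u_s,r\hat X_s+(1-r)X_s)\,\diff r$ (still bounded by $\kappa$), not $\hat b_x$, since using $\hat b_x$ would put a residual proportional to $\delta X$ back into the forcing; for the equations for $\delta X^1$, $X^2$ and $\delta X^{1,2}$ the coefficient $\hat b_x$ is the correct one, as you have it.
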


%% Proof

\begin{proof}
In this proof, we omit the super scripts $v,\tau,\ep$. For example, we denote $u=u^{v,\tau,\ep}$, $X=X^{v,\tau,\ep}$, $X^1=X^{1,v,\tau,\ep}$, $\delta b=\delta b^v$, $R^v=R$, and so on. For each $p\in[2,\infty)$, we denote by $C_p$ a positive constant which depends on $p$, $\hat{u}$, $v$ and other parameters given in \cref{control_assum_coefficient} but not on $\tau$ or $\ep$. The value of $C_p$ may vary from line to line. Notice that, by \eqref{control_eq_estimate} and $R^u\leq R\vee\hat{R}$,
\begin{equation}\label{Taylor_eq_estimate-X}
	\|\hat{X}\|_{C^p_\bF(0,T)}+\|X\|_{C^p_\bF(0,T)}\leq C_p\ \ \text{for any $p\in[2,\infty)$.}
\end{equation}
For $\varphi\in\{b,\sigma\}$, define
\begin{equation*}
	\tilde{\varphi}_x(s):=\int^1_0\varphi_x(s,u_s,r\hat{X}_s+(1-r)X_s)\,\diff r,\ \tilde{\varphi}_{xx}(s):=2\int^1_0r\varphi_{xx}(s,u_s,r\hat{X}_s+(1-r)X_s)\,\diff r.
\end{equation*}
Then, by Taylor's formula, we have
\begin{align}
	\nonumber&\varphi(s,u_s,X_s)-\varphi(s,\hat{u}_s,\hat{X}_s)\\
	\label{Taylor_eq_Taylor1}&=\tilde{\varphi}_x(s)\delta X_s+\delta\varphi(s)\1_{[\tau,\tau+\ep]}(s)\\
	\label{Taylor_eq_Taylor1'}&=\hat{\varphi}_x(s)\delta X_s+\Big(\tilde{\varphi}_x(s)-\varphi_x(s,u_s,\hat{X}_s)\Big)\delta X_s+\Big(\delta \varphi(s)+\delta\varphi_x(s)\delta X_s\Big)\1_{[\tau,\tau+\ep]}(s)\\
	\nonumber&=\hat{\varphi}_x(s)\delta X_s+\frac{1}{2}\big\langle\hat{\varphi}_{xx}(s)\delta X_s,\delta X_s\big\rangle+\frac{1}{2}\Big\langle\Big(\tilde{\varphi}_{xx}(s)-\varphi_{xx}(s,u_s,\hat{X}_s)\Big)\delta X_s,\delta X_s\Big\rangle\\
	\label{Taylor_eq_Taylor2}&\hspace{1cm}+\Big(\delta\varphi(s)+\delta\varphi_x(s)\delta X_s+\frac{1}{2}\big\langle\delta\varphi_{xx}(s)\delta X_s,\delta X_s\big\rangle\Big)\1_{[\tau,\tau+\ep]}(s).
\end{align}

\underline{The estimate \eqref{Taylor_eq_var-delta} for $\delta X=\delta X^{v,\tau,\ep}$.}
By the expansion \eqref{Taylor_eq_Taylor1}, we see that $\delta X$ solves the SVE
\begin{equation*}
	\delta X_t=\delta\xi_t+\int^t_0K_b(t-s)\tilde{b}_x(s)\delta X_s\,\diff s+\int^t_0K_\sigma(t-s)\tilde{\sigma}_x(s)\delta X_s\,\diff W_s,\ \ t\in[0,T],
\end{equation*}
where
\begin{equation*}
	\delta\xi_t:=\int^t_0K_b(t-s)\delta b(s)\1_{[\tau,\tau+\ep]}(s)\,\diff s+\int^t_0K_\sigma(t-s)\delta\sigma(s)\1_{[\tau,\tau+\ep]}(s)\,\diff W_s,\ \ t\in[0,T].
\end{equation*}
Noting \eqref{Taylor_eq_var-coefficient-state}, by \cref{pre_lemm_convolution}, we see that $\delta\xi\in C^p_\bF(0,T;\bR^n)$ for any $p\in[2,\infty)$ and
\begin{equation*}
	\|\delta\xi\|_{C^p_\bF(0,T)}\leq C_p\big\{\|K_b\|_{1,\ep}+\|K_\sigma\|_{2,\ep}\big\}.
\end{equation*}
Then, noting that $|\tilde{b}_x(s)|\leq\kappa$ and $|\tilde{\sigma}_x(s)|\leq\kappa$, by \cref{pre_lemm_SVE}, we get the estimate \eqref{Taylor_eq_var-delta}.

\underline{The estimate \eqref{Taylor_eq_var-1} for $X^1=X^{1,v,\tau,\ep}$.}
Notice that the first order variational SVE \eqref{Taylor_eq_vareq-SVE1} for $X^1$ can be written as
\begin{equation*}
	X^1_t=\xi^1_t+\int^t_0K_b(t-s)\hat{b}_x(s)X^1_s\,\diff s+\int^t_0K_\sigma(t-s)\hat{\sigma}_x(s)X^1_s\,\diff W_s,\ \ t\in[0,T],
\end{equation*}
where
\begin{equation*}
	\xi^1_t:=\int^t_0K_b(t-s)\delta b(s)\1_{[\tau,\tau+\ep]}(s)\,\diff s+\int^t_0K_\sigma(t-s)\delta\sigma(s)\1_{[\tau,\tau+\ep]}(s)\,\diff W_s,\ \ t\in[0,T].
\end{equation*}
Noting \eqref{Taylor_eq_var-coefficient-state}, by \cref{pre_lemm_convolution}, we see that $\xi^1\in C^p_\bF(0,T;\bR^n)$ for any $p\in[2,\infty)$ and
\begin{equation*}
	\|\xi^1\|_{C^p_\bF(0,T)}\leq C_p\big\{\|K_b\|_{1,\ep}+\|K_\sigma\|_{2,\ep}\big\}.
\end{equation*}
Then, noting that $|\hat{b}_x(s)|\leq\kappa$ and $|\hat{\sigma}_x(s)|\leq\kappa$, by \cref{pre_lemm_SVE}, we get the estimate \eqref{Taylor_eq_var-1}.

\underline{The estimate \eqref{Taylor_eq_var-delta1} for $\delta X^1=\delta X^{1,v,\tau,\ep}$.}
By the expansion \eqref{Taylor_eq_Taylor1'} and the first order variational SVE \eqref{Taylor_eq_vareq-SVE1} for $X^1$, we see that $\delta X^1$ solves the SVE
\begin{equation*}
	\delta X^1_t=\delta\xi^1_t+\int^t_0K_b(t-s)\hat{b}_x(s)\delta X^1_s\,\diff s+\int^t_0K_\sigma(t-s)\hat{\sigma}_x(s)\delta X^1_s\,\diff W_s,\ \ t\in[0,T],
\end{equation*}
where
\begin{align*}
	\delta\xi^1_t&:=\int^t_0K_b(t-s)\Big(\tilde{b}_x(s)-b_x(s,u_s,\hat{X}_s)\Big)\delta X_s\,\diff s+\int^t_0K_b(t-s)\delta b_x(s)\delta X_s\1_{[\tau,\tau+\ep]}(s)\,\diff s\\
	&\hspace{0.5cm}+\int^t_0K_\sigma(t-s)\Big(\tilde{\sigma}_x(s)-\sigma_x(s,u_s,\hat{X}_s)\Big)\delta X_s\,\diff W_s+\int^t_0K_\sigma(t-s)\delta\sigma_x(s)\delta X_s\1_{[\tau,\tau+\ep]}(s)\,\diff W_s,\ \ t\in[0,T].
\end{align*}
By \cref{control_assum_coefficient}, we have $|\tilde{\varphi}_x(s)-\varphi_x(s,u_s,\hat{X}_s)|\leq\frac{\kappa}{2}|\delta X_s|$ for $\varphi\in\{b,\sigma\}$. From this and \eqref{Taylor_eq_var-coefficient-state}, together with the estimate \eqref{Taylor_eq_var-delta} for $\delta X$, by using \cref{pre_lemm_convolution}, we see that $\delta\xi^1\in C^p_\bF(0,T;\bR^n)$ for any $p\in[2,\infty)$ and
\begin{align*}
	\|\delta\xi^1\|_{C^p_\bF(0,T)}&\leq C_p\big\{\|K_b\|_{1,\ep}+\|K_\sigma\|_{2,\ep}\big\}^2+C_p\|K_b\|_{1,\ep}\big\{\|K_b\|_{1,\ep}+\|K_\sigma\|_{2,\ep}\big\}\\
	&\hspace{0.5cm}+C_p\big\{\|K_b\|_{1,\ep}+\|K_\sigma\|_{2,\ep}\big\}^2+C_p\|K_\sigma\|_{2,\ep}\big\{\|K_b\|_{1,\ep}+\|K_\sigma\|_{2,\ep}\big\}\\
	&\leq C_p\big\{\|K_b\|_{1,\ep}+\|K_\sigma\|_{2,\ep}\big\}^2.
\end{align*}
Then, by \cref{pre_lemm_SVE}, we get the estimate \eqref{Taylor_eq_var-delta1}.

\underline{The estimate \eqref{Taylor_eq_var-2} for $X^2=X^{2,v,\tau,\ep}$.}
Notice that the second order variational SVE \eqref{Taylor_eq_vareq-SVE2} for $X^2$ can be written as
\begin{equation*}
	X^2_t=\xi^2_t+\int^t_0K_b(t-s)\hat{b}_x(s)X^2_s\,\diff s+\int^t_0K_\sigma(t-s)\hat{\sigma}_x(s)X^2_s\,\diff W_s,\ \ t\in[0,T],
\end{equation*}
where
\begin{align*}
	\xi^2_t&:=\frac{1}{2}\int^t_0K_b(t-s)\big\langle\hat{b}_{xx}(s)X^1_s,X^1_s\big\rangle\,\diff s+\int^t_0K_b(t-s)\delta b_x(s)X^1_s\1_{[\tau,\tau+\ep]}(s)\,\diff s\\
	&\hspace{0.5cm}+\frac{1}{2}\int^t_0K_\sigma(t-s)\big\langle\hat{\sigma}_{xx}(s)X^1_s,X^1_s\big\rangle\,\diff W_s+\int^t_0K_\sigma(t-s)\delta\sigma_x(s)X^1_s\1_{[\tau,\tau+\ep]}(s)\,\diff W_s,\ \ t\in[0,T].
\end{align*}
From \eqref{Taylor_eq_var-coefficient-state}, together with the estimate \eqref{Taylor_eq_var-1} for $X^1$, by using \cref{pre_lemm_convolution}, we see that $\xi^2\in C^p_\bF(0,T;\bR^n)$ for any $p\in[2,\infty)$ and
\begin{align*}
	\|\xi^2\|_{C^p_\bF(0,T)}&\leq C_p\big\{\|K_b\|_{1,\ep}+\|K_\sigma\|_{2,\ep}\big\}^2+C_p\|K_b\|_{1,\ep}\big\{\|K_b\|_{1,\ep}+\|K_\sigma\|_{2,\ep}\big\}\\
	&\hspace{0.5cm}+C_p\big\{\|K_b\|_{1,\ep}+\|K_\sigma\|_{2,\ep}\big\}^2+C_p\|K_\sigma\|_{2,\ep}\big\{\|K_b\|_{1,\ep}+\|K_\sigma\|_{2,\ep}\big\}\\
	&\leq C_p\big\{\|K_b\|_{1,\ep}+\|K_\sigma\|_{2,\ep}\big\}^2.
\end{align*}
Then, by \cref{pre_lemm_SVE}, we get the estimate \eqref{Taylor_eq_var-2}.

\underline{The estimate \eqref{Taylor_eq_var-delta12} for $\delta X^{1,2}=\delta X^{1,2,v,\tau,\ep}$.}
By the expansion \eqref{Taylor_eq_Taylor2} and the variational SVE \eqref{Taylor_eq_vareq-SVE12} for $X^{1,2}=X^1+X^2$, we see that $\delta X^{1,2}$ solves the SVE
\begin{equation*}
	\delta X^{1,2}_t=\delta\xi^{1,2}_t+\int^t_0K_b(t-s)\hat{b}_x(s)\delta X^{1,2}_s\,\diff s+\int^t_0K_\sigma(t-s)\hat{\sigma}_x(s)\delta X^{1,2}_s\,\diff W_s,\ \ t\in[0,T],
\end{equation*}
where
\begin{align*}
	\delta\xi^{1,2}_t&:=\frac{1}{2}\int^t_0K_b(t-s)\big\langle\hat{b}_{xx}(s)\delta X^1_s,\delta X_s+X^1_s\big\rangle\,\diff s\\
	&\hspace{0.5cm}+\frac{1}{2}\int^t_0K_b(t-s)\Big\langle\Big(\tilde{b}_{xx}(s)-b_{xx}(s,u_s,\hat{X}_s)\Big)\delta X_s,\delta X_s\Big\rangle\,\diff s\\
	&\hspace{0.5cm}+\int^t_0K_b(t-s)\delta b_x(s)\delta X^1_s\1_{[\tau,\tau+\ep]}(s)\,\diff s\\
	&\hspace{0.5cm}+\frac{1}{2}\int^t_0K_b(t-s)\big\langle\delta b_{xx}(s)\delta X_s,\delta X_s\big\rangle\1_{[\tau,\tau+\ep]}(s)\,\diff s\\
	&\hspace{0.5cm}+\frac{1}{2}\int^t_0K_\sigma(t-s)\big\langle\hat{\sigma}_{xx}(s)\delta X^1_s,\delta X_s+X^1_s\big\rangle\,\diff W_s\\
	&\hspace{0.5cm}+\frac{1}{2}\int^t_0K_\sigma(t-s)\Big\langle\Big(\tilde{\sigma}_{xx}(s)-\sigma_{xx}(s,u_s,\hat{X}_s)\Big)\delta X_s,\delta X_s\Big\rangle\,\diff W_s\\
	&\hspace{0.5cm}+\int^t_0K_\sigma(t-s)\delta\sigma_x(s)\delta X^1_s\1_{[\tau,\tau+\ep]}(s)\,\diff W_s\\
	&\hspace{0.5cm}+\frac{1}{2}\int^t_0K_\sigma(t-s)\big\langle\delta\sigma_{xx}(s)\delta X_s,\delta X_s\big\rangle\1_{[\tau,\tau+\ep]}(s)\,\diff W_s\\
	&=:I_1(t)+I_2(t)+I_3(t)+I_4(t)+I_5(t)+I_6(t)+I_7(t)+I_8(t),\ \ t\in[0,T].
\end{align*}
By \cref{control_assum_coefficient} and $R(s,u_s)=R^u_s\leq R_s\vee\hat{R}_s$ with $R=R^v$ and $\hat{R}=R^{\hat{u}}$, we have
\begin{equation*}
	|\tilde{\varphi}_{xx}(s)-\varphi_{xx}(s,u_s,\hat{X}_s)|\leq\frac{1}{3}\big(R_s\vee\hat{R}_s\big)\big(1\vee|X_s|\vee|\hat{X}_s|\big)^\gamma|\delta X_s|\ \ \text{for $\varphi\in\{b,\sigma\}$.}
\end{equation*}
Noting \eqref{Taylor_eq_var-coefficient-state} and \eqref{Taylor_eq_estimate-X}, by using \cref{pre_lemm_convolution} and H\"{o}lder's inequality, together with the estimates \eqref{Taylor_eq_var-delta}, \eqref{Taylor_eq_var-1} and \eqref{Taylor_eq_var-delta1} for $\delta X$, $X^1$ and $\delta X^1$, we see that $I_k\in C^p_\bF(0,T;\bR^n)$ for any $p\in[2,\infty)$ and $k\in\{1,\dots,8\}$, and
\begin{equation*}
	\|I_k\|_{C^p_\bF(0,T)}\leq
	\begin{dcases}
	C_p\big\{\|K_b\|_{1,\ep}+\|K_\sigma\|_{2,\ep}\big\}^3,\ \ k\in\{1,2,5,6\},\\
	C_p\|K_b\|_{1,\ep}\big\{\|K_b\|_{1,\ep}+\|K_\sigma\|_{2,\ep}\big\}^2,\ \ k\in\{3,4\},\\
	C_p\|K_\sigma\|_{2,\ep}\big\{\|K_b\|_{1,\ep}+\|K_\sigma\|_{2,\ep}\big\}^2,\ \ k\in\{7,8\}.
	\end{dcases}
\end{equation*}
Hence, we see that $\delta\xi^{1,2}\in C^p_\bF(0,T;\bR^n)$ and
\begin{equation*}
	\|\delta\xi^{1,2}\|_{C^p_\bF(0,T)}\leq C_p\big\{\|K_b\|_{1,\ep}+\|K_\sigma\|_{2,\ep}\big\}^3.
\end{equation*}
Then, by \cref{pre_lemm_SVE}, we get the estimate \eqref{Taylor_eq_var-delta12}.

\underline{The estimate \eqref{Taylor_eq_var-cost} for $\delta J^{1,2}=\delta J^{1,2,v,\tau,\ep}$.}
By using Taylor expansions for $h$ and $f$ similar to \eqref{Taylor_eq_Taylor2}, we have
\begin{align*}
	J(u)-J(\hat{u})&=\bE\left[\hat{h}_x\delta X_T+\frac{1}{2}\big\langle\hat{h}_{xx}\delta X_T,\delta X_T\big\rangle+\frac{1}{2}\Big\langle\Big(\tilde{h}_{xx}-\hat{h}_{xx}\Big)\delta X_T,\delta X_T\Big\rangle\right]\\
	&\hspace{0.5cm}+\bE\left[\int^T_0\Big\{\hat{f}_x(s)\delta X_s+\frac{1}{2}\big\langle\hat{f}_{xx}(s)\delta X_s,\delta X_s\big\rangle+\frac{1}{2}\Big\langle\Big(\tilde{f}_{xx}(s)-f_{xx}(s,u_s,\hat{X}_s)\Big)\delta X_s,\delta X_s\Big\rangle\Big\}\,\diff s\right]\\
	&\hspace{0.5cm}+\bE\left[\int^{\tau+\ep}_\tau\Big\{\delta f(s)+\delta f_x(s)\delta X_s+\frac{1}{2}\big\langle\delta f_{xx}(s)\delta X_s,\delta X_s\big\rangle\Big\}\,\diff s\right],\\
%	&=\bE\left[\hat{h}_xX^{1,2}_T+\frac{1}{2}\big\langle\hat{h}_{xx}X^1_T,X^1_T\big\rangle\right]+\bE\left[\int^T_0\Big\{\hat{f}_x(s)X^{1,2}_s+\frac{1}{2}\big\langle\hat{f}_{xx}(s)X^1_s,X^1_s\big\rangle\Big\}\,\diff s\right]+\bE\left[\int^{\tau+\ep}_\tau\delta f(s)\,\diff s\right]\\
%	&\hspace{0.5cm}+\bE\left[\hat{h}_x\delta X^{1,2}_T+\frac{1}{2}\big\langle\hat{h}_{xx}\delta X^1_T,\delta X_T+X^1_T\big\rangle+\frac{1}{2}\Big\langle\Big(\tilde{h}_{xx}-h_{xx}\Big)\delta X_T,\delta X_T\Big\rangle\right]\\
%	&\hspace{0.5cm}+\bE\left[\int^T_0\Big\{\hat{f}_x(s)\delta X^{1,2}_s+\frac{1}{2}\big\langle\hat{f}_{xx}(s)\delta X^1_s,\delta X_s+X^1_s\big\rangle+\frac{1}{2}\Big\langle\Big(\tilde{f}_{xx}(s)-f_{xx}(s,u_s,\hat{X}_s)\Big)\delta X_s,\delta X_s\Big\rangle\Big\}\,\diff s\right]\\
%	&\hspace{0.5cm}+\bE\left[\int^{\tau+\ep}_\tau\Big\{\delta f_x(s)\delta X_s+\frac{1}{2}\big\langle\delta f_{xx}(s)\delta X_s,\delta X_s\big\rangle\Big\}\,\diff s\right],
\end{align*}
where
\begin{equation*}
	\tilde{h}_{xx}:=2\int^1_0rh_{xx}(r\hat{X}_T+(1-r)X_T)\,\diff r,\ \ \tilde{f}_{xx}(s):=2\int^1_0rf_{xx}(s,u_s,r\hat{X}_s+(1-r)X_s)\,\diff r.
\end{equation*}
Hence, by the definition \eqref{Taylor_eq_vareq-cost} of $J^{1,2}=J^{1,2,v,\tau,\ep}$, the term $\delta J^{1,2}=J(u)-J(\hat{u})-J^{1,2}$ is expressed as
\begin{align*}
	\delta J^{1,2}&=\bE\left[\hat{h}_x\delta X^{1,2}_T+\frac{1}{2}\big\langle\hat{h}_{xx}\delta X^1_T,\delta X_T+X^1_T\big\rangle+\frac{1}{2}\Big\langle\Big(\tilde{h}_{xx}-\hat{h}_{xx}\Big)\delta X_T,\delta X_T\Big\rangle\right]\\
	&\hspace{0.5cm}+\bE\left[\int^T_0\Big\{\hat{f}_x(s)\delta X^{1,2}_s+\frac{1}{2}\big\langle\hat{f}_{xx}(s)\delta X^1_s,\delta X_s+X^1_s\big\rangle+\frac{1}{2}\Big\langle\Big(\tilde{f}_{xx}(s)-f_{xx}(s,u_s,\hat{X}_s)\Big)\delta X_s,\delta X_s\Big\rangle\Big\}\,\diff s\right]\\
	&\hspace{0.5cm}+\bE\left[\int^{\tau+\ep}_\tau\Big\{\delta f_x(s)\delta X_s+\frac{1}{2}\big\langle\delta f_{xx}(s)\delta X_s,\delta X_s\big\rangle\Big\}\,\diff s\right].
\end{align*}
By \cref{control_assum_coefficient} and $R(s,u_s)=R^u_s\leq R_s\vee\hat{R}_s$ with $R=R^v$ and $\hat{R}=R^{\hat{u}}$, we have
\begin{equation*}
	|\tilde{f}_{xx}(s)-f_{xx}(s,u_s,\hat{X}_s)|\leq\frac{1}{3}\big(R_s\vee\hat{R}_s\big)\big(1\vee|X_s|\vee|\hat{X}_s|\big)^\gamma|\delta X_s|\ \ \text{and}\ \ |\tilde{h}_{xx}-\hat{h}_{xx}|\leq\frac{\kappa}{3}(1\vee|X_T|\vee|\hat{X}_T|)^\gamma|\delta X_T|.
\end{equation*}
Noting \eqref{Taylor_eq_var-coefficient-cost} and \eqref{Taylor_eq_estimate-X}, by using H\"{o}lder's inequality and the estimates \eqref{Taylor_eq_var-delta}, \eqref{Taylor_eq_var-1}, \eqref{Taylor_eq_var-delta1} and \eqref{Taylor_eq_var-delta12} for $\delta X$, $X^1$, $\delta X^1$ and $\delta X^{1,2}$, we obtain \eqref{Taylor_eq_var-cost}. Finally, noting \cref{intro_rem_K-ep}, if $K_b\in L^{3/2}(0,T;\bR^{n\times n})$ and $K_\sigma\in L^6(0,T;\bR^{n\times n})$, we see that
\begin{equation*}
	\|K_b\|^3_{1,\ep}\leq\|K_b\|^3_{3/2,\ep}\ep,\ \ \|K_\sigma\|^3_{2,\ep}\leq\|K_\sigma\|^3_{6,\ep}\ep,
\end{equation*}
and
\begin{equation*}
	\lim_{\ep\downarrow0}\|K_b\|_{1,\ep}=\lim_{\ep\downarrow0}\|K_b\|_{3/2,\ep}=\lim_{\ep\downarrow0}\|K_\sigma\|_{2,\ep}=\lim_{\ep\downarrow0}\|K_\sigma\|_{6,\ep}=0.
\end{equation*}
From the above estimates and \eqref{Taylor_eq_var-cost}, we get \eqref{Taylor_eq_var-cost'}. This completes the proof.
\end{proof}

%% Remark

\begin{rem}
Consider for example the fractional kernels $K_b(t)=\frac{1}{\Gamma(\beta_b)}t^{\beta_b-1}I_{n\times n}$ and $K_\sigma(t)=\frac{1}{\Gamma(\beta_\sigma)}t^{\beta_\sigma-1}$ with $\beta_b\in(0,1)$ and $\beta_\sigma\in(\frac{1}{2},1)$. For these kernels, the conditions $K_b\in L^{3/2}(0,T;\bR^{n\times n})$ and $K_\sigma\in L^6(0,T;\bR^{n\times n})$ hold if and only if $\beta_b>\frac{1}{3}$ and $\beta_\sigma>\frac{5}{6}$.
\end{rem}

%%%%%%%%%%%%%%%%%%%%%%%%%%%%%%%%%%
%%%%%%%%%%%%%%%%%%%%%%%%%%%%%%%%%%
\section{Adjoint equations and global maximum principle}\label{MP}
%%%%%%%%%%%%%%%%%%%%%%%%%%%%%%%%%%
%%%%%%%%%%%%%%%%%%%%%%%%%%%%%%%%%%

By the variational argument (\cref{Taylor_prop_SVE-expansion}), under suitable integrability conditions on the kernels, we immediately obtain a first order necessary condition for a fixed control $\hat{u}\in\cU$ to be optimal, namely,
\begin{equation*}
	\liminf_{\ep\downarrow0}\frac{1}{\ep}J^{1,2,v,\tau,\ep}\geq0
\end{equation*}
for any $v\in\cU$ and $\tau\in[0,T)$, where $J^{1,2,v,\tau,\ep}$ is defined by \eqref{Taylor_eq_vareq-cost}. However, the above is still an implicit necessary condition that is not easily applicable, since the term $J^{1,2,v,\tau,\ep}$ involves the solutions $X^{1,v,\tau,\ep}$ and $X^{1,2,v,\tau,\ep}$ to the variational SVEs \eqref{Taylor_eq_vareq-SVE1} and \eqref{Taylor_eq_vareq-SVE12}, respectively, which depend on the choices of $v,\tau,\ep$. Our aim is to derive first and second order adjoint equations, which enable us to get rid of the terms $X^{1,v,\tau,\ep}$ and $X^{1,2,v,\tau,\ep}$ from $J^{1,2,v,\tau,\ep}$ through a kind of duality relation. To do so, we need to treat the quadratic functional of $(X^{1,v,\tau,\ep},X^{1,2,v,\tau,\ep})$ appearing in the expression \eqref{Taylor_eq_vareq-cost} of $J^{1,2,v,\tau,\ep}$. One of the main difficulty here is that the solutions $X^{1,v,\tau,\ep}$, $X^{1,2,v,\tau,\ep}$ of variational SVEs \eqref{Taylor_eq_vareq-SVE1} and \eqref{Taylor_eq_vareq-SVE12} with singular kernels are no longer semimartingales. The lack of the semimartingale property prevents us to use It\^{o}'s formula and the standard BSDE technique, which have been used in the study of stochastic control problems for SDEs.

In order to overcome the difficulty mentioned above, we utilize (and slightly generalize) the infinite dimensional framework proposed in \cite{Ha24}. We impose the following structure condition on the kernels.

%% Assumption

\begin{assum}\label{MP_assum_kernel}
The kernels $K_b,K_\sigma:(0,T)\to\bR^{n\times n}$ are of the following forms:
\begin{equation}\label{MP_eq_kernel}
	K_b(t)=\int_{\bR_+}e^{-\theta t}M_b(\theta)\dmu\ \ \text{and}\ \ K_\sigma(t)=\int_{\bR_+}e^{-\theta t}M_\sigma(\theta)\dmu,\ \ t\in(0,T),
\end{equation}
for some Borel measure $\mu$ on $\bR_+$ and matrix-valued Borel measurable maps $M_b,M_\sigma:\bR_+\to\bR^{n\times n}$ such that
\begin{equation}\label{MP_eq_mu-integrable}
	\int_{\bR_+}r(\theta)\dmu<\infty
\end{equation}
and
\begin{equation}\label{MP_eq_M-integrable}
	\int_{\bR_+}(1+\theta)^{-\alpha}r(\theta)|M_b(\theta)|^2\dmu<\infty,\ \ \int_{\bR_+}(1+\theta)^{1-\alpha}r(\theta)|M_\sigma(\theta)|^2\dmu<\infty
\end{equation}
for some $\alpha\in[0,1)$, where $r(\theta):=1\wedge(\theta^{-1/2})$.
\end{assum}

In the following, when we impose \cref{MP_assum_kernel} on the kernels $K_b$ and $K_\sigma$, we fix $\mu$, $M_b$, $M_\sigma$ and $\alpha$ appearing therein and use the notation $r(\theta):=1\wedge(\theta^{-1/2})$. The class of the kernels satisfying \cref{MP_assum_kernel} is characterized by the following lemma.

%% Lemma

\begin{lemm}\label{MP_lemm_kernel}
Let $\alpha\in[0,1)$. The kernels $K_b,K_\sigma:(0,T)\to\bR^{n\times n}$ satisfy \cref{MP_assum_kernel} if and only if they are in form of
\begin{equation}\label{MP_eq_CM}
	K_b(t)=\big(K^{i,j,+}_b(t)-K^{i,j,-}_b(t)\big)^n_{i,j=1}\ \ \text{and}\ \ K_\sigma(t)=\big(K^{i,j,+}_\sigma(t)-K^{i,j,-}_\sigma(t)\big)^n_{i,j=1},\ \ t\in(0,T),
\end{equation}
for some completely monotone functions\footnote{A (scalar) function $k:(0,\infty)\to\bR_+$ is said to be completely monotone if it is infinitely differentiable and such that $(-1)^m\frac{\diff^m}{\diff t^m}k(t)\geq0$ for any $t\in(0,\infty)$ and any nonnegative integer $m$.} $K^{i,j,\pm}_b,K^{i,j,\pm}_\sigma:(0,\infty)\to\bR_+$ such that
\begin{equation}\label{MP_eq_CM-integrability}
\begin{split}
	&\int^T_0t^{-\frac{1}{2}}K^{i,j,\pm}_b(t)\,\diff t<\infty\ \ \text{and}\ \ \lim_{t\downarrow0}K^{i,j,\pm}_\sigma(t)<\infty\ \ \text{for any $i,j\in\{1,\dots,n\}$ if $\alpha=0$, and}\\
	&\int^T_0t^{\frac{\alpha-1}{2}}K^{i,j,\pm}_b(t)\,\diff t<\infty\ \ \text{and}\ \ \int^T_0t^{\frac{\alpha}{2}-1}K^{i,j,\pm}_\sigma(t)\,\diff t<\infty\ \ \text{for any $i,j\in\{1,\dots,n\}$ if $\alpha\in(0,1)$.}
\end{split}
\end{equation}
Furthermore, under \cref{MP_assum_kernel}, $K_b\in L^{\frac{2}{1+\alpha}}(0,T;\bR^{n\times n})$ and $K_\sigma\in L^{\frac{2}{\alpha}}(0,T;\bR^{n\times n})$.
\end{lemm}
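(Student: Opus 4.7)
The plan is to use Bernstein's theorem, which identifies completely monotone functions on $(0,\infty)$ with Laplace transforms of Radon measures on $\bR_+$, thereby giving a direct correspondence between \eqref{MP_eq_kernel} and \eqref{MP_eq_CM}. The integrability conditions will be transferred via Fubini and Cauchy--Schwarz, where the bookkeeping repeatedly uses the elementary pointwise inequality $(1+\theta)^{-1}r(\theta)^{-1}\leq Cr(\theta)$ to collapse each residual integral back to $\int r\dmu<\infty$.

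For the ``only if'' direction, I decompose $M_b^{i,j}=M_b^{i,j,+}-M_b^{i,j,-}$ and $M_\sigma^{i,j}=M_\sigma^{i,j,+}-M_\sigma^{i,j,-}$ into positive and negative parts, and define $K_b^{i,j,\pm}(t):=\int_{\bR_+}e^{-\theta t}M_b^{i,j,\pm}(\theta)\dmu$, and analogously for $K_\sigma^{i,j,\pm}$. Each is completely monotone (a Laplace transform of a non-negative measure), yielding \eqref{MP_eq_CM}. To verify \eqref{MP_eq_CM-integrability}, Fubini expresses $\int_0^T t^{(\alpha-1)/2}K_b^{i,j,\pm}(t)\,dt=\int_{\bR_+}g_b(\theta)M_b^{i,j,\pm}(\theta)\dmu$ with $g_b(\theta):=\int_0^T t^{(\alpha-1)/2}e^{-\theta t}\,dt$; the substitution $s=\theta t$ yields $g_b(\theta)\leq C(1+\theta)^{-(\alpha+1)/2}$, and analogously $g_\sigma(\theta)\leq C(1+\theta)^{-\alpha/2}$. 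Cauchy--Schwarz against the weights prescribed in \eqref{MP_eq_M-integrable} leaves the residual $\int(1+\theta)^{-1}r(\theta)^{-1}\dmu\leq C\int r\dmu<\infty$.

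For the ``if'' direction, Bernstein's theorem yields Radon measures $\nu_b^{i,j,\pm},\nu_\sigma^{i,j,\pm}$ with $K_b^{i,j,\pm}(t)=\int_{\bR_+}e^{-\theta t}\,d\nu_b^{i,j,\pm}$, and Fubini converts \eqref{MP_eq_CM-integrability} into $\int(1+\theta)^{-(\alpha+1)/2}\,d\nu_b^{i,j,\pm}<\infty$ and $\int(1+\theta)^{-\alpha/2}\,d\nu_\sigma^{i,j,\pm}<\infty$. The main obstacle is the construction of a \emph{single} common $\mu$ together with maps $M_b,M_\sigma$ such that \emph{all three} of \eqref{MP_eq_mu-integrable}--\eqref{MP_eq_M-integrable} hold simultaneously; the unweighted sum $\sum\nu$ already fails at $\int r\dmu<\infty$ when $\alpha>0$. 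My proposal is
\[
\mu:=\sum_{i,j,\pm}\bigl[(1+\theta)^{-\alpha/2}\,\nu_b^{i,j,\pm}(d\theta)+(1+\theta)^{(1-\alpha)/2}\,\nu_\sigma^{i,j,\pm}(d\theta)\bigr],
\]
with $M_b^{i,j}:=d(\nu_b^{i,j,+}-\nu_b^{i,j,-})/d\mu$ and $M_\sigma^{i,j}:=d(\nu_\sigma^{i,j,+}-\nu_\sigma^{i,j,-})/d\mu$. Since $\mu$ dominates each weighted component, the bounds $d\nu_b^{i,j,\pm}/d\mu\leq(1+\theta)^{\alpha/2}$ and $d\nu_\sigma^{i,j,\pm}/d\mu\leq(1+\theta)^{(\alpha-1)/2}$ hold $\mu$-a.e.; applying the identity $\int w\,(d\nu/d\mu)^2\,d\mu=\int w\,(d\nu/d\mu)\,d\nu$ collapses the $M$-integrability in \eqref{MP_eq_M-integrable} back to the known $\nu$-integrability, while \eqref{MP_eq_mu-integrable} follows from the pointwise estimates $r(\theta)(1+\theta)^{-\alpha/2}\leq C(1+\theta)^{-(\alpha+1)/2}$ and $r(\theta)(1+\theta)^{(1-\alpha)/2}\leq C(1+\theta)^{-\alpha/2}$.

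For the final $L^p$ claims, Minkowski's integral inequality gives $\|K\|_{L^p(0,T)}\leq\int_{\bR_+}|M(\theta)|\,\|e^{-\theta\cdot}\|_{L^p(0,T)}\dmu$ together with the direct computation $\|e^{-\theta\cdot}\|_{L^p(0,T)}\leq C\,r(\theta)^{2/p}$. Taking $p=2/(1+\alpha)$ for $K_b$ and $p=2/\alpha$ for $K_\sigma$ (the $\alpha=0$ case handled by the direct bound $|K_\sigma(t)|\leq\int|M_\sigma|\dmu$) and applying Cauchy--Schwarz against the same weights as in \eqref{MP_eq_M-integrable} leaves the residuals $\int(1+\theta)^\alpha r(\theta)^{1+2\alpha}\dmu$ and $\int(1+\theta)^{\alpha-1}r(\theta)^{2\alpha-1}\dmu$, both dominated by $C\int r\dmu<\infty$ via direct pointwise comparison with $r(\theta)$; these exponents are precisely the ones for which this bookkeeping closes, which explains the particular $L^p$ indices appearing in the statement.
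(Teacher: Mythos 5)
Your proof is correct and follows essentially the same structural route as the paper's: the ``only if'' direction decomposes $M_b,M_\sigma$ into $\mu$-densities of signed measures and verifies \eqref{MP_eq_CM-integrability} via Cauchy--Schwarz against the weights in \eqref{MP_eq_M-integrable}, while the ``if'' direction constructs exactly the same $\mu$ (the $(1+\theta)^{-\alpha/2}$-weighted sum of the $\nu_b$'s plus the $(1+\theta)^{(1-\alpha)/2}$-weighted sum of the $\nu_\sigma$'s) and the same Radon--Nikodym derivatives for $M_b,M_\sigma$, with the same bounds $\mathrm{d}\nu_b^{i,j,\pm}/\mathrm{d}\mu\leq(1+\theta)^{\alpha/2}$, $\mathrm{d}\nu_\sigma^{i,j,\pm}/\mathrm{d}\mu\leq(1+\theta)^{-(1-\alpha)/2}$.

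The one genuine difference is that you make the argument self-contained where the paper outsources. The paper routes through an intermediate condition (its \eqref{MP_eq_mu-integrability'}, asserting $\int(1\wedge\theta^{-(1+\alpha)/2})\,\mu^{i,j,\pm}_b(\diff\theta)<\infty$ and $\int(1\wedge\theta^{-\alpha/2})\,\mu^{i,j,\pm}_\sigma(\diff\theta)<\infty$) and then cites Bernstein's theorem together with \cite[Lemma 2.1]{Ha24} for both the equivalence of that intermediate condition with \eqref{MP_eq_CM-integrability} and for the $L^p$ membership of the kernels. You instead prove the Laplace-transform equivalence directly (Fubini plus the substitution $s=\theta t$ yielding $g_b(\theta)\asymp(1+\theta)^{-(\alpha+1)/2}$, $g_\sigma(\theta)\asymp(1+\theta)^{-\alpha/2}$), and you prove the $L^p$ claim directly via Minkowski's integral inequality and the elementary bound $\|e^{-\theta\cdot}\|_{L^p(0,T)}\leq C(1\wedge\theta^{-1/p})=C\,r(\theta)^{2/p}$, followed by Cauchy--Schwarz. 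This makes the proof leaner to read without the external reference, at the cost of a few more explicit estimates. One small gap in exposition: for the ``if'' direction you invoke Fubini to go from $\int_0^T t^{(\alpha-1)/2}K_b^{i,j,\pm}(t)\,\diff t<\infty$ to $\int(1+\theta)^{-(\alpha+1)/2}\,\diff\nu_b^{i,j,\pm}<\infty$, which requires a matching \emph{lower} bound $g_b(\theta)\geq c(1+\theta)^{-(\alpha+1)/2}$ (and $g_\sigma(\theta)\geq c(1+\theta)^{-\alpha/2}$), not only the upper bound you state in the ``only if'' direction. The lower bound is easy (restrict to $t\leq 1/\theta$ for $\theta\geq1$ and to the constant contribution for $\theta<1$), but it should be written down so that the Fubini argument actually yields the claimed equivalence.
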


%% Remark

\begin{rem}
Notice that, by H\"{o}lder's inequality, \eqref{MP_eq_CM-integrability} holds if $K^{i,j,\pm}_b\in\bigcup_{\delta>0}L^{\frac{2}{1+\alpha}+\delta}(0,T;\bR)$ and $K^{i,j,\pm}_\sigma\in\bigcup_{\delta>0}L^{\frac{2}{\alpha}+\delta}(0,T;\bR)$ for any $i,j\in\{1,\dots,n\}$.
\end{rem}

%% Proof

\begin{proof}[Proof of \cref{MP_lemm_kernel}]
Suppose that $K_b$ and $K_\sigma$ satisfy \cref{MP_assum_kernel}. For each $i,j\in\{1,\dots,n\}$, define
\begin{equation}\label{MP_eq_CM'}
	K^{i,j,\pm}_b(t)=\int_{\bR_+}e^{-\theta t}\,\mu^{i,j,\pm}_b(\diff\theta),\ \ K^{i,j,\pm}_\sigma(t)=\int_{\bR_+}e^{-\theta t}\,\mu^{i,j,\pm}_\sigma(\diff\theta),\ \ t\in(0,\infty),
\end{equation}
where, for $\varphi\in\{b,\sigma\}$, $\mu^{i,j,+}_\varphi$ and $\mu^{i,j,-}_\varphi$ are Borel measures on $\bR_+$ defined by
\begin{equation*}
	\mu^{i,j,+}_\varphi(\diff\theta):=\max\{M_\varphi(\theta)^{i,j},0\}\,\mu(\diff\theta)\ \ \text{and}\ \ \mu^{i,j,-}_\varphi(\diff\theta):=\max\{-M_\varphi(\theta)^{i,j},0\}\,\mu(\diff\theta).
\end{equation*}
Clearly, the expressions in \eqref{MP_eq_CM} hold. From \eqref{MP_eq_mu-integrable} and \eqref{MP_eq_M-integrable}, using the Cauchy--Schwarz inequality, we can show that
\begin{equation}\label{MP_eq_mu-integrability'}
	\int_{\bR_+}\big(1\wedge(\theta^{-\frac{1+\alpha}{2}})\big)\,\mu^{i,j,\pm}_b(\diff\theta)<\infty\ \ \text{and}\ \ \int_{\bR_+}\big(1\wedge(\theta^{-\frac{\alpha}{2}})\big)\,\mu^{i,j,\pm}_\sigma(\diff\theta)<\infty\ \ \text{for any $i,j\in\{1,\dots,n\}$.}
\end{equation}
By Bernstein's theorem (cf.\ \cite[Theorem 1.4]{ScSoVo12}), we see that $K^{i,j,\pm}_b$ and $K^{i,j,\pm}_\sigma$ are completely monotone functions. Furthermore, \cite[Lemma 2.1]{Ha24} shows that \eqref{MP_eq_mu-integrability'} is equivalent to \eqref{MP_eq_CM-integrability} and implies that $K^{i,j,\pm}_b\in L^{\frac{2}{1+\alpha}}(0,T;\bR)$ and $K^{i,j,\pm}_\sigma\in L^{\frac{2}{\alpha}}(0,T;\bR)$ for each $i,j\in\{1,\dots,n\}$. The latter yields that $K_b\in L^{\frac{2}{1+\alpha}}(0,T;\bR^{n\times n})$ and $K_\sigma\in L^{\frac{2}{\alpha}}(0,T;\bR^{n\times n})$.

Conversely, assume that $K_b$ and $K_\sigma$ are in form of \eqref{MP_eq_CM} for some completely monotone functions $K^{i,j,\pm}_b,K^{i,j,\pm}_\sigma:(0,\infty)\to\bR_+$ satisfying \eqref{MP_eq_CM-integrability}. Again by Bernstein's theorem (cf.\ \cite[Theorem 1.4]{ScSoVo12}) and \cite[Lemma 2.1]{Ha24}, for each $i,j\in\{1,\dots,n\}$, there exist (unique) Borel measures $\mu^{i,j,\pm}_b$ and $\mu^{i,j,\pm}_\sigma$ on $\bR_+$ satisfying \eqref{MP_eq_CM'} and \eqref{MP_eq_mu-integrability'}. Define a Borel measure $\mu$ on $\bR_+$ by
\begin{equation*}
	\mu(\diff\theta):=\big(1+\theta\big)^{-\frac{\alpha}{2}}\sum^n_{i,j=1}\big\{\mu^{i,j,+}_b(\diff\theta)+\mu^{i,j,-}_b(\diff\theta)\big\}+(1+\theta)^{\frac{1-\alpha}{2}}\sum^n_{i,j=1}\big\{\mu^{i,j,+}_\sigma(\diff\theta)+\mu^{i,j,-}_\sigma(\diff\theta)\big\}.
\end{equation*}
Notice that $\mu^{i,j,\pm}_b$ and $\mu^{i,j,\pm}_\sigma$ are absolutely continuous with respect to $\mu$. Furthermore, by \eqref{MP_eq_mu-integrability'}, we see that $\mu$ satisfies the integrability condition \eqref{MP_eq_mu-integrable}. Define matrix-valued Borel measurable maps $M_b,M_\sigma:\bR_+\to\bR^{n\times n}$ by
\begin{equation*}
	M_b:=\left(\frac{\diff\mu^{i,j,+}_b}{\diff\mu}-\frac{\diff\mu^{i,j,-}_b}{\diff\mu}\right)^n_{i,j=1},\ \ M_\sigma:=\left(\frac{\diff\mu^{i,j,+}_\sigma}{\diff\mu}-\frac{\diff\mu^{i,j,-}_\sigma}{\diff\mu}\right)^n_{i,j=1}.
\end{equation*}
Noting that
\begin{equation*}
	0\leq\frac{\diff\mu^{i,j,\pm}_b}{\diff\mu}(\theta)\leq\big(1+\theta)^{\frac{\alpha}{2}}\ \ \text{and}\ \ 0\leq\frac{\diff\mu^{i,j,\pm}_\sigma}{\diff\mu}(\theta)\leq\big(1+\theta)^{-\frac{1-\alpha}{2}}\ \ \text{for $\mu$-a.e.\ $\theta\in\bR_+$ for any $i,j\in\{1,\dots,n\}$,}
\end{equation*}
we see that $M_b$ and $M_\sigma$ satisfy the integrability condition \eqref{MP_eq_M-integrable}. By \eqref{MP_eq_CM} and \eqref{MP_eq_CM'}, we see that the representations in \eqref{MP_eq_kernel} hold. This completes the proof.
\end{proof}

%% Remark

\begin{rem}
\begin{itemize}
\item
From \eqref{MP_eq_CM-integrability}, the number $\alpha\in[0,1)$ is regarded as the degree of singularity of the kernels $K_b$ and $K_\sigma$ at zero. Namely, the larger the number $\alpha$ is, the more singular the kernels $K_b$ and $K_\sigma$ can be. When $\alpha=0$, the kernel $K_\sigma$ appearing in the stochastic integral term becomes regular in the sense that $\lim_{t\downarrow0}|K_\sigma(t)|<\infty$, while the kernel $K_b$ appearing in the Lebesgue integral term is still allowed to be singular and belongs to $L^2(0,T;\bR^{n\times n})$. The number $\alpha$ can also be seen as the degree of the ``heavy tail'' (that is, the weakness of the integrability at $\infty$) of the vector measures $M_b(\theta)\dmu$ and $M_\sigma(\theta)\dmu$ in the representation \eqref{MP_eq_kernel}.
\item
From the last assertion of \cref{MP_lemm_kernel} and \cref{intro_rem_K-ep}, under \cref{MP_assum_kernel}, we have $\|K_b\|_{1,\ep}\leq\|K_b\|_{2/(1+\alpha),\ep}\ep^{\frac{1-\alpha}{2}}$ and $\|K_\sigma\|_{2,\ep}\leq\|K_\sigma\|_{2/\alpha,\ep}\ep^{\frac{1-\alpha}{2}}$. The conditions that $K_b\in L^{3/2}(0,T;\bR^{n\times n})$ and $K_\sigma\in L^6(0,T;\bR^{n\times n})$ assumed in the last assertion of \cref{Taylor_prop_SVE-expansion} are satisfied if \cref{MP_assum_kernel} holds with $\alpha=\frac{1}{3}$.
\item
Important examples of completely monotone functions can be found in \cite[Example 2.3]{Ha24}. For example, consider the fractional kernels in form of
\begin{equation*}
	K_b(t)=\frac{1}{\Gamma(\beta_b)}t^{\beta_b-1}I_{n\times n},\ \ K_\sigma(t)=\frac{1}{\Gamma(\beta_\sigma)}t^{\beta_\sigma-1}I_{n\times n},
\end{equation*}
for some constants $\beta_b\in(0,1)$ and $\beta_\sigma\in(\frac{1}{2},1)$. Here, $\Gamma(\beta):=\int_{\bR_+}\theta^{\beta-1}e^{-\theta}\,\diff \theta$, $\beta>0$, denotes the Gamma function. These kernels satisfy \cref{MP_assum_kernel} for any $(1-2\beta_b)\vee(2-2\beta_\sigma)<\alpha<1$. Indeed, we can take
\begin{equation*}
	\mu(\diff\theta)=\theta^{-\gamma}\1_{(0,\infty)}(\theta)\,\diff\theta,\ \ M_b(\theta)=\frac{\theta^{\gamma-\beta_b}}{\Gamma(\beta_b)\Gamma(1-\beta_b)}I_{n\times n},\ \ M_\sigma(\theta)=\frac{\theta^{\gamma-\beta_\sigma}}{\Gamma(\beta_\sigma)\Gamma(1-\beta_\sigma)}I_{n\times n},
\end{equation*}
where $\gamma$ is an arbitrary constant such that $\frac{1}{2}<\gamma<(\alpha+2\beta_b-\frac{1}{2})\wedge(\alpha+2\beta_\sigma-\frac{3}{2})\wedge1$.
\item
Suppose that $(K_b^1,K_\sigma^1)$ and $(K_b^2,K_\sigma^2)$ satisfy \cref{MP_assum_kernel} with $(\mu^i,M_b^i,M_\sigma^i,\alpha^i)$, $i=1,2$, as respective tuples. Then, for any constants $c_b^1,c_b^1,c_\sigma^1,c_\sigma^2\in\bR$, the pair $(K_b,K_\sigma)=(c_b^1K_b^1+c_b^2K_b^2,c_\sigma^1 K_\sigma^1+c_\sigma^2K_\sigma^2)$ satisfies \cref{MP_assum_kernel} with
\begin{equation*}
	\mu=\mu^1+\mu^2,\ \ M_b=c_b^1M_b^1\frac{\diff\mu^1}{\diff\mu}+c_b^2M_b^2\frac{\diff\mu^2}{\diff\mu},\ \ M_\sigma=c_\sigma^1M_\sigma^1\frac{\diff\mu^1}{\diff\mu}+c_\sigma^2M_\sigma^2\frac{\diff\mu^2}{\diff\mu},\ \ \alpha=\alpha^1\vee\alpha^2.
\end{equation*}
\end{itemize}
\end{rem}

%%%%%%
\subsection{Lifting the variational SVEs}\label{MP-lift}
%%%%%%

Suppose that \cref{control_assum_coefficient} and \cref{MP_assum_kernel} hold. Fix $\hat{u},v\in\cU$, $\tau\in[0,T)$ and $\ep\in(0,T-\tau]$. We use the same notations as in \cref{Taylor}.

Following \cite{Ha24}, we introduce ``lifts'' of the variational SVEs \eqref{Taylor_eq_vareq-SVE1} and \eqref{Taylor_eq_vareq-SVE12} for $X^{1,v,\tau,\ep}$ and $X^{1,2,v,\tau,\ep}$, respectively. For each $\theta\in\bR_+$, consider the following SDEs on $\bR^n$ parametrized by $\theta$:
\begin{equation}\label{MP_eq_vareq-SVE1-lift}
	\begin{dcases}
	\diff Y^{1,v,\tau,\ep}_t(\theta)=-\theta Y^{1,v,\tau,\ep}_t(\theta)\,\diff t+M_b(\theta)\big\{\hat{b}_x(t)X^{1,v,\tau,\ep}_t+\delta b^v(t)\1_{[\tau,\tau+\ep]}(t)\big\}\,\diff t\\
	\hspace{4cm}+M_\sigma(\theta)\big\{\hat{\sigma}_x(t)X^{1,v,\tau,\ep}_t+\delta\sigma^v(t)\1_{[\tau,\tau+\ep]}(t)\big\}\,\diff W_t,\ \ t\in[0,T],\\
	Y^{1,v,\tau,\ep}_0(\theta)=0,
	\end{dcases}
\end{equation}
and
\begin{equation}\label{MP_eq_vareq-SVE12-lift}
	\begin{dcases}
	\diff Y^{1,2,v,\tau,\ep}_t(\theta)=-\theta Y^{1,2,v,\tau,\ep}_t(\theta)\,\diff t+M_b(\theta)\Big\{\hat{b}_x(t)X^{1,2,v,\tau,\ep}_t+\frac{1}{2}\big\langle\hat{b}_{xx}(t)X^{1,v,\tau,\ep}_t,X^{1,v,\tau,\ep}_t\big\rangle\\
	\hspace{7cm}+\Big(\delta b^v(t)+\delta b^v_x(t)X^{1,v,\tau,\ep}_t\Big)\1_{[\tau,\tau+\ep]}(t)\Big\}\,\diff t\\
	\hspace{2.2cm}+M_\sigma(\theta)\Big\{\hat{\sigma}_x(t)X^{1,2,v,\tau,\ep}_t+\frac{1}{2}\big\langle\hat{\sigma}_{xx}(t)X^{1,v,\tau,\ep}_t,X^{1,v,\tau,\ep}_t\big\rangle\\
	\hspace{5cm}+\Big(\delta\sigma^v(t)+\delta\sigma^v_x(t)X^{1,v,\tau,\ep}_t\Big)\1_{[\tau,\tau+\ep]}(t)\Big\}\,\diff W_t,\ \ t\in[0,T],\\
	Y^{1,2,v,\tau,\ep}_0(\theta)=0.
	\end{dcases}
\end{equation}
Noting \eqref{Taylor_eq_var-coefficient-state} and $X^{1,v,\tau,\ep},X^{1,2,v,\tau,\ep}\in\bigcap_{p\in[2,\infty)}C^p_\bF(0,T;\bR^n)$, the above SDEs admit unique solutions $Y^{1,v,\tau,\ep}(\theta)$ and $Y^{1,2,v,\tau,\ep}(\theta)$ such that
\begin{equation*}
	\bE\left[\sup_{t\in[0,T]}|Y^{1,v,\tau,\ep}_t(\theta)|^p+\sup_{t\in[0,T]}|Y^{1,2,v,\tau,\ep}_t(\theta)|^p\right]<\infty\ \ \text{for any $p\in[2,\infty)$.}
\end{equation*}
The solutions have the following expressions:
\begin{equation}\label{MP_eq_vareq-SVE1-lift'}
\begin{split}
	Y^{1,v,\tau,\ep}_t(\theta)&=\int^t_0e^{-\theta(t-s)}M_b(\theta)\Big\{\hat{b}_x(s)X^{1,v,\tau,\ep}_s+\delta b^v(s)\1_{[\tau,\tau+\ep]}(s)\Big\}\,\diff s\\
	&\hspace{1cm}+\int^t_0e^{-\theta(t-s)}M_\sigma(\theta)\Big\{\hat{\sigma}_x(s)X^{1,v,\tau,\ep}_s+\delta\sigma^v(s)\1_{[\tau,\tau+\ep]}(s)\Big\}\,\diff W_s
\end{split}
\end{equation}
and
\begin{equation}\label{MP_eq_vareq-SVE12-lift'}
\begin{split}
	Y^{1,2,v,\tau,\ep}_t(\theta)&=\int^t_0e^{-\theta(t-s)}M_b(\theta)\Big\{\hat{b}_x(s)X^{1,2,v,\tau,\ep}_s+\frac{1}{2}\big\langle\hat{b}_{xx}(s)X^{1,v,\tau,\ep}_s,X^{1,v,\tau,\ep}_s\big\rangle\\
	&\hspace{4.5cm}+\Big(\delta b^v(s)+\delta b^v_x(s)X^{1,v,\tau,\ep}_s\Big)\1_{[\tau,\tau+\ep]}(s)\Big\}\,\diff s\\
	&\hspace{0.5cm}+\int^t_0e^{-\theta(t-s)}M_\sigma(\theta)\Big\{\hat{\sigma}_x(s)X^{1,2,v,\tau,\ep}_s+\frac{1}{2}\big\langle\hat{\sigma}_{xx}(s)X^{1,v,\tau,\ep}_s,X^{1,v,\tau,\ep}_s\big\rangle\\
	&\hspace{5cm}+\Big(\delta\sigma^v(s)+\delta\sigma^v_x(s)X^{1,v,\tau,\ep}_s\Big)\1_{[\tau,\tau+\ep]}(s)\Big\}\,\diff W_s.
\end{split}
\end{equation}
By the standard stability argument, one can show that there exist (jointly) progressively measurable versions of $Y^{1,v,\tau,\ep},Y^{1,2,v,\tau,\ep}:\Omega\times[0,T]\times\bR_+\to\bR^n$. In the following, we consider such versions.

As investigated in \cite{Ha24}, $Y^{1,v,\tau,\ep}$ and $Y^{1,2,v,\tau,\ep}$ can be seen as (infinite dimensional) \emph{lifts} of $X^{1,v,\tau,\ep}$ and $X^{1,2,v,\tau,\ep}$, respectively. Indeed, the following lemma shows that $X^{1,v,\tau,\ep}_t$ and $X^{1,2,v,\tau,\ep}_t$ are represented as the integrals of $Y^{1,v,\tau,\ep}_t(\theta)$ and $Y^{1,2,v,\tau,\ep}_t(\theta)$ with respect to $\mu(\diff\theta)$. In the following, for each $\mu$-integrable function $\psi$ on $\bR_+$ taking values in a Euclidean space, we denote
\begin{equation}\label{MP_eq_integral-map}
	\mu[\psi]:=\int_{\bR_+}\psi(\theta)\dmu.
\end{equation}

%% Lemma

\begin{lemm}\label{MP_lemm_vareq-lift}
Suppose that \cref{control_assum_coefficient} and \cref{MP_assum_kernel} hold. Fix $\hat{u},v\in\cU$, $\tau\in[0,T)$ and $\ep\in(0,T-\tau]$. Then, for any $p\in[2,\infty)$, there exists a constant $C_{p,\alpha}>0$, which does not depend on $\tau$ or $\ep$, such that
\begin{equation}\label{MP_eq_vareq-lift}
	\int_{\bR_+}(1+\theta)r(\theta)\Big\{\|Y^{1,v,\tau,\ep}(\theta)\|_{C^p_\bF(0,T)}^2+\|Y^{1,2,v,\tau,\ep}(\theta)\|_{C^p_\bF(0,T)}^2\Big\}\dmu\leq C_{p,\alpha}\ep^{1-\alpha}.
\end{equation}
In particular,
\begin{equation}\label{MP_eq_lift-L1}
	\sup_{t\in[0,T]}\bE\Big[\|Y^{1,v,\tau,\ep}_t\|^p_{L^1(\mu)}+\|Y^{1,2,v,\tau,\ep}_t\|^p_{L^1(\mu)}\Big]<\infty
\end{equation}
for any $p\in[2,\infty)$. Furthermore, it holds that
\begin{equation}\label{MP_eq_lift-Fubini}
	\mu[Y^{1,v,\tau,\ep}_t]=X^{1,v,\tau,\ep}_t\ \ \text{and}\ \ \mu[Y^{1,2,v,\tau,\ep}_t]=X^{1,2,v,\tau,\ep}_t\ \ \text{a.s.\ for any $t\in[0,T]$.}
\end{equation}
\end{lemm}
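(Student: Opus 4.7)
The plan is to start from the explicit mild-solution representations \eqref{MP_eq_vareq-SVE1-lift'} and \eqref{MP_eq_vareq-SVE12-lift'} and estimate $\|Y^{1,v,\tau,\ep}_t(\theta)\|_{L^p}$ pointwise in $\theta$. Applying Minkowski's integral inequality to the drift term and the Burkholder--Davis--Gundy inequality followed by Minkowski to the diffusion term, together with the uniform bounds $|\hat{b}_x|,|\hat{\sigma}_x|\leq\kappa$ from \eqref{Taylor_eq_var-coefficient-state}, yields $\|Y^{1,v,\tau,\ep}_t(\theta)\|_{L^p}^2$ controlled by four contributions of the form $|M_b(\theta)|^2$ or $|M_\sigma(\theta)|^2$ times an integral of $e^{-\theta(t-s)}$ or $e^{-2\theta(t-s)}$. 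These integrals are bounded by $\min(T,1/\theta)^2$ and $\min(T,1/(2\theta))$ on the ``continuous'' part (weighted by $\|X^{1,v,\tau,\ep}\|_{C^p_\bF(0,T)}^2$), and by $\min(\ep,1/\theta)^2$ and $\min(\ep,1/(2\theta))$ on the ``spike'' part (weighted by $\|\delta b^v\|_{L^{p,\infty}_\bF(0,T)}^2$ or $\|\delta\sigma^v\|_{L^{p,\infty}_\bF(0,T)}^2$).

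To integrate these pointwise bounds against $(1+\theta)r(\theta)\dmu$, I would establish four elementary inequalities valid for all $\theta\geq 0$ and $\ep\in(0,T]$:
\begin{gather*}
(1+\theta)\min(T,1/\theta)^2\leq C_{\alpha,T}(1+\theta)^{-\alpha},\quad (1+\theta)\min(\ep,1/\theta)^2\leq C_{\alpha,T}(1+\theta)^{-\alpha}\ep^{1-\alpha},\\
(1+\theta)\min(T,1/(2\theta))\leq C_{\alpha,T}(1+\theta)^{1-\alpha},\quad (1+\theta)\min(\ep,1/(2\theta))\leq C_{\alpha,T}(1+\theta)^{1-\alpha}\ep^{1-\alpha},
\end{gather*}
each verified by splitting into the regimes $\theta\leq 1$ and $\theta>1$ (and, for the spike versions, according to whether $\theta\ep\leq 1$ or $\theta\ep>1$) and invoking the interpolation $\min(a,b)\leq a^{1-s}b^s$ with $s=\frac{1+\alpha}{2}$. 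Combined with the integrability hypotheses \eqref{MP_eq_M-integrable}, these reduce $\int(1+\theta)r(\theta)\|Y^{1,v,\tau,\ep}(\theta)\|_{C^p_\bF(0,T)}^2\dmu$ to a constant multiple of $\|X^{1,v,\tau,\ep}\|_{C^p_\bF(0,T)}^2+\ep^{1-\alpha}$. By \cref{MP_lemm_kernel}, \cref{MP_assum_kernel} implies $K_b\in L^{2/(1+\alpha)}(0,T;\bR^{n\times n})$ and $K_\sigma\in L^{2/\alpha}(0,T;\bR^{n\times n})$, so by \cref{intro_rem_K-ep} one has $\|K_b\|_{1,\ep}+\|K_\sigma\|_{2,\ep}\leq C\ep^{(1-\alpha)/2}$, and \cref{Taylor_prop_SVE-expansion} then gives $\|X^{1,v,\tau,\ep}\|_{C^p_\bF(0,T)}^2\leq C_p\ep^{1-\alpha}$, yielding \eqref{MP_eq_vareq-lift} for $Y^{1,v,\tau,\ep}$. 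The parallel estimate for $Y^{1,2,v,\tau,\ep}$ is obtained identically, treating the extra quadratic terms $\langle\hat{\varphi}_{xx}X^{1,v,\tau,\ep},X^{1,v,\tau,\ep}\rangle$ via $\|\cdot\|_{L^p}\leq\kappa\|X^{1,v,\tau,\ep}\|_{L^{2p}}^2$ and using \eqref{Taylor_eq_var-1}--\eqref{Taylor_eq_var-2}.

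The bound \eqref{MP_eq_lift-L1} follows from \eqref{MP_eq_vareq-lift} via Cauchy--Schwarz, based on the key observation $((1+\theta)r(\theta))^{-1}\leq r(\theta)$ for all $\theta\geq 0$ (checked by splitting at $\theta=1$) together with $\int r\dmu<\infty$ from \eqref{MP_eq_mu-integrable}: one has $\|Y_t\|_{L^1(\mu)}^2\leq(\int r\dmu)\int|Y_t(\theta)|^2(1+\theta)r(\theta)\dmu$, and the expectation and $\mu$-integral are exchanged via Minkowski's integral inequality with exponent $p/2\geq 1$. For the Fubini identity \eqref{MP_eq_lift-Fubini}, I would apply deterministic Fubini to the drift parts and stochastic Fubini to the martingale parts of \eqref{MP_eq_vareq-SVE1-lift'} and \eqref{MP_eq_vareq-SVE12-lift'} (whose integrability hypotheses are furnished by the $(1+\theta)r(\theta)$-weighted estimate just proved together with the representation \eqref{MP_eq_kernel} of the kernels); the interchange collapses $\int e^{-\theta(t-s)}M_\varphi(\theta)\dmu=K_\varphi(t-s)$, so $(\mu[Y^{1,v,\tau,\ep}_t])_{t\in[0,T]}$ is seen to satisfy exactly the variational SVE \eqref{Taylor_eq_vareq-SVE1}, and uniqueness in $C^p_\bF(0,T;\bR^n)$ identifies it with $X^{1,v,\tau,\ep}$; the case of $X^{1,2,v,\tau,\ep}$ is identical. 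The main technical obstacle is extracting the sharp rate $\ep^{1-\alpha}$ uniformly in $\theta$ through the four interpolation inequalities above with powers matching those prescribed by \eqref{MP_eq_M-integrable}; the remaining ingredients are routine once these are in place.
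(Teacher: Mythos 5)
Your proposal is correct and essentially reproduces the paper's own argument. The four interpolation inequalities you propose, obtained by splitting at $\theta\lessgtr 1$ (resp.\ $\theta\ep\lessgtr 1$) and using $\min(a,b)\le a^{1-s}b^s$, are equivalent to the bound $e^{-x}\le 1\wedge x^{-\beta}$ that the paper uses to estimate $\|e^{-\theta\cdot}\|_{L^1(0,s)}$ and $\|e^{-\theta\cdot}\|_{L^2(0,s)}$ in \eqref{MP_eq_vareq-lift-estimate3}; the remaining steps (Minkowski/BDG applied to the mild form, i.e.\ \cref{pre_lemm_convolution} with kernel $e^{-\theta\cdot}$; the reduction of $\|X^{1,v,\tau,\ep}\|_{C^p_\bF(0,T)}$ to $\ep^{(1-\alpha)/2}$ via \cref{Taylor_prop_SVE-expansion}, \cref{MP_lemm_kernel} and \cref{intro_rem_K-ep}; Cauchy--Schwarz with $r(\theta)^{-1}\le(1+\theta)r(\theta)$ for \eqref{MP_eq_lift-L1}; (stochastic) Fubini for \eqref{MP_eq_lift-Fubini}) match the paper. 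One small simplification you could make: in the Fubini step you do not need to invoke uniqueness of the variational SVE \eqref{Taylor_eq_vareq-SVE1}. The integrands in \eqref{MP_eq_vareq-SVE1-lift'} involve the fixed process $X^{1,v,\tau,\ep}$ itself (not $\mu[Y^{1,v,\tau,\ep}]$), so after collapsing $\int e^{-\theta(t-s)}M_\varphi(\theta)\dmu=K_\varphi(t-s)$ the right-hand side is literally the right-hand side of \eqref{Taylor_eq_vareq-SVE1} evaluated at $X^{1,v,\tau,\ep}$, and hence equals $X^{1,v,\tau,\ep}_t$ directly, without any well-posedness argument.
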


%% Proof

\begin{proof}
Let $p\in[2,\infty)$. We first show \eqref{MP_eq_vareq-lift}. Fix $\theta\in\bR_+$. Noting \eqref{MP_eq_vareq-SVE1-lift'} and \eqref{MP_eq_vareq-SVE12-lift'}, by using \cref{pre_lemm_convolution} (applying to the kernel $t\mapsto e^{-\theta t}$) and \cref{Taylor_prop_SVE-expansion}, we obtain
\begin{equation}\label{MP_eq_vareq-lift-estimate1}
\begin{split}
	&\|Y^{1,v,\tau,\ep}(\theta)\|_{C^p_\bF(0,T)}+\|Y^{1,2,v,\tau,\ep}(\theta)\|_{C^p_\bF(0,T)}\\
	&\leq C_p\big\{\|e^{-\theta\cdot}\|_{L^1(0,T)}|M_b(\theta)|+\|e^{-\theta\cdot}\|_{L^2(0,T)}|M_\sigma(\theta)|\big\}\big\{\|K_b\|_{1,\ep}+\|K_\sigma\|_{2,\ep}\big\}\\
	&\hspace{1cm}+C_p\|e^{-\theta\cdot}\|_{L^1(0,\ep)}|M_b(\theta)|+C_p\|e^{-\theta\cdot}\|_{L^2(0,\ep)}|M_\sigma(\theta)|,
\end{split}
\end{equation}
for some constant $C_p>0$, which does not depend on $\tau$ or $\ep$. Noting \cref{MP_lemm_kernel} and \cref{intro_rem_K-ep}, we see that
\begin{equation}\label{MP_eq_vareq-lift-estimate2}
	\|K_b\|_{1,\ep}^2\leq\|K_b\|_{\frac{2}{1+\alpha},\ep}^2\ep^{1-\alpha}\ \ \text{and}\ \ \|K_\sigma\|_{2,\ep}^2\leq\|K_\sigma\|_{\frac{2}{\alpha},\ep}^2\ep^{1-\alpha}.
\end{equation}
Furthermore, noting that $e^{-x}\leq1\wedge x^{-\beta}$ for any $x\geq0$ and $\beta\in(0,1)$, one has, for any $s\in[0,T]$,
\begin{align*}
	&\|e^{-\theta\cdot}\|_{L^1(0,s)}\leq s\wedge\left(\frac{2\theta^{-(1+\alpha)/2}s^{(1-\alpha)/2}}{1-\alpha}\right)\leq\left(s^{(1+\alpha)/2}\vee\frac{2}{1-\alpha}\right)(1\vee\theta)^{-(1+\alpha)/2}s^{(1-\alpha)/2},\\
	&\|e^{-\theta\cdot}\|_{L^2(0,s)}\leq\left(s\wedge\left(\frac{(2\theta)^{-\alpha}s^{1-\alpha}}{1-\alpha}\right)\right)^{1/2}\leq\left(s^\alpha\vee\frac{1}{2^\alpha(1-\alpha)}\right)^{1/2}(1\vee\theta)^{-\alpha/2}s^{(1-\alpha)/2}.
\end{align*}
Hence,
\begin{equation}\label{MP_eq_vareq-lift-estimate3}
\begin{split}
	&\int_{\bR_+}(1+\theta)r(\theta)\|e^{-\theta\cdot}\|_{L^1(0,s)}^2|M_b(\theta)|^2\dmu\leq C_\alpha\int_{\bR_+}(1+\theta)^{-\alpha}r(\theta)|M_b(\theta)|^2\dmu\,s^{1-\alpha},\\
	&\int_{\bR_+}(1+\theta)r(\theta)\|e^{-\theta\cdot}\|_{L^2(0,s)}^2|M_\sigma(\theta)|^2\dmu\leq C_\alpha\int_{\bR_+}(1+\theta)^{1-\alpha}r(\theta)|M_\sigma(\theta)|^2\dmu\,s^{1-\alpha},\\
	&\hspace{8cm}s\in[0,T],
\end{split}
\end{equation}
where $C_\alpha$ is a positive constant which depends only on $\alpha$ and $T$. Noting the integrability condition \eqref{MP_eq_M-integrable} in \cref{MP_assum_kernel}, by \eqref{MP_eq_vareq-lift-estimate1}, \eqref{MP_eq_vareq-lift-estimate2} and \eqref{MP_eq_vareq-lift-estimate3} (applying to $s=T$ and $s=\ep$), we obtain the desired estimate \eqref{MP_eq_vareq-lift}.
Furthermore, noting that $r(\theta)^{-1}\leq(1+\theta)r(\theta)$, the integrability condition \eqref{MP_eq_mu-integrable} in \cref{MP_assum_kernel} and the estimate \eqref{MP_eq_vareq-lift} yield that
\begin{align*}
	\sup_{t\in[0,T]}\bE\big[\|Y^{1,v,\tau,\ep}_t\|^p_{L^1(\mu)}\big]^{1/p}&\leq\int_{\bR_+}\|Y^{1,v,\tau,\ep}(\theta)\|_{C^p_\bF(0,T)}\dmu\\
	&\leq\left(\int_{\bR_+}r(\theta)\dmu\right)^{1/2}\left(\int_{\bR_+}r(\theta)^{-1}\|Y^{1,v,\tau,\ep}(\theta)\|_{C^p_\bF(0,T;\bR^n)}^2\dmu\right)^{1/2}<\infty.
\end{align*}
Similarly, we can show that $\sup_{t\in[0,T]}\bE[\|Y^{1,2,v,\tau,\ep}_t\|^p_{L^1(\mu)}]^{1/p}<\infty$. Hence, the estimate \eqref{MP_eq_lift-L1} holds.

Next, we show \eqref{MP_eq_lift-Fubini}. Fix $t\in[0,T]$. We integrate left- and right-hand sides of \eqref{MP_eq_vareq-SVE1-lift'} and \eqref{MP_eq_vareq-SVE12-lift'} with respect to $\mu(\diff\theta)$ and switch the orders of integrals by means of the (stochastic) Fubini theorem (cf.\ \cite[Theorem 2.2]{Ve11}). To do so, let us check the integrability conditions which ensures the applications of the stochastic Fubini theorem. Observe that, for any $b\in L^{1,\infty}_\bF(0,t;\bR^n)$,
\begin{align*}
	&\bE\left[\int_{\bR_+}\int^t_0e^{-\theta(t-s)}|M_b(\theta)||b_s|\,\diff s\dmu\right]\\
	&\leq\|b\|_{L^{1,\infty}_\bF(0,t)}\int_{\bR_+}\|e^{-\theta\cdot}\|_{L^1(0,t)}|M_b(\theta)|\dmu\\
	&\leq\|b\|_{L^{1,\infty}_\bF(0,t)}\left(\int_{\bR_+}r(\theta)\dmu\right)^{1/2}\left(\int_{\bR_+}r(\theta)^{-1}\|e^{-\theta\cdot}\|_{L^1(0,t)}^2|M_b(\theta)|^2\dmu\right)^{1/2},
\end{align*}
and the last line is finite thanks to the integrability condition \eqref{MP_eq_mu-integrable} in \cref{MP_assum_kernel} and \eqref{MP_eq_vareq-lift-estimate3}, together with $r(\theta)^{-1}\leq(1+\theta)r(\theta)$. Similarly, for any $\sigma\in L^{2,\infty}_\bF(0,t;\bR^n)$,
\begin{align*}
	&\bE\left[\int_{\bR_+}\left(\int^t_0e^{-2\theta(t-s)}|M_\sigma(\theta)|^2|\sigma_s|^2\,\diff s\right)^{1/2}\dmu\right]\\
	&\leq\|\sigma\|_{L^{2,\infty}_\bF(0,t)}\int_{\bR_+}\|e^{-\theta\cdot}\|_{L^2(0,t)}|M_\sigma(\theta)|\dmu\\
	&\leq\|\sigma\|_{L^{2,\infty}_\bF(0,t)}\left(\int_{\bR_+}r(\theta)\dmu\right)^{1/2}\left(\int_{\bR_+}r(\theta)^{-1}\|e^{-\theta\cdot}\|_{L^2(0,t)}^2|M_\sigma(\theta)|^2\dmu\right)^{1/2}<\infty.
\end{align*}
Thus, for such processes $b$ and $\sigma$, we can apply the (stochastic) Fubini theorem (cf.\ \cite{Ve11}), and the expressions \eqref{MP_eq_kernel} for $K_b$ and $K_\sigma$ yield that
\begin{equation*}
	\int_{\bR_+}\int^t_0e^{-\theta(t-s)}M_b(\theta)b_s\,\diff s\dmu=\int^t_0\int_{\bR_+}e^{-\theta(t-s)}M_b(\theta)\dmu\,b_s\,\diff s=\int^t_0K_b(t-s)b_s\,\diff s\ \ \text{a.s.}
\end{equation*}
and
\begin{equation*}
	\int_{\bR_+}\int^t_0e^{-\theta(t-s)}M_\sigma(\theta)\sigma_s\,\diff W_s\dmu=\int^t_0\int_{\bR_+}e^{-\theta(t-s)}M_\sigma(\theta)\dmu\,\sigma_s\,\diff W_s=\int^t_0K_\sigma(t-s)\sigma_s\,\diff W_s\ \ \text{a.s.}
\end{equation*}
Applying the above observations to \eqref{MP_eq_vareq-SVE1-lift'} and noting that $X^{1,v,\tau,\ep}$ solves the SVE \eqref{Taylor_eq_vareq-SVE1}, we get
\begin{align*}
	\mu[Y^{1,v,\tau,\ep}_t]&=\int_{\bR_+}Y^{1,v,\tau,\ep}_t(\theta)\dmu\\
	&=\int^t_0K_b(t-s)\Big\{\hat{b}_x(s)X^{1,v,\tau,\ep}_s+\delta b^v(s)\1_{[\tau,\tau+\ep]}(s)\Big\}\,\diff s\\
	&\hspace{1cm}+\int^t_0K_\sigma(t-s)\Big\{\hat{\sigma}_x(s)X^{1,v,\tau,\ep}_s+\delta\sigma^v(s)\1_{[\tau,\tau+\ep]}(s)\Big\}\,\diff W_s\\
	&=X^{1,v,\tau,\ep}_t\ \ \text{a.s.}
\end{align*}
Similarly, from the expression \eqref{MP_eq_vareq-SVE12-lift'} for $Y^{1,2,v,\tau,\ep}$ and the SVE \eqref{Taylor_eq_vareq-SVE12} for $X^{1,2,v,\tau,\ep}$, we obtain $\mu[Y^{1,2,v,\tau,\ep}_t]=X^{1,2,v,\tau,\ep}_t$ a.s. Thus, \eqref{MP_eq_lift-Fubini} holds. This completes the proof.
\end{proof}

%%%%%%
\subsection{Heuristic derivations of first and second order adjoint equations}\label{MP-heuristic}
%%%%%%

In this subsection, we heuristically demonstrate how to derive the adjoint equations. The formal calculations in this subsection will be rigorously proved in \cref{MP-main}. As before, suppose that \cref{control_assum_coefficient} and \cref{MP_assum_kernel} hold. Fix $\hat{u},v\in\cU$, $\tau\in[0,T)$ and $\ep\in(0,T-\tau]$. Let $Y^{1,v,\tau,\ep}$ and $Y^{1,2,v,\tau,\ep}$ be the lifts of $X^{1,v,\tau,\ep}$ and $X^{1,2,v,\tau,\ep}$, respectively, defined in the previous subsection. Our idea is to calculate the terms $\bE[\hat{h}_xX^{1,2,v,\tau,\ep}_T]$ and $\bE[\langle\hat{h}_{xx}X^{1,v,\tau,\ep}_T,X^{1,v,\tau,\ep}_T\rangle]$ appearing in $J^{1,2,v,\tau,\ep}$ by using semimartingale properties of $Y^{1,2,v,\tau,\ep}$ and $Y^{1,v,\tau,\ep}$, respectively.

First of all, noting \cref{MP_lemm_vareq-lift}, we have
\begin{equation}\label{MP_eq_terminal1}
	\bE\big[\hat{h}_xX^{1,2,v,\tau,\ep}_T]=\bE\big[\hat{h}_x\mu[Y^{1,2,v,\tau,\ep}_T]\big]=\int_{\bR_+}\bE\big[\hat{h}_xY^{1,2,v,\tau,\ep}_T(\theta)\big]\dmu
\end{equation}
and
\begin{equation}\label{MP_eq_terminal2}
\begin{split}
	\bE\Big[\big\langle\hat{h}_{xx}X^{1,v,\tau,\ep}_T,X^{1,v,\tau,\ep}_T\big\rangle\Big]&=\bE\Big[\big\langle\hat{h}_{xx}\mu[Y^{1,v,\tau,\ep}_T],\mu[Y^{1,v,\tau,\ep}_T]\big\rangle\Big]\\
	&=\int_{\bR_+}\int_{\bR_+}\bE\Big[\big\langle\hat{h}_{xx}Y^{1,v,\tau,\ep}_T(\theta_2),Y^{1,v,\tau,\ep}_T(\theta_1)\big\rangle\Big]\,\dmumu.
\end{split}
\end{equation}
Here, we used Fubini's theorem, which is justified by the integrability conditions \eqref{Taylor_eq_var-coefficient-cost} and \eqref{MP_eq_lift-L1}. We focus on the expectations $\bE\big[\hat{h}_xY^{1,2,v,\tau,\ep}_T(\theta)\big]$ and $\bE[\langle\hat{h}_{xx}Y^{1,v,\tau,\ep}_T(\theta_2),Y^{1,v,\tau,\ep}_T(\theta_1)\rangle]$ for fixed $\theta\in\bR_+$ and $(\theta_1,\theta_2)\in\bR_+^2$. Let $\bR^n$-valued process $(\hat{g}_t(\theta))_{t\in[0,T]}$ and $\bR^{n\times n}$-valued process $(\hat{G}_t(\theta_1,\theta_2))_{t\in[0,T]}$ be given, and assume that they are progressively measurable and satisfy
\begin{equation}\label{MP_eq_integrability-gG}
	\bE\left[\left(\int^T_0|\hat{g}_t(\theta)|\,\diff t\right)^2+\left(\int^T_0|\hat{G}_t(\theta_1,\theta_2)|\,\diff t\right)^2\right]<\infty.
\end{equation}
Consider the following backward stochastic differential equations (BSDEs):
\begin{equation}\label{MP_eq_BSDE1}
	\begin{dcases}
	\diff\hat{p}_t(\theta)=\theta\hat{p}_t(\theta)\,\diff t-\hat{g}_t(\theta)\,\diff t+\hat{q}_t(\theta)\,\diff W_t,\ \ t\in[0,T],\\
	\hat{p}_T(\theta)=-\hat{h}_x^\top,
	\end{dcases}
\end{equation}
and
\begin{equation}\label{MP_eq_BSDE2}
	\begin{dcases}
	\diff\hat{P}_t(\theta_1,\theta_2)=(\theta_1+\theta_2)\hat{P}_t(\theta_1,\theta_2)\,\diff t-\hat{G}_t(\theta_1,\theta_2)\,\diff t+\hat{Q}_t(\theta_1,\theta_2)\,\diff W_t,\ \ t\in[0,T],\\
	\hat{P}_T(\theta_1,\theta_2)=-\hat{h}_{xx}.
	\end{dcases}
\end{equation}
Noting that $\hat{h}_x^\top\in L^2_{\cF_T}(\Omega;\bR^n)$ and $\hat{h}_{xx}\in L^2_{\cF_T}(\Omega;\bR^{n\times n})$ (see \eqref{Taylor_eq_var-coefficient-cost}), by the well-known result on BSDEs (cf.\ \cite[Proposition 4.3.1]{Zh17}), we see that the above BSDEs admit unique adapted solutions $(\hat{p}(\theta),\hat{q}(\theta))$ and $(\hat{P}(\theta_1,\theta_2),\hat{Q}(\theta_1,\theta_2))$, which are $\bR^n$- and $\bR^{n\times n}$-valued respectively and satisfy
\begin{equation*}
	\bE\left[\sup_{t\in[0,T]}|\hat{p}_t(\theta)|^2+\int^T_0|\hat{q}_t(\theta)|^2\,\diff t+\sup_{t\in[0,T]}|\hat{P}_t(\theta_1,\theta_2)|^2+\int^T_0|\hat{Q}_t(\theta_1,\theta_2)|^2\,\diff t\right]<\infty.
\end{equation*}
For a while, as in the proof of \cref{Taylor_prop_SVE-expansion}, we sometimes omit the super scripts $v,\tau,\ep$. Noting that $Y^{1,2}(\theta)=Y^{1,2,v,\tau,\ep}(\theta)$ is an $\bR^n$-valued It\^{o} process which satisfies the SDE \eqref{MP_eq_vareq-SVE12-lift} (for fixed $\theta$), by using It\^{o}'s formula and recalling the notation \eqref{notation_eq_matrix}, we have
\begin{align}
	\nonumber
	&-\bE\big[\hat{h}_xY^{1,2}_T(\theta)\big]\\
	\nonumber
	&=\bE\Big[\big\langle\hat{p}_T(\theta),Y^{1,2}_T(\theta)\big\rangle-\big\langle\hat{p}_0(\theta),Y^{1,2}_0(\theta)\big\rangle\Big]\\
%	\nonumber
%	&=\bE\left[\int^T_0\Big\{\langle p_t(\theta),\hat{b}_x(t)X^{1,2}_t\rangle+\langle q_t(\theta),\hat{\sigma}_x(t)X^{1,2}_t\rangle-\langle g_t(\theta),Y^{1,2}_t(\theta)\rangle\right.\\
%	\nonumber
%	&\left.\hspace{1.5cm}+\langle p_t(\theta)\delta b(t)\rangle\1_{[\tau,\tau+\ep]}(t)+\frac{1}{2}\Big\langle p_t(\theta),\big\langle\hat{b}_{xx}(t)X^1_t,X^1_t\big\rangle\Big\rangle\vphantom{\int^T_0}\right.\\
%	\nonumber
%	&\left.\hspace{1.5cm}+\langle q_t(\theta),\delta\sigma(t)\rangle\1_{[\tau,\tau+\ep]}(t)+\langle q_t(\theta),\delta\sigma_x(t)X^1_t\rangle\1_{[\tau,\tau+\ep]}(t)+\frac{1}{2}\Big\langle q_t(\theta),\big\langle\hat{\sigma}_{xx}(t)X^1_t,X^1_t\big\rangle\Big\rangle\Big\}\diff t\vphantom{\int^T_0}\right]\\
	\nonumber
	&=\bE\left[\int^T_0\Big\{\big\langle\hat{b}_x(t)^\top M_b(\theta)^\top\hat{p}_t(\theta)+\hat{\sigma}_x(t)^\top M_\sigma(\theta)^\top\hat{q}_t(\theta),X^{1,2}_t\big\rangle-\big\langle\hat{g}_t(\theta),Y^{1,2}_t(\theta)\big\rangle\right.\\
	\nonumber
	&\left.\hspace{2cm}+\frac{1}{2}\Big\langle\Big(\big\langle M_b(\theta)^\top\hat{p}_t(\theta),\hat{b}_{xx}(t)\big\rangle+\big\langle M_\sigma(\theta)^\top\hat{q}_t(\theta),\hat{\sigma}_{xx}(t)\big\rangle\Big)X^1_t,X^1_t\Big\rangle\Big\}\,\diff t\vphantom{\int^T_0}\right]\\
	\label{MP_eq_duality1-theta}
	&\hspace{0.5cm}+\bE\left[\int^{\tau+\ep}_\tau\Big\{\big\langle M_b(\theta)^\top\hat{p}_t(\theta),\delta b(t)+\delta b_x(t)X^1_t\big\rangle+\big\langle M_\sigma(\theta)^\top\hat{q}_t(\theta),\delta\sigma(t)+\delta\sigma_x(t)X^1_t\big\rangle\Big\}\,\diff t\right].
\end{align}
Similarly, noting the SDE \eqref{MP_eq_vareq-SVE1-lift} for $Y^1(\theta_1)$ and $Y^1(\theta_2)$ (for fixed $\theta_1$ and $\theta_2$), by using It\^{o}'s formula, we have
\begin{align}
	\nonumber
	&-\bE\Big[\big\langle\hat{h}_{xx}Y^1_T(\theta_2),Y^1_T(\theta_1)\big\rangle\Big]\\
	\nonumber
	&=\bE\Big[\big\langle\hat{P}_T(\theta_1,\theta_2)Y^1_T(\theta_2),Y^1_T(\theta_1)\big\rangle-\big\langle\hat{P}_0(\theta_1,\theta_2)Y^1_0(\theta_2),Y^1_0(\theta_1)\big\rangle\Big]\\
%	\nonumber
%	&=\bE\Big[\int^T_0\Big\{\langle P_t(\theta_1,\theta_2)Y^1_t(\theta_1),\hat{b}_x(t)X^1_t\rangle+\langle P_t(\theta_1,\theta_2)\hat{b}_x(t)X^1_t,Y^1_t(\theta_2)\rangle\\
%	\nonumber
%	&\hspace{1.5cm}+\big\langle P_t(\theta_1,\theta_2)\big\{\hat{\sigma}_x(t)X^1_t+\delta\sigma(t)\1_{[\tau,\tau+\ep]}(t)\big\},\hat{\sigma}_x(t)X^1_t+\delta\sigma(t)\1_{[\tau,\tau+\ep]}(t)\big\rangle\\
%	\nonumber
%	&\hspace{1.5cm}+\big\langle Q_t(\theta_1,\theta_2)Y^1_t(\theta_1),\hat{\sigma}_x(t)X^1_t+\delta\sigma(t)\1_{[\tau,\tau+\ep]}(t)\big\rangle\\
%	\nonumber
%	&\hspace{1.5cm}+\big\langle Q_t(\theta_1,\theta_2)\big\{\hat{\sigma}_x(t)X^1_t+\delta\sigma(t)\1_{[\tau,\tau+\ep]}(t)\big\rangle\big\},Y^1_t(\theta_2)\big\rangle\\
%	\nonumber
%	&\hspace{1.5cm}-\langle G_t(\theta_1,\theta_2)Y^1_t(\theta_1),Y^1_t(\theta_2)\rangle\Big\}\,\diff t\Big]\\
	\nonumber
	&=\bE\left[\int^T_0\Big\{\big\langle\hat{b}_x(t)^\top M_b(\theta_1)^\top\hat{P}_t(\theta_1,\theta_2)Y^1_t(\theta_2),X^1_t\big\rangle+\big\langle\hat{P}_t(\theta_1,\theta_2)M_b(\theta_2)\hat{b}_x(t)X^1_t,Y^1_t(\theta_1)\big\rangle\right.\\
	\nonumber
	&\left.\hspace{2cm}+\big\langle\hat{\sigma}_x(t)^\top M_\sigma(\theta_1)^\top\hat{P}_t(\theta_1,\theta_2)M_\sigma(\theta_2)\hat{\sigma}_x(t)X^1_t,X^1_t\big\rangle\right.\\
	\nonumber
	&\left.\hspace{2cm}+\big\langle\hat{\sigma}_x(t)^\top M_\sigma(\theta_1)^\top\hat{Q}_t(\theta_1,\theta_2)Y^1_t(\theta_2),X^1_t\big\rangle+\big\langle \hat{Q}_t(\theta_1,\theta_2)M_\sigma(\theta_2)\hat{\sigma}_x(t)X^1_t,Y^1_t(\theta_1)\big\rangle\right.\\
	\nonumber
	&\left.\hspace{2cm}-\big\langle\hat{G}_t(\theta_1,\theta_2)Y^1_t(\theta_2),Y^1_t(\theta_1)\big\rangle\Big\}\,\diff t\vphantom{\int^T_0}\right]\\
	\nonumber
	&\hspace{0.5cm}+\bE\left[\int^{\tau+\ep}_\tau\Big\{\big\langle M_b(\theta_1)^\top\hat{P}_t(\theta_1,\theta_2)Y^1_t(\theta_2),\delta b(t)\big\rangle+\big\langle\hat{P}_t(\theta_1,\theta_2)M_b(\theta_2)\delta b(t),Y^1_t(\theta_1)\big\rangle\right.\\
	\nonumber
	&\left.\hspace{3cm}+\big\langle M_\sigma(\theta_1)^\top\hat{P}_t(\theta_1,\theta_2)M_\sigma(\theta_2)\delta\sigma(t),\delta\sigma(t)\big\rangle\right.\\
	\nonumber
	&\left.\hspace{3cm}+\big\langle M_\sigma(\theta_1)^\top\hat{P}_t(\theta_1,\theta_2)M_\sigma(\theta_2)\hat{\sigma}_x(t)X^1_t,\delta\sigma(t)\big\rangle\right.\\
	\nonumber
	&\left.\hspace{3cm}+\big\langle M_\sigma(\theta_1)^\top\hat{P}_t(\theta_1,\theta_2)M_\sigma(\theta_2)\delta\sigma(t),\hat{\sigma}_x(t)X^1_t\big\rangle\right.\\
	\label{MP_eq_duality2-theta}
	&\left.\hspace{3cm}+\big\langle M_\sigma(\theta_1)^\top\hat{Q}_t(\theta_1,\theta_2)Y^1_t(\theta_2),\delta\sigma(t)\big\rangle+\big\langle\hat{Q}_t(\theta_1,\theta_2)M_\sigma(\theta_2)\delta\sigma(t),Y^1_t(\theta_1)\big\rangle\Big\}\,\diff t\vphantom{\int^{\tau+\ep}_\tau}\right].
\end{align}
Next, we integrate both sides of \eqref{MP_eq_duality1-theta} and \eqref{MP_eq_duality2-theta} with respect to $\dmu$ and $\dmumu$, respectively. Heuristically, we assume that the maps $(\omega,t,\theta)\mapsto\hat{g}_t(\theta),\hat{p}_t(\theta),\hat{q}_t(\theta)$ and $(\omega,t,\theta_1,\theta_2)\mapsto\hat{G}_t(\theta_1,\theta_2),\hat{P}_t(\theta_1,\theta_2),\hat{Q}_t(\theta_1,\theta_2)$ are jointly measurable and satisfy suitable integrability conditions that enable us to switch the orders of integrations for $\dmu\otimes\diff\bP\otimes\diff t$ and $\mu(\diff\theta_1)\otimes\mu(\diff\theta_2)\otimes\diff\bP\otimes\diff t$ by means of Fubini's theorem. Recall the notation \eqref{MP_eq_integral-map} of the integral map $\mu[\cdot]$. From \eqref{MP_eq_duality1-theta}, we get (at least formally)
\begin{align}
	\nonumber
	&-\int_{\bR_+}\bE\big[\hat{h}_xY^{1,2}_T(\theta)\big]\dmu\\
	\nonumber
	&=\bE\left[\int^T_0\left\{\big\langle\hat{b}_x(t)^\top\mu[M_b^\top\hat{p}_t]+\hat{\sigma}_x(t)^\top\mu[M_\sigma^\top\hat{q}_t],X^{1,2}_t\big\rangle-\int_{\bR_+}\big\langle\hat{g}_t(\theta),Y^{1,2}_t(\theta)\big\rangle\dmu\right.\right.\\
	\nonumber
	&\left.\left.\hspace{2cm}+\frac{1}{2}\Big\langle\Big(\big\langle\mu[M_b^\top\hat{p}_t],\hat{b}_{xx}(t)\big\rangle+\big\langle\mu[M_\sigma^\top\hat{q}_t],\hat{\sigma}_{xx}(t)\big\rangle\Big)X^1_t,X^1_t\Big\rangle\vphantom{\int_{\bR_+}}\right\}\,\diff t\vphantom{\int^T_0}\right]\\
	\label{MP_eq_duality1-mu}
	&\hspace{0.5cm}+\bE\left[\int^{\tau+\ep}_\tau\Big\{\big\langle\mu[M_b^\top\hat{p}_t],\delta b(t)+\delta b_x(t)X^1_t\big\rangle+\big\langle\mu[M_\sigma^\top \hat{q}_t],\delta\sigma(t)+\delta\sigma_x(t)X^1_t\big\rangle\Big\}\,\diff t\right].
\end{align}
Also, from \eqref{MP_eq_duality2-theta}, we get (at least formally)
\begin{align}
	\nonumber
	&-\int_{\bR_+}\int_{\bR_+}\bE\Big[\big\langle\hat{h}_{xx}Y^1_T(\theta_2),Y^1_T(\theta_1)\big\rangle\Big]\dmumu\\
	\nonumber
	&=\bE\left[\int^T_0\left\{\int_{\bR_+}\big\langle\hat{b}_x(t)^\top\mu[M_b^\top\hat{P}_t(\cdot,\theta_2)]Y^1_t(\theta_2),X^1_t\big\rangle\,\mu(\diff\theta_2)+\int_{\bR_+}\big\langle\mu[\hat{P}_t(\theta_1,\cdot)M_b]\hat{b}_x(t)X^1_t,Y^1_t(\theta_1)\big\rangle\,\mu(\diff\theta_1)\right.\right.\\
	\nonumber
	&\left.\left.\hspace{1.5cm}+\big\langle\hat{\sigma}_x(t)^\top\mu^{\otimes 2}[M_\sigma^\top\hat{P}_tM_\sigma]\hat{\sigma}_x(t)X^1_t,X^1_t\big\rangle\right.\right.\\
	\nonumber
	&\left.\left.\hspace{1.5cm}+\int_{\bR_+}\big\langle\hat{\sigma}_x(t)^\top\mu[M_\sigma^\top\hat{Q}_t(\cdot,\theta_2)]Y^1_t(\theta_2),X^1_t\big\rangle\,\mu(\diff\theta_2)+\int_{\bR_+}\big\langle\mu[\hat{Q}_t(\theta_1,\cdot)M_\sigma]\hat{\sigma}_x(t)X^1_t,Y^1_t(\theta_1)\big\rangle\,\mu(\diff\theta_1)\right.\right.\\
	\nonumber
	&\left.\left.\hspace{1.5cm}-\int_{\bR_+}\int_{\bR_+}\big\langle\hat{G}_t(\theta_1,\theta_2)Y^1_t(\theta_2),Y^1_t(\theta_1)\big\rangle\,\dmumu\right\}\,\diff t\right]\\
	\nonumber
	&\hspace{0.5cm}+\bE\left[\int^{\tau+\ep}_\tau\left\{\int_{\bR_+}\big\langle\mu[M_b^\top\hat{P}_t(\cdot,\theta_2)]Y^1_t(\theta_2),\delta b(t)\big\rangle\,\mu(\diff\theta_2)+\int_{\bR_+}\big\langle\mu[\hat{P}_t(\theta_1,\cdot)M_b]\delta b(t),Y^1_t(\theta_1)\big\rangle\,\mu(\diff\theta_1)\right.\right.\\
	\nonumber
	&\left.\left.\hspace{1.5cm}+\big\langle\mu^{\otimes 2}[M_\sigma^\top\hat{P}_tM_\sigma]\delta\sigma(t),\delta\sigma(t)\big\rangle\right.\right.\\
	\nonumber
	&\left.\left.\hspace{1.5cm}+\big\langle\mu^{\otimes 2}[M_\sigma^\top\hat{P}_tM_\sigma]\hat{\sigma}_x(t)X^1_t,\delta\sigma(t)\big\rangle+\big\langle\mu^{\otimes 2}[M_\sigma^\top\hat{P}_tM_\sigma]\delta\sigma(t),\hat{\sigma}_x(t)X^1_t\big\rangle\right.\right.\\
	\label{MP_eq_duality2-mu}
	&\left.\left.\hspace{1.5cm}+\int_{\bR_+}\big\langle\mu[M_\sigma^\top\hat{Q}_t(\cdot,\theta_2)]Y^1_t(\theta_2),\delta\sigma(t)\big\rangle\,\mu(\diff\theta_2)+\int_{\bR_+}\big\langle\mu[\hat{Q}_t(\theta_1,\cdot)M_\sigma]\delta\sigma(t),Y^1_t(\theta_1)\big\rangle\,\mu(\diff\theta_1)\right\}\,\diff t\vphantom{\int^T_0}\right],
\end{align}
where
\begin{align*}
	&\mu[M_b^\top\hat{P}_t(\cdot,\theta_2)]:=\int_{\bR_+}M_b^\top(\theta_1)\hat{P}_t(\theta_1,\theta_2)\,\mu(\diff\theta_1),\ \mu[\hat{P}_t(\theta_1,\cdot)M_b]:=\int_{\bR_+}\hat{P}_t(\theta_1,\theta_2)M_b(\theta_2)\,\mu(\diff\theta_2),\\
	&\mu^{\otimes2}[M_\sigma^\top\hat{P}_tM_\sigma]:=\int_{\bR_+}\int_{\bR_+}M_\sigma(\theta_1)^\top\hat{P}_t(\theta_1,\theta_2)M_\sigma(\theta_2)\dmumu,
\end{align*}
and the terms $\mu[M_\sigma^\top\hat{Q}_t(\cdot,\theta_2)]$ and $\mu[\hat{Q}_t(\theta_1,\cdot)M_\sigma]$ are defined similarly. The above calculations will be proved rigorously after deriving adjoint equations and showing their well-posedness; see \cref{MP_prop_J12-small} below.

By \eqref{MP_eq_terminal1}, \eqref{MP_eq_terminal2}, \eqref{MP_eq_duality1-mu} and \eqref{MP_eq_duality2-mu}, the term $J^{1,2}=J^{1,2,v,\tau,\ep}$ defined by \eqref{Taylor_eq_vareq-cost} is expressed as follows:
\begin{align}
	\nonumber
	&J^{1,2}\\
	\nonumber
	&=-\bE\left[\int^T_0\left\{\big\langle\hat{b}_x(t)^\top\mu[M_b^\top\hat{p}_t]+\hat{\sigma}_x(t)^\top\mu[M_\sigma^\top\hat{q}_t]-\hat{f}_x(t)^\top,X^{1,2}_t\big\rangle-\int_{\bR_+}\big\langle\hat{g}_t(\theta),Y^{1,2}_t(\theta)\big\rangle\dmu\right\}\,\diff t\right]\\
	\nonumber
	&\hspace{0.3cm}-\frac{1}{2}\bE\left[\int^T_0\left\{\int_{\bR_+}\big\langle\hat{b}_x(t)^\top\mu[M_b^\top\hat{P}_t(\cdot,\theta_2)]Y^1_t(\theta_2),X^1_t\big\rangle\,\mu(\diff\theta_2)+\int_{\bR_+}\big\langle\mu[\hat{P}_t(\theta_1,\cdot)M_b]\hat{b}_x(t)X^1_t,Y^1_t(\theta_1)\big\rangle\,\mu(\diff\theta_1)\right.\right.\\
	\nonumber
	&\left.\left.\hspace{1.5cm}+\big\langle\hat{\sigma}_x(t)^\top\mu^{\otimes 2}[M_\sigma^\top\hat{P}_tM_\sigma]\hat{\sigma}_x(t)X^1_t,X^1_t\big\rangle\right.\right.\\
	\nonumber
	&\left.\left.\hspace{1.5cm}+\int_{\bR_+}\big\langle\hat{\sigma}_x(t)^\top\mu[M_\sigma^\top\hat{Q}_t(\cdot,\theta_2)]Y^1_t(\theta_2),X^1_t\big\rangle\,\mu(\diff\theta_2)+\int_{\bR_+}\big\langle\mu[\hat{Q}_t(\theta_1,\cdot)M_\sigma]\hat{\sigma}_x(t)X^1_t,Y^1_t(\theta_1)\big\rangle\,\mu(\diff\theta_1)\right.\right.\\
	\nonumber
	&\left.\left.\hspace{1.5cm}+\Big\langle\Big(\big\langle\mu[M_b^\top\hat{p}_t],\hat{b}_{xx}(t)\big\rangle+\big\langle\mu[M_\sigma^\top\hat{q}_t],\hat{\sigma}_{xx}(t)\big\rangle-\hat{f}_{xx}(t)\Big)X^1_t,X^1_t\Big\rangle\right.\right.\\
	\nonumber
	&\left.\left.\hspace{1.5cm}-\int_{\bR_+}\int_{\bR_+}\big\langle\hat{G}_t(\theta_1,\theta_2)Y^1_t(\theta_2),Y^1_t(\theta_1)\big\rangle\,\dmumu\right\}\,\diff t\right]\\
	\nonumber
	&\hspace{0.3cm}-\bE\left[\int^{\tau+\ep}_\tau\left\{\big\langle\mu[M_b^\top\hat{p}_t],\delta b(t)\big\rangle+\big\langle\mu[M_\sigma^\top\hat{q}_t],\delta\sigma(t)\big\rangle-\delta f(t)+\frac{1}{2}\big\langle\mu^{\otimes2}[M_\sigma^\top\hat{P}_tM_\sigma]\delta\sigma(t),\delta\sigma(t)\big\rangle\right\}\,\diff t\right]\\
	\nonumber
	&\hspace{0.3cm}-\bE\left[\int^{\tau+\ep}_\tau\left\{\big\langle\mu[M_b^\top\hat{p}_t],\delta b_x(t)X^1_t\big\rangle+\big\langle\mu[M_\sigma^\top\hat{q}_t],\delta\sigma_x(t)X^1_t\big\rangle\right.\right.\\
	\nonumber
	&\left.\left.\hspace{1.5cm}+\frac{1}{2}\int_{\bR_+}\big\langle\mu[M_b^\top\hat{P}_t(\cdot,\theta_2)]Y^1_t(\theta_2),\delta b(t)\big\rangle\,\mu(\diff\theta_2)+\frac{1}{2}\int_{\bR_+}\big\langle\mu[\hat{P}_t(\theta_1,\cdot)M_b]\delta b(t),Y^1_t(\theta_1)\big\rangle\,\mu(\diff\theta_1)\right.\right.\\
	\nonumber
	&\left.\left.\hspace{1.5cm}+\frac{1}{2}\big\langle\mu^{\otimes 2}[M_\sigma^\top\hat{P}_tM_\sigma]\hat{\sigma}_x(t)X^1_t,\delta\sigma(t)\big\rangle+\frac{1}{2}\big\langle\mu^{\otimes2}[M_\sigma^\top\hat{P}_tM_\sigma]\delta\sigma(t),\hat{\sigma}_x(t)X^1_t\big\rangle\right.\right.\\
	\label{MP_eq_vareq-cost'}
	&\left.\left.\hspace{1.5cm}+\frac{1}{2}\int_{\bR_+}\big\langle\mu[M_\sigma^\top\hat{Q}_t(\cdot,\theta_2)]Y^1_t(\theta_2),\delta\sigma(t)\big\rangle\,\mu(\diff\theta_2)+\frac{1}{2}\int_{\bR_+}\big\langle\mu[\hat{Q}_t(\theta_1,\cdot)M_\sigma]\delta\sigma(t),Y^1_t(\theta_1)\big\rangle\,\mu(\diff\theta_1)\right\}\,\diff t\right].
\end{align}
Notice that the third expectation in the right-hand side of \eqref{MP_eq_vareq-cost'} does not involve $X^{1,v,\tau,\ep}$ or $X^{1,2,v,\tau,\ep}$. The last expectation is expected to be of order $o(\ep)$ (which will be shown after introducing precise forms of adjoint equations and proving their well-posedness; see \cref{MP_prop_J12-small} below). Moreover, noting that $\mu[Y^1_t]=X^1_t$ and $\mu[Y^{1,2}_t]=X^{1,2}_t$ a.s.\ for any $t\in[0,T]$ by \cref{MP_lemm_vareq-lift}, the first and second expectations would vanish if $\hat{g}$ and $\hat{G}$ were of the following form:
\begin{equation}\label{MP_eq_generator-g}
	\hat{g}_t(\theta)=\hat{b}_x(t)^\top\mu[M_b^\top\hat{p}_t]+\hat{\sigma}_x(t)^\top\mu[M_\sigma^\top\hat{q}_t]-\hat{f}_x(t)^\top
\end{equation}
and
\begin{equation}\label{MP_eq_generator-G}
\begin{split}
	\hat{G}_t(\theta_1,\theta_2)&=\hat{b}_x(t)^\top\mu[M_b^\top\hat{P}_t(\cdot,\theta_2)]+\mu[\hat{P}_t(\theta_1,\cdot)M_b]\hat{b}_x(t)+\hat{\sigma}_x(t)^\top\mu^{\otimes2}[M_\sigma^\top\hat{P}_tM_\sigma]\hat{\sigma}_x(t)\\
	&\hspace{0.5cm}+\hat{\sigma}_x(t)^\top\mu[M_\sigma^\top\hat{Q}_t(\cdot,\theta_2)]+\mu[\hat{Q}_t(\theta_1,\cdot)M_\sigma]\hat{\sigma}_x(t)\\
	&\hspace{0.5cm}+\langle\mu[M_b^\top\hat{p}_t],\hat{b}_{xx}(t)\rangle+\langle\mu[M_\sigma^\top\hat{q}_t],\hat{\sigma}_{xx}(t)\rangle-\hat{f}_{xx}(t).
\end{split}
\end{equation}
Inserting the above expressions to the BSDEs \eqref{MP_eq_BSDE1} and \eqref{MP_eq_BSDE2}, we get the \emph{first order adjoint equation}
\begin{equation}\label{MP_eq_adeq1}
	\begin{dcases}
	\diff\hat{p}_t(\theta)=\theta\hat{p}_t(\theta)\,\diff t-\Big\{\hat{b}_x(t)^\top\mu[M_b^\top\hat{p}_t]+\hat{\sigma}_x(t)^\top\mu[M_\sigma^\top\hat{q}_t]-\hat{f}_x(t)^\top\Big\}\,\diff t+\hat{q}_t(\theta)\,\diff W_t,\ \ \theta\in\bR_+,\ t\in[0,T],\\
	\hat{p}_T(\theta)=-\hat{h}_x^\top,\ \ \theta\in\bR_+,
	\end{dcases}
\end{equation}
and the \emph{second order adjoint equation}
\begin{equation}\label{MP_eq_adeq2}
	\begin{dcases}
	\diff\hat{P}_t(\theta_1,\theta_2)=(\theta_1+\theta_2)\hat{P}_t(\theta_1,\theta_2)\,\diff t\\
	\hspace{2cm}-\Big\{\hat{b}_x(t)^\top\mu[M_b^\top\hat{P}_t(\cdot,\theta_2)]+\mu[\hat{P}_t(\theta_1,\cdot)M_b]\hat{b}_x(t)+\hat{\sigma}_x(t)^\top\mu^{\otimes2}[M_\sigma^\top\hat{P}_tM_\sigma]\hat{\sigma}_x(t)\\
	\hspace{3.5cm}+\hat{\sigma}_x(t)^\top\mu[M_\sigma^\top\hat{Q}_t(\cdot,\theta_2)]+\mu[\hat{Q}_t(\theta_1,\cdot)M_\sigma]\hat{\sigma}_x(t)\\
	\hspace{3.5cm}+\big\langle\mu[M_b^\top\hat{p}_t],\hat{b}_{xx}(t)\big\rangle+\big\langle\mu[M_\sigma^\top\hat{q}_t],\hat{\sigma}_{xx}(t)\big\rangle-\hat{f}_{xx}(t)\Big\}\,\diff t\\
	\hspace{2cm}+\hat{Q}_t(\theta_1,\theta_2)\,\diff W_t,\ \ (\theta_1,\theta_2)\in\bR_+^2,\ t\in[0,T],\\
	\hat{P}_T(\theta_1,\theta_2)=-\hat{h}_{xx},\ \ (\theta_1,\theta_2)\in\bR_+^2.
	\end{dcases}
\end{equation}
Notice that the above equations depend only on the fixed control process $\hat{u}\in\cU$ and is independent of the choices of $v\in\cU$, $\tau\in[0,T)$ and $\ep\in(0,T-\tau]$.

Summarizing the above heuristic arguments, by using the solutions $(\hat{p},\hat{q})$ and $(\hat{P},\hat{Q})$ of the first and second order adjoint equations \eqref{MP_eq_adeq1} and \eqref{MP_eq_adeq2}, the term $J^{1,2}=J^{1,2,v,\tau,\ep}$ is written as
\begin{equation}\label{MP_eq_vareq-cost''}
	J^{1,2}=-\bE\left[\int^{\tau+\ep}_\tau\left\{\big\langle\mu[M_b^\top\hat{p}_t],\delta b(t)\big\rangle+\big\langle\mu[M_\sigma^\top\hat{q}_t],\delta\sigma(t)\big\rangle-\delta f(t)+\frac{1}{2}\big\langle\mu^{\otimes2}[M_\sigma^\top\hat{P}_tM_\sigma]\delta\sigma(t),\delta\sigma(t)\big\rangle\right\}\,\diff t\right]-\cE,
\end{equation}
where $\cE=\cE^{v,\tau,\ep}$ is the last expectation in the right-hand side of \eqref{MP_eq_vareq-cost'} (defined through the solutions $(\hat{p},\hat{q})$ and $(\hat{P},\hat{Q})$ of the adjoint equations), which is expected to be of order $o(\ep)$. From this, together with \cref{Taylor_prop_SVE-expansion}, one can obtain the global maximum principle. In order to rigorously prove the heuristic arguments in this subsection, we need to investigate the following three points:
\begin{itemize}
\item
Define appropriate notions of the ``solutions'' of the first and second order adjoint equations \eqref{MP_eq_adeq1} and \eqref{MP_eq_adeq2}, and prove their well-posedness.
\item
Justify the applications of Fubini's theorem deriving \eqref{MP_eq_duality1-mu} and \eqref{MP_eq_duality2-mu} from \eqref{MP_eq_duality1-theta} and \eqref{MP_eq_duality2-theta}, respectively.
\item
Show that the remainder term $\cE=\cE^{v,\tau,\ep}$, the last expectation in the right-hand side of \eqref{MP_eq_vareq-cost'}, is of order $o(\ep)$ as $\ep\downarrow0$.
\end{itemize}

The first and second order adjoint equations \eqref{MP_eq_adeq1} and \eqref{MP_eq_adeq2} are systems of infinitely many finite dimensional BSDEs for $(\hat{p}(\theta),\hat{q}(\theta))$ and $(\hat{P}(\theta_1,\theta_2),\hat{Q}(\theta_1,\theta_2))$ with parameters $\theta\in\bR_+$ and $(\theta_1,\theta_2)\in\bR_+^2$, respectively, which are coupled via the integral terms with respect to $\mu$. In another perspective, they can be regarded as infinite dimensional \emph{backward stochastic evolution equations} (BSEEs) defined on certain spaces of (univariate and two variate, respectively) functions. In \cref{MP-adeq}, we formulate a general framework of BSEEs and show the well-posedness of the first and second order adjoint equations \eqref{MP_eq_adeq1} and \eqref{MP_eq_adeq2} in a unified manner; see also \cref{BSEE} for the proof of the well-posedness of the general BSEE. After that, in \cref{MP-main}, we rigorously justify the heuristic arguments developed in this subsection and show the global maximum principle.

%%%%%%
\subsection{Well-posedness of the adjoint equations}\label{MP-adeq}
%%%%%%

In this subsection, we show the well-posedness of the first and second order adjoint equations \eqref{MP_eq_adeq1} and \eqref{MP_eq_adeq2}. If the measure $\mu$ is finite and if the maps $M_b$ and $M_\sigma$ are bounded (that correspond to the case where the kernels $K_b$ and $K_\sigma$ are regular), then \eqref{MP_eq_adeq1} and \eqref{MP_eq_adeq2} can be regarded as BSEEs on $L^2(\mu;\bR^n)$ and $L^2(\mu^{\otimes2};\bR^{n\times n})$, respectively. Hence, the general results on BSEEs on a (single) Hilbert space (cf.\ \cite{HuPe91}) can be applied to them. However, this is not the case for the singular kernels in \cref{MP_assum_kernel}. Since the measure $\mu$ is typically an infinite measure, the terminal condition $-\hat{h}_x^\top$ or $-\hat{h}_{xx}$ does no longer belong to $L^p(\mu;\bR^n)$ or $L^p(\mu^{\otimes2};\bR^{n\times n})$ for each $p\in[1,\infty)$. Also, there is a difficulty to deal with the well-posedness and continuity (in the operator sense) of the integral maps appearing in the generators of \eqref{MP_eq_adeq1} and \eqref{MP_eq_adeq2}. Our idea to overcome these difficulties in the singular kernel case is to formulate the BSEEs on (a family of) weighted $L^2$ spaces in the spirit of the author's previous work \cite{Ha24}, where the infinite dimensional Markovian lifts of SVEs are investigated.

Now let us introduce a suitable framework of BSEEs on weighted $L^2$ spaces, that contains the first and second order adjoint equations \eqref{MP_eq_adeq1} and \eqref{MP_eq_adeq2} as special cases. Let $(\Theta,\Sigma,\nu)$ be a countably generated $\sigma$-finite measure space, that is, $\Sigma$ is a countably generated $\sigma$-algebra on a (non-empty) set $\Theta$, and $\nu$ is a $\sigma$-finite measure on $(\Theta,\Sigma)$. Let $\varpi:\Theta\to\bR_+$ be a nonnegative $\Sigma$-measurable function, and let $E$ be a Euclidean space. For each $\beta\in\bR$, we denote by $\bH_\beta=\bH_\beta(\Theta,\Sigma,\nu,\varpi;E)$ the set of the $\nu$-equivalence classes of $\Sigma$-measurable functions $\psi:\Theta\to E$ such that
\begin{equation}\label{BSEE_eq_norm}
	\|\psi\|_{\bH_\beta}:=\left(\int_\Theta(1+\varpi(\vth))^\beta|\psi(\vth)|^2\,\nu(\diff\vth)\right)^{1/2}<\infty.
\end{equation}
For each $\psi_1,\psi_2\in\bH_\beta$, define
\begin{equation*}
	\langle\psi_1,\psi_2\rangle_{\bH_\beta}:=\int_\Theta(1+\varpi(\vth))^\beta\langle\psi_1(\vth),\psi_2(\vth)\rangle\,\nu(\diff\vth).
\end{equation*}
Then, $(\bH_\beta,\|\cdot\|_{\bH_\beta},\langle\cdot,\cdot\rangle_{\bH_\beta})$ is a separable Hilbert space\footnote{Notice that $\psi\mapsto(1+\varpi)^{\beta/2}\psi$ is an isometric isomorphism from $\bH_\beta$ to $L^2(\Theta,\Sigma,\nu;E)$. Since $\Sigma$ is countably generated and $\nu$ is $\sigma$-finite, the Hilbert space $L^2(\Theta,\Sigma,\nu;E)$ is separable, and hence $\bH_\beta$ is separable as well.}. We equip $\bH_\beta$ with the Borel $\sigma$-algebra $\cB(\bH_\beta)$. Clearly, we have the natural continuous embedding $\bH_\beta\hookrightarrow\bH_{\beta'}$ for $\beta\geq\beta'$. Notice also that $\bH_0=L^2(\Theta,\Sigma,\nu;E)$.

Let an $\cF_T$-measurable random variable $\Phi:\Omega\to\bH_{\beta_\Phi}$ and a progressively measurable map $\cG:\Omega\times[0,T]\times\bH_{\beta_\cP}\times\bH_{\beta_\cQ}\to\bH_{\beta_\cG}$ be given, where $\beta_\Phi,\beta_\cG,\beta_\cP,\beta_\cQ\in\bR$ are given parameters. We consider the following BSEE on the weighted $L^2$-space:
\begin{equation}\label{BSEE_eq_BSEE}
	\begin{dcases}
	\diff\cP_t(\vth)=\varpi(\vth)\cP_t(\vth)\,\diff t-\cG(t,\cP_t,\cQ_t)(\vth)\,\diff t+\cQ_t(\vth)\,\diff W_t,\ \ \vth\in\Theta,\ t\in[0,T],\\
	\cP_T(\vth)=\Phi(\vth),\ \ \vth\in\Theta.
	\end{dcases}
\end{equation}
The above is a formal expression. The precise notion of the solution is defined as follows.

%% Definition

\begin{defi}\label{BSEE_defi_solution}
Under the above setting, we say that a pair $(\cP,\cQ)$ of progressively measurable processes $\cP:\Omega\times[0,T]\to\bH_{\beta_\cP}$ and $\cQ:\Omega\times[0,T]\to\bH_{\beta_\cQ}$ is a \emph{solution} to the BSEE \eqref{BSEE_eq_BSEE} if there exist (jointly) progressively measurable maps $\bar{p},\bar{q},\bar{g}:\Omega\times[0,T]\times\Theta\to E$ such that the following hold:
\begin{itemize}
\item
$[\bar{p}_t(\cdot)]=\cP_t$, $[\bar{q}_t(\cdot)]=\cQ_t$, and $[\bar{g}_t(\cdot)]=\cG(t,\cP_t,\cQ_t)$ a.s. for a.e.\ $t\in[0,T]$, as well as $[\bar{p}_T(\cdot)]=\Phi$ a.s., where $[\psi(\cdot)]$ denotes the $\nu$-equivalence class of a $\Sigma$-measurable map $\psi:\Theta\to E$;
\item
for each fixed $\vth\in\Theta$, it holds that
\begin{equation}\label{BSEE_eq_L^2-solution}
	\bE\left[\sup_{t\in[0,T]}|\bar{p}_t(\vth)|^2+\int^T_0|\bar{q}_t(\vth)|^2\,\diff t+\left(\int^T_0|\bar{g}_t(\vth)|\,\diff t\right)^2\right]<\infty,
\end{equation}
and the process $\bar{p}(\vth)=(\bar{p}_t(\vth))_{t\in[0,T]}$ is an $E$-valued continuous semimartingale satisfying
\begin{equation*}
	\diff \bar{p}_t(\vth)=\varpi(\vth)\bar{p}_t(\vth)\,\diff t-\bar{g}_t(\vth)\,\diff t+\bar{q}_t(\vth)\,\diff W_t,\ \ t\in[0,T],
\end{equation*}
in the usual It\^{o}'s sense.
\end{itemize}
We call a tuple $(\bar{p},\bar{q},\bar{g})$ satisfying the above properties a \emph{good representative} for the solution $(\cP,\cQ)$.
\end{defi}

%% Remark

\begin{rem}
Notice that the process $\cP:\Omega\times[0,T]\to\bH_{\beta_\cP}$ itself is not necessarily an $\bH_{\beta_\cP}$-valued semimartingale.
\end{rem}

We summarize the above setup and impose the following assumption on the coefficients $\Phi$ and $\cG$.

%% Assumption

\begin{assum}\label{BSEE_assum_coefficient}
Let $(\Theta,\Sigma,\nu)$ be a countably generated $\sigma$-finite measure space. Let $\varpi:\Theta\to\bR_+$ be a nonnegative $\Sigma$-measurable function, and let $E$ be a Euclidean space. Fix $\alpha\in[0,1)$. Let $\Phi:\Omega\to\bH_0$ be an $\cF_T$-measurable random variable and $\cG:\Omega\times[0,T]\times\bH_{1+\alpha}\times\bH_\alpha\to\bH_0$ be a progressively measurable map. Assume that
\begin{equation}\label{BSEE_eq_BSEE-generator}
	\bE\left[\|\Phi\|^2_{\bH_0}+\int^T_0(T-t)^\alpha\|\cG(t,0,0)\|^2_{\bH_0}\,\diff t\right]<\infty.
\end{equation}
Furthermore, assume that there exists a constant $L>0$ such that
\begin{equation}\label{BSEE_eq_BSEE-Lip}
	\|\cG(t,\cP,\cQ)-\cG(t,\cP',\cQ')\|_{\bH_0}\leq L\big\{\|\cP-\cP'\|_{\bH_{1+\alpha}}+\|\cQ-\cQ'\|_{\bH_\alpha}\big\}
\end{equation}
for any $(\cP,\cQ),(\cP',\cQ')\in\bH_{1+\alpha}\times\bH_\alpha$ and $(\omega,t)\in\Omega\times[0,T]$.
\end{assum}

%% Remark

\begin{rem}
As we will see below, the parameter $\alpha\in[0,1)$ in \cref{BSEE_assum_coefficient} corresponds to the one in \cref{MP_assum_kernel}. We are in particular interested in the ``singular case'' $\alpha\in(0,1)$.
\end{rem}

The following is the main result in this subsection. The proof is postponed to \cref{BSEE}.

%% Theorem

\begin{theo}\label{BSEE_theo_BSEE}
Let \cref{BSEE_assum_coefficient} hold. Then, the BSEE \eqref{BSEE_eq_BSEE} has a solution $(\cP,\cQ):\Omega\times[0,T]\to\bH_{1+\alpha}\times\bH_\alpha$ such that
\begin{equation}\label{BSEE_eq_BSEE-estimate}
\begin{split}
	&\bE\left[\esssup_{t\in[0,T]}\|\cP_t\|^2_{\bH_0}+\int^T_0\|\cP_t\|^2_{\bH_1}\,\diff t+\int^T_0\|\cQ_t\|^2_{\bH_0}\,\diff t\right.\\
	&\left.\hspace{0.5cm}+\esssup_{t\in[0,T]}(T-t)^\alpha\|\cP_t\|^2_{\bH_\alpha}+\int^T_0(T-t)^\alpha\|\cP_t\|^2_{\bH_{1+\alpha}}\,\diff t+\int^T_0(T-t)^\alpha\|\cQ_t\|^2_{\bH_\alpha}\,\diff t\right]\\
	&\leq C_{T,L,\alpha}\,\bE\left[\|\Phi\|^2_{\bH_0}+\int^T_0(T-t)^\alpha\|\cG(t,0,0)\|_{\bH_0}^2\,\diff t\right],
\end{split}
\end{equation}
where $C_{T,L,\alpha}>0$ is a constant depending only on $T$, $L$ and $\alpha$. In particular,
\begin{equation}\label{BSEE_eq_solution-integrability}
	\bE\left[\int^T_0(T-t)^\alpha\|\cG(t,\cP_t,\cQ_t)\|_{\bH_0}^2\,\diff t\right]<\infty.
\end{equation}
Furthermore, the solution is unique in the following sense: if $(\cP',\cQ'):\Omega\times[0,T]\to\bH_{1+\alpha}\times\bH_\alpha$ is another solution of the BSEE \eqref{BSEE_eq_BSEE} satisfying \eqref{BSEE_eq_solution-integrability} with $(\cP,\cQ)$ replaced by $(\cP',\cQ')$, then $\cP'_t=\cP_t$ in $\bH_{1+\alpha}$ and $\cQ'_t=\cQ_t$ in $\bH_\alpha$ a.s.\ for a.e.\ $t\in[0,T]$.
\end{theo}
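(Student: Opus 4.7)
My plan is to solve the BSEE in the parameter $\vth$ pointwise by exploiting its scalar linear structure, and then to control the resulting processes in the weighted norms on the left-hand side of \eqref{BSEE_eq_BSEE-estimate} via a Banach fixed-point argument. For a candidate driver $\bar g_t(\vth)$ with appropriate joint measurability (handled as in \cref{app-measurability}), the BSDE with fixed parameter $\vth$ is scalar linear with constant drift coefficient $\varpi(\vth)\geq 0$ and admits the explicit representation
\begin{equation*}
	\bar p_t(\vth) = \bE_t\Big[e^{-\varpi(\vth)(T-t)}\Phi(\vth) + \int_t^T e^{-\varpi(\vth)(s-t)}\bar g_s(\vth)\,\diff s\Big],
\end{equation*}
with $\bar q(\vth)$ obtained from the Brownian martingale representation applied to the associated closed martingale. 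Taking $\nu$-equivalence classes pointwise in $t$ defines candidate processes $\cP$ and $\cQ$, and the plan is to iterate this map with $\bar g_s(\vth) := \cG(s,\cP'_s,\cQ'_s)(\vth)$.

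The technical core is an a priori estimate on the resolvent map $(\Phi,g) \mapsto (\bar p,\bar q)$, split into two parts matching the two groups of norms in \eqref{BSEE_eq_BSEE-estimate}. The unweighted-in-time part $\sup_t\|\cP_t\|_{\bH_0}^2 + \int_0^T\|\cP_t\|_{\bH_1}^2\,\diff t + \int_0^T\|\cQ_t\|_{\bH_0}^2\,\diff t$ is controlled by the It\^o energy identity
\begin{equation*}
	\bE\|\cP_t\|_{\bH_\beta}^2 + 2\bE\int_t^T\|\cP_s\|_{\bH_{\beta+1}}^2\,\diff s + \bE\int_t^T\|\cQ_s\|_{\bH_\beta}^2\,\diff s = \bE\|\Phi\|_{\bH_\beta}^2 + 2\bE\int_t^T\|\cP_s\|_{\bH_\beta}^2\,\diff s + 2\bE\int_t^T\langle\cP_s,\cG_s\rangle_{\bH_\beta}\,\diff s,
\end{equation*}
obtained from the pointwise It\^o expansion of $|\bar p_t(\vth)|^2$, integration against $\nu(\diff\vth)$, and the elementary identity $(1+\varpi)^\beta\varpi = (1+\varpi)^{\beta+1}-(1+\varpi)^\beta$; specialized to $\beta=0$ and combined with the Lipschitz bound \eqref{BSEE_eq_BSEE-Lip}, Young's inequality and Burkholder--Davis--Gundy, it yields the desired control. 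The $(T-t)^\alpha$-weighted part, for which the $\beta=\alpha$ version of the above identity is inadmissible because $\Phi$ is not assumed to lie in $\bH_\alpha$, is derived instead from the pointwise explicit representation via the $\varpi$-uniform inequalities
\begin{equation*}
	\sup_{t\in[0,T]}(T-t)^\alpha(1+\varpi)^\alpha e^{-2\varpi(T-t)} \leq C_\alpha,\qquad \int_0^T(T-t)^\alpha(1+\varpi)^{1+\alpha}e^{-2\varpi(T-t)}\,\diff t \leq C_\alpha
\end{equation*}
(proved by the substitution $u = 2\varpi(T-t)$ and a dichotomy on whether $\varpi(T-t)$ is $\lesssim 1$ or $\gtrsim 1$) for the $\Phi$-contribution, together with a Schur-type estimate on the convolution kernel $e^{-\varpi(s-t)}\mathbf{1}_{t<s}$ paired against $(T-s)^{\alpha/2}$-weighted functions for the $\bar g$-contribution. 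The bound on $\int_0^T(T-t)^\alpha\|\cQ_t\|_{\bH_\alpha}^2\,\diff t$ then closes by applying the $\beta=\alpha$ energy identity to a time-truncated version and using the estimates already obtained on $\cP$.

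With the a priori estimate in hand, let $\cB$ be the Banach space of progressively measurable pairs $(\cP,\cQ):\Omega\times[0,T]\to\bH_{1+\alpha}\times\bH_\alpha$ equipped with the norm whose square is the LHS of \eqref{BSEE_eq_BSEE-estimate}. Define $\Xi(\cP',\cQ') := (\cP,\cQ)$ via the pointwise BSDE with $\bar g_s(\vth) := \cG(s,\cP'_s,\cQ'_s)(\vth)$. The estimate shows $\Xi$ sends $\cB$ into itself; applied to the difference $\Xi(\cP'_1,\cQ'_1)-\Xi(\cP'_2,\cQ'_2)$ (which corresponds to $\Phi=0$ and driver-difference bounded by $L(\|\cP'_1-\cP'_2\|_{\bH_{1+\alpha}}+\|\cQ'_1-\cQ'_2\|_{\bH_\alpha})$), contraction is obtained either by introducing an exponential time weight $e^{\lambda(T-t)}$ on $\cB$ with $\lambda$ sufficiently large, or by first solving on a short terminal interval $[T-\delta,T]$ and concatenating backward. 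Banach's fixed-point theorem then yields a unique fixed point in $\cB$, and a good representative $(\bar p,\bar q,\bar g)$ is produced along the way. Uniqueness in the sense of the theorem follows by applying the $\beta=0$ energy identity to the difference of two solutions satisfying \eqref{BSEE_eq_solution-integrability} together with a Gronwall argument.

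\emph{The main obstacle} is establishing the $\varpi$-uniform a priori estimate itself: the Lipschitz assumption \eqref{BSEE_eq_BSEE-Lip} entails a full loss of ``spatial regularity'' from $\bH_{1+\alpha}\times\bH_\alpha$ down to $\bH_0$, placing the equation outside the standard BSEE-on-Gelfand-triplet framework of \cite{HuPe91,Pe92}. This loss must be compensated simultaneously by the smoothing of the multiplication semigroup $\psi\mapsto e^{-\varpi t}\psi$ (whose ``eigenvalues'' $\varpi(\vth)$ are unbounded in the heavy-tail direction of $\Theta$) and by the singular temporal weight $(T-t)^\alpha$ near the terminal time; closing the uniform-in-$\varpi$ balance between the regimes $\varpi(T-t)\lesssim 1$ and $\varpi(T-t)\gtrsim 1$ in the Schur-type bound for the $\bar g$-contribution is the delicate point. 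Once this sharp estimate is secured, the contraction, measurability, and uniqueness steps follow standard patterns.
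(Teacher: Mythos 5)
Your outline matches the paper's strategy in its essentials: solve pointwise in $\vth$ via the explicit conditional-expectation representation, establish $\varpi$-uniform a priori bounds that trade spatial weight for temporal weight and for inverse powers of $(1+\varpi)$, and close by Banach's fixed point. The paper packages the pointwise estimates as a single scalar BSDE lemma (\cref{BSEE_lemm_BSDE-apriori}) with an explicit factor $\Gamma(1-\alpha)/\kappa^{1-\alpha}$ in front of the generator term, which is precisely the ``closing the uniform-in-$\varpi$ balance'' that you flag as the main obstacle. Two points in your plan would need repair before they reach the same place.

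First, the exponential weight has the wrong sign. Writing $\bar p_t(\vth)=\bE_t[\int_t^T e^{-\varpi(s-t)}\bar g_s(\vth)\,\diff s + \dots]$, multiplying by $e^{\lambda(T-t)}$ turns the resolvent kernel into $e^{-(\varpi-\lambda)(s-t)}$, which loses positivity of the effective dissipation when $\varpi<\lambda$ and destroys the Schur bound. The correct weight is $e^{\lambda t}$: then $(e^{\lambda t}\bar p_t,e^{\lambda t}\bar q_t)$ solves the same BSDE with $\varpi(\vth)$ replaced by $\lambda+\varpi(\vth)\geq\lambda$, so the resolvent kernel becomes $e^{-(\varpi+\lambda)(s-t)}$ and every $\varpi$-power that appears in the estimate is replaced by a power of $\lambda+\varpi$, uniformly bounded below by $\lambda$. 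This is exactly how the paper extracts a contraction constant of order $\lambda^{-(1-\alpha)}$ uniformly over $\vth$.

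Second, the weighted bound $\bE\int_0^T(T-t)^\alpha\|\cQ_t\|_{\bH_\alpha}^2\,\diff t$ does not follow from ``the $\beta=\alpha$ energy identity on a time-truncated version'' as stated. Truncating at $T-\epsilon$ gives $\bE\int_0^{T-\epsilon}\|\cQ_s\|_{\bH_\alpha}^2\,\diff s\lesssim\bE\|\cP_{T-\epsilon}\|_{\bH_\alpha}^2+\dots\lesssim\epsilon^{-\alpha}$, and multiplying by $\epsilon^\alpha$ only bounds $\epsilon^\alpha\int_0^{T-\epsilon}\|\cQ_s\|_{\bH_\alpha}^2\,\diff s$, not $\int_0^T(T-s)^\alpha\|\cQ_s\|_{\bH_\alpha}^2\,\diff s$; a dyadic summation of these truncated bounds diverges. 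The paper instead multiplies the It\^o energy identity over $[t,\tau]$ by $\alpha(T-\tau)^{\alpha-1}$ and integrates over $\tau\in[t,T]$ (equation \eqref{BSEE_eq_BSDE-Ito2}), which removes the terminal term and produces the $(T-t)^\alpha$-weighted identity at the cost of an extra source $\alpha\int_t^T(T-s)^{\alpha-1}|\fp_s|^2\,\diff s$. That source is then controlled by a smoothing estimate from the explicit representation, $\bE\int_0^T(T-t)^{\beta-1}|\fp_t|^2\,\diff t\leq(2\kappa)^{-\beta}\Gamma(\beta)\,\bE[|\fh|^2+2\int_0^T|\fp_t||\fg_t|\,\diff t]$ with $\beta=\alpha$, whose $\kappa^{-\alpha}$ gain is what lets the $(1+\varpi)^\alpha$ weight appear on the left-hand side after integrating in $\nu$. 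This averaging against $\alpha(T-\tau)^{\alpha-1}$ is the missing ingredient; once it and the correct exponential weight are in place, your Schur computation, the concatenation alternative, the measurability bookkeeping via \cref{app-measurability}, and the uniqueness argument all proceed as you describe.
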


%% Remark

\begin{rem}\label{BSEE_rem_Gelfand}
Notice the relations
\begin{equation*}
	\bH_{1+\alpha}\hookrightarrow\bH_1\hookrightarrow\bH_\alpha\hookrightarrow\bH_0=L^2(\Theta,\Sigma,\nu;E)\hookrightarrow\bH_{-1}.
\end{equation*}
The terminal random variable $\Phi$ and generator $\cG$ take values in the space $\bH_0$. If \cref{BSEE_assum_coefficient} holds with $\alpha=0$, then \eqref{BSEE_eq_BSEE} reduces to a BSEE defined on the Gelfand triplet $\bH_1\hookrightarrow\bH_0\hookrightarrow\bH_{-1}$, and the well-established result in \cite{Pe92} is applicable; see also \cite{HuPe91,HuTa18,LiZh20,MeSh13,Zh92} for further works on infinite dimensional BSEEs on Gelfand triplets. However, this is not the case when $\alpha\in(0,1)$. The difficulty in the ``singular case'' is that $(\cP,\cQ)\mapsto\cG(t,\cP,\cQ)$ is not necessarily continuous (or even not defined) as a map from $\bH_1\times\bH_0$ to $\bH_0$. In this case, the BSEE \eqref{BSEE_eq_BSEE} is beyond the existing frameworks of BSEEs on Gelfand triplets, and \cref{BSEE_theo_BSEE} is a quite non-trivial result.
\end{rem}

We apply \cref{BSEE_theo_BSEE} to the first and second order adjoint equations \eqref{MP_eq_adeq1} and \eqref{MP_eq_adeq2}. Let \cref{control_assum_coefficient} and \cref{MP_assum_kernel} hold. Fix $\hat{u}\in\cU$. The first order adjoint equation \eqref{MP_eq_adeq1} can be regarded as a BSEE \eqref{BSEE_eq_BSEE} with
\begin{equation*}
	(\Theta,\Sigma,\nu)=(\bR_+,\cB(\bR_+),r(\theta)\mu(\diff\theta)),\ \ \varpi(\theta)=\theta,\ \ E=\bR^n,
\end{equation*}
and
\begin{equation}\label{BSEE_eq_adeq1-coefficient}
	\Phi(\theta)=-\hat{h}_x^\top,\ \ \cG(t,p,q)(\theta)=\hat{b}_x(t)^\top\mu[M_b^\top p]+\hat{\sigma}_x(t)^\top\mu[M_\sigma^\top q]-\hat{f}_x(t)^\top.
\end{equation}
For each $\beta\in\bR$ and a Euclidean space $E$, we denote $\bH^1_\beta(E):=\bH_\beta(\bR_+,\cB(\bR_+),r(\theta)\mu(\diff\theta),\theta;E)$. We sometimes omit the dependency on $E$ when there is no ambiguity. By \eqref{MP_eq_mu-integrable}, the Euclidean space $E$ is continuously embedded into $\bH^1_0(E)=L^2(\bR_+,\cB(\bR_+),r(\theta)\mu(\diff\theta);E)$ by the natural injection $p\mapsto(\theta\mapsto p)$. Notice that the condition \eqref{MP_eq_M-integrable} means that $M_b\in\bH^1_{-\alpha}(\bR^{n\times n})$ and $M_\sigma\in\bH^1_{1-\alpha}(\bR^{n\times n})$. Observe that, for any $p\in\bH^1_{1+\alpha}=\bH^1_{1+\alpha}(\bR^n)$ and $q\in\bH^1_\alpha=\bH^1_\alpha(\bR^n)$,
\begin{align}
	\nonumber
	\int_{\bR_+}|M_b(\theta)||p(\theta)|\dmu&\leq\left(\int_{\bR_+}(1+\theta)^{-1-\alpha}r(\theta)^{-1}|M_b(\theta)|^2\dmu\right)^{1/2}\left(\int_{\bR_+}(1+\theta)^{1+\alpha}r(\theta)|p(\theta)|^2\dmu\right)^{1/2}\\
	\label{MP_eq_Mb-p}
	&\leq\|M_b\|_{\bH^1_{-\alpha}}\|p\|_{\bH^1_{1+\alpha}}
\end{align}
and
\begin{align}
	\nonumber
	\int_{\bR_+}|M_\sigma(\theta)||q(\theta)|\dmu&\leq\left(\int_{\bR_+}(1+\theta)^{-\alpha}r(\theta)^{-1}|M_\sigma(\theta)|^2\dmu\right)^{1/2}\left(\int_{\bR_+}(1+\theta)^\alpha r(\theta)|q(\theta)|^2\dmu\right)^{1/2}\\
	\label{MP_eq_Msigma-q}
	&\leq\|M_\sigma\|_{\bH^1_{1-\alpha}}\|q\|_{\bH^1_\alpha},
\end{align}
where we used the fact that $r(\theta)^{-1}\leq(1+\theta)r(\theta)$. In particular, the map $p\mapsto\mu[M_b^\top p]$ is a bounded linear operator from $\bH^1_{1+\alpha}$ to $\bH^1_0$, and the map $q\mapsto\mu[M_\sigma^\top q]$ is a bounded linear operator from $\bH^1_\alpha$ to $\bH^1_0$. From the above observations, together with \eqref{Taylor_eq_var-coefficient-state} and \eqref{Taylor_eq_var-coefficient-cost}, we see that the coefficient \eqref{BSEE_eq_adeq1-coefficient} satisfies \cref{BSEE_assum_coefficient} (with the same parameter $\alpha\in[0,1)$ as in \cref{MP_assum_kernel}). Hence, by \cref{BSEE_theo_BSEE}, the first order adjoint equation \eqref{MP_eq_adeq1} has a unique solution $(\hat{p},\hat{q})$ such that
\begin{equation}\label{MP_eq_adeq1-estimate}
	\bE\left[\int^T_0(T-t)^\alpha\|\hat{p}_t\|_{\bH^1_{1+\alpha}}^2\,\diff t+\int^T_0(T-t)^\alpha\|\hat{q}_t\|_{\bH^1_\alpha}^2\,\diff t+\int^T_0(T-t)^\alpha\|\hat{g}_t\|_{\bH^1_0}^2\,\diff t\right]<\infty,
\end{equation}
where $\hat{g}:\Omega\times[0,T]\to\bH^1_0$ is given by \eqref{MP_eq_generator-g}.

%% Remark

\begin{rem}
In the terminology of \cite{Ha24}, $\bH^1_0=\cH_\mu$, $\bH^1_1=\cV_\mu$, $\bH^1_{-1}=\cV^*_\mu$ and $\bH^1_2=\cD(A)$.
\end{rem}

The second order adjoint equation \eqref{MP_eq_adeq1} can be regarded as a BSEE \eqref{BSEE_eq_BSEE} with
\begin{equation*}
	(\Theta,\Sigma,\nu)=(\bR_+^2,\cB(\bR_+^2),r(\theta_1)r(\theta_2)\mu(\diff\theta_1)\mu(\diff\theta_2)),\ \ \varpi(\theta_1,\theta_2)=\theta_1+\theta_2,\ \ E=\bR^{n\times n},
\end{equation*}
and
\begin{equation}\label{BSEE_eq_adeq2-coefficient}
\begin{split}
	&\Phi(\theta_1,\theta_2)=-\hat{h}_{xx},\\
	&\cG(t,P,Q)(\theta_1,\theta_2)=\hat{b}_x(t)^\top\mu[M_b^\top P(\cdot,\theta_2)]+\mu[P(\theta_1,\cdot)M_b]\hat{b}_x(t)+\hat{\sigma}_x(t)^\top\mu^{\otimes2}[M_\sigma^\top PM_\sigma]\hat{\sigma}_x(t)\\
	&\hspace{3cm}+\hat{\sigma}_x(t)^\top\mu[M_\sigma^\top Q(\cdot,\theta_2)]+\mu[Q(\theta_1,\cdot)M_\sigma]\hat{\sigma}_x(t)\\
	&\hspace{3cm}+\langle\mu[M_b^\top\hat{p}_t],\hat{b}_{xx}(t)\rangle+\langle\mu[M_\sigma^\top\hat{q}_t],\hat{\sigma}_{xx}(t)\rangle-\hat{f}_{xx}(t).
\end{split}
\end{equation}
For each $\beta\in\bR$ and a Euclidean space $E$, we denote $\bH^2_\beta(E):=\bH_\beta(\bR_+^2,\cB(\bR_+^2),r(\theta_1)r(\theta_2)\mu(\diff\theta_1)\mu(\diff\theta_2),\theta_1+\theta_2;E)$. We sometimes omit the dependency on $E$ when there is no ambiguity. By \eqref{MP_eq_mu-integrable}, the space $\bH^1_0(E)=L^2(\bR_+,\cB(\bR_+),r(\theta)\mu(\diff\theta);E)$ is continuously embedded into $\bH^2_0(E)=L^2(\bR_+^2,\cB(\bR_+^2),r(\theta_1)r(\theta_2)\mu(\diff\theta_1)\mu(\diff\theta_2);E)$ by the natural injections
\begin{equation*}
	\big(\theta\mapsto p(\theta)\big)\mapsto\big((\theta_1,\theta_2)\mapsto p(\theta_1)\big)\ \ \text{and}\ \ \big(\theta\mapsto p(\theta)\big)\mapsto\big((\theta_1,\theta_2)\mapsto p(\theta_2)\big).
\end{equation*}
Namely, we have the continuous embeddings $E\hookrightarrow \bH^1_0(E)\hookrightarrow\bH^2_0(E)$. By similar calculations as above, for any $P\in\bH^2_{1+\alpha}=\bH^2_{1+\alpha}(\bR^{n\times n})$ and $Q\in\bH^2_\alpha=\bH^2_\alpha(\bR^{n\times n})$, we see that
\begin{align}
	\nonumber
	&\left(\int_{\bR_+}\left(\int_{\bR_+}|M_b(\theta_1)||P(\theta_1,\theta_2)|\,\mu(\diff\theta_1)\right)^2r(\theta_2)\,\mu(\diff\theta_2)\right)^{1/2}\\
	\nonumber
	&\leq\|M_b\|_{\bH^1_{-\alpha}}\left(\int_{\bR_+}\int_{\bR_+}(1+\theta_1)^{1+\alpha}|P(\theta_1,\theta_2)|^2r(\theta_1)r(\theta_2)\dmumu\right)^{1/2}\\
	\label{MP_eq_Mb-P}
	&\leq\|M_b\|_{\bH^1_{-\alpha}}\|P\|_{\bH^2_{1+\alpha}}
\end{align}
and
\begin{align}
	\nonumber
	&\left(\int_{\bR_+}\left(\int_{\bR_+}|M_\sigma(\theta_1)||Q(\theta_1,\theta_2)|\,\mu(\diff\theta_1)\right)^2r(\theta_2)\,\mu(\diff\theta_2)\right)^{1/2}\\
	\nonumber
	&\leq\|M_\sigma\|_{\bH^1_{1-\alpha}}\left(\int_{\bR_+}\int_{\bR_+}(1+\theta_1)^\alpha|Q(\theta_1,\theta_2)|^2r(\theta_1)r(\theta_2)\dmumu\right)^{1/2}\\
	\label{MP_eq_Msigma-Q}
	&\leq\|M_\sigma\|_{\bH^1_{1-\alpha}}\|Q\|_{\bH^2_\alpha}.
\end{align}
Similarly, we have
\begin{equation*}
	\left(\int_{\bR_+}\left(\int_{\bR_+}|P(\theta_1,\theta_2)||M_b(\theta_2)|\,\mu(\diff\theta_2)\right)^2r(\theta_1)\,\mu(\diff\theta_1)\right)^{1/2}\leq\|M_b\|_{\bH^1_{-\alpha}}\|P\|_{\bH^2_{1+\alpha}}
\end{equation*}
and
\begin{equation*}
	\left(\int_{\bR_+}\left(\int_{\bR_+}|Q(\theta_1,\theta_2)||M_\sigma(\theta_2)|\,\mu(\diff\theta_2)\right)^2r(\theta_1)\,\mu(\diff\theta_1)\right)^{1/2}\leq\|M_\sigma\|_{\bH^1_{1-\alpha}}\|Q\|_{\bH^2_\alpha}.
\end{equation*}
Furthermore, noting that $\|\cdot\|_{\bH^2_{2\alpha}}\leq\|\cdot\|_{\bH^2_{1+\alpha}}$ for $\alpha\in[0,1)$,
\begin{align}
	\nonumber
	&\int_{\bR_+^2}|M_\sigma(\theta_1)||P(\theta_1,\theta_2)||M_\sigma(\theta_2)|\dmumu\\
	\nonumber
	&\leq\int_{\bR_+}(1+\theta)^{-\alpha}|M_\sigma(\theta)|^2r(\theta)^{-1}\dmu\,\left(\int_{\bR_+}\int_{\bR_+}(1+\theta_1)^\alpha(1+\theta_2)^\alpha|P(\theta_1,\theta_2)|^2r(\theta_1)r(\theta_2)\dmumu\right)^{1/2}\\
	\nonumber
	&\leq\int_{\bR_+}(1+\theta)^{1-\alpha}|M_\sigma(\theta)|^2r(\theta)\dmu\left(\int_{\bR_+}\int_{\bR_+}(1+\theta_1+\theta_2)^{2\alpha}|P(\theta_1,\theta_2)|^2r(\theta_1)r(\theta_2)\dmumu\right)^{1/2}\\
	\nonumber
	&=\|M_\sigma\|_{\bH^1_{1-\alpha}}^2\|P\|_{\bH^2_{2\alpha}}\\
	\label{MP_eq_Msigma-P}
	&\leq\|M_\sigma\|_{\bH^1_{1-\alpha}}^2\|P\|_{\bH^2_{1+\alpha}}.
\end{align}
In particular, the maps $P\mapsto((\theta_1,\theta_2)\mapsto\mu[M_b^\top P(\cdot,\theta_2)])$, $P\mapsto((\theta_1,\theta_2)\mapsto\mu[P(\theta_1,\cdot)M_b])$ and $P\mapsto\mu[M_\sigma^\top PM_\sigma]$ are bounded linear operators from $\bH^2_{1+\alpha}$ to $\bH^2_0$, and the maps $Q\mapsto((\theta_1,\theta_2)\mapsto\mu[M_\sigma^\top Q(\cdot,\theta_2)])$ and $Q\mapsto((\theta_1,\theta_2)\mapsto\mu[Q(\theta_1,\cdot)M_\sigma])$ are bounded linear operators from $\bH^2_\alpha$ to $\bH^2_0$. From the above observations, together with \eqref{Taylor_eq_var-coefficient-state}, \eqref{Taylor_eq_var-coefficient-cost} and the estimate \eqref{MP_eq_adeq1-estimate} for the solution $(\hat{p},\hat{q})$ of the first order adjoint equation \eqref{MP_eq_adeq1}, we see that the coefficient \eqref{BSEE_eq_adeq2-coefficient} satisfies \cref{BSEE_assum_coefficient} (with the same parameter $\alpha\in[0,1)$ as in \cref{MP_assum_kernel}). Hence, by \cref{BSEE_theo_BSEE}, the second order adjoint equation \eqref{MP_eq_adeq2} has a unique solution $(\hat{P},\hat{Q})$ such that
\begin{equation}\label{MP_eq_adeq2-estimate} 
	\bE\left[\int^T_0(T-t)^\alpha\|\hat{P}_t\|_{\bH^2_{1+\alpha}}^2\,\diff t+\int^T_0(T-t)^\alpha\|\hat{Q}_t\|_{\bH^2_\alpha}^2\,\diff t+\int^T_0(T-t)^\alpha\|\hat{G}_t\|_{\bH^2_0}^2\,\diff t\right]<\infty,
\end{equation}
where $\hat{G}:\Omega\times[0,T]\to\bH^2_0$ is given by \eqref{MP_eq_generator-G}.

Summarizing the above arguments, we obtain the following result.

%% Proposition

\begin{prop}\label{MP_prop_adeq-wellposed}
Let \cref{control_assum_coefficient} and \cref{MP_assum_kernel} hold. Fix $\hat{u}\in\cU$. Then, there exist unique solutions $(\hat{p},\hat{q}):\Omega\times[0,T]\to\bH^1_{1+\alpha}(\bR^n)\times\bH^1_\alpha(\bR^n)$ and $(\hat{P},\hat{Q}):\Omega\times[0,T]\to\bH^2_{1+\alpha}(\bR^{n\times n})\times\bH^2_\alpha(\bR^{n\times n})$ to the first and second order adjoint equations \eqref{MP_eq_adeq1} and \eqref{MP_eq_adeq2}, respectively. Furthermore, the estimates \eqref{MP_eq_adeq1-estimate} and \eqref{MP_eq_adeq2-estimate} hold.
\end{prop}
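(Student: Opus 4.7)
The plan is to apply Theorem~\ref{BSEE_theo_BSEE} in cascade to the two adjoint equations, treating \eqref{MP_eq_adeq1} first and then using its solution as a datum inside the generator of \eqref{MP_eq_adeq2}. Almost all of the required setup has already been performed in the discussion preceding the statement; the proof essentially amounts to checking that Assumption~\ref{BSEE_assum_coefficient} is satisfied in each of the two concrete instances, with the common parameter $\alpha\in[0,1)$ inherited from \cref{MP_assum_kernel}.

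For the first order equation, I would take $(\Theta,\Sigma,\nu)=(\bR_+,\cB(\bR_+),r(\theta)\mu(\diff\theta))$, $\varpi(\theta)=\theta$, $E=\bR^n$, and identify $(\Phi,\cG)$ as in \eqref{BSEE_eq_adeq1-coefficient}. The terminal datum $\Phi=-\hat h_x^\top$ is a constant function of $\theta$, so $\bE[\|\Phi\|_{\bH^1_0}^2]=\bE[|\hat h_x|^2]\int_{\bR_+}r(\theta)\dmu$, which is finite by \eqref{Taylor_eq_var-coefficient-cost} and the integrability \eqref{MP_eq_mu-integrable}. The same computation handles the $\hat f_x^\top$ piece of $\cG(t,0,0)$, since $(T-t)^\alpha\leq T^\alpha$ and $\hat f_x\in\bigcap_p L^{p,\infty}_\bF(0,T)$. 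The Lipschitz estimate \eqref{BSEE_eq_BSEE-Lip} follows by combining \eqref{MP_eq_Mb-p}--\eqref{MP_eq_Msigma-q} with the uniform bound $|\hat b_x|+|\hat\sigma_x|\leq 2\kappa$ from \eqref{Taylor_eq_var-coefficient-state}, giving a Lipschitz constant of order $\kappa(\|M_b\|_{\bH^1_{-\alpha}}+\|M_\sigma\|_{\bH^1_{1-\alpha}})$. Theorem~\ref{BSEE_theo_BSEE} then yields a unique solution $(\hat p,\hat q)\in\bH^1_{1+\alpha}\times\bH^1_\alpha$ with the estimate \eqref{MP_eq_adeq1-estimate}.

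For the second order equation, I would take $(\Theta,\Sigma,\nu)=(\bR_+^2,\cB(\bR_+^2),r(\theta_1)r(\theta_2)\mu(\diff\theta_1)\mu(\diff\theta_2))$, $\varpi(\theta_1,\theta_2)=\theta_1+\theta_2$, $E=\bR^{n\times n}$, and read $(\Phi,\cG)$ off \eqref{BSEE_eq_adeq2-coefficient}. The Lipschitz bound \eqref{BSEE_eq_BSEE-Lip} for the $(P,Q)$-dependence is precisely what the five continuity estimates \eqref{MP_eq_Mb-P}, \eqref{MP_eq_Msigma-Q}, \eqref{MP_eq_Msigma-P} (and their analogues with the roles of $\theta_1$ and $\theta_2$ swapped) were designed to provide, again multiplied by $2\kappa$. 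The new point is the integrability of $\cG(\cdot,0,0)$ in the sense of \eqref{BSEE_eq_BSEE-generator}: besides the $\hat f_{xx}$ term, which is handled as before, one must control the two terms $\langle\mu[M_b^\top\hat p_t],\hat b_{xx}(t)\rangle$ and $\langle\mu[M_\sigma^\top\hat q_t],\hat\sigma_{xx}(t)\rangle$. By \eqref{MP_eq_Mb-p}--\eqref{MP_eq_Msigma-q} these are bounded pointwise by a constant multiple of $\|\hat p_t\|_{\bH^1_{1+\alpha}}+\|\hat q_t\|_{\bH^1_\alpha}$, and squaring and integrating against $(T-t)^\alpha\,\diff t$ gives a finite quantity thanks exactly to \eqref{MP_eq_adeq1-estimate}. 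Theorem~\ref{BSEE_theo_BSEE} then delivers $(\hat P,\hat Q)\in\bH^2_{1+\alpha}\times\bH^2_\alpha$ together with \eqref{MP_eq_adeq2-estimate}.

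The only genuinely non-trivial aspect of this proof was to correctly choose the weighted spaces so that the five integral operators appearing in the generators are bounded, and so that the terminal conditions $-\hat h_x^\top$ and $-\hat h_{xx}$ (which are constant in the $\theta$-variable) actually lie in $\bH^1_0$ and $\bH^2_0$; this is encoded in \cref{MP_assum_kernel} through \eqref{MP_eq_mu-integrable}--\eqref{MP_eq_M-integrable}. Once this is in place, everything reduces to bookkeeping with the estimates \eqref{MP_eq_Mb-p}--\eqref{MP_eq_Msigma-P}, and the main obstacle is deferred to the proof of Theorem~\ref{BSEE_theo_BSEE} itself.
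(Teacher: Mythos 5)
Your proposal is correct and follows essentially the same route as the paper: the paper's own proof of this proposition is precisely the paragraph of calculations preceding its statement (culminating in ``Summarizing the above arguments\ldots''), which instantiates \cref{BSEE_theo_BSEE} twice with the weighted-space choices you give, verifies \cref{BSEE_assum_coefficient} via the operator bounds \eqref{MP_eq_Mb-p}--\eqref{MP_eq_Msigma-P} and the coefficient bounds \eqref{Taylor_eq_var-coefficient-state}--\eqref{Taylor_eq_var-coefficient-cost}, and uses the first-order estimate \eqref{MP_eq_adeq1-estimate} to control $\cG(\cdot,0,0)$ in the second-order equation. You have identified all the load-bearing steps, including the cascade structure.
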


%% Remark

\begin{rem}\label{MP_rem_adeq2-symmetry}
Observe that $(\check{P},\check{Q}):\Omega\times[0,T]\to\bH^2_{1+\alpha}(\bR^{n\times n})\times\bH^2_\alpha(\bR^{n\times n})$ defined by $\check{
P}_t(\theta_1,\theta_2):=\hat{P}_t(\theta_2,\theta_1)^\top$ and $\check{Q}_t(\theta_1,\theta_2):=\hat{Q}_t(\theta_2,\theta_1)^\top$ satisfies the same BSEE \eqref{MP_eq_adeq2} as $(\hat{P},\hat{Q})$. Thus, by the uniqueness of the solution, we see that $(\check{P},\check{Q})=(\hat{P},\hat{Q})$. In particular, the term $\mu^{\otimes2}[M_\sigma^\top\hat{P}_tM_\sigma]$ is a symmetric matrix.
\end{rem}

%%%%%%
\subsection{The global maximum principle}\label{MP-main}
%%%%%%

Now we return to the analysis on the global maximum principle. Using \cref{MP_prop_adeq-wellposed} on the well-posedness of the first and second order adjoint equations \eqref{MP_eq_adeq1} and \eqref{MP_eq_adeq2}, we rigorously justify the formal calculations in \cref{MP-heuristic}.

%% Proposition

\begin{prop}\label{MP_prop_J12-small}
Suppose that \cref{control_assum_coefficient} and \cref{MP_assum_kernel} hold. Fix $\hat{u}\in\cU$. Then, there exists a Lebesgue-null set $\hat{\cN}\subset[0,T)$ such that, for any $v\in\cU$ and any $\tau\in[0,T)\setminus\hat{\cN}$, it holds that
\begin{equation}\label{MP_eq_J12-small}
\begin{split}
	J^{1,2,v,\tau,\ep}&=-\bE\left[\int^{\tau+\ep}_\tau\left\{\big\langle\mu[M_b^\top\hat{p}_t],\delta b^v(t)\big\rangle+\big\langle\mu[M_\sigma^\top\hat{q}_t],\delta\sigma^v(t)\big\rangle-\delta f^v(t)\vphantom{\frac{1}{2}}\right.\right.\\
	&\hspace{4cm}\left.\left.+\frac{1}{2}\big\langle\mu^{\otimes2}[M_\sigma^\top\hat{P}_tM_\sigma]\delta\sigma^v(t),\delta\sigma^v(t)\big\rangle\right\}\,\diff t\right]+o(\ep)\ \ \text{as $\ep\downarrow0$,}
\end{split}
\end{equation}
where $J^{1,2,v,\tau,\ep}$ is defined by \eqref{Taylor_eq_vareq-cost}, and $(\hat{p},\hat{q}):\Omega\times[0,T]\to\bH^1_{1+\alpha}\times\bH^1_\alpha$ and $(\hat{P},\hat{Q}):\Omega\times[0,T]\to\bH^2_{1+\alpha}\times\bH^2_\alpha$ are the solutions of the first and second order adjoint equations \eqref{MP_eq_adeq1} and \eqref{MP_eq_adeq2}, respectively.
\end{prop}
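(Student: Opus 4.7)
The plan is to make rigorous the heuristic calculation of \cref{MP-heuristic}, proceeding in three stages: (a) fiberwise It\^o-pairing of the adjoint BSEEs with the lifted processes $Y^{1,v,\tau,\ep}(\theta)$ and $Y^{1,2,v,\tau,\ep}(\theta)$ from \eqref{MP_eq_vareq-SVE1-lift}--\eqref{MP_eq_vareq-SVE12-lift}; (b) integration against $\mu$ and $\mu^{\otimes 2}$ via (stochastic) Fubini; (c) using the explicit generator form to cancel the $[0,T]$-contributions and reduce the problem to an $[\tau,\tau+\ep]$-remainder of order $o(\ep)$.

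For (a), \cref{MP_prop_adeq-wellposed} provides the solutions $(\hat p,\hat q)$ and $(\hat P,\hat Q)$; pick good representatives $(\bar p,\bar q,\bar g)$ and $(\bar P,\bar Q,\bar G)$ in the sense of \cref{BSEE_defi_solution}. For fixed $\theta\in\bR_+$, apply It\^o's formula to $\langle\bar p_t(\theta),Y^{1,2,v,\tau,\ep}_t(\theta)\rangle$, using \eqref{MP_eq_vareq-SVE12-lift} for the lift SDE and \eqref{BSEE_eq_L^2-solution} to kill the local martingale on expectation; combined with $\bar p_T(\theta)=-\hat h_x^\top$ this produces exactly \eqref{MP_eq_duality1-theta}, and the analogous pairing $\langle\bar P_t(\theta_1,\theta_2)Y^{1,v,\tau,\ep}_t(\theta_2),Y^{1,v,\tau,\ep}_t(\theta_1)\rangle$ gives \eqref{MP_eq_duality2-theta}. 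For (b), integrate these fiberwise identities against $\dmu$ and $\dmumu$ and invoke Fubini (and the stochastic Fubini theorem of \cite{Ve11}) to interchange $\mu$-integrations with $\bE$, with the time integral, and with the Brownian integral. The required absolute-integrability bounds follow from the Cauchy--Schwarz estimates \eqref{MP_eq_Mb-p}--\eqref{MP_eq_Msigma-q} and \eqref{MP_eq_Mb-P}--\eqref{MP_eq_Msigma-P} combined with the lift moment bound \eqref{MP_eq_vareq-lift} of \cref{MP_lemm_vareq-lift} and the weighted adjoint bounds \eqref{MP_eq_adeq1-estimate}, \eqref{MP_eq_adeq2-estimate}; the $(T-t)^\alpha$ time weight on the adjoint side balances against the uniform-in-$t$ lift $\bH$-norms. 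The identity \eqref{MP_eq_lift-Fubini} substitutes $\mu[Y^{1,v,\tau,\ep}_t]=X^{1,v,\tau,\ep}_t$ and $\mu[Y^{1,2,v,\tau,\ep}_t]=X^{1,2,v,\tau,\ep}_t$, yielding \eqref{MP_eq_duality1-mu}--\eqref{MP_eq_duality2-mu}.

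For (c), the generators \eqref{MP_eq_generator-g}, \eqref{MP_eq_generator-G} are $\theta$- and $(\theta_1,\theta_2)$-independent, so $\int_{\bR_+}\langle\hat g_t,Y^{1,2}_t(\theta)\rangle\dmu=\langle\hat g_t,X^{1,2}_t\rangle$ and the double $\mu^{\otimes 2}$-integral of $\langle\hat G_t Y^1_t(\theta_2),Y^1_t(\theta_1)\rangle$ reduces to $\langle\hat G_tX^1_t,X^1_t\rangle$ by \eqref{MP_eq_lift-Fubini} and Fubini. Inserting these into \eqref{MP_eq_duality1-mu}--\eqref{MP_eq_duality2-mu} and plugging back into the definition \eqref{Taylor_eq_vareq-cost} of $J^{1,2,v,\tau,\ep}$ triggers the exact cancellation of the $[0,T]$-terms described in \eqref{MP_eq_vareq-cost'}--\eqref{MP_eq_vareq-cost''}, leaving the desired $[\tau,\tau+\ep]$-leading expression minus a residual $\cE^{v,\tau,\ep}$ (the last expectation in \eqref{MP_eq_vareq-cost'}). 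Each summand of $\cE^{v,\tau,\ep}$ has the schematic form $\bE[\int_\tau^{\tau+\ep}\langle A_t,B_tC_t\rangle\,\diff t]$, where $A_t$ is an adjoint-derived process lying in $L^2_\bF(0,T;\bH)$ for $\bH\in\{\bH^1_0,\bH^2_0\}$, $B_t\in\{\hat b_x,\hat\sigma_x,\delta b^v,\delta\sigma^v,\dots\}$ has $\sup_t\bE[|B_t|^q]<\infty$ for all $q\in[2,\infty)$, and $C_t\in\{X^{1,v,\tau,\ep}_t,Y^{1,v,\tau,\ep}_t\}$ vanishes on $[0,\tau]$ with, by \cref{Taylor_prop_SVE-expansion} and \cref{MP_lemm_vareq-lift},
\begin{equation*}
	\sup_{t\in[0,T]}\bE\bigl[\|C_t\|^p\bigr]^{1/p}=O\bigl(\|K_b\|_{1,\ep}+\|K_\sigma\|_{2,\ep}\bigr)=O(\ep^{(1-\alpha)/2}).
\end{equation*}
Define $\hat\cN\subset[0,T)$ as the Lebesgue-null union of the non-Lebesgue-point sets of the finitely many scalar maps $t\mapsto\bE[\|A_t\|_\bH^2]$; these depend only on $\hat u$, not on $v$. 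For $\tau\notin\hat\cN$, two applications of Cauchy--Schwarz yield
\begin{equation*}
	|\bE[\textstyle\int_\tau^{\tau+\ep}\langle A_t,B_tC_t\rangle\,\diff t]|\leq\bigl(\bE\textstyle\int_\tau^{\tau+\ep}\|A_t\|_\bH^2\,\diff t\bigr)^{1/2}\bigl(\bE\textstyle\int_\tau^{\tau+\ep}|B_t|^2\|C_t\|^2\,\diff t\bigr)^{1/2}=O(\ep^{1/2})\cdot O(\ep^{1-\alpha/2})=O(\ep^{(3-\alpha)/2})=o(\ep)
\end{equation*}
for $\alpha<1$, proving the claim.

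The principal technical obstacle will be the stochastic Fubini step in (b) for the double $\mu^{\otimes 2}$-integral arising from \eqref{MP_eq_duality2-theta}, where one must simultaneously track the $\mu$-weights $r(\theta_i)$, the polynomial $(1+\theta_i)^{\pm\alpha/2}$-weights built into the $\bH^2_\beta$-norms, and the time-weight $(T-t)^\alpha$ on the adjoint side — all while the Brownian integral sits inside the double $\mu$-integral. This balance is precisely the one engineered into \cref{BSEE_assum_coefficient} that makes \cref{BSEE_theo_BSEE} applicable; once the interchange is validated, the rest of the proof reduces to the algebraic cancellation of stage (c) and the short-interval remainder estimate outlined above.
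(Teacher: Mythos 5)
Your three-stage plan — fiberwise It\^o pairing, $\mu$- and $\mu^{\otimes2}$-integration via (stochastic) Fubini, then algebraic cancellation plus a Lebesgue-differentiation remainder estimate — is exactly the structure of the paper's proof, and stages (a) and (b) are handled correctly (the paper verifies the seven integrability bounds \eqref{MP_eq_Fubini1-1}--\eqref{MP_eq_Fubini1-3} and \eqref{MP_eq_Fubini2-1}--\eqref{MP_eq_Fubini2-4} precisely as you sketch).

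There is, however, a small but genuine gap in stage (c). You assert that the adjoint-derived factor $A_t$ lies in $L^2_\bF(0,T;\bH)$ and therefore define $\hat\cN$ via Lebesgue points of $t\mapsto\bE[\|A_t\|_\bH^2]$. This is not established for $\alpha\in(0,1)$. What \cref{BSEE_theo_BSEE} actually gives (see \eqref{MP_eq_adeq1-estimate}, \eqref{MP_eq_adeq2-estimate}) is only the time-weighted bound $\bE[\int_0^T(T-t)^\alpha\|\hat p_t\|^2_{\bH^1_{1+\alpha}}\,\diff t]<\infty$, and the operator bounds \eqref{MP_eq_Mb-p}--\eqref{MP_eq_Msigma-P} need precisely the $\bH_{1+\alpha}$-norm; they do \emph{not} map the unweighted-integrable $\bH_1$-norm into $\bR^n$ because $M_b$ is only in $\bH^1_{-\alpha}$, not $\bH^1_0$. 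So $t\mapsto\bE[\|A_t\|^2_\bH]$ is not known to be in $L^1(0,T)$, only locally integrable on $[0,T)$ (since $(T-t)^{-\alpha}$ is bounded away from $T$). The argument can be repaired in two equivalent ways: either (i) observe explicitly that local integrability on $[0,T)$ suffices for a.e.\ Lebesgue differentiation, or (ii) do what the paper does: apply Lebesgue's differentiation theorem to the genuinely integrable maps $t\mapsto(T-t)^\alpha\bE[\|\hat p_t\|^2_{\bH^1_{1+\alpha}}]$ and $t\mapsto(T-t)^\alpha\bE[\|\hat P_t\|^2_{\bH^2_{1+\alpha}}]$, carry the compensating $(T-t)^{-\alpha}$ into the other Cauchy--Schwarz factor where $\int_\tau^{\tau+\ep}(T-t)^{-\alpha}\,\diff t=O(\ep)$ for $\tau<T$, and obtain the same $o(\ep)$ bound. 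A second, more cosmetic imprecision: the schematic pairing $\langle A_t,B_tC_t\rangle$ doesn't quite match the second-order remainder terms, which have the form $\int_{\bR_+}\langle A_t(\theta_2)C_t(\theta_2),B_t\rangle\,\mu(\diff\theta_2)$; the correct Cauchy--Schwarz there must distribute the singular weight $r(\theta_2)^{\pm1}$ between the $\hat P$-factor and the $Y^1$-factor (this is exactly how \eqref{MP_eq_Fubini2-2}--\eqref{MP_eq_Fubini2-4} are organised and where \eqref{MP_eq_vareq-lift} with the $(1+\theta)r(\theta)$-weight enters), so the estimate should be restated with the paper's weighted pairings rather than a single $\|A_t\|_\bH\,\|C_t\|_\bH$ product.
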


%% Proof

\begin{proof}
Let $v\in\cU$, $\tau\in[0,T)$ and $\ep\in(0,T-\tau]$ be given. As before, in this proof, we omit the super scripts $v,\tau,\ep$. We denote by $C$ a positive constant which is independent of $\tau$ and $\ep$ (but may depend on $v$). The value of $C$ may vary from line to line. We show \eqref{MP_eq_J12-small} by (A) deriving \eqref{MP_eq_vareq-cost'} rigorously and (B) showing that the last expectation in \eqref{MP_eq_vareq-cost'} is of order $o(\ep)$ as $\ep\downarrow0$ for any $\tau\in[0,T)\setminus\hat{\cN}$, where $\hat{\cN}\subset[0,T)$ is a Lebesgue-null set independent of the choice of $v$. We identify the solution $(\hat{p},\hat{q}):\Omega\times[0,T]\to\bH^1_{1+\alpha}\times\bH^1_\alpha$ of the first order adjoint equation \eqref{MP_eq_adeq1} as well as the process $\hat{g}:\Omega\times[0,T]\to\bH^1_0$ defined by \eqref{MP_eq_generator-g} with a good representative $(\bar{\hat{p}},\bar{\hat{q}},\bar{\hat{g}}):\Omega\times[0,T]\times\bR_+\to\bR^n\times\bR^n\times\bR^n$ (see \cref{BSEE_defi_solution}). Also, we identify the solution $(\hat{P},\hat{Q}):\Omega\times[0,T]\to\bH^2_{1+\alpha}\times\bH^2_\alpha$ of the second order adjoint equation \eqref{MP_eq_adeq2} as well as the process $\hat{G}:\Omega\times[0,T]\to\bH^2_0$ defined by \eqref{MP_eq_generator-G} with a good representative $(\bar{\hat{P}},\bar{\hat{Q}},\bar{\hat{G}}):\Omega\times[0,T]\times\bR_+^2\to\bR^{n\times n}\times\bR^{n\times n}\times\bR^{n\times n}$. These identifications do not matter in this proof since the objective \eqref{MP_eq_vareq-cost'} is independent of the choice of the good representatives. By the same reason, we may assume that $\hat{p}_T(\theta)=-\hat{h}_x^\top$ a.s.\ for any $\theta\in\bR_+$ and $\hat{P}_T(\theta_1,\theta_2)=-\hat{h}_{xx}$ a.s.\ for any $(\theta_1,\theta_2)\in\bR_+^2$.

\underline{(A) Rigorous derivation of \eqref{MP_eq_vareq-cost'}.} By \cref{BSEE_defi_solution}, we see that the generators $\hat{g}(\theta)$ and $\hat{G}(\theta_1,\theta_2)$ defined by \eqref{MP_eq_generator-g} and \eqref{MP_eq_generator-G} (through the solutions $(\hat{p},\hat{q})$ and $(\hat{P},\hat{Q})$) satisfy the integrability condition \eqref{MP_eq_integrability-gG}. Thus, by the arguments in \cref{MP-heuristic}, the equalities \eqref{MP_eq_duality1-theta} and \eqref{MP_eq_duality2-theta} hold for any $\theta\in\bR_+$ and any $(\theta_1,\theta_2)\in\bR_+^2$, respectively. In order to show \eqref{MP_eq_vareq-cost'} rigorously, it remains to justify the applications of Fubini's theorem deriving \eqref{MP_eq_duality1-mu} and \eqref{MP_eq_duality2-mu} from \eqref{MP_eq_duality1-theta} and \eqref{MP_eq_duality2-theta}, respectively.

\underline{(A-1) Justification of Fubini's theorem deriving \eqref{MP_eq_duality1-mu} from \eqref{MP_eq_duality1-theta}.} In order to justify the derivation of \eqref{MP_eq_duality1-mu} from \eqref{MP_eq_duality1-theta}, it suffices to show that
\begin{align}
	\label{MP_eq_Fubini1-1}
	&\bE\left[\int^T_0\int_{\bR_+}|\hat{g}_t(\theta)||Y^{1,2}_t(\theta)|\dmu\,\diff t\right]<\infty,\\
	\label{MP_eq_Fubini1-2}
	&\bE\left[\int^T_0\int_{\bR_+}|M_b(\theta)||\hat{p}_t(\theta)||\xi_t|\dmu\,\diff t\right]<\infty\ \ \text{for any $\xi\in L^{2,\infty}_\bF(0,T;\bR^n)$, and}\\
	\label{MP_eq_Fubini1-3}
	&\bE\left[\int^T_0\int_{\bR_+}|M_\sigma(\theta)||\hat{q}_t(\theta)||\xi_t|\dmu\,\diff t\right]<\infty\ \ \text{for any $\xi\in L^{2,\infty}_\bF(0,T;\bR^n)$.}
\end{align}
First, we show \eqref{MP_eq_Fubini1-1}. Noting $r(\theta)^{-1}\leq(1+\theta)r(\theta)$, $\alpha\in[0,1)$ and the estimates \eqref{MP_eq_adeq1-estimate} for $\hat{g}$ and \eqref{MP_eq_vareq-lift} for $Y^{1,2}=Y^{1,2,v,\tau,\ep}$, by the Cauchy--Schwarz inequality and Tonelli's theorem, we have
\begin{align*}
	&\bE\left[\int^T_0\int_{\bR_+}|\hat{g}_t(\theta)||Y^{1,2}_t(\theta)|\dmu\,\diff t\right]\\
	&\leq\left(\int_{\bR_+}r(\theta)^{-1}\|Y^{1,2}(\theta)\|_{C^2_\bF(0,T)}^2\dmu\right)^{1/2}\left(\int_{\bR_+}r(\theta)\left(\int^T_0\bE\big[|\hat{g}_t(\theta)|^2\big]^{1/2}\,\diff t\right)^2\dmu\right)^{1/2}\\
	&\leq C\ep^{(1-\alpha)/2}\left(\int^T_0(T-t)^{-\alpha}\,\diff t\right)^{1/2}\bE\left[\int^T_0(T-t)^\alpha\|\hat{g}_t\|_{\bH^1_0}^2\,\diff t\right]^{1/2}\\
	&<\infty.
\end{align*}
Hence, \eqref{MP_eq_Fubini1-1} holds. Next, we show \eqref{MP_eq_Fubini1-2} and \eqref{MP_eq_Fubini1-3}. Let $\xi\in L^{2,\infty}_\bF(0,T;\bR^n)$. Noting \eqref{MP_eq_Mb-p} and \eqref{MP_eq_adeq1-estimate}, we have
\begin{align*}
	&\bE\left[\int^T_0\int_{\bR_+}|M_b(\theta)||\hat{p}_t(\theta)||\xi_t|\dmu\,\diff t\right]\\
	&\leq\|\xi\|_{L^{2,\infty}_\bF(0,T)}\int^T_0\bE\left[\left(\int_{\bR_+}|M_b(\theta)||\hat{p}_t(\theta)|\dmu\right)^2\right]^{1/2}\,\diff t\\
	&\leq\|\xi\|_{L^{2,\infty}_\bF(0,T)}\left(\int^T_0(T-t)^{-\alpha}\,\diff t\right)^{1/2}\|M_b\|_{\bH^1_{-\alpha}}\bE\left[\int^T_0(T-t)^\alpha\|\hat{p}_t\|_{\bH^1_{1+\alpha}}^2\,\diff t\right]^{1/2}\\
	&<\infty.
\end{align*}
Hence, \eqref{MP_eq_Fubini1-2} holds. Similarly, noting \eqref{MP_eq_Msigma-q} and \eqref{MP_eq_adeq1-estimate}, we have
\begin{align*}
	&\bE\left[\int^T_0\int_{\bR_+}|M_\sigma(\theta)||\hat{q}_t(\theta)||\xi_t|\dmu\,\diff t\right]\\
	&\leq\|\xi\|_{L^{2,\infty}_\bF(0,T)}\int^T_0\bE\left[\left(\int_{\bR_+}|M_\sigma(\theta)||\hat{q}_t(\theta)|\dmu\right)^2\right]^{1/2}\,\diff t\\
	&\leq\|\xi\|_{L^{2,\infty}_\bF(0,T)}\left(\int^T_0(T-t)^{-\alpha}\,\diff t\right)^{1/2}\|M_\sigma\|_{\bH^1_{1-\alpha}}\bE\left[\int^T_0(T-t)^\alpha\|\hat{q}_t\|_{\bH^1_\alpha}^2\,\diff t\right]^{1/2}\\
	&<\infty.
\end{align*}
Hence, \eqref{MP_eq_Fubini1-3} holds. The above observations justify the derivation of \eqref{MP_eq_duality1-mu} from \eqref{MP_eq_duality1-theta} by means of Fubini's theorem, and the proof of (A-1) is completed.

\underline{(A-2) Justification of Fubini's theorem deriving \eqref{MP_eq_duality2-mu} from \eqref{MP_eq_duality2-theta}.} In order to justify the derivation of \eqref{MP_eq_duality2-mu} from \eqref{MP_eq_duality2-theta}, noting the symmetry of $(\hat{P},\hat{Q})$ (see \cref{MP_rem_adeq2-symmetry}), it suffices to show that
\begin{align}
	\label{MP_eq_Fubini2-1}
	&\bE\left[\int^T_0\int_{\bR_+}\int_{\bR_+}|\hat{G}_t(\theta_1,\theta_2)||Y^1_t(\theta_1)||Y^1_t(\theta_2)|\dmumu\,\diff t\right]<\infty,\\
	\label{MP_eq_Fubini2-2}
	&\bE\left[\int^T_0\int_{\bR_+}\int_{\bR_+}|M_b(\theta_1)||\hat{P}_t(\theta_1,\theta_2)||Y^1_t(\theta_2)||\xi_t|\dmumu\,\diff t\right]<\infty\ \ \text{for any $\xi\in L^{4,\infty}_\bF(0,T;\bR^n)$,}\\
	\label{MP_eq_Fubini2-3}
	&\bE\left[\int^T_0\int_{\bR_+}\int_{\bR_+}|M_\sigma(\theta_1)||\hat{Q}_t(\theta_1,\theta_2)||Y^1_t(\theta_2)||\xi_t|\dmumu\,\diff t\right]<\infty\ \ \text{for any $\xi\in L^{4,\infty}_\bF(0,T;\bR^n)$, and}\\
	\label{MP_eq_Fubini2-4}
	&\bE\left[\int^T_0\int_{\bR_+}\int_{\bR_+}|M_\sigma(\theta_1)||\hat{P}_t(\theta_1,\theta_2)||M_\sigma(\theta_2)||\xi^1_t||\xi^2_t|\dmumu\,\diff t\right]<\infty\ \ \text{for any $\xi^1,\xi^2\in L^{4,\infty}_\bF(0,T;\bR^n)$.}
\end{align}
First, we show \eqref{MP_eq_Fubini2-1}. Noting the estimates \eqref{MP_eq_adeq2-estimate} for $\hat{G}$ and \eqref{MP_eq_vareq-lift} for $Y^1=Y^{1,v,\tau,\ep}$, by H\"{o}lder's inequality and Tonelli's theorem, we have
\begin{align*}
	&\bE\left[\int^T_0\int_{\bR_+}\int_{\bR_+}|\hat{G}_t(\theta_1,\theta_2)||Y^1_t(\theta_1)||Y^1_t(\theta_2)|\dmumu\,\diff t\right]\\
	&\leq\int_{\bR_+}r(\theta)^{-1}\|Y^1(\theta)\|_{C^4_\bF(0,T)}^2\dmu\,\left(\int_{\bR_+}\int_{\bR_+}r(\theta_1)r(\theta_2)\left(\int^T_0\bE\big[|\hat{G}_t(\theta_1,\theta_2)|^2\big]^{1/2}\,\diff t\right)^2\dmumu\right)^{1/2}\\
	&\leq C\ep^{1-\alpha}\left(\int^T_0(T-t)^{-\alpha}\,\diff t\right)^{1/2}\bE\left[\int^T_0(T-t)^\alpha\|\hat{G}_t\|_{\bH^2_0}^2\,\diff t\right]^{1/2}\\
	&<\infty.
\end{align*}
Hence, \eqref{MP_eq_Fubini2-1} holds. Next, we show \eqref{MP_eq_Fubini2-2} and \eqref{MP_eq_Fubini2-3}. Let $\xi\in L^{4,\infty}_\bF(0,T;\bR^n)$. Noting \eqref{MP_eq_vareq-lift}, \eqref{MP_eq_Mb-P} and \eqref{MP_eq_adeq2-estimate}, we have
\begin{align*}
	&\bE\left[\int^T_0\int_{\bR_+}\int_{\bR_+}|M_b(\theta_1)||\hat{P}_t(\theta_1,\theta_2)||Y^1_t(\theta_2)||\xi_t|\dmumu\,\diff t\right]\\
	&\leq\|\xi\|_{L^{4,\infty}_\bF(0,T)}\left(\int_{\bR_+}r(\theta)^{-1}\|Y^1(\theta)\|_{C^4_\bF(0,T)}^2\dmu\right)^{1/2}\\
	&\hspace{1cm}\times\left(\int_{\bR_+}r(\theta_2)\left(\int^T_0\bE\left[\left(\int_{\bR_+}|M_b(\theta_1)||\hat{P}_t(\theta_1,\theta_2)|\,\mu(\diff\theta_1)\right)^2\right]^{1/2}\,\diff t\right)^2\,\mu(\diff\theta_2)\right)^{1/2}\\
	&\leq C\ep^{(1-\alpha)/2}\|\xi\|_{L^{4,\infty}_\bF(0,T)}\left(\int^T_0(T-t)^{-\alpha}\,\diff t\right)^{1/2}\|M_b\|_{\bH^1_{-\alpha}}\bE\left[\int^T_0(T-t)^\alpha\|\hat{P}_t\|_{\bH^2_{1+\alpha}}^2\,\diff t\right]^{1/2}\\
	&<\infty.
\end{align*}
Hence, \eqref{MP_eq_Fubini2-2} holds. Also, noting \eqref{MP_eq_vareq-lift}, \eqref{MP_eq_Msigma-Q} and \eqref{MP_eq_adeq2-estimate}, we have
\begin{align*}
	&\bE\left[\int^T_0\int_{\bR_+}\int_{\bR_+}|M_\sigma(\theta_1)||\hat{Q}_t(\theta_1,\theta_2)||Y^1_t(\theta_2)||\xi_t|\dmumu\,\diff t\right]\\
	&\leq\|\xi\|_{L^{4,\infty}_\bF(0,T)}\left(\int_{\bR_+}r(\theta)^{-1}\|Y^1(\theta)\|_{C^4_\bF(0,T)}^2\dmu\right)^{1/2}\\
	&\hspace{1cm}\times\left(\int_{\bR_+}r(\theta_2)\left(\int^T_0\bE\left[\left(\int_{\bR_+}|M_\sigma(\theta_1)||\hat{Q}_t(\theta_1,\theta_2)|\,\mu(\diff\theta_1)\right)^2\right]^{1/2}\,\diff t\right)^2\,\mu(\diff\theta_2)\right)^{1/2}\\
	&\leq C\ep^{(1-\alpha)/2}\|\xi\|_{L^{4,\infty}_\bF(0,T)}\left(\int^T_0(T-t)^{-\alpha}\,\diff t\right)^{1/2}\|M_\sigma\|_{\bH^1_{1-\alpha}}\bE\left[\int^T_0(T-t)^\alpha\|\hat{Q}_t\|_{\bH^2_\alpha}^2\,\diff t\right]^{1/2}\\
	&<\infty.
\end{align*}
Hence, \eqref{MP_eq_Fubini2-3} holds. Lastly, we show \eqref{MP_eq_Fubini2-4}. Let $\xi^1,\xi^2\in L^{4,\infty}_\bF(0,T;\bR^n)$. Noting \eqref{MP_eq_vareq-lift}, \eqref{MP_eq_Msigma-P} and \eqref{MP_eq_adeq2-estimate}, we have
\begin{align*}
	&\bE\left[\int^T_0\int_{\bR_+}\int_{\bR_+}|M_\sigma(\theta_1)||\hat{P}_t(\theta_1,\theta_2)||M_\sigma(\theta_2)||\xi^1_t||\xi^2_t|\dmumu\,\diff t\right]\\
	&\leq\|\xi^1\|_{L^{4,\infty}_\bF(0,T)}\|\xi^2\|_{L^{4,\infty}_\bF(0,T)}\int^T_0\bE\left[\left(\int_{\bR_+}\int_{\bR_+}|M_\sigma(\theta_1)||\hat{P}_t(\theta_1,\theta_2)||M_\sigma(\theta_2)|\dmumu\right)^2\right]^{1/2}\,\diff t\\
	&\leq\|\xi^1\|_{L^{4,\infty}_\bF(0,T)}\|\xi^2\|_{L^{4,\infty}_\bF(0,T)}\left(\int^T_0(T-t)^{-\alpha}\,\diff t\right)^{1/2}\|M_\sigma\|_{\bH^1_{1-\alpha}}^2\bE\left[\int^T_0(T-t)^\alpha\|\hat{P}_t\|_{\bH^2_{1+\alpha}}^2\,\diff t\right]^{1/2}\\
	&<\infty.
\end{align*}
Hence, \eqref{MP_eq_Fubini2-4} holds. The above observations justify the derivation of \eqref{MP_eq_duality2-mu} from \eqref{MP_eq_duality2-theta} by means of Fubini's theorem, and the proof of (A-2) is completed. 

The observations in (A) above rigorously prove the equality \eqref{MP_eq_vareq-cost'}. By the relations \eqref{MP_eq_generator-g} and \eqref{MP_eq_generator-G} for $\hat{g}(\theta)$ and $\hat{G}(\theta_1,\theta_2)$, we see that the first and second expectations in the right-hand side of \eqref{MP_eq_vareq-cost'} are zeros. Consequently, noting the symmetry of $(\hat{P},\hat{Q})$ (see \cref{MP_rem_adeq2-symmetry}), we obtain the expression \eqref{MP_eq_vareq-cost''}, where $\cE=\cE^{v,\tau,\ep}\in\bR$ is given by
\begin{align*}
	\cE&:=\bE\left[\int^{\tau+\ep}_\tau\Big\{\big\langle\mu[M_b^\top\hat{p}_t],\delta b_x(t)X^1_t\rangle+\langle\mu[M_\sigma^\top\hat{q}_t],\delta\sigma_x(t)X^1_t\big\rangle\vphantom{\int_{\bR_+}}\right.\\
	&\left.\hspace{2cm}+\int_{\bR_+}\big\langle\mu[M_b^\top\hat{P}_t(\cdot,\theta_2)]Y^1_t(\theta_2),\delta b(t)\big\rangle\,\mu(\diff\theta_2)+\int_{\bR_+}\big\langle\mu[M_\sigma^\top\hat{Q}_t(\cdot,\theta_2)]Y^1_t(\theta_2),\delta\sigma(t)\big\rangle\,\mu(\diff\theta_2)\vphantom{\int_{\bR_+}}\right.\\
	&\left.\hspace{2cm}+\big\langle\mu^{\otimes 2}[M_\sigma^\top\hat{P}_tM_\sigma]\hat{\sigma}_x(t)X^1_t,\delta\sigma(t)\big\rangle\vphantom{\int_{\bR_+}}\Big\}\,\diff t\right],
\end{align*}
which is nothing but the last expectation in the right-hand side of \eqref{MP_eq_vareq-cost'}.

\underline{(B) Convergence with order $o(\ep)$ of the last expectation in the right-hand side of \eqref{MP_eq_vareq-cost'}.} In order to show \eqref{MP_eq_J12-small}, it remains to prove that there exists a Lebesgue-null set $\hat{\cN}\subset[0,T)$, which is independent of the choice of $v\in\cU$, such that $\cE^{v,\tau,\ep}=o(\ep)$ as $\ep\downarrow0$ for any $\tau\in[0,T)\setminus\hat{\cN}$. For this purpose, it suffices to show that the following hold:
\begin{align}
	\label{MP_eq_remainder1}&\lim_{\ep\downarrow0}\frac{1}{\ep}\bE\left[\int^{\tau+\ep}_\tau\int_{\bR_+}|M_b(\theta)||\hat{p}_t(\theta)||\delta b_x(t)X^1_t|\dmu\,\diff t\right]=0,\\
	\label{MP_eq_remainder2}&\lim_{\ep\downarrow0}\frac{1}{\ep}\bE\left[\int^{\tau+\ep}_\tau\int_{\bR_+}|M_\sigma(\theta)||\hat{q}_t(\theta)||\delta\sigma_x(t)X^1_t|\dmu\,\diff t\right]=0,\\
	\label{MP_eq_remainder3}&\lim_{\ep\downarrow0}\frac{1}{\ep}\bE\left[\int^{\tau+\ep}_\tau\int_{\bR_+}\int_{\bR_+}|M_b(\theta_1)||\hat{P}_t(\theta_1,\theta_2)||Y^1_t(\theta_2)||\delta b(t)|\dmumu\,\diff t\right]=0,\\
	\label{MP_eq_remainder4}&\lim_{\ep\downarrow0}\frac{1}{\ep}\bE\left[\int^{\tau+\ep}_\tau\int_{\bR_+}\int_{\bR_+}|M_\sigma(\theta_1)||\hat{Q}_t(\theta_1,\theta_2)||Y^1_t(\theta_2)||\delta\sigma(t)|\dmumu\,\diff t\right]=0,\ \ \text{and}\\
	\label{MP_eq_remainder5}&\lim_{\ep\downarrow0}\frac{1}{\ep}\bE\left[\int^{\tau+\ep}_\tau\int_{\bR_+}\int_{\bR_+}|M_\sigma(\theta_1)||\hat{P}_t(\theta_1,\theta_2)||M_\sigma(\theta_2)||\hat{\sigma}_x(t)X^1_t||\delta\sigma(t)|\dmumu\,\diff t\right]=0,
\end{align}
for any $\tau\in[0,T)\setminus\hat{\cN}$ for some Lebesgue-null set $\hat{\cN}\subset[0,T)$ which is independent of the choice of $v$. The above five claims are shown by applying the arguments we have proved so far. Let us prove \eqref{MP_eq_remainder1}. Applying the argument showing \eqref{MP_eq_Fubini1-2} above to $\xi_t=\delta b_x(t)X^1_t\1_{[\tau,\tau+\ep]}(t)$ and noting $|\delta b_x(t)|\leq2\kappa$ by \cref{control_assum_coefficient}, we see that, for any $v\in\cU$, $\tau\in[0,T)$ and $\ep\in(0,T-\tau]$,
\begin{align*}
	&\bE\left[\int^{\tau+\ep}_\tau\int_{\bR_+}|M_b(\theta)||\hat{p}_t(\theta)||\delta b_x(t)X^1_t|\dmu\,\diff t\right]\\
	&\leq2\kappa\|X^1\|_{C^2_\bF(0,T)}\left(\int^{\tau+\ep}_\tau(T-t)^{-\alpha}\,\diff t\right)^{1/2}\|M_b\|_{\bH^1_{-\alpha}}\bE\left[\int^{\tau+\ep}_\tau(T-t)^\alpha\|\hat{p}_t\|_{\bH^1_{1+\alpha}}^2\,\diff t\right]^{1/2}.
\end{align*}
By \eqref{Taylor_eq_var-1} in \cref{Taylor_prop_SVE-expansion}, we have $\lim_{\ep\downarrow0}\|X^{1,v,\tau,\ep}\|_{C^2_\bF(0,T)}=0$ for any $v\in\cU$ and any $\tau\in[0,T)$. Also, we have $\lim_{\ep\downarrow0}\frac{1}{\ep}\int^{\tau+\ep}_\tau(T-t)^{-\alpha}\,\diff t=(T-\tau)^{-\alpha}<\infty$ for any $\tau\in[0,T)$. Lastly, noting the estimate \eqref{MP_eq_adeq1-estimate} for $\hat{p}$, by Lebesgue's differentiation theorem, there exists a Lebesgue-null set $\hat{\cN}\subset[0,T)$, which does not depend on $v$, such that
\begin{equation*}
	\lim_{\ep\downarrow0}\frac{1}{\ep}\bE\left[\int^{\tau+\ep}_\tau(T-t)^\alpha\|\hat{p}_t\|_{\bH^1_{1+\alpha}}^2\,\diff t\right]=(T-\tau)^\alpha\bE\Big[\|\hat{p}_\tau\|_{\bH^1_{1+\alpha}}^2\Big]<\infty
\end{equation*}
for any $\tau\in[0,T)\setminus\hat{\cN}$. Hence, we see that \eqref{MP_eq_remainder1} holds for any $\tau\in[0,T)\setminus\hat{\cN}$. Similarly, we can show \eqref{MP_eq_remainder2}, \eqref{MP_eq_remainder3}, \eqref{MP_eq_remainder4} and \eqref{MP_eq_remainder5} by using the arguments showing \eqref{MP_eq_Fubini1-3}, \eqref{MP_eq_Fubini2-2}, \eqref{MP_eq_Fubini2-3} and \eqref{MP_eq_Fubini2-4} above, respectively.

Consequently, we have $\cE^{v,\tau,\ep}=o(\ep)$ as $\ep\downarrow0$ for any $\tau\in[0,T)\setminus\hat{\cN}$ for some Lebesgue-null set $\hat{\cN}\subset[0,T)$, which is independent of the choice of $v$. This completes the proof of \cref{MP_prop_J12-small}.
\end{proof}

%% Remark

\begin{rem}
Unlike the last assertion of \cref{Taylor_prop_SVE-expansion}, \cref{MP_prop_J12-small} holds true without assuming that $K_b\in L^{3/2}(0,T;\bR^{n\times n})$ or $K_\sigma\in L^6(0,T;\bR^{n\times n})$.
\end{rem}

Recall the definition \eqref{intro_eq_Hamiltonian} of the \emph{Hamiltonian} $H:\Omega\times[0,T]\times U\times\bR^n\times\bR^n\times\bR^n\to\bR$:
\begin{equation*}
	H(t,u,x,p,q):=\langle p,b(t,u,x)\rangle+\langle q,\sigma(t,u,x)\rangle-f(t,u,x),
\end{equation*}
for $(\omega,t,u,x,p,q)\in\Omega\times[0,T]\times U\times\bR^n\times\bR^n\times\bR^n$, where we suppressed the dependency on $\omega$. This is the ``standard'' Hamiltonian appearing in control problems of SDEs; see \cite{Pe90} and \cite[Chapter 3]{YoZh99}. Using the Hamiltonian, the first and second order adjoint equations \eqref{MP_eq_adeq1} and \eqref{MP_eq_adeq2} can be written as \eqref{intro_eq_adeq1} and \eqref{intro_eq_adeq2}, respectively.

Combining \cref{Taylor_prop_SVE-expansion} and \cref{MP_prop_J12-small}, we obtain the following \emph{global maximum principle}, which is the main result of this paper.

%% Theorem

\begin{theo}\label{MP_theo_MP}
Suppose that \cref{control_assum_coefficient} and \cref{MP_assum_kernel} with $\alpha=\frac{1}{3}$ hold. Let $\hat{u}\in\cU$ be given, and denote $\hat{X}:=X^{\hat{u}}$. Also, let $(\hat{p},\hat{q})$ and $(\hat{P},\hat{Q})$ be the solutions of the associated first and second order adjoint equations \eqref{MP_eq_adeq1} and \eqref{MP_eq_adeq2}, respectively. If $\hat{u}$ is an optimal control of the stochastic Volterra control problem \eqref{intro_eq_cSVE}--\eqref{intro_eq_cost}, then the following variational inequality holds:
\begin{equation}\label{MP_eq_variational-inequality}
\begin{split}
	&H\big(t,\hat{u}_t,\hat{X}_t,\mu[M_b^\top\hat{p}_t],\mu[M_\sigma^\top\hat{q}_t]\big)-H\big(t,v,\hat{X}_t,\mu[M_b^\top\hat{p}_t],\mu[M_\sigma^\top\hat{q}_t]\big)\\
	&-\frac{1}{2}\Big\langle\mu^{\otimes2}[M_\sigma^\top\hat{P}_tM_\sigma]\big\{\sigma(t,\hat{u}_t,\hat{X}_t)-\sigma(t,v,\hat{X}_t)\big\},\sigma(t,\hat{u}_t,\hat{X}_t)-\sigma(t,v,\hat{X}_t)\Big\rangle\geq0\\
	&\hspace{4cm}\text{for all $v\in U$ a.s.\ for a.e.\ $t\in[0,T]$}.
\end{split}
\end{equation}
\end{theo}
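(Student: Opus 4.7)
The plan is to combine the Taylor expansion of the cost (\cref{Taylor_prop_SVE-expansion}) with the duality identity (\cref{MP_prop_J12-small}) and then perform a careful spike/indicator variation to convert an expected inequality into a pointwise variational inequality. First, note that with $\alpha = \tfrac{1}{3}$, \cref{MP_lemm_kernel} guarantees $K_b \in L^{3/2}(0,T;\bR^{n\times n})$ and $K_\sigma \in L^{6}(0,T;\bR^{n\times n})$, so the last assertion of \cref{Taylor_prop_SVE-expansion} applies. For any $v \in \cU$, any $\tau \in [0,T)$ and any $\ep \in (0, T-\tau]$, optimality of $\hat{u}$ gives $J(u^{v,\tau,\ep}) - J(\hat{u}) \geq 0$, which via $\delta J^{1,2,v,\tau,\ep} = o(\ep)$ yields $J^{1,2,v,\tau,\ep} \geq o(\ep)$. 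Then applying \cref{MP_prop_J12-small} on the common null set $\hat{\cN}$, for every $\tau \in [0,T)\setminus\hat{\cN}$,
\begin{equation*}
    \bE\!\left[\int_\tau^{\tau+\ep}\!\!\mathcal{I}^v(t)\,\diff t\right] \leq o(\ep), \quad \ep\downarrow 0,
\end{equation*}
where $\mathcal{I}^v(t)$ is minus the integrand on the right-hand side of \eqref{MP_eq_J12-small}, namely
\begin{equation*}
    \mathcal{I}^v(t) = H\big(t, v_t, \hat{X}_t, \mu[M_b^\top \hat{p}_t], \mu[M_\sigma^\top \hat{q}_t]\big) - H\big(t, \hat{u}_t, \hat{X}_t, \mu[M_b^\top \hat{p}_t], \mu[M_\sigma^\top \hat{q}_t]\big) + \tfrac{1}{2}\big\langle \mu^{\otimes 2}[M_\sigma^\top \hat{P}_t M_\sigma]\delta\sigma^v(t),\delta\sigma^v(t)\big\rangle.
\end{equation*}

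Next I would pass from the integrated to the pointwise form via Lebesgue differentiation. For a fixed $v \in \cU$, the map $t \mapsto \bE[\mathcal{I}^v(t)]$ is in $L^1(0,T)$ (the integrability has already been established in the course of \cref{MP_prop_J12-small}), so dividing by $\ep$ and letting $\ep\downarrow 0$ yields $\bE[\mathcal{I}^v(\tau)] \leq 0$ at every Lebesgue point of this map; removing a further null set of $\tau$'s gives this conclusion outside a ($v$-dependent) exceptional set.

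The heart of the argument, and the point I expect to require the most care, is upgrading this scalar inequality to the pointwise variational inequality \eqref{MP_eq_variational-inequality}. The standard device is the following indicator trick: fix $\tilde{v} \in U$ (as a constant) and, for an arbitrary progressively measurable set $A \subset \Omega\times[0,T]$, define $v^A_t := \tilde{v}\,\1_A(\omega,t) + \hat{u}_t\,\1_{A^c}(\omega,t)$. Then $v^A \in \cU$ (since $R^{v^A} \leq R^{\tilde{v}} \vee \hat{R}$) and $\mathcal{I}^{v^A}(t) = \mathcal{I}^{\tilde{v}}(t)\,\1_A(\omega,t)$. Choosing $A = \{(\omega,t) : \mathcal{I}^{\tilde{v}}(t,\omega) > 0\}$, which is progressively measurable because $\mathcal{I}^{\tilde{v}}$ is, the previous step produces $\bE[(\mathcal{I}^{\tilde{v}}(\tau))^+] \leq 0$ outside a null $\tau$-set $\hat{\cN}_{\tilde{v}}$, whence $\mathcal{I}^{\tilde{v}}(\tau)\leq 0$ $\bP$-a.s.\ for a.e.\ $\tau \in [0,T]$.

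Finally, to convert ``for each $\tilde{v} \in U$ separately'' into ``for all $v \in U$ simultaneously'' I would invoke separability of $U$: pick a countable dense $\{v_k\} \subset U$ and set $\hat{\cN}_\star := \hat{\cN} \cup \bigcup_k \hat{\cN}_{v_k}$, which is still Lebesgue-null. For $t \in [0,T]\setminus\hat{\cN}_\star$ and $\omega$ outside a corresponding $\bP$-null set, $\mathcal{I}^{v_k}(t,\omega)\leq 0$ for every $k$. Continuity of $b,\sigma,f$ in the control variable from \cref{control_assum_coefficient} makes $v \mapsto \mathcal{I}^v(t,\omega)$ continuous on $U$, so the inequality extends to every $v \in U$; rewriting $\mathcal{I}^v(t) \leq 0$ using the definition \eqref{intro_eq_Hamiltonian} of $H$ and the symmetry of $\mu^{\otimes 2}[M_\sigma^\top \hat{P}_t M_\sigma]$ (\cref{MP_rem_adeq2-symmetry}) gives exactly \eqref{MP_eq_variational-inequality}. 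The main obstacle is keeping the null sets uniform across all variations $v$ while justifying the indicator-variation step: this is why \cref{MP_prop_J12-small} was formulated with a common null set $\hat{\cN}$ independent of $v \in \cU$, and why the separability of $U$ must be used at the very end rather than being folded into the duality calculation.
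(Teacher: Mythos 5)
Your proof is correct and reaches the conclusion by a route that is close to but not identical with the paper's. Both arguments start the same way (kernel integrability with $\alpha=\tfrac{1}{3}$, then combine the Taylor estimate of \cref{Taylor_prop_SVE-expansion} with the duality formula of \cref{MP_prop_J12-small}), and both end with a countable dense subset of $U$ plus continuity in the control variable. The divergence is in how the integrated inequality over $[\tau,\tau+\ep]$ is localized. The paper fixes $v=v^\ell$ constant and perturbs on $A_\tau\times[\tau,T]$ for an \emph{arbitrary} $A_\tau\in\cF_\tau$; since $\hat{\cH}_\tau(\hat{u}_\tau)$ and $\hat{\cH}_\tau(v^\ell)$ are $\cF_\tau$-measurable, arbitrariness of $A_\tau$ yields the a.s.\ inequality, but this forces the paper to invoke Lebesgue's differentiation theorem for Bochner $L^1(\Omega)$-valued maps so that the resulting $\tau$-null set $\check{\cN}$ is uniform over the uncountably many $A_\tau$. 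You instead fix the specific progressively measurable set $A=\{\mathcal{I}^{\tilde v}>0\}$, whose indicator collapses $\mathcal{I}^{v^A}(t)$ to $(\mathcal{I}^{\tilde v}(t))^+$, so the scalar Lebesgue differentiation theorem applied to $t\mapsto\bE[(\mathcal{I}^{\tilde v}(t))^+]$ already gives $(\mathcal{I}^{\tilde v}(\tau))^+=0$ a.s.\ for a.e.\ $\tau$. The trade-off is that your extra null set is $\tilde v$-dependent, but since only countably many $\tilde v=v_k$ are needed this is harmless. Your route thus buys you a lighter differentiation theorem (scalar instead of Bochner-valued) at the cost of a slightly more contrived indicator set; it is a standard alternative in stochastic control and is fully rigorous here. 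One cosmetic remark: the invocation of the symmetry of $\mu^{\otimes 2}[M_\sigma^\top\hat P_tM_\sigma]$ (\cref{MP_rem_adeq2-symmetry}) in your last step is unnecessary, since $\delta\sigma^v(t)=-(\sigma(t,\hat u_t,\hat X_t)-\sigma(t,v,\hat X_t))$ and the quadratic form is evaluated at a single vector.
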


%% Proof

\begin{proof}
By \cref{MP_lemm_kernel}, \cref{MP_assum_kernel} with $\alpha=\frac{1}{3}$ implies that $K_b\in L^{3/2}(0,T;\bR^{n\times n})$ and $K_\sigma\in L^6(0,T;\bR^{n\times n})$. Hence, the last assertion in \cref{Taylor_prop_SVE-expansion} holds.

Assume that $\hat{u}\in\cU$ is an optimal control. Define the so-called $\cH$-function $\hat{\cH}:\Omega\times[0,T]\times U\to\bR$ by
\begin{align*}
	\hat{\cH}_t(v)&:=H\big(t,v,\hat{X}_t,\mu[M_b^\top\hat{p}_t],\mu[M_\sigma^\top\hat{q}_t]\big)\\
	&\hspace{1cm}+\frac{1}{2}\Big\langle\mu^{\otimes2}[M_\sigma^\top \hat{P}_tM_\sigma]\big\{\sigma(t,\hat{u}_t,\hat{X}_t)-\sigma(t,v,\hat{X}_t)\big\},\sigma(t,\hat{u}_t,\hat{X}_t)-\sigma(t,v,\hat{X}_t)\Big\rangle,
\end{align*}
for $(\omega,t,v)\in\Omega\times[0,T]\times U$, where we suppressed the dependency on $\omega$. It can be easily shown that the $\cH$-function is progressively measurable and satisfies $\bE[\int^T_0|\hat{\cH}_t(v_t)|\,\diff t]<\infty$ for any $v\in\cU$. Furthermore, by the continuity conditions (with respect to $u\in U$) in \cref{control_assum_coefficient}, we see that $v\mapsto\hat{\cH}_t(v)$ is continuous for any $(\omega,t)\in\Omega\times[0,T]$. Notice that the variational inequality \eqref{MP_eq_variational-inequality} is equivalent to the following maximum condition:
\begin{equation}\label{MP_eq_maximum-condition}
	\hat{\cH}_t(\hat{u}_t)=\max_{v\in U}\hat{\cH}_t(v)\ \ \text{a.s.\ for a.e.\ $t\in[0,T]$}.
\end{equation}
In the following, we will show \eqref{MP_eq_maximum-condition}.

Noting that the control domain $U$ is separable (by \cref{control_assum_coefficient}), we can take a countable dense subset $\{v^\ell\}_{\ell\in\bN}$ of $U$. Each $v^\ell\in U$ can be seen as a constant control process $v^\ell_t:=v^\ell$, $t\in[0,T]$, which belongs to $\cU$ thanks to \cref{control_assum_coefficient}. Notice that (the $\bP$-equivalence classes of) the functions $t\mapsto\hat{\cH}_t(\hat{u}_t)$ and $t\mapsto\hat{\cH}_t(v^\ell)$ are Bochner-integrable as maps from $[0,T]$ to the separable Banach space $L^1(\Omega,\cF_T,\bP)$. Hence, by Lebesgue's differentiation theorem for Bochner-integrable functions (cf.\ \cite[Theorem 3.8.5]{HiPh57}), there exists a Lebesgue-null set $\check{\cN}\subset[0,T)$ such that
\begin{equation}\label{MP_eq_Lebesgue}
	\lim_{\ep\downarrow0}\frac{1}{\ep}\int^{\tau+\ep}_\tau\bE\big[|\hat{\cH}_t(\hat{u}_t)-\hat{\cH}_\tau(\hat{u}_\tau)|\big]\,\diff t=0\ \ \text{and}\ \ \lim_{\ep\downarrow0}\frac{1}{\ep}\int^{\tau+\ep}_\tau\bE\big[|\hat{\cH}_t(v^\ell)-\hat{\cH}_\tau(v^\ell)|\big]\,\diff t=0
\end{equation}
for any $\ell\in\bN$ and any $\tau\in[0,T)\setminus\check{\cN}$. Let $\cN:=\hat{\cN}\cup\check{\cN}$, where $\hat{\cN}\subset[0,T)$ is the Lebesgue-null set specified in \cref{MP_prop_J12-small}. Here, we emphasize that the null set $\cN$ depends only on the fixed (optimal) control $\hat{u}$ and the sequence $\{v^\ell\}_{\ell\in\bN}$.

Let $\ell\in\bN$, $\tau\in[0,T)\setminus\cN$ and $A_\tau\in\cF_\tau$ be given. Define
\begin{equation*}
	v^{\ell,\tau}_t(\omega):=v^\ell\1_{A_\tau\times[\tau,T]}(\omega,t)+\hat{u}_t(\omega)\1_{(\Omega\times[0,T])\setminus(A_\tau\times[\tau,T])}(\omega,t).
\end{equation*}
Notice that $v^{\ell,\tau}\in\cU$. Since $K_b\in L^{3/2}(0,T;\bR^{n\times n})$ and $K_\sigma\in L^6(0,T;\bR^{n\times n})$, by the last assertion in \cref{Taylor_prop_SVE-expansion}, we have
\begin{equation*}
	J(u^{v^{\ell,\tau},\tau,\ep})-J(\hat{u})=J^{1,2,v^{\ell,\tau},\tau,\ep}+o(\ep)\ \ \text{as $\ep\downarrow0$},
\end{equation*}
where $u^{v^{\ell,\tau},\tau,\ep}\in\cU$ and $J^{1,2,v^{\ell,\tau},\tau,\ep}$ are defined by \eqref{Taylor_eq_spike-variation} and \eqref{Taylor_eq_vareq-cost}, respectively, with the choice $v=v^{\ell,\tau}$. Furthermore, noting that $\tau\in[0,T)\setminus\cN$, by \cref{MP_prop_J12-small} and the definition of the $\cH$-function, we see that 
\begin{equation*}
	J^{1,2,v^{\ell,\tau},\tau,\ep}=\bE\left[\int^{\tau+\ep}_\tau\Big\{\hat{\cH}_t(\hat{u}_t)-\hat{\cH}_t(v^{\ell,\tau}_t)\Big\}\,\diff t\right]+o(\ep)\ \ \text{as $\ep\downarrow0$}.
\end{equation*}
However, by the definition of $v^{\ell,\tau}$ and \eqref{MP_eq_Lebesgue}, we have
\begin{align*}
	\bE\left[\int^{\tau+\ep}_\tau\Big\{\hat{\cH}_t(\hat{u}_t)-\hat{\cH}_t(v^{\ell,\tau}_t)\Big\}\,\diff t\right]&=\bE\left[\int^{\tau+\ep}_\tau\Big\{\hat{\cH}_t(\hat{u}_t)-\hat{\cH}_t(v^\ell)\Big\}\,\diff t\,\1_{A_\tau}\right]\\
	&=\bE\Big[\big\{\hat{\cH}_\tau(\hat{u}_\tau)-\hat{\cH}_\tau(v^\ell)\big\}\1_{A_\tau}\Big]\,\ep+o(\ep)\ \ \text{as $\ep\downarrow0$}.
\end{align*}
Consequently, we have
\begin{equation*}
	J(u^{v^{\ell,\tau},\tau,\ep})-J(\hat{u})=\bE\Big[\big\{\hat{\cH}_\tau(\hat{u}_\tau)-\hat{\cH}_\tau(v^\ell)\big\}\1_{A_\tau}\Big]\,\ep+o(\ep)\ \ \text{as $\ep\downarrow0$}.
\end{equation*}
By the optimality of $\hat{u}$, we have $J(u^{v^{\ell,\tau},\tau,\ep})-J(\hat{u})\geq0$ for any $\ep\in(0,T-\tau]$, and hence it holds that
\begin{equation*}
	\bE\Big[\big\{\hat{\cH}_\tau(\hat{u}_\tau)-\hat{\cH}_\tau(v^\ell)\big\}\1_{A_\tau}\Big]\geq0.
\end{equation*}
Since $A_\tau\in\cF_\tau$ is arbitrary, noting that $\hat{\cH}_\tau(\hat{u}_\tau)$ and $\hat{\cH}_\tau(v^\ell)$ are $\cF_\tau$-measurable, we have $\hat{\cH}_\tau(\hat{u}_\tau)\geq\hat{\cH}_\tau(v^\ell)$ a.s. Furthermore, since $\ell\in\bN$ is arbitrary, the continuity of $v\mapsto\hat{\cH}_\tau(v)$ and the density of $\{v^\ell\}_{\ell\in\bN}$ in $U$ yield that
\begin{equation*}
	\hat{\cH}_\tau(\hat{u}_\tau)=\max_{v\in U}\hat{\cH}_\tau(v)\ \ \text{a.s.}
\end{equation*}
The above holds for any $\tau\in[0,T)\setminus\cN$ with $\cN\subset[0,T)$ being null with respect to the Lebesgue measure. Thus, the maximum condition \eqref{MP_eq_maximum-condition} holds. This completes the proof.
\end{proof}

%% Remark

\begin{rem}
\begin{itemize}
\item
The above result is a natural generalization of Peng's global maximum principle for SDEs \cite{Pe90} (see also \cite[Chapter 3]{YoZh99}). Indeed, if $K_b(t)=K_\sigma(t)=I_{n\times n}$, then we can take $\mu(\diff\theta)=\delta_0(\diff\theta)$ and $M_b(\theta)=M_\sigma(\theta)=I_{n\times n}$. In this case, the first and second order adjoint equations \eqref{MP_eq_adeq1} and \eqref{MP_eq_adeq2} (with $\theta=0$ and $(\theta_1,\theta_2)=(0,0)$, respectively) as well as the variational inequality \eqref{MP_eq_variational-inequality} reduce to the counterparts in the SDEs case.
\item
Wang and Yong \cite{WaYo23} obtained a different form of a global maximum principle for non-convolution type SVEs with regular kernels. Their approach relies on the analysis of backward stochastic Volterra integral equations (BSVIEs); the first order adjoint equation is written as a Type-II BSVIE (see \eqref{app_eq_BSVIE1} in \cref{app-BSVIE}), and the second order adjoint equation is written as a coupled system of non-standard BSVIEs (see \eqref{app_eq_BSVIE2} in \cref{app-BSVIE}). Compared with \cite{WaYo23}, our approach does not need the analysis of BSVIEs. The adjoint equations we obtained in the present paper is much simpler than that of \cite{WaYo23}, and is more tractable (especially in the singular kernel case) thanks to the semimartingale structure. As a price, unlike \cite{WaYo23}, we require the structure condition (\cref{MP_assum_kernel}) on the kernels. In \cref{app-BSVIE}, we show some relationships between BSEEs and BSVIEs.
\end{itemize}
\end{rem}

%%%%%%%%%%%%%%%%%%%%%%%%%%%%%%%%%%
%%%%%%%%%%%%%%%%%%%%%%%%%%%%%%%%%%
\section{Well-posedness of general BSEEs on weighted $L^2$ spaces}\label{BSEE}
%%%%%%%%%%%%%%%%%%%%%%%%%%%%%%%%%%
%%%%%%%%%%%%%%%%%%%%%%%%%%%%%%%%%%

In this section, we prove \cref{BSEE_theo_BSEE}. To do so, the following novel estimate for finite dimensional BSDEs, which is interesting by its own right, plays a key role.

%% Lemma

\begin{lemm}\label{BSEE_lemm_BSDE-apriori}
There exists a universal constant $C_0>0$ such that the following holds. Let $E$ be a Euclidean space. Let $\fh:\Omega\to E$ be an $\cF_T$-measurable random variable and $\fg:\Omega\times[0,T]\to E$ be a progressively measurable process such that
\begin{equation}\label{BSEE_eq_BSDE-integrability-generator}
	\bE\left[|\fh|^2+\int^T_0(T-t)^\alpha|\fg_t|^2\,\diff t\right]<\infty,
\end{equation}
where $\alpha\in[0,1)$ is a given constant. Fix a constant $\kappa>0$, and let $(\fp,\fq)$ be the adapted solution of the following BSDE on $E$:
\begin{equation}\label{BSEE_eq_BSDE}
	\begin{dcases}
	\diff\fp_t=\kappa\fp_t\diff t-\fg_t\,\diff t+\fq_t\,\diff W_t,\ \ t\in[0,T],\\
	\fp_T=\fh.
	\end{dcases}
\end{equation}
Then,
\begin{equation}\label{BSEE_eq_BSDE-apriori}
\begin{split}
	&\bE\left[\sup_{t\in[0,T]}|\fp_t|^2+\kappa\int^T_0|\fp_t|^2\,\diff t+\int^T_0|\fq_t|^2\,\diff t\right.\\
	&\left.\hspace{1cm}+\kappa^\alpha\sup_{t\in[0,T]}(T-t)^\alpha|\fp_t|^2+\kappa^{1+\alpha}\int^T_0(T-t)^\alpha|\fp_t|^2\,\diff t+\kappa^\alpha\int^T_0(T-t)^\alpha|\fq_t|^2\,\diff t\right]\\
	&\leq C_0\bE\left[|\fh|^2+\frac{\Gamma(1-\alpha)}{\kappa^{1-\alpha}}\int^T_0(T-t)^\alpha|\fg_t|^2\,\diff t\right],
\end{split}
\end{equation}
where $\Gamma(\beta):=\int^\infty_0t^{\beta-1}e^{-t}\,\diff t$, $\beta>0$, denotes the Gamma function.
\end{lemm}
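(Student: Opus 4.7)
First, I exploit the constant-coefficient linear structure of \eqref{BSEE_eq_BSDE}: multiplying through by $e^{-\kappa t}$ converts the BSDE into a drift-free backward equation for $\tilde{\fp}_t := e^{-\kappa t}\fp_t$ with generator $-e^{-\kappa t}\fg_t$, giving the explicit semigroup representation
$$\fp_t = \bE_t\!\left[e^{-\kappa(T-t)}\fh + \int_t^T e^{-\kappa(u-t)}\fg_u\,\diff u\right], \quad t\in[0,T].$$
This formula drives every subsequent estimate.

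Next, I derive a pointwise bound on $|\fp_t|^2$ by applying Cauchy--Schwarz to the integral term with the split $e^{-\kappa(u-t)} = [e^{-\kappa(u-t)/2}(u-t)^{-\alpha/2}]\cdot[e^{-\kappa(u-t)/2}(u-t)^{\alpha/2}]$. Combined with the moment identity $\int_0^\infty e^{-\kappa v}v^{-\alpha}\,\diff v = \Gamma(1-\alpha)\kappa^{\alpha-1}$ and conditional Jensen, this yields
$$|\fp_t|^2 \leq 2e^{-2\kappa(T-t)}\bE_t[|\fh|^2] + 2\Gamma(1-\alpha)\kappa^{\alpha-1}\bE_t\!\left[\int_t^T e^{-\kappa(u-t)}(u-t)^\alpha|\fg_u|^2\,\diff u\right],$$
which is the source of the characteristic $\Gamma(1-\alpha)\kappa^{\alpha-1}$ factor on the right-hand side of the lemma. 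Integrating this bound in $t$ without weight and with weight $(T-t)^\alpha$, swapping orders via Fubini, and using the auxiliary moments $\int_0^\infty v^\beta e^{-\kappa v}\,\diff v = \Gamma(1+\beta)\kappa^{-1-\beta}$ together with the subadditivity $(T-u+v)^\alpha\leq(T-u)^\alpha+v^\alpha$ (valid for $\alpha\in[0,1]$), produces the four $\fp$-terms on the left with the stated $\kappa$-scalings and converts the $(u-t)^\alpha$ weight into the $(T-u)^\alpha$ weight on $\fg$ required by the statement.

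For the two $\fq$-norms and the two supremum terms I apply It\^o's formula to $|\fp_t|^2$ (for the unweighted estimates) and to $(T-t)^\alpha|\fp_t|^2$ (for the weighted ones), obtaining the energy identity
$$|\fp_t|^2 + 2\kappa\!\int_t^T\!|\fp_s|^2\,\diff s + \int_t^T\!|\fq_s|^2\,\diff s = |\fh|^2 + 2\!\int_t^T\!\fp_s\cdot\fg_s\,\diff s - 2\!\int_t^T\!\fp_s\cdot\fq_s\,\diff W_s$$
and its $(T-s)^\alpha$-weighted analogue. Young's inequality $2\fp\cdot\fg\leq\kappa|\fp|^2+|\fg|^2/\kappa$ absorbs the cross term into the $2\kappa|\fp|^2$ piece on the left, leaving $\bE\int|\fq_s|^2\,\diff s$ controlled by the $\fp$-estimates already obtained plus an $|\fg|^2$-piece that I rewrite in the desired $(T-s)^\alpha$-weighted form via the pointwise bound and Fubini machinery of the previous step. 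The supremum estimates follow by taking $\sup_t$ in the It\^o identity and applying the Burkholder--Davis--Gundy inequality to $\int\fp_s\cdot\fq_s\,\diff W_s$, absorbing a factor $\tfrac{1}{2}\bE[\sup_t|\fp_t|^2]$ into the left-hand side.

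The main obstacle is the weighted case: It\^o applied to $(T-t)^\alpha|\fp_t|^2$ produces the singular extra term $\alpha\int_t^T(T-s)^{\alpha-1}|\fp_s|^2\,\diff s$ that must be controlled without degrading the $\kappa^{\alpha-1}$ scaling on the right-hand side. Handling this term via the pointwise bound from the second step, and carefully matching the $(u-t)^\alpha$ weight naturally produced by the Cauchy--Schwarz decomposition against the $(T-u)^\alpha$ weight on $\fg$ appearing in the statement, is the technically delicate point that determines the universal constant $C_0$.
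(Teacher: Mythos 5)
Your pointwise bound obtained from the explicit representation plus Cauchy--Schwarz is correct, and it does produce the characteristic factor $\Gamma(1-\alpha)\kappa^{\alpha-1}$. But the route from there to the $(T-t)^\alpha$-weighted right-hand side of the lemma has a genuine gap: after Fubini, the weight $(u-t)^\alpha$ attached to $|\fg_u|^2$ cannot be converted into $(T-u)^\alpha$. Concretely, for the term $\kappa^{1+\alpha}\int_0^T(T-t)^\alpha\bE[|\fp_t|^2]\,\diff t$, the Fubini step yields the inner integral $\int_0^u((T-u)+v)^\alpha e^{-\kappa v}v^\alpha\,\diff v$, and the subadditivity $((T-u)+v)^\alpha\leq(T-u)^\alpha+v^\alpha$ splits it into a good piece $(T-u)^\alpha\Gamma(1+\alpha)\kappa^{-1-\alpha}$ \emph{plus} a residual $\Gamma(1+2\alpha)\kappa^{-1-2\alpha}$, which after collecting constants produces a term of order $\kappa^{-1}\int_0^T\bE[|\fg_u|^2]\,\diff u$ with no $(T-u)^\alpha$ weight. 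That cannot be dominated by $C_0\kappa^{\alpha-1}\int_0^T(T-u)^\alpha\bE[|\fg_u|^2]\,\diff u$ uniformly in $\kappa>0$: take $\kappa\to0$ with $\fg$ concentrated near $t=T$. The same obstruction hits the unweighted term $\kappa\int|\fp_t|^2$ (Fubini there produces \emph{only} the unweighted $\kappa^{-1}\int|\fg|^2$), and the term $\alpha\int(T-s)^{\alpha-1}|\fp_s|^2\,\diff s$ coming from the weighted It\^o identity. So the weight-matching you flag as ``technically delicate'' is in fact the step that breaks.

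The fix, and what the paper does, is not to go to $|\fg|^2$ early. The explicit representation is used only to obtain the intermediate bound $\bE[|\fp_t|^2]\leq\bE[e^{-2\kappa(T-t)}|\fh|^2+2\int_t^Te^{-2\kappa(s-t)}|\fp_s||\fg_s|\,\diff s]$, keeping the cross term $|\fp||\fg|$, and then multiplying by $(T-t)^{\beta-1}$ and integrating. The crucial elementary inequality is $(T-t)^{\beta-1}\leq(s-t)^{\beta-1}$ for $t\leq s\leq T$ and $\beta\in(0,1)$, which gives $\int_0^s(T-t)^{\beta-1}e^{-2\kappa(s-t)}\,\diff t\leq(2\kappa)^{-\beta}\Gamma(\beta)$ and hence
\begin{equation*}
\bE\left[\int_0^T(T-t)^{\beta-1}|\fp_t|^2\,\diff t\right]\leq(2\kappa)^{-\beta}\Gamma(\beta)\,\bE\left[|\fh|^2+2\int_0^T|\fp_t||\fg_t|\,\diff t\right],\quad\beta\in(0,1),
\end{equation*}
with the \emph{cross term} $\int|\fp||\fg|$ on the right. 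Applying this with $\beta=\alpha$ controls the singular It\^o term $\alpha\int(T-s)^{\alpha-1}|\fp_s|^2\,\diff s$; applying it with $\beta=1-\alpha$ supplies an extra quantity $\frac{\kappa^{1-\alpha}}{\Gamma(1-\alpha)}\int(T-t)^{-\alpha}|\fp_t|^2\,\diff t$ that one may add to the left-hand side of the combined estimate. Since all three estimates share the same right-hand side $\bE[|\fh|^2+\int|\fp||\fg|]$ (up to weights), Young's inequality is applied \emph{once, at the end}, with the split $|\fp_t||\fg_t|=[(T-t)^{-\alpha/2}|\fp_t|]\cdot[(T-t)^{\alpha/2}|\fg_t|]$, and the $(T-t)^{-\alpha}|\fp_t|^2$ piece is absorbed into the left-hand side via the auxiliary quantity just described; this is a self-improving (bootstrap) estimate. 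Your proposal commits to $|\fg|^2$ too early via Cauchy--Schwarz with a fixed exponential weight, destroying the slack that the bootstrap exploits.
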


%% Remark

\begin{rem}
Under the condition \eqref{BSEE_eq_BSDE-integrability-generator} with $\alpha\in[0,1)$, by the Cauchy--Schwarz inequality,
\begin{equation*}
	\bE\left[\left(\int^T_0|\fg_t|\,\diff t\right)^2\right]\leq\frac{T^{1-\alpha}}{1-\alpha}\bE\left[\int^T_0(T-t)^\alpha|\fg_t|^2\,\diff t\right]<\infty.
\end{equation*}
Hence, by \cite[Proposition 4.3.1]{Zh17}, there exists a unique adapted solution $(\fp,\fq)$ to the BSDE \eqref{BSEE_eq_BSDE} such that
\begin{equation*}
	\bE\left[\sup_{t\in[0,T]}|\fp_t|^2+\int^T_0|\fq_t|^2\,\diff t\right]<\infty.
\end{equation*}
The important point in the statement of \cref{BSEE_lemm_BSDE-apriori} is that the constant $C_0$ in \eqref{BSEE_eq_BSDE-apriori} is independent of the parameter $\kappa$.
\end{rem}

%% Proof

\begin{proof}[Proof of \cref{BSEE_lemm_BSDE-apriori}]
In this proof, we denote by $C$ a universal positive constant which varies from line to line.

First, we show that
\begin{equation}\label{BSEE_eq_BSDE-estimate1}
	\bE\left[\sup_{t\in[0,T]}|\fp_t|^2+\kappa\int^T_0|\fp_t|^2\,\diff t+\int^T_0|\fq_t|^2\,\diff t\right]\leq C\bE\left[|\fh|^2+\int^T_0|\fp_t||\fg_t|\,\diff t\right].
\end{equation}
By It\^{o}'s formula, we have
\begin{equation}\label{BSEE_eq_BSDE-Ito}
	|\fp_t|^2+2\kappa\int^T_t|\fp_s|^2\,\diff s+\int^T_t|\fq_s|^2\,\diff s=|\fh|^2+2\int^T_t\langle\fp_s,\fg_s\rangle\,\diff s-2\int^T_t\langle\fp_s,\fq_s\rangle\,\diff W_s,\ \ t\in[0,T].
\end{equation}
On the one hand, taking the expectations in both sides of \eqref{BSEE_eq_BSDE-Ito} with $t=0$, we have
\begin{equation}\label{BSEE_eq_BSDE-estimate1-1}
	\bE\left[2\kappa\int^T_0|\fp_s|^2\,\diff s+\int^T_0|\fq_s|^2\,\diff s\right]\leq\bE\left[|\fh|^2+2\int^T_0|\fp_s||\fg_s|\,\diff s\right].
\end{equation}
On the other hand, taking the supremum in $t\in[0,T]$ and then taking the expectations in both sides of \eqref{BSEE_eq_BSDE-Ito}, the Burkholder--Davis--Gundy inequality and Young's inequality yield that
\begin{align*}
	\bE\left[\sup_{t\in[0,T]}|\fp_t|^2\right]&\leq\bE\left[|\fh|^2+2\int^T_0|\fp_s||\fg_s|\,\diff s\right]+C\bE\left[\left(\int^T_0|\fp_s|^2|\fq_s|^2\,\diff s\right)^{1/2}\right]\\
	&\leq\bE\left[|\fh|^2+2\int^T_0|\fp_s||\fg_s|\,\diff s\right]+C\bE\left[\int^T_0|\fq_s|^2\,\diff s\right]+\frac{1}{2}\bE\left[\sup_{s\in[0,T]}|\fp_s|^2\right].
\end{align*}
Hence, we have
\begin{equation}\label{BSEE_eq_BSDE-estimate1-2}
	\bE\left[\sup_{t\in[0,T]}|\fp_t|^2\right]\leq C\bE\left[|\fh|^2+\int^T_0|\fp_s||\fg_s|\,\diff s\right]+C\bE\left[\int^T_0|\fq_s|^2\,\diff s\right].
\end{equation}
By \eqref{BSEE_eq_BSDE-estimate1-1} and \eqref{BSEE_eq_BSDE-estimate1-2}, we obtain \eqref{BSEE_eq_BSDE-estimate1}. In particular, in the case of $\alpha=0$, Young's inequality yields that
\begin{equation*}
	\bE\left[\sup_{t\in[0,T]}|\fp_t|^2+\kappa\int^T_0|\fp_t|^2\,\diff t+\int^T_0|\fq_t|^2\,\diff t\right]\leq C\bE\left[|\fh|^2+\frac{1}{\kappa}\int^T_0|\fg_s|^2\,\diff s\right]+\frac{\kappa}{2}\bE\left[\int^T_0|\fp_s|^2\,\diff s\right],
\end{equation*}
and hence the desired estimate \eqref{BSEE_eq_BSDE-apriori} holds in this special case. In the rest of this proof, we assume that $\alpha\in(0,1)$.

Next, we show that
\begin{equation}\label{BSEE_eq_BSDE-estimate2}
\begin{split}
	&\bE\left[\sup_{t\in[0,T]}(T-t)^\alpha|\fp_t|^2+\kappa\int^T_0(T-t)^\alpha|\fp_t|^2\,\diff t+\int^T_0(T-t)^\alpha|\fq_t|^2\,\diff t\right]\\
	&\leq C\bE\left[\alpha\int^T_0(T-t)^{\alpha-1}|\fp_t|^2\,\diff s+\int^T_0(T-t)^\alpha|\fp_t||\fg_t|\,\diff t\right].
\end{split}
\end{equation}
From \eqref{BSEE_eq_BSDE-Ito}, we have
\begin{equation*}
	|\fp_t|^2+2\kappa\int^\tau_t|\fp_s|^2\,\diff s+\int^\tau_t|\fq_s|^2\,\diff s=|\fp_\tau|^2+2\int^\tau_t\langle\fp_s,\fg_s\rangle\,\diff s-2\int^\tau_t\langle\fp_s,\fq_s\rangle\,\diff W_s,\ \ 0\leq t\leq\tau\leq T.
\end{equation*}
Noting that $\alpha\in(0,1)$, multiplying $\alpha(T-\tau)^{\alpha-1}$ to both sides above and then integrating with respect to $\tau\in[t,T]$, we have
\begin{equation}\label{BSEE_eq_BSDE-Ito2}
\begin{split}
	&(T-t)^\alpha|\fp_t|^2+2\kappa\int^T_t(T-s)^\alpha|\fp_s|^2\,\diff s+\int^T_t(T-s)^\alpha|\fq_s|^2\,\diff s\\
	&=\alpha\int^T_t(T-s)^{\alpha-1}|\fp_s|^2\,\diff s+2\int^T_t(T-s)^\alpha\langle\fp_s,\fg_s\rangle\,\diff s-2\int^T_t(T-s)^\alpha\langle\fp_s,\fq_s\rangle\,\diff W_s,\ \ t\in[0,T].
\end{split}
\end{equation}
Here, we used $\int^T_t\alpha(T-\tau)^{\alpha-1}\,\diff\tau=(T-t)^\alpha$ and the (stochastic) Fubini theorem. The applications of the (stochastic) Fubini theorem are justified thanks to the facts that $\bE[\sup_{s\in[t,T]}|\fp_s|^2]<\infty$, $\bE[\int^T_t|\fq_s|^2\,\diff s]<\infty$ and $\bE[(\int^T_t|\fg_s|\,\diff s)^2]<\infty$; for example, observe that
\begin{equation*}
	\bE\left[\int^T_t\alpha(T-\tau)^{\alpha-1}\left(\int^\tau_t|\fp_s|^2|\fq_s|^2\,\diff s\right)^{1/2}\,\diff\tau\right]\leq(T-t)^\alpha\bE\left[\sup_{s\in[t,T]}|\fp_s|^2\right]^{1/2}\bE\left[\int^T_t|\fq_s|^2\,\diff s\right]^{1/2}<\infty,
\end{equation*}
and hence \cite[Theorem 2.2]{Ve11} ensures the interchangeability of the stochastic integral and the Lebesgue integral;
\begin{equation*}
	\int^T_t\alpha(T-\tau)^{\alpha-1}\int^\tau_t\langle\fp_s,\fq_s\rangle\,\diff W_s\,\diff\tau=\int^T_t\int^T_s\alpha(T-\tau)^{\alpha-1}\,\diff\tau\langle\fp_s,\fq_s\rangle\,\diff W_s=\int^T_t(T-s)^\alpha\langle\fp_s,\fq_s\rangle\,\diff W_s\ \ \text{a.s.}
\end{equation*}
Other terms can be justified similarly. On the one hand, taking the expectations in both sides of \eqref{BSEE_eq_BSDE-Ito2} with $t=0$, we have
\begin{equation}\label{BSEE_eq_BSDE-estimate2-1}
	\bE\left[2\kappa\int^T_0(T-s)^\alpha|\fp_s|^2\,\diff s+\int^T_0(T-s)^\alpha|\fq_s|^2\,\diff s\right]\leq\bE\left[\alpha\int^T_0(T-s)^{\alpha-1}|\fp_s|^2\,\diff s+2\int^T_0(T-s)^\alpha|\fp_s||\fg_s|\,\diff s\right].
\end{equation}
On the other hand, taking the supremum in $t\in[0,T]$ and then taking the expectations in both sides of \eqref{BSEE_eq_BSDE-Ito2}, the Burkholder--Davis--Gundy inequality and Young's inequality yield that
\begin{align*}
	&\bE\left[\sup_{t\in[0,T]}(T-t)^\alpha|\fp_t|^2\right]\\
	&\leq\bE\left[\alpha\int^T_0(T-s)^{\alpha-1}|\fp_s|^2\,\diff s+2\int^T_0(T-s)^\alpha|\fp_s||\fg_s|\,\diff s\right]+C\bE\left[\left(\int^T_0(T-s)^{2\alpha}|\fp_s|^2|\fq_s|^2\,\diff s\right)^{1/2}\right]\\
	&\leq\bE\left[\alpha\int^T_0(T-s)^{\alpha-1}|\fp_s|^2\,\diff s+2\int^T_0(T-s)^\alpha|\fp_s||\fg_s|\,\diff s\right]+C\bE\left[\int^T_0(T-s)^\alpha|\fq_s|^2\,\diff s\right]\\
	&\hspace{0.5cm}+\frac{1}{2}\bE\left[\sup_{s\in[0,T]}(T-s)^\alpha|\fp_s|^2\right].
\end{align*}
Hence,
\begin{equation}\label{BSEE_eq_BSDE-estimate2-2}
\begin{split}
	&\bE\left[\sup_{t\in[0,T]}(T-t)^\alpha|\fp_t|^2\right]\\
	&\leq C\bE\left[\alpha\int^T_0(T-s)^{\alpha-1}|\fp_s|^2\,\diff s+\int^T_0(T-s)^\alpha|\fp_s||\fg_s|\,\diff s\right]+C\bE\left[\int^T_0(T-s)^\alpha|\fq_s|^2\,\diff s\right].
\end{split}
\end{equation}
By \eqref{BSEE_eq_BSDE-estimate2-1} and \eqref{BSEE_eq_BSDE-estimate2-2}, we obtain \eqref{BSEE_eq_BSDE-estimate2}.

Next, we show that
\begin{equation}\label{BSEE_eq_BSDE-estimate3}
	\bE\left[\int^T_0(T-t)^{\beta-1}|\fp_t|^2\,\diff t\right]\leq(2\kappa)^{-\beta}\Gamma(\beta)\bE\left[|\fh|^2+2\int^T_0|\fp_t||\fg_t|\,\diff t\right]\ \ \text{for any $\beta\in(0,1)$.}
\end{equation}
Let $\beta\in(0,1)$ be fixed. From \eqref{BSEE_eq_BSDE-Ito}, by using It\^{o}'s formula, we have
\begin{equation*}
	|\fp_t|^2+\int^T_te^{-2\kappa(s-t)}|\fq_s|^2\,\diff s=e^{-2\kappa(T-t)}|\fh|^2+2\int^T_te^{-2\kappa(s-t)}\langle\fp_s,\fg_s\rangle\,\diff s-2\int^T_te^{-2\kappa(s-t)}\langle\fp_s,\fq_s\rangle\,\diff W_s,
\end{equation*}
and hence
\begin{equation*}
	\bE\big[|\fp_t|^2\big]\leq\bE\left[e^{-2\kappa(T-t)}|\fh|^2+2\int^T_te^{-2\kappa(s-t)}|\fp_s||\fg_s|\,\diff s\right]
\end{equation*}
for any $t\in[0,T]$. Multiplying $(T-t)^{\beta-1}$ to both sides in the above inequality and then integrating with respect to $t\in[0,T]$, we have, by Tonelli's theorem,
\begin{equation}\label{BSEE_eq_BSDE-estimate3-1}
\begin{split}
	&\bE\left[\int^T_0(T-t)^{\beta-1}|\fp_t|^2\,\diff t\right]\\
	&\leq\bE\left[\left(\int^T_0(T-t)^{\beta-1}e^{-2\kappa(T-t)}\,\diff t\right)\,|\fh|^2+2\int^T_0\left(\int^s_0(T-t)^{\beta-1}e^{-2\kappa(s-t)}\,\diff t\right)\,|\fp_s||\fg_s|\,\diff s\right].
\end{split}
\end{equation}
Noting that $\beta\in(0,1)$, we have
\begin{equation*}
	\int^s_0(T-t)^{\beta-1}e^{-2\kappa(s-t)}\,\diff t\leq\int^s_0(s-t)^{\beta-1}e^{-2\kappa(s-t)}\,\diff t\leq\int^\infty_0t^{\beta-1}e^{-2\kappa t}\,\diff t=(2\kappa)^{-\beta}\Gamma(\beta)
\end{equation*}
for any $s\in[0,T]$. From this estimate and \eqref{BSEE_eq_BSDE-estimate3-1}, we obtain \eqref{BSEE_eq_BSDE-estimate3}.

By \eqref{BSEE_eq_BSDE-estimate1}, \eqref{BSEE_eq_BSDE-estimate2} and \eqref{BSEE_eq_BSDE-estimate3} (applying to $\beta=1-\alpha$ and $\beta=\alpha$), noting that $\alpha\Gamma(\alpha)\leq1\leq\Gamma(1-\alpha)$ and using Young's inequality, we obtain
\begin{align*}
	&\bE\left[\sup_{t\in[0,T]}|\fp_t|^2+\kappa\int^T_0|\fp_t|^2\,\diff t+\int^T_0|\fq_t|^2\,\diff t\right]+\frac{\kappa^{1-\alpha}}{\Gamma(1-\alpha)}\bE\left[\int^T_0(T-t)^{-\alpha}|\fp_t|^2\,\diff t\right]\\
	&\hspace{0.5cm}+\kappa^\alpha\bE\left[\sup_{t\in[0,T]}(T-t)^\alpha|\fp_t|^2+\kappa\int^T_0(T-t)^\alpha|\fp_t|^2\,\diff t+\int^T_0(T-t)^\alpha|\fq_t|^2\,\diff t\right]\\
	&\leq C\bE\left[|\fh|^2+\int^T_0|\fp_t||\fg_t|\,\diff t+\kappa^\alpha\int^T_0(T-t)^\alpha|\fp_t||\fg_t|\,\diff t\right]\\
	&\leq C\bE\left[|\fh|^2+\frac{\Gamma(1-\alpha)}{\kappa^{1-\alpha}}\int^T_0(T-t)^{\alpha}|\fg_t|^2\,\diff t\right]\\
	&\hspace{0.5cm}+\frac{\kappa^{1-\alpha}}{\Gamma(1-\alpha)}\bE\left[\int^T_0(T-t)^{-\alpha}|\fp_t|^2\,\diff t\right]+\frac{\kappa^{1+\alpha}}{2}\bE\left[\int^T_0(T-t)^\alpha|\fp_t|^2\,\diff t\right].
\end{align*}
Hence, the desired estimate \eqref{BSEE_eq_BSDE-apriori} holds. This completes the proof.
\end{proof}

Next, we show the well-posedness and provides a moment estimate for the BSEE \eqref{BSEE_eq_BSEE} assuming that the generator $\cG$ is independent of the solution. Recall the definition \eqref{BSEE_eq_norm} of the norm $\|\cdot\|_{\bH_\beta}$ of the Hilbert space $\bH_\beta$ for $\beta\in\bR$.

%% Lemma

\begin{lemm}\label{BSEE_lemm_trivialBSEE}
Let \cref{BSEE_assum_coefficient} hold, and assume furthermore that $\cG(t,\cP,\cQ)=\cG(t,0,0)=:\cG_t$ for any $(\omega,t,\cP,\cQ)\in\Omega\times[0,T]\times\bH_{1+\alpha}\times\bH_\alpha$. Then, there exists a unique solution $(\cP,\cQ):\Omega\times[0,T]\to\bH_{1+\alpha}\times\bH_\alpha$ to the corresponding BSEE:
\begin{equation}\label{BSEE_eq_trivialBSEE}
	\begin{dcases}
	\diff\cP_t(\vth)=\varpi(\vth)\cP_t(\vth)\,\diff t-\cG_t(\vth)\,\diff t+\cQ_t(\vth)\,\diff W_t,\ \ \vth\in\Theta,\ t\in[0,T],\\
	\cP_T(\vth)=\Phi(\vth),\ \ \vth\in\Theta.
	\end{dcases}
\end{equation}
Furthermore, for any $\lambda\geq1$, it holds that
\begin{equation}\label{BSEE_eq_trivialBSEE-estimate}
\begin{split}
	&\bE\left[\esssup_{t\in[0,T]}e^{2\lambda t}\|\cP_t\|_{\bH_0}^2+\int^T_0e^{2\lambda t}\|\cP_t\|_{\bH_1}^2\,\diff t+\int^T_0e^{2\lambda t}\|\cQ_t\|_{\bH_0}^2\,\diff t\right.\\
	&\left.\hspace{1cm}+\esssup_{t\in[0,T]}(T-t)^\alpha e^{2\lambda t}\|\cP_t\|_{\bH_\alpha}^2+\int^T_0(T-t)^\alpha e^{2\lambda t}\|\cP_t\|_{\bH_{1+\alpha}}^2\,\diff t+\int^T_0(T-t)^\alpha e^{2\lambda t}\|\cQ_t\|_{\bH_\alpha}^2\,\diff t\right]\\
	&\leq C_0\bE\left[e^{2\lambda T}\|\Phi\|_{\bH_0}^2+\frac{\Gamma(1-\alpha)}{\lambda^{1-\alpha}}\int^T_0(T-t)^\alpha e^{2\lambda t}\|\cG_t\|_{\bH_0}^2\,\diff t\right],
\end{split}
\end{equation}
where $C_0>0$ is the constant appearing in \cref{BSEE_lemm_BSDE-apriori}.
\end{lemm}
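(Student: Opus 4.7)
The strategy is to solve the BSEE pointwise in $\vth\in\Theta$ as a classical finite-dimensional linear BSDE, apply the quantitative a priori estimate of \cref{BSEE_lemm_BSDE-apriori} with a shifted drift coefficient, and then integrate against $\nu(\diff\vth)$.

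First, for each fixed $\vth\in\Theta$, observe that the equation in \eqref{BSEE_eq_trivialBSEE} is, after plugging in a good representative, nothing but the linear BSDE
\[
\diff\bar p_t(\vth)=\varpi(\vth)\bar p_t(\vth)\,\diff t-\cG_t(\vth)\,\diff t+\bar q_t(\vth)\,\diff W_t,\qquad \bar p_T(\vth)=\Phi(\vth),
\]
whose explicit solution is
\[
\bar p_t(\vth)=\bE_t\!\left[e^{-\varpi(\vth)(T-t)}\Phi(\vth)+\int_t^T e^{-\varpi(\vth)(s-t)}\cG_s(\vth)\,\diff s\right],
\]
with $\bar q(\vth)$ obtained from the martingale representation applied to $e^{-\varpi(\vth)t}\bar p_t(\vth)+\int_0^t e^{-\varpi(\vth)s}\cG_s(\vth)\,\diff s$. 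By Fubini, the integrability \eqref{BSEE_eq_BSEE-generator} implies \eqref{BSEE_eq_BSDE-integrability-generator} for $\nu$-a.e.\ $\vth$, so the pointwise BSDE is well-posed in the $L^2$ sense. The mildly subtle point is the construction of \emph{jointly} progressively measurable versions $\bar p,\bar q:\Omega\times[0,T]\times\Theta\to E$; this is done by applying the measurable-selection machinery collected in \cref{app-measurability} to the explicit formula above for $\bar p$ and to the martingale representation used to define $\bar q$.

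Second, fix $\lambda\ge 1$ and introduce $\tilde p_t(\vth):=e^{\lambda t}\bar p_t(\vth)$, $\tilde q_t(\vth):=e^{\lambda t}\bar q_t(\vth)$. A direct It\^o computation shows that $(\tilde p(\vth),\tilde q(\vth))$ solves
\[
\diff\tilde p_t=\bigl(\lambda+\varpi(\vth)\bigr)\tilde p_t\,\diff t-e^{\lambda t}\cG_t(\vth)\,\diff t+\tilde q_t\,\diff W_t,\qquad \tilde p_T=e^{\lambda T}\Phi(\vth).
\]
This is exactly the BSDE \eqref{BSEE_eq_BSDE} with $\kappa=\lambda+\varpi(\vth)$, terminal $e^{\lambda T}\Phi(\vth)$ and generator $e^{\lambda t}\cG_t(\vth)$. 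Applying \cref{BSEE_lemm_BSDE-apriori} pointwise in $\vth$ yields, after substituting back $\tilde p=e^{\lambda\cdot}\bar p$ and $\tilde q=e^{\lambda\cdot}\bar q$ and using the two elementary bounds $\lambda+\varpi(\vth)\ge 1+\varpi(\vth)$ and $\lambda+\varpi(\vth)\ge\lambda$ (both valid for $\lambda\ge 1$, $\varpi\ge 0$) in the LHS coefficients and RHS denominator respectively, the inequality
\begin{align*}
&\bE\!\left[\sup_{t\in[0,T]}e^{2\lambda t}|\bar p_t(\vth)|^2+(1+\varpi(\vth))\int_0^T e^{2\lambda t}|\bar p_t(\vth)|^2\,\diff t+\int_0^T e^{2\lambda t}|\bar q_t(\vth)|^2\,\diff t\right.\\
&\left.\ +(1+\varpi(\vth))^\alpha\sup_{t\in[0,T]}(T-t)^\alpha e^{2\lambda t}|\bar p_t(\vth)|^2+(1+\varpi(\vth))^{1+\alpha}\!\int_0^T(T-t)^\alpha e^{2\lambda t}|\bar p_t(\vth)|^2\,\diff t\right.\\
&\left.\ +(1+\varpi(\vth))^\alpha\!\int_0^T(T-t)^\alpha e^{2\lambda t}|\bar q_t(\vth)|^2\,\diff t\right]\\
&\le C_0\,\bE\!\left[e^{2\lambda T}|\Phi(\vth)|^2+\frac{\Gamma(1-\alpha)}{\lambda^{1-\alpha}}\int_0^T(T-t)^\alpha e^{2\lambda t}|\cG_t(\vth)|^2\,\diff t\right]
\end{align*}
holds.

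Third, integrate the above inequality against $\nu(\diff\vth)$ and invoke Tonelli's theorem: each weighted pointwise term on the LHS, once integrated, becomes the corresponding $\bH_\beta$-norm appearing in \eqref{BSEE_eq_trivialBSEE-estimate}. To handle the supremum-in-$t$ outside the $\nu$-integral, use the trivial bound $\sup_t\|\cP_t\|_{\bH_0}^2\le\int_\Theta\sup_t|\bar p_t(\vth)|^2\,\nu(\diff\vth)$ (and analogously for the $\bH_\alpha$-weighted supremum). This produces exactly \eqref{BSEE_eq_trivialBSEE-estimate}, and it simultaneously certifies that $(\cP_t,\cQ_t)\in\bH_{1+\alpha}\times\bH_\alpha$ for a.e.\ $t$, so $(\cP,\cQ)$ is a solution in the sense of \cref{BSEE_defi_solution}. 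Uniqueness is immediate from linearity: if two solutions exist, their difference has a good representative $(\bar p^\Delta,\bar q^\Delta)$ that, for $\nu$-a.e.\ $\vth$, solves the homogeneous BSDE with $\Phi=0$ and $\cG=0$, hence vanishes identically.

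The only real obstacle in the plan is the joint $(\omega,t,\vth)$-measurability of the family $\{(\bar p(\vth),\bar q(\vth))\}_{\vth\in\Theta}$, since a priori the pointwise BSDE construction yields solutions defined only up to $\bP$-nullsets depending on $\vth$. For $\bar p$ this is handled by the closed-form conditional expectation; for $\bar q$ one applies a measurable version of the martingale representation theorem, the details of which are treated in \cref{app-measurability}. Everything else is the straightforward combination of \cref{BSEE_lemm_BSDE-apriori}, Tonelli's theorem, and the change of variable $\tilde p=e^{\lambda\cdot}\bar p$.
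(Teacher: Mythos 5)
Your proposal is correct and follows essentially the same route as the paper: solve the linear BSDE pointwise in $\vth$ via the explicit conditional-expectation formula, obtain $\bar q(\vth)$ from a measurable version of the martingale representation (the paper carries this out concretely through \cref{app_lemm_measurability}(iii), decomposing $\Phi(\vth)$ and $\cG_s(\vth)$ into $\bar\eta,\bar\zeta$), apply \cref{BSEE_lemm_BSDE-apriori} to the rescaled pair $(e^{\lambda t}\bar p_t(\vth),e^{\lambda t}\bar q_t(\vth))$ with $\kappa=\lambda+\varpi(\vth)$, simplify with $\lambda+\varpi\ge 1+\varpi$ and $(\lambda+\varpi)^{1-\alpha}\ge\lambda^{1-\alpha}$, integrate against $\nu$ by Tonelli, and conclude uniqueness from pointwise BSDE uniqueness.
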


%% Proof

\begin{proof}
\underline{Uniqueness of the solution of the BSEE \eqref{BSEE_eq_trivialBSEE}.} Let $(\cP^1,\cQ^1)$ and $(\cP^2,\cQ^2)$ be two solutions of the BSEE \eqref{BSEE_eq_trivialBSEE} with respective good representatives $(\bar{p}^1,\bar{q}^1,\bar{g}^1)$ and $(\bar{p}^2,\bar{q}^2,\bar{g}^2)$; see \cref{BSEE_defi_solution}. By the definition, there exists $\Theta_*\in\Sigma$ with $\nu(\Theta\setminus\Theta_*)=0$ such that, for any $\vth\in\Theta_*$, $\bar{p}^1_T(\vth)=\bar{p}^2_T(\vth)$ a.s.\ and $\bar{g}^1_t(\vth)=\bar{g}^2_t(\vth)$ a.s.\ for a.e.\ $t\in[0,T]$. Noting the integrability condition \eqref{BSEE_eq_L^2-solution} in \cref{BSEE_defi_solution}, for any $\vth\in\Theta_*$, the pairs $(\bar{p}^1(\vth),\bar{q}^1(\vth))$ and $(\bar{p}^2(\vth),\bar{q}^2(\vth))$ are adapted solutions of the same BSDE on the Euclidean space $E$. By the uniqueness of the adapted solution (cf.\ \cite[Theorem 4.3.1]{Zh17}), we see that $\bar{p}^1_t(\vth)=\bar{p}^2_t(\vth)$ and $\bar{q}^1_t(\vth)=\bar{q}^2_t(\vth)$ a.s.\ for a.e.\ $t\in[0,T]$ for any $\vth\in\Theta_*$. Since $[\bar{p}^i_t(\cdot)]=\cP^i_t$ and $[\bar{q}^i_t(\cdot)]=\cQ^i_t$ a.s.\ for a.e.\ $t\in[0,T]$ (where $[\psi(\cdot)]$ denotes the $\nu$-equivalence class of a measurable function $\psi:\Theta\to E$), we see that $(\cP^1_t,\cQ^1_t)=(\cP^2_t,\cQ^2_t)$ in $\bH_{1+\alpha}\times\bH_\alpha$ a.s.\ for a.e.\ $t\in[0,T]$. This shows the uniqueness of the solution to the BSEE \eqref{BSEE_eq_trivialBSEE}.

\underline{Existence of the solution of the BSEE \eqref{BSEE_eq_trivialBSEE} satisfying \eqref{BSEE_eq_trivialBSEE-estimate}.} By \cref{app_lemm_measurability} (ii), there exist $\cF_T\otimes\Sigma$-measurable map $\bar{\varphi}:\Omega\times\Theta\to E$ and (jointly) progressively measurable map $\bar{g}:\Omega\times[0,T]\times\Theta\to E$ such that $[\bar{\varphi}(\cdot)]=\Phi$ and $[\bar{g}_t(\cdot)]=\cG_t$ for any $t\in[0,T]$ and $\omega\in\Omega$. By \eqref{BSEE_eq_BSEE-generator} (with $\cG(t,\cP,\cQ)=\cG_t$) and \cref{app_lemm_measurability} (iii), we can find progressively measurable maps $\bar{\eta}:\Omega\times[0,T]\times\Theta\to E$ and $\bar{\zeta}:\Omega\times[0,T]\times[0,T]\times\Theta\to E$ with
\begin{equation*}
	\bE\left[\int_\Theta\int^T_0|\bar{\eta}_t(\vth)|^2\,\diff t\,\nu(\diff\vth)\right]+\bE\left[\int_\Theta\int^T_0(T-s)^\alpha\int^T_0|\bar{\zeta}_t(s,\vth)|^2\,\diff t\,\diff s\,\nu(\diff\vth)\right]<\infty
\end{equation*}
such that, for $\nu$-a.e.\ $\vth\in\Theta$,
\begin{equation}\label{BSEE_eq_etazeta-representation}
\begin{split}
	&\bar{\varphi}(\vth)=\bE\big[\bar{\varphi}(\vth)\big]+\int^T_0\bar{\eta}_t(\vth)\,\diff W_t\ \ \text{a.s.}\ \ \text{and}\\
	&\bar{g}_s(\vth)=\bE\big[\bar{g}_s(\vth)\big]+\int^T_0\bar{\zeta}_t(s,\vth)\,\diff W_t\ \ \text{a.s.\ for a.e.\ $s\in[0,T]$.}
\end{split}
\end{equation}
Hence, there exist jointly measurable versions (still denoted by $\bar{\varphi},\bar{g},\bar{\eta},\bar{\zeta}$) such that the representation formula \eqref{BSEE_eq_etazeta-representation} and the integrability condition
\begin{equation}\label{BSEE_eq_etazeta-integrability}
\begin{split}
	&\bE\left[|\bar{\varphi}(\vth)|^2+\int^T_0(T-s)^\alpha|\bar{g}_s(\vth)|^2\,\diff s+\int^T_0|\bar{\eta}_t(\vth)|^2\,\diff t+\int^T_0(T-s)^\alpha\int^T_0|\bar{\zeta}_t(s,\vth)|^2\,\diff t\,\diff s\right]<\infty
\end{split}
\end{equation}
hold for any $\vth\in\Theta$. Notice that the progressive measurability of $(\bar{g}_s(\vth))_{s\in[0,T]}$ implies $\bar{\zeta}_t(s,\vth)=0$ a.s.\ for a.e.\ $t>s$ for any $\vth\in\Theta$. Define $\bar{p},\bar{q}:\Omega\times[0,T]\times\Theta\to E$ by
\begin{align*}
	&\bar{p}_t(\vth):=\bE_t\left[e^{-\varpi(\vth)(T-t)}\bar{\varphi}(\vth)+\int^T_te^{-\varpi(\vth)(s-t)}\bar{g}_s(\vth)\,\diff s\right],\\
	&\bar{q}_t(\vth):=e^{-\varpi(\vth)(T-t)}\bar{\eta}_t(\vth)+\int^T_te^{-\varpi(\vth)(s-t)}\bar{\zeta}_t(s,\vth)\,\diff s,
\end{align*}
for $(\omega,t,\vth)\in\Omega\times[0,T]\times\Theta$. Here, recall that $\bE_t[\cdot]=\bE[\cdot|\cF_t]$ denotes the conditional expectation operator with respect to $\cF_t$. Clearly, $\bar{p}$ and $\bar{q}$ are progressively measurable. Furthermore, for any $\vth\in\Theta$, $\bar{p}(\vth)=(\bar{p}_t(\vth))_{t\in[0,T]}$ is an $E$-valued continuous semimartingale with $\bar{p}_T(\vth)=\bar{\varphi}(\vth)$ a.s. In particular, $[\bar{p}_T(\cdot)]=\Phi$ a.s. From the integrability condition \eqref{BSEE_eq_etazeta-integrability} and $\alpha\in[0,1)$, one can easily check that
\begin{equation*}
	\bE\left[\sup_{t\in[0,T]}|\bar{p}_t(\vth)|^2+\int^T_0|\bar{q}_t(\vth)|^2\,\diff t+\left(\int^T_0|\bar{g}_t(\vth)|\,\diff t\right)^2\right]<\infty\ \ \text{for any $\vth\in\Theta$.}
\end{equation*}
Let $\vth\in\Theta$ be fixed. Observe that
\begin{align}
	\nonumber
	&e^{-\varpi(\vth)t}\bar{p}_t(\vth)-\bar{p}_0(\vth)+\int^t_0e^{-\varpi(\vth)s}\bar{g}_s(\vth)\,\diff s\\
	\nonumber
	&=e^{-\varpi(\vth)T}\left\{\bE_t\big[\bar{\varphi}(\vth)\big]-\bE\big[\bar{\varphi}(\vth)\big]\right\}+\int^T_0e^{-\varpi(\vth)s}\left\{\bE_t\big[\bar{g}_s(\vth)\big]-\bE\big[\bar{g}_s(\vth)\big]\right\}\,\diff s\\
	\nonumber
	&=e^{-\varpi(\vth)T}\int^t_0\bar{\eta}_s(\vth)\,\diff W_s+\int^T_0e^{-\varpi(\vth)s}\int^{s\wedge t}_0\bar{\zeta}_r(s,\vth)\,\diff W_r\,\diff s\\
	\nonumber
	&=\int^t_0e^{-\varpi(\vth)s}\left\{e^{-\varpi(\vth)(T-s)}\bar{\eta}_s(\vth)+\int^T_se^{-\varpi(\vth)(r-s)}\bar{\zeta}_s(r,\vth)\,\diff r\right\}\,\diff W_s\\
	\label{BSEE_eq_Ito-bar}
	&=\int^t_0e^{-\varpi(\vth)s}\bar{q}_s(\vth)\,\diff W_s\ \ \text{a.s.\ for any $t\in[0,T]$,}
\end{align}
where we used the representation formula \eqref{BSEE_eq_etazeta-representation} in the second equality and the stochastic Fubini theorem in the third equality. The application of the stochastic Fubini theorem can be easily justified by noting the progressive measurability and the integrability \eqref{BSEE_eq_etazeta-integrability} of the map $(\omega,r,s)\mapsto\bar{\zeta}_r(s,\vth)$. By \eqref{BSEE_eq_Ito-bar}, It\^{o}'s formula yields that the $E$-valued continuous semimartingale $\bar{p}(\vth)=(\bar{p}_t(\vth))_{t\in[0,T]}$ satisfies
\begin{equation*}
	\diff\bar{p}_t(\vth)=\varpi(\vth)\bar{p}_t(\vth)-\bar{g}_t(\vth)\,\diff t+\bar{q}_t(\vth)\,\diff W_t,\ \ t\in[0,T].
\end{equation*}

Fix a constant $\lambda\geq1$. Observe that, for any $\vth\in\Theta$, the pair $((e^{\lambda t}\bar{p}_t(\vth))_{t\in[0,T]},(e^{\lambda t}\bar{q}_t(\vth))_{t\in[0,T]})$ is an adapted solution of the BSDE \eqref{BSEE_eq_BSDE} on $E$ with
\begin{equation*}
	\fh:=e^{\lambda T}\bar{\varphi}(\vth),\ \ \fg_t:=e^{\lambda t}\bar{g}_t(\vth),\ \ \text{and}\ \ \kappa=\lambda+\varpi(\vth).
\end{equation*}
Hence, by \cref{BSEE_lemm_BSDE-apriori}, we have
\begin{align*}
	&\bE\left[\sup_{t\in[0,T]}e^{2\lambda t}|\bar{p}_t(\vth)|^2+(\lambda+\varpi(\vth))\int^T_0e^{2\lambda t}|\bar{p}_t(\vth)|^2\,\diff t+\int^T_0e^{2\lambda t}|\bar{q}_t(\vth)|^2\,\diff t\right.\\
	&\left.\hspace{1cm}+(\lambda+\varpi(\vth))^\alpha\sup_{t\in[0,T]}(T-t)^\alpha e^{2\lambda t}|\bar{p}_t(\vth)|^2\right.\\
	&\left.\hspace{1cm}+(\lambda+\varpi(\vth))^{1+\alpha}\int^T_0(T-t)^\alpha e^{2\lambda t}|\bar{p}_t(\vth)|^2\,\diff t+(\lambda+\varpi(\vth))^\alpha \int^T_0(T-t)^\alpha e^{2\lambda t}|\bar{q}_t(\vth)|^2\,\diff t\right]\\
	&\leq C_0\bE\left[e^{2\lambda T}|\bar{\varphi}(\vth)|^2+\frac{\Gamma(1-\alpha)}{(\lambda+\varpi(\vth))^{1-\alpha}}\int^T_0(T-t)^\alpha e^{2\lambda t}|\bar{g}_t(\vth)|^2\,\diff t\right],
\end{align*}
where $C_0>0$ is the constant appearing in \cref{BSEE_lemm_BSDE-apriori}. Noting that $\lambda\geq1$, $\varpi(\vth)\geq0$ and $\alpha\in[0,1)$, we obtain
\begin{align*}
	&\bE\left[\sup_{t\in[0,T]}e^{2\lambda t}|\bar{p}_t(\vth)|^2+(1+\varpi(\vth))\int^T_0e^{2\lambda t}|\bar{p}_t(\vth)|^2\,\diff t+\int^T_0e^{2\lambda t}|\bar{q}_t(\vth)|^2\,\diff t\right.\\
	&\left.\hspace{1cm}+(1+\varpi(\vth))^\alpha\sup_{t\in[0,T]}(T-t)^\alpha e^{2\lambda t}|\bar{p}_t(\vth)|^2\right.\\
	&\left.\hspace{1cm}+(1+\varpi(\vth))^{1+\alpha}\int^T_0(T-t)^\alpha e^{2\lambda t}|\bar{p}_t(\vth)|^2\,\diff t+(1+\varpi(\vth))^\alpha \int^T_0(T-t)^\alpha e^{2\lambda t}|\bar{q}_t(\vth)|^2\,\diff t\right]\\
	&\leq C_0\bE\left[e^{2\lambda T}|\bar{\varphi}(\vth)|^2+\frac{\Gamma(1-\alpha)}{\lambda^{1-\alpha}}\int^T_0(T-t)^\alpha e^{2\lambda t}|\bar{g}_t(\vth)|^2\,\diff t\right].
\end{align*}
Integrating both sides above by $\nu$ with respect to $\vth\in\Theta$ and using Tonelli's theorem (recall that the measure $\nu$ is $\sigma$-finite by the assumption), we get
\begin{equation}\label{BSEE_eq_trivialBSEE-estimate1}
\begin{split}
	&\bE\left[\sup_{t\in[0,T]}e^{2\lambda t}\|\bar{p}_t\|_{\bH_0}^2+\int^T_0e^{2\lambda t}\|\bar{p}_t\|_{\bH_1}^2\,\diff t+\int^T_0e^{2\lambda t}\|\bar{q}_t\|_{\bH_0}^2\,\diff t\right.\\
	&\left.\hspace{1cm}+\sup_{t\in[0,T]}(T-t)^\alpha e^{2\lambda t}\|\bar{p}_t\|_{\bH_\alpha}^2+\int^T_0(T-t)^\alpha e^{2\lambda t}\|\bar{p}_t\|_{\bH_{1+\alpha}}^2\,\diff t+\int^T_0(T-t)^\alpha e^{2\lambda t}\|\bar{q}_t\|_{\bH_\alpha}^2\,\diff t\right]\\
	&\leq C_0\bE\left[e^{2\lambda T}\|\bar{\varphi}\|_{\bH_0}^2+\frac{\Gamma(1-\alpha)}{\lambda^{1-\alpha}}\int^T_0(T-t)^\alpha e^{2\lambda t}\|\bar{g}_t\|_{\bH_0}^2\,\diff t\right].
\end{split}
\end{equation}
The last line above is finite thanks to the assumption \eqref{BSEE_eq_BSEE-generator} (with $\cG(t,\cP,\cQ)=\cG_t$). Now we define $\cP:\Omega\times[0,T]\to\bH_{1+\alpha}$ and $\cQ:\Omega\times[0,T]\to\bH_\alpha$ by
\begin{equation*}
	\cP_t:=
	\begin{dcases}
	[\bar{p}_t(\cdot)]\ \ &\text{if $\|\bar{p}_t\|_{\bH_{1+\alpha}}<\infty$,}\\
	0\ \ &\text{otherwise,}
	\end{dcases}
	\ \ \text{and}\ \ \cQ_t:=
	\begin{dcases}
	[\bar{q}_t(\cdot)]\ \ &\text{if $\|\bar{q}_t\|_{\bH_{\alpha}}<\infty$,}\\
	0\ \ &\text{otherwise,}
	\end{dcases}
\end{equation*}
where $[\psi(\cdot)]$ denotes the $\nu$-equivalence class of a measurable function $\psi:\Theta\to E$. By \cref{app_lemm_measurability} (i), we see that $\cP$ and $\cQ$ are ($\bH_{1+\alpha}$- and $\bH_\alpha$-valued, respectively) progressively measurable processes. The estimate \eqref{BSEE_eq_trivialBSEE-estimate1} implies that $\cP_t=[\bar{p}_t(\cdot)]$ and $\cQ_t=[\bar{q}_t(\cdot)]$ a.s.\ for a.e.\ $t\in[0,T]$ and that $(\cP,\cQ)$ satisfies \eqref{BSEE_eq_trivialBSEE-estimate}. By the construction, $(\cP,\cQ)$ is the solution of the BSEE \eqref{BSEE_eq_trivialBSEE} with the good representative $(\bar{p},\bar{q},\bar{g})$. This completes the proof.
\end{proof}

Now we can prove \cref{BSEE_theo_BSEE}.

%% Proof

\begin{proof}[Proof of \cref{BSEE_theo_BSEE}]
Let \cref{BSEE_assum_coefficient} hold. We denote by $\bS_{T,\alpha}$ the set of all pairs $(\cP,\cQ)$ of the $\bP\otimes\diff t$-equivalence classes of progressively measurable processes $\cP:\Omega\times[0,T]\to\bH_{1+\alpha}$ and $\cQ:\Omega\times[0,T]\to\bH_\alpha$ such that
\begin{equation*}
	\|(\cP,\cQ)\|_{\bS_{T,\alpha}}:=\bE\left[\int^T_0(T-t)^\alpha\|\cP_t\|_{\bH_{1+\alpha}}^2\,\diff t+\int^T_0(T-t)^\alpha\|\cQ_t\|_{\bH_\alpha}^2\,\diff t\right]^{1/2}<\infty.
\end{equation*}
Then, $(\bS_{T,\alpha},\|\cdot\|_{\bS_{T,\alpha}})$ is a Banach space. Furthermore, for each $\lambda\geq1$, define
\begin{equation*}
	\|(\cP,\cQ)\|_{\bS_{T,\alpha,\lambda}}:=\bE\left[\int^T_0(T-t)^\alpha e^{2\lambda t}\|\cP_t\|_{\bH_{1+\alpha}}^2\,\diff t+\int^T_0(T-t)^\alpha e^{2\lambda t}\|\cQ_t\|_{\bH_\alpha}^2\,\diff t\right]^{1/2},\ \ (\cP,\cQ)\in\bS_{T,\alpha}.
\end{equation*}
Clearly, $\|\cdot\|_{\bS_{T,\alpha,\lambda}}$ is another norm on $\bS_{T,\alpha}$ and equivalent to the original one $\|\cdot\|_{\bS_{T,\alpha}}$.

By \cref{BSEE_lemm_trivialBSEE}, every solution of the BSEE \eqref{BSEE_eq_BSEE} satisfying the integrability condition \eqref{BSEE_eq_solution-integrability} must belong to the space $\bS_{T,\alpha}$. Take an arbitrary $(\cP,\cQ)\in\bS_{T,\alpha}$. By \cref{BSEE_assum_coefficient}, we see that
\begin{equation*}
	\bE\left[\|\Phi\|^2_{\bH_0}+\int^T_0(T-t)^\alpha\|\cG(t,\cP_t,\cQ_t)\|^2_{\bH_0}\,\diff t\right]<\infty.
\end{equation*}
Hence, by \cref{BSEE_lemm_trivialBSEE}, there exists a unique solution $(\tilde{\cP},\tilde{\cQ})\in\bS_{T,\alpha}$ to the BSEE
\begin{equation*}
	\begin{dcases}
	\diff\tilde{\cP}_t(\vth)=\varpi(\vth)\tilde{\cP}_t(\vth)\,\diff t-\cG(t,\cP_t,\cQ_t)(\vth)\,\diff t+\tilde{\cQ}_t(\vth)\,\diff W_t,\ \ \vth\in\Theta,\ t\in[0,T],\\
	\tilde{\cP}_T(\vth)=\Phi(\vth),\ \ \vth\in\Theta.
	\end{dcases}
\end{equation*}
Set $\Xi_{T,\alpha}(\cP,\cQ):=(\tilde{\cP},\tilde{\cQ})$. Notice that an element $(\cP,\cQ)\in\bS_{T,\alpha}$ is a solution to the BSEE \eqref{BSEE_eq_BSEE} if and only if it is a fixed point of the map $\Xi_{T,\alpha}:\bS_{T,\alpha}\to\bS_{T,\alpha}$. In the following, we will show that the map $\Xi_{T,\alpha}$ is contractive on the Banach space $\bS_{T,\alpha}$ under the equivalent norm $\|\cdot\|_{\bS_{T,\alpha,\lambda}}$ for a sufficiently large $\lambda\geq1$, showing the existence and uniqueness of the fixed point.

Fix $\lambda\geq1$, which will be determined later. Let $(\cP^1,\cQ^1)$ and $(\cP^2,\cQ^2)$ be two elements of $\bS_{T,\alpha}$. Define $(\tilde{\cP}^i,\tilde{\cQ}^i):=\Xi_{T,\alpha}(\cP^i,\cQ^i)$ for $i=1,2$. Then $(\tilde{\cP}^1-\tilde{\cP}^2,\tilde{\cQ}^1-\tilde{\cQ}^2)\in\bS_{T,\alpha}$ is the unique solution to the BSEE
\begin{equation*}
	\begin{dcases}
	\diff(\tilde{\cP}^1_t-\tilde{\cP}^2_t)(\vth)=\varpi(\vth)(\tilde{\cP}^1_t-\tilde{\cP}^2_t)(\vth)\,\diff t-\big(\cG(t,\cP^1_t,\cQ^1_t)-\cG(t,\cP^2_t,\cQ^2_t)\big)(\vth)\,\diff t+(\tilde{\cQ}^1_t-\tilde{\cQ}^2_t)(\vth)\,\diff W_t,\\
	\hspace{6cm}\vth\in\Theta,\ t\in[0,T],\\
	(\tilde{\cP}^1_T-\tilde{\cP}^2_T)(\vth)=0,\ \ \vth\in\Theta.
	\end{dcases}
\end{equation*}
Hence, by the estimate \eqref{BSEE_eq_trivialBSEE-estimate} in \cref{BSEE_lemm_trivialBSEE}, it holds that
\begin{equation*}
	\big\|(\tilde{\cP}^1-\tilde{\cP}^2,\tilde{\cQ}^1-\tilde{\cQ}^2)\big\|_{\bS_{T,\alpha,\lambda}}^2\leq C_0\frac{\Gamma(1-\alpha)}{\lambda^{1-\alpha}}\bE\left[\int^T_0(T-t)^\alpha e^{2\lambda t}\|\cG(t,\cP^1_t,\cQ^1_t)-\cG(t,\cP^2_t,\cQ^2_t)\|^2_{\bH_0}\,\diff t\right],
\end{equation*}
where $C_0>0$ is the constant appearing in \cref{BSEE_lemm_BSDE-apriori}. By the assumption \eqref{BSEE_eq_BSEE-Lip}, we see that
\begin{align*}
	\big\|(\tilde{\cP}^1-\tilde{\cP}^2,\tilde{\cQ}^1-\tilde{\cQ}^2)\big\|_{\bS_{T,\alpha,\lambda}}^2&\leq C_0\frac{\Gamma(1-\alpha)}{\lambda^{1-\alpha}}\bE\left[\int^T_0(T-t)^\alpha e^{2\lambda t}2L^2\Big\{\|\cP^1_t-\cP^2_t\|_{\bH_{1+\alpha}}^2+\|\cQ^1_t-\cQ^2_t\|_{\bH_\alpha}^2\Big\}\,\diff t\right]\\
	&=2C_0L^2\frac{\Gamma(1-\alpha)}{\lambda^{1-\alpha}}\big\|(\cP^1-\cP^2,\cQ^1-\cQ^2)\big\|_{\bS_{T,\alpha,\lambda}}^2.
\end{align*}
Therefore, noting that $\alpha\in[0,1)$ and taking $\lambda\geq1$ (depending only on $L$ and $\alpha$) such that $2C_0L^2\frac{\Gamma(1-\alpha)}{\lambda^{1-\alpha}}<1$, the map $\Xi_{T,\alpha}$ becomes contractive on the Banach space $\bS_{T,\alpha}$ under the norm $\|\cdot\|_{\bS_{T,\alpha,\lambda}}$. Hence, by Banach's fixed point theorem, the map $\Xi_{T,\alpha}:\bS_{T,\alpha}\to\bS_{T,\alpha}$ has a unique fixed point. This means that the BSEE \eqref{BSEE_eq_BSEE} has a unique solution satisfying \eqref{BSEE_eq_solution-integrability} and that the solution belongs to $\bS_{T,\alpha}$.

Next, we show the estimate \eqref{BSEE_eq_BSEE-estimate}. Let $(\cP,\cQ)$ be the solution to the BSEE \eqref{BSEE_eq_BSEE} satisfying \eqref{BSEE_eq_solution-integrability}. Again by the estimate \eqref{BSEE_eq_trivialBSEE-estimate} in \cref{BSEE_lemm_trivialBSEE} and the assumption \eqref{BSEE_eq_BSEE-Lip}, it holds that
\begin{align*}
	&\bE\left[\esssup_{t\in[0,T]}e^{2\lambda t}\|\cP_t\|_{\bH_0}^2+\int^T_0e^{2\lambda t}\|\cP_t\|_{\bH_1}^2\,\diff t+\int^T_0e^{2\lambda t}\|\cQ_t\|_{\bH_0}^2\,\diff t\right.\\
	&\left.\hspace{1cm}+\esssup_{t\in[0,T]}(T-t)^\alpha e^{2\lambda t}\|\cP_t\|_{\bH_\alpha}^2+\int^T_0(T-t)^\alpha e^{2\lambda t}\|\cP_t\|_{\bH_{1+\alpha}}^2\,\diff t+\int^T_0(T-t)^\alpha e^{2\lambda t}\|\cQ_t\|_{\bH_\alpha}^2\,\diff t\right]\\
	&\leq C_0\bE\left[e^{2\lambda T}\|\Phi\|_{\bH_0}^2+\frac{\Gamma(1-\alpha)}{\lambda^{1-\alpha}}\int^T_0(T-t)^\alpha e^{2\lambda t}\|\cG(t,\cP_t,\cQ_t)\|_{\bH_0}^2\,\diff t\right]\\
	&\leq C_0\bE\left[e^{2\lambda T}\|\Phi\|_{\bH_0}^2+\frac{\Gamma(1-\alpha)}{\lambda^{1-\alpha}}\int^T_0(T-t)^\alpha e^{2\lambda t}3\big\{\|\cG(t,0,0)\|_{\bH_0}^2+L^2\|\cP_t\|_{\bH_{1+\alpha}}^2+L^2\|\cQ_t\|_{\bH_\alpha}^2\big\}\,\diff t\right].
\end{align*}
Hence, noting that $(\cP,\cQ)\in\bS_{T,\alpha}$, taking $\lambda\geq1$ (depending only on $L$ and $\alpha$) such that $3C_0L^2\frac{\Gamma(1-\alpha)}{\lambda^{1-\alpha}}\leq\frac{1}{2}$, we obtain the desired estimate \eqref{BSEE_eq_BSEE-estimate}. This completes the proof.
\end{proof}

%%%%%%%%%%%%%%%%%%%%%%%%%%%%%%%%%%
%%%%%%%%%%%%%%%%%%%%%%%%%%%%%%%%%%
%% Appendix
%%%%%%%%%%%%%%%%%%%%%%%%%%%%%%%%%%
%%%%%%%%%%%%%%%%%%%%%%%%%%%%%%%%%%

\appendix
\setcounter{theo}{0}
\setcounter{equation}{0}

\section*{Appendix}

%%%%%%%%%%%%%%
%% Section
%%%%%%%%%%%%%%

\section{Remarks on measurability issues}\label{app-measurability}

In the study of BSEEs introduced in this paper, we sometimes identify a function-space-valued measurable map with a jointly measurable multi-variate map. Let us make remarks on the measurability issues for the identification of these two objectives.

%% Lemma

\begin{lemm}\label{app_lemm_measurability}
Let $(A,\cA)$ be a measurable space, and let $(B,\cB,\fm)$ be a countably generated $\sigma$-finite measure space. Denote by $(L^2(B;E),\|\cdot\|_{L^2(B;E)},\langle\cdot,\cdot\rangle_{L^2(B;E)})$ the separable Hilbert space of equivalence classes of square-integrable measurable functions from $(B,\cB,\fm)$ to a Euclidean space $E$. For each measurable function $\psi:B\to E$, we denote its $\fm$-equivalence class by $[\psi]$.
\begin{itemize}
\item[(i)]
Let $\bar{f}:A\times B\to E$ be a jointly measurable map (that is, $\cA\otimes\cB$-measurable) such that $\int_B|\bar{f}(a,b)|^2\,\fm(\diff b)<\infty$ for each $a\in A$. Define $f(a):=[\bar{f}(a,\cdot)]$. Then, the map $a\mapsto f(a)$ is a (strongly) measurable map from $(A,\cA)$ to $L^2(B;E)$ equipped with its Borel $\sigma$-field.
\item[(ii)]
Let $f:A\to L^2(B;E)$ be a (strongly) measurable map. Then, there exists a jointly measurable map $\bar{f}:A\times B\to E$ such that $f(a)=[\bar{f}(a,\cdot)]$ for any $a\in A$.
\item[(iii)]
Let $\bar{\varphi}:\Omega\times B\to E$ be an $\cF_T\otimes\cB$-measurable map such that $\bE[\int_B|\bar{\varphi}(b)|^2\fm(\diff b)]<\infty$. Then, there exists a progressively measurable map $\bar{\zeta}:\Omega\times[0,T]\times B\to E$ such that
\begin{equation}\label{app_eq_zeta-integrability}
	\bE\left[\int_B\int^T_0|\bar{\zeta}_t(b)|^2\,\diff t\,\fm(\diff b)\right]<\infty
\end{equation}
and
\begin{equation}\label{app_eq_zeta-representation}
	\bar{\varphi}(b)=\bE[\bar{\varphi}(b)]+\int^T_0\bar{\zeta}_t(b)\,\diff W_t\ \ \text{a.s.\ for $\fm$-a.e.\ $b\in B$.}
\end{equation}
\end{itemize}
\end{lemm}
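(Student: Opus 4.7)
Since $(B,\cB,\fm)$ is countably generated and $\sigma$-finite, the Hilbert space $L^2(B;E)$ is separable, so by Pettis' theorem (strong) Borel measurability of $f:A\to L^2(B;E)$ is equivalent to weak measurability. I plan to verify the latter: for each $g\in L^2(B;E)$ with any representative $\bar g$, the identity
\begin{equation*}
\langle f(a),g\rangle_{L^2(B;E)}=\int_B\langle\bar f(a,b),\bar g(b)\rangle\,\fm(\diff b)
\end{equation*}
holds, and the right-hand side is $\cA$-measurable in $a$ by Tonelli's theorem applied to the jointly measurable, absolutely integrable integrand (Cauchy--Schwarz in $b$ using the assumed square-integrability of $\bar f(a,\cdot)$ and of $\bar g$).

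\textbf{Part (ii).} I will argue by fast-convergent simple approximation. Using the separability of $L^2(B;E)$, I pick measurable simple functions $f_n=\sum_{k=1}^{N_n}\1_{A_{n,k}}g_{n,k}$ with $A_{n,k}\in\cA$ and $g_{n,k}\in L^2(B;E)$ such that $\|f_n(a)-f(a)\|_{L^2(B;E)}\le 2^{-n}$ for every $a\in A$; fixing representatives $\bar g_{n,k}$ of $g_{n,k}$, the map $\bar f_n(a,b):=\sum_k\1_{A_{n,k}}(a)\bar g_{n,k}(b)$ is $\cA\otimes\cB$-measurable. For each fixed $a$, summability of $\|f_{n+1}(a)-f_n(a)\|_{L^2(B;E)}^2$ together with Fubini gives $\sum_n|\bar f_{n+1}(a,b)-\bar f_n(a,b)|^2<\infty$ for $\fm$-a.e.\ $b$, so $(\bar f_n(a,b))_n$ is Cauchy in $E$ for $\fm$-a.e.\ $b$. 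Define $\bar f(a,b):=\lim_{n\to\infty}\bar f_n(a,b)$ where the limit exists in $E$ and $\bar f(a,b):=0$ otherwise; this $\bar f$ is jointly measurable, and by construction $[\bar f(a,\cdot)]$ coincides with the $L^2$-limit $f(a)$ for every $a\in A$.

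\textbf{Part (iii).} The integrability assumption gives $\bE[|\bar\varphi(b)|^2]<\infty$ for $\fm$-a.e.\ $b$. For such $b$ the classical It\^o martingale representation theorem on the Brownian filtration produces a unique equivalence class $\bar\zeta(b)\in L^2_\bF(0,T;E)$ such that $\bar\varphi(b)=\bE[\bar\varphi(b)]+\int_0^T\bar\zeta_t(b)\,\diff W_t$ a.s., and the map $\Psi:\eta\mapsto\text{representer of }\eta-\bE[\eta]$ is an isometry from $L^2_{\cF_T}(\Omega;E)$ into the closed subspace $L^2_\bF(0,T;E)\subset L^2(\Omega\times[0,T];E)$. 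By part (i) applied to $(A,\cA)=(B,\cB)$ and to $\bar\varphi$ viewed as a jointly measurable map into $E$ on $\Omega\times B$, the map $b\mapsto\bar\varphi(b)\in L^2_{\cF_T}(\Omega;E)$ is strongly measurable; composing with the bounded linear $\Psi$ yields strong measurability of $b\mapsto\bar\zeta(b)\in L^2_\bF(0,T;E)$. I then re-run the approximation scheme of part (ii), but inside the closed (hence separable) subspace $L^2_\bF(0,T;E)$: the approximating simple functions $\sum_k\1_{B_{n,k}}\xi_{n,k}$ admit progressively measurable representatives $\bar\xi_{n,k}$ of $\xi_{n,k}$, so $\bar\zeta^{(n)}(\omega,t,b):=\sum_k\1_{B_{n,k}}(b)\bar\xi_{n,k}(\omega,t)$ is jointly measurable and, for each $b$, progressively measurable in $(\omega,t)$. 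Setting $\bar\zeta_t(\omega,b)$ equal to the pointwise limit when it exists and zero otherwise produces the desired map satisfying \eqref{app_eq_zeta-integrability} (by summing the $L^2$-bounds over $b\in B$ via Fubini) and \eqref{app_eq_zeta-representation}.

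\textbf{Main obstacle.} The delicate point is in (iii): I need a single $\bar\zeta$ that is jointly measurable in $(\omega,t,b)$ \emph{and} progressively measurable in $(\omega,t)$ separately for each $b$, whereas the martingale representation is only defined $b$-by-$b$ up to a $(\bP\otimes\diff t)$-null set that may depend on $b$. The resolution is to perform the approximation entirely inside the closed subspace $L^2_\bF(0,T;E)$, so that the simple-function representatives are automatically progressively measurable, and to observe that both joint measurability and fiberwise progressive measurability are preserved under countable pointwise limits such as $\limsup_n\bar\zeta^{(n)}$.
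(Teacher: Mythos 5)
Your parts (i) and (ii) follow essentially the same route as the paper: weak measurability plus Pettis for (i), and a fast-converging approximation by piecewise-constant maps (built from preimages of balls around a countable dense set) for (ii). One technical slip in (ii): you cannot in general arrange a \emph{uniform} bound $\|f_n(a)-f(a)\|_{L^2(B;E)}\le 2^{-n}$ for all $a\in A$ with \emph{finitely}-valued simple functions $f_n$ unless $f(A)$ is totally bounded. Strong measurability gives uniform approximation by \emph{countably}-valued measurable maps; the paper accordingly writes $\bar f_n(a,b)=\sum_{k\in\bN}\1_{A^n_k}(a)\bar x_k(b)$ with a countable disjoint partition $\{A^n_k\}_k$ of $A$. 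Replace your finite sum $\sum_{k=1}^{N_n}$ by a countable sum and the argument is exactly the paper's.

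For (iii) your route is genuinely different from the paper's, and both are valid. The paper first passes to the $L^2(B;E)$-valued random variable $\varphi(\omega):=[\bar\varphi(\omega,\cdot)]$ (using (i)), applies the Hilbert-space-valued martingale representation theorem to obtain a progressively measurable $\zeta:\Omega\times[0,T]\to L^2(B;E)$, then invokes (ii) once to produce a jointly progressively measurable scalar representative $\bar\zeta$, and finally uses the stochastic Fubini theorem to verify the fibrewise representation formula. You instead apply the scalar (Euclidean-valued) martingale representation theorem $b$-by-$b$, observe via (i) that $b\mapsto\bar\varphi(b)\in L^2_{\cF_T}(\Omega;E)$ is measurable, push through the bounded linear operator $\Psi:L^2_{\cF_T}(\Omega;E)\to L^2_\bF(0,T;E)$ to get $b\mapsto\bar\zeta(b)$ strongly measurable, and then re-run the approximation argument of (ii) \emph{inside} $L^2_\bF(0,T;E)$ so that the piecewise-constant approximants already have progressively measurable representatives. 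Your version avoids invoking the Hilbert-valued martingale representation theorem and instead relies only on the scalar one plus a second pass through the approximation machinery; the paper's version is shorter because it outsources the progressive-measurability bookkeeping to the known Hilbert-valued MRT. Your "main obstacle" paragraph correctly identifies the delicate point (the $b$-dependent null sets) and the fix (approximating within the closed subspace $L^2_\bF(0,T;E)$ so that progressive measurability is preserved under countable pointwise limits); this is sound, with the same caveat as in (ii) that the approximating piecewise-constant maps should be countably rather than finitely valued.
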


%% Proof

\begin{proof}
\begin{itemize}
\item[(i)]
For any $\eta\in L^2(B;E)$ (with a measurable representative $\bar{\eta}:B\to E$), Fubini's theorem shows that $A\ni a\mapsto\langle\eta,f(a)\rangle_{L^2(B;E)}=\int_B\langle\bar{\eta}(b),\bar{f}(a,b)\rangle\,\fm(\diff b)\in\bR$ is measurable, which means that $a\mapsto f(a)$ is weakly measurable and equivalently (thanks to the separability of $L^2(B;E)$) strongly measurable.
\item[(ii)]
Let $\{x_k\}_{k\in\bN}$ be a countable dense subset of $L^2(B;E)$ (with measurable representatives $\bar{x}_k:B\to E$, $k\in\bN$). Define a map $\bar{f}_n:A\times B\to E$ for each $n\in\bN$ by
\begin{equation*}
	\bar{f}_n(a,b):=\sum_{k\in\bN}\1_{A^n_k}(a)\bar{x}_k(b),\ \ (a,b)\in A\times B.
\end{equation*}
Here, $A^n_k$, $k\in\bN$, are defined inductively by $A^n_1:=f^{-1}(B(x_1,2^{-n}))$ and $A^n_{k+1}:=f^{-1}(B(x_{k+1},2^{-n}))\setminus(\bigcup^k_{j=1}A^n_j)$ for $k\in\bN$, where $B(x,r)$ denotes the open ball in $L^2(B;E)$ with center $x\in L^2(B;E)$ and radius $r>0$. Since $f:A\to L^2(B;E)$ is strongly measurable, we see that $A^n_k\in\cA$ for any $n,k\in\bN$, and $\{A^n_k\}_{k\in\bN}$ is a disjoint family covering $A$ for each $n\in\bN$. Hence, the map $\bar{f}_n:A\times B\to E$ is well-defined and jointly measurable. Furthermore, $\|f(a)-[\bar{f}_n(a,\cdot)]\|_{L^2(B;E)}\leq2^{-n}$ for any $a\in A$ and $n\in\bN$. By Minkowski's integral inequality, the above implies that the function $b\mapsto F(a,b):=\sum^\infty_{n=1}|\bar{f}_{n+1}(a,b)-\bar{f}_n(a,b)|$ is square integrable with respect to the measure $\fm$ for each $a\in A$. Now we set $\bar{f}(a,b):=\lim_{n\to\infty}\bar{f}_n(a,b)$ if the limit exists in $E$ and $\bar{f}(a,b):=0$ otherwise. Then $\bar{f}:A\times B\to E$ is jointly measurable and such that $\bar{f}_n(a,b)\to\bar{f}(a,b)$ as $n\to\infty$ for $\fm$-a.e.\ $b\in B$ for any $a\in A$. Noting that $|\bar{f}_n(a,b)|\leq|\bar{f}_1(a,b)|+F(a,b)$, by the dominated convergence theorem, $[\bar{f}_n(a,\cdot)]\to[\bar{f}(a,\cdot)]$ in $L^2(B;E)$ as $n\to\infty$ for any $a\in A$. Hence, it holds that $f(a)=[\bar{f}(a,\cdot)]$ in $L^2(B;E)$ for any $a\in A$.
\item[(iii)]
Without loss of generality, we may assume that $\bE[\bar{\varphi}(b)]=0$ for any $b\in B$. Define $\varphi(\omega):=[\bar{\varphi}(\omega,\cdot)]$ if $\int_B|\bar{\varphi}(\omega,b)|^2\,\fm(\diff b)<\infty$ and $\varphi(\omega):=0$ otherwise. By the assertion (i), together with the assumption that $\bE[\int_B|\bar{\varphi}(b)|^2\,\fm(\diff b)]<\infty$, we see that $\varphi:\Omega\to L^2(B;E)$ is strongly $\cF_T$-measurable and $\bE[\|\varphi\|_{L^2(B;E)}^2]<\infty$. Hence, by the martingale representation theorem (applying to the $L^2(B;E)$-valued random variable $\varphi$), there exists a unique progressively measurable process $\zeta:\Omega\times[0,T]\to L^2(B;E)$ such that $\bE[\int^T_0\|\zeta_t\|_{L^2(B;E)}^2\,\diff t]<\infty$ and $\varphi=\int^T_0\zeta_t\,\diff W_t$ in $L^2(B;E)$ a.s. By the assertion (ii), there exists a (jointly) progressively measurable map $\bar{\zeta}:\Omega\times[0,T]\times B\to E$ such that $[\bar{\zeta}_t(\omega,\cdot)]=\zeta_t(\omega)$. Clearly, the integrability condition \eqref{app_eq_zeta-integrability} holds. Furthermore, for any $\eta\in L^2(B;E)$ with a measurable representative $\bar{\eta}:B\to E$, we have
\begin{align*}
	\int_B\langle\bar{\eta}(b),\bar{\varphi}(b)\rangle\,\fm(\diff b)&=\langle\eta,\varphi\rangle_{L^2(B;E)}=\left\langle\eta,\int^T_0\zeta_t\,\diff W_t\right\rangle_{L^2(B;E)}=\int^T_0\langle\eta,\zeta_t\rangle_{L^2(B;E)}\,\diff W_t\\
	&=\int^T_0\int_B\langle\bar{\eta}(b),\bar{\zeta}_t(b)\rangle\,\fm(\diff b)\,\diff W_t=\int_B\left\langle\bar{\eta}(b),\int^T_0\bar{\zeta}_t(b)\,\diff W_t\right\rangle\,\fm(\diff b)\ \ \text{a.s.},
\end{align*}
where we used the stochastic Fubini theorem (cf.\ \cite[Theorem 2.2]{Ve11}) in the last equality. Since $L^2(B;E)$ is separable, there exists a $\bP$-null set $\cN\in\cF$ (independent of the choice of $\eta$) such that the equality $\int_B\langle\bar{\eta}(b),\bar{\varphi}(b,\omega)\rangle\,\fm(\diff b)=\int_B\langle\bar{\eta}(b),(\int^T_0\bar{\zeta}_t(b)\,\diff W_t)(\omega)\rangle\,\fm(\diff b)$ holds for any $\eta=[\bar{\eta}]\in L^2(B;E)$ and any $\omega\in\Omega\setminus\cN$. Hence, the equality \eqref{app_eq_zeta-representation} holds. This completes the proof.
\end{itemize}
\end{proof}

%%%%%%%%%%%%%%
%% Section
%%%%%%%%%%%%%%

\section{Relationships between BSEEs and BSVIEs}\label{app-BSVIE}

In the study of the global maximum principle for SVEs, Wang and Yong \cite{WaYo23} considered (non-convolution type) SVEs with regular kernels. They introduced first and second order adjoint equations in form of the so-called \emph{backward stochastic Volterra integral equations} (BSVIEs). In this section, we investigate a relationship between our adjoint equations \eqref{MP_eq_adeq1} and \eqref{MP_eq_adeq2} in form of BSEEs and that of \cite{WaYo23} in form of BSVIEs.

Suppose that \cref{control_assum_coefficient} and \cref{MP_assum_kernel} hold. Fix a control process $\hat{u}\in\cU$. In this setting, the first order adjoint equation in \cite{WaYo23} can be written as the following (Type-II) BSVIE\footnote{Here, we use a slightly different but equivalent formulation for the Type-II BSVIE from \cite{WaYo23}.}:
\begin{equation}\label{app_eq_BSVIE1}
	\begin{dcases}
	p^\V_1(t)=-\hat{h}_x^\top-\int^T_tq^\V_1(s)\,\diff W_s,\ \ t\in[0,T],\\
	p^\V_2(t)=-\hat{f}_x(t)^\top+\hat{b}_x(t)^\top K_b(T-t)^\top p^\V_1(t)+\hat{\sigma}_x(t)^\top K_\sigma(T-t)^\top q^\V_1(t)\\
	\hspace{2cm}+\int^T_t\Big\{\hat{b}_x(t)^\top K_b(s-t)^\top \bE_t[p^\V_2(s)]+\hat{\sigma}_x(t)^\top K_\sigma(s-t)^\top q^\V_2(s,t)\Big\}\,\diff s.\ \ t\in[0,T].
	\end{dcases}
\end{equation}
Here and below, the super script ``$\V$'' represents ``Volterra''. A tuple
\begin{equation*}
	(p^\V_1,q^\V_1,p^\V_2,q^\V_2)=\Big((p^\V_1(t))_{0\leq t\leq T},(q^\V_1(t))_{0\leq t\leq T},(p^\V_2(t))_{0\leq t\leq T},(q^\V_2(s,t))_{0\leq t\leq s\leq T}\Big)
\end{equation*}
is said to be an \emph{adapted M-solution} to the BSVIE \eqref{app_eq_BSVIE1} if the maps
\begin{align*}
	&\Omega\times[0,T]\ni(\omega,t)\mapsto p^\V_1(t),q^\V_1(t),p^\V_2(t)\in\bR^n\ \ \text{and}\\
	&\Omega\times[0,T]\times[0,T]\ni(\omega,t,s)\mapsto q^\V_2(s,t)\1_{[0,s]}(t)\in\bR^n
\end{align*}
are progressively measurable, square-integrable and satisfy \eqref{app_eq_BSVIE1}, together with the constraint
\begin{equation}\label{app_eq_BSVIE1-M}
	p^\V_2(s)=\bE\big[p^\V_2(s)\big]+\int^s_0q^\V_2(s,t)\,\diff W_t,\ \ s\in[0,T].
\end{equation}
Notice that $(p^\V_1,q^\V_1)$ is determined by the first line in \eqref{app_eq_BSVIE1}; $p^\V_1(t)=-\bE_t[\hat{h}_x]^\top$, and $q^\V_1$ is the martingale integrand of the $\bR^n$-valued martingale $p^\V_1$ defined via the martingale representation theorem:
\begin{equation}\label{app_eq_BSVIE1-trivial}
	p^\V_1(s)=p^\V_1(0)+\int^s_0q^\V_1(t)\,\diff W_t,\ \ s\in[0,T].
\end{equation}
Under suitable integrability conditions on the kernels $K_b$ and $K_\sigma$, the BSVIE \eqref{app_eq_BSVIE1} admits a unique adapted M-solution; see \cite{Yo08}.

The second order adjoint equation introduced in \cite{WaYo23} is written as the following system of BSVIEs:
\begin{equation}\label{app_eq_BSVIE2}
	\begin{dcases}
	P^\V_1(t)=-\hat{h}_{xx}-\int^T_tQ^\V_1(s)\,\diff W_s,\ \ t\in[0,T],\\
	P^\V_2(t)=\hat{b}_x(t)^\top K_b(T-t)^\top P^\V_1(t)+\hat{\sigma}_x(t)^\top K_\sigma(T-t)^\top Q^\V_1(t)\\
	\hspace{2cm}+\int^T_t\Big\{\hat{b}_x(t)^\top K_b(s-t)^\top\bE_t\big[P^\V_2(s)\big]+\hat{\sigma}_x(t)^\top K_\sigma(s-t)^\top Q^\V_2(s,t)\Big\}\,\diff s,\ \ t\in[0,T],\\
	P^\V_3(t)=-\hat{f}_{xx}(t)+\big\langle K_b(T-t)^\top p^\V_1(t),\hat{b}_{xx}(t)\big\rangle+\big\langle K_\sigma(T-t)^\top q^\V_1(t),\hat{\sigma}_{xx}(t)\big\rangle\\
	\hspace{0.5cm}+\int^T_t\Big\{\big\langle K_b(s-t)^\top\bE_t\big[p^\V_2(s)\big],\hat{b}_{xx}(t)\big\rangle+\big\langle K_\sigma(s-t)^\top q^\V_2(s,t),\hat{\sigma}_{xx}(t)\big\rangle\Big\}\,\diff s\\
	\hspace{0.5cm}+\hat{\sigma}_x(t)^\top K_\sigma(T-t)^\top P^\V_1(t)K_\sigma(T-t)\hat{\sigma}_x(t)\\
	\hspace{0.5cm}+\int^T_t\hat{\sigma}_x(t)^\top\Big\{K_\sigma(T-t)^\top\bE_t\big[P^\V_2(s)\big]^\top K_\sigma(s-t)+K_\sigma(s-t)^\top\bE_t\big[P^\V_2(s)\big]K_\sigma(T-t)\Big\}\hat{\sigma}_x(t)\,\diff s\\
	\hspace{0.5cm}+\int^T_t\hat{\sigma}_x(t)^\top K_\sigma(s-t)^\top\bE_t\big[P^\V_3(s)\big]K_\sigma(s-t)\hat{\sigma}_x(t)\,\diff s\\
	\hspace{0.5cm}+\int^T_t\int^T_s\hat{\sigma}_x(t)^\top\Big\{K_\sigma(r-t)^\top\bE_t\big[P^\V_4(r,s)\big]^\top K_\sigma(s-t)+K_\sigma(s-t)^\top\bE_t\big[P^\V_4(r,s)\big]K_\sigma(r-t)\Big\}\hat{\sigma}_x(t)\,\diff r\,\diff s,\\
	\hspace{7cm}t\in[0,T],\\
	P^\V_4(r,t)=\hat{b}_x(t)^\top K_b(T-t)^\top\bE_t\big[P^\V_2(r)\big]^\top+\hat{\sigma}_x(t)^\top K_\sigma(T-t)^\top Q^\V_2(r,t)^\top\\
	\hspace{2cm}+\hat{b}_x(t)^\top K_b(r-t)^\top\bE_t\big[P^\V_3(r)\big]+\hat{\sigma}_x(t)^\top K_\sigma(r-t)^\top Q^\V_3(r,t)\\
	\hspace{2cm}+\int^T_r\Big\{\hat{b}_x(t)^\top K_b(s-t)^\top\bE_t\big[P^\V_4(s,r)\big]^\top+\hat{\sigma}_x(t)^\top K_\sigma(s-t)^\top Q^\V_4(s,r,t)^\top\Big\}\,\diff s\\
	\hspace{2cm}+\int^r_t\Big\{\hat{b}_x(t)^\top K_b(s-t)^\top \bE_t\big[P^\V_4(r,s)\big]+\hat{\sigma}_x(t)^\top K_\sigma(s-t)^\top Q^\V_4(r,s,t)\Big\}\,\diff s,\ \ 0\leq t\leq r\leq T.
	\end{dcases}
\end{equation}
A tuple
\begin{align*}
	&(P^\V_1,Q^\V_1,P^\V_2,Q^\V_2,P^\V_3,Q^\V_3,P^\V_4,Q^\V_4)\\
	&=\Big((P^\V_1(t))_{0\leq t\leq T},(Q^\V_1(t))_{0\leq t\leq T},(P^\V_2(t))_{0\leq t\leq T},(Q^\V_2(s,t))_{0\leq t\leq s\leq T},\\
	&\hspace{2cm}(P^\V_3(t))_{0\leq t\leq T},(Q^\V_3(s,t))_{0\leq t\leq s\leq T},(P^\V_4(r,t))_{0\leq t\leq r\leq T},(Q^\V_4(r,s,t))_{0\leq t\leq s\leq r\leq T}\Big)
\end{align*}
is said to be an adapted M-solution to the BSVIE system \eqref{app_eq_BSVIE2} if the maps
\begin{align*}
	&\Omega\times[0,T]\ni(\omega,t)\mapsto P^\V_1(t),Q^\V_1(t),P^\V_2(t),P^\V_3(t)\in\bR^{n\times n},\\
	&\Omega\times[0,T]\times[0,T]\ni(\omega,t,s)\mapsto Q^\V_2(s,t)\1_{[0,s]}(t),Q^\V_3(s,t)\1_{[0,s]}(t)\in\bR^{n\times n},\\
	&\Omega\times[0,T]\times[0,T]\ni(\omega,t,r)\mapsto P^\V_4(r,t)_{[0,r]}(t)\in\bR^{n\times n},\ \ \text{and}\\
	&\Omega\times[0,T]\times[0,T]\times[0,T]\ni(\omega,t,s,r)\mapsto Q^\V_4(r,s,t)\1_{[0,r]}(s)\1_{[0,s]}(t)\in\bR^{n\times n}
\end{align*}
are progressively measurable, square-integrable and satisfy \eqref{app_eq_BSVIE2}, together with the constraints
\begin{equation}\label{app_eq_BSVIE2-M}
	\begin{dcases}
	P^\V_2(s)=\bE\big[P^\V_2(s)\big]+\int^s_0Q^\V_2(s,t)\,\diff W_t,\ \ 0\leq s\leq T,\\
	P^\V_3(s)=\bE\big[P^\V_3(s)\big]+\int^s_0Q^\V_3(s,t)\,\diff W_t,\ \ 0\leq s\leq T,\\
	P^\V_4(r,s)=\bE\big[P^\V_4(r,s)\big]+\int^s_0Q^\V_4(r,s,t)\,\diff W_t,\ \ 0\leq s\leq r\leq T.
	\end{dcases}
\end{equation}
Notice that the pair $(P^\V_1,Q^\V_1)$ is determined by the first line of \eqref{app_eq_BSVIE2}; $P^\V_1(t)=-\bE_t[\hat{h}_{xx}]$, and $Q^\V_1$ is the martingale integrand of the $\bR^{n\times n}$-valued martingale $P^\V_1$ defined via the martingale representation theorem:
\begin{equation}\label{app_eq_BSVIE2-trivial}
	P^\V_1(s)=P^\V_1(0)+\int^s_0Q^\V_1(t)\,\diff W_t,\ \ s\in[0,T].
\end{equation}
By \cite[Theorem 5.1]{WaYo23}, when $K_b$ and $K_\sigma$ are bounded, the BSVIE system \eqref{app_eq_BSVIE2} has a unique adapted M-solution.

The following proposition shows relationships between BSEEs and BSVIEs.

%% Proposition

\begin{prop}\label{app_prop_BSVIE}
Suppose that \cref{control_assum_coefficient} and \cref{MP_assum_kernel} with $\alpha=0$ hold. Fix a control process $\hat{u}\in\cU$.
\begin{itemize}
\item[(1-A)]
If $(p^\V_1,q^\V_1,p^\V_2,q^\V_2)$ is the adapted M-solution of the BSVIE \eqref{app_eq_BSVIE1}, then the pair $(\hat{p},\hat{q})$ of the $\mu$-equivalence classes of progressively measurable maps $\hat{p},\hat{q}:\Omega\times[0,T]\times\bR_+\to\bR^n$ defined by
\begin{equation}\label{app_eq_pq}
\begin{split}
	&\hat{p}_t(\theta):=e^{-\theta(T-t)}p^\V_1(t)+\int^T_te^{-\theta(s-t)}\bE_t\big[p^\V_2(s)\big]\,\diff s,\\
	&\hat{q}_t(\theta):=e^{-\theta(T-t)}q^\V_1(t)+\int^T_te^{-\theta(s-t)}q^\V_2(s,t)\,\diff s,\ \ (\omega,t,\theta)\in\Omega\times[0,T]\times\bR_+,
\end{split}
\end{equation}
is the solution of the BSEE \eqref{MP_eq_adeq1}.
\item[(1-B)]
Conversely, if $(\hat{p},\hat{q})$ is the solution of the BSEE \eqref{MP_eq_adeq1}, then the tuple $(p^\V_1,q^\V_1,p^\V_2,q^\V_2)$ of progressively measurable maps defined by
\begin{equation}\label{app_eq_p^V}
\begin{split}
	&p^\V_1(t):=-\bE_t\big[\hat{h}_x\big]^\top,\ \ t\in[0,T],\\
	&p^\V_2(t):=\hat{b}_x(t)^\top\mu[M_b^\top\hat{p}_t]+\hat{\sigma}_x(t)^\top\mu[M_\sigma^\top\hat{q}_t]-\hat{f}_x(t)^\top,\ \ t\in[0,T],
\end{split}
\end{equation}
together with the martingale representations \eqref{app_eq_BSVIE1-M} and \eqref{app_eq_BSVIE1-trivial}, is the adapted M-solution of the BSVIE \eqref{app_eq_BSVIE1}.
\item[(2-A)]
Assume that $(P^\V_1,Q^\V_1,P^\V_2,Q^\V_2,P^\V_3,Q^\V_3,P^\V_4,Q^\V_4)$ is the adapted M-solution of the BSVIE system \eqref{app_eq_BSVIE2}. Then, the following hold:
\begin{itemize}
\item[(i)]
The pair $(\hat{P},\hat{Q})$ of the $\mu^{\otimes2}$-equivalence classes of progressively measurable maps $\hat{P},\hat{Q}:\Omega\times[0,T]\times\bR_+^2\to\bR^{n\times n}$ defined by
\begin{equation}\label{app_eq_PQ}
\begin{split}
	&\hat{P}_t(\theta_1,\theta_2)\\
	&:=e^{-(\theta_1+\theta_2)(T-t)}P^\V_1(t)\\
	&\hspace{0.5cm}e^{-\theta_1(T-t)}\int^T_te^{-\theta_2(s-t)}\bE_t\big[P^\V_2(s)\big]^\top\,\diff s+e^{-\theta_2(T-t)}\int^T_te^{-\theta_1(s-t)}\bE_t\big[P^\V_2(s)\big]\,\diff s\\
	&\hspace{0.5cm}+\int^T_te^{-(\theta_1+\theta_2)(r-t)}\bE_t\big[P^\V_3(r)\big]\,\diff r\\
	&\hspace{0.5cm}+\int^T_t\left\{e^{-\theta_1(r-t)}\int^r_te^{-\theta_2(s-t)}\bE_t\big[P^\V_4(r,s)\big]^\top\,\diff s+e^{-\theta_2(r-t)}\int^r_te^{-\theta_1(s-t)}\bE_t\big[P^\V_4(r,s)\big]\,\diff s\right\}\,\diff r,\\
	&\hat{Q}_t(\theta_1,\theta_2)\\
	&:=e^{-(\theta_1+\theta_2)(T-t)}Q^\V_1(t)\\
	&\hspace{0.5cm}+e^{-\theta_1(T-t)}\int^T_te^{-\theta_2(s-t)}Q^\V_2(s,t)^\top\,\diff s+e^{-\theta_2(T-t)}\int^T_te^{-\theta_1(s-t)}Q^\V_2(s,t)\,\diff s\\
	&\hspace{0.5cm}+\int^T_te^{-(\theta_1+\theta_2)(r-t)}Q^\V_3(r,t)\,\diff r\\
	&\hspace{0.5cm}+\int^T_t\left\{e^{-\theta_1(r-t)}\int^r_te^{-\theta_2(s-t)}Q^\V_4(r,s,t)^\top\,\diff s+e^{-\theta_2(r-t)}\int^r_te^{-\theta_1(s-t)}Q^\V_4(r,s,t)\diff s\right\}\,\diff r,\\
	&\hspace{6cm}(\omega,t,\theta_1,\theta_2)\in\Omega\times[0,T]\times\bR_+^2,
\end{split}
\end{equation}
is the solution of the BSEE \eqref{MP_eq_adeq2}.
\item[(ii)]
The pair $(\cP,\cQ)$ of the $\mu$-equivalence classes of progressively measurable maps $\cP,\cQ:\Omega\times[0,T]\times\bR_+\to\bR^{n\times n}$ defined by
\begin{align*}
	&\cP_t(\theta):=e^{-\theta(T-t)}P^\V_1(t)+\int^T_te^{-\theta(s-t)}\bE_t\big[P^\V_2(s)\big]\,\diff s,\\
	&\cQ_t(\theta):=e^{-\theta(T-t)}Q^\V_1(t)+\int^T_te^{-\theta(s-t)}Q^\V_2(s,t)\,\diff s,\ \ (\omega,t,\theta)\in\Omega\times[0,T]\times\bR_+,
\end{align*}
is the solution of the following BSEE:
\begin{equation}\label{app_eq_BSEE-T}
	\begin{dcases}
	\diff\cP_t(\theta)=\theta\cP_t(\theta)\,\diff t-\big\{\hat{b}_x(t)^\top\mu[M_b^\top\cP_t]+\hat{\sigma}_x(t)^\top\mu[M_\sigma^\top\cQ_t]\big\}\,\diff t+\cQ_t(\theta)\,\diff W_t,\ \ \theta\in\bR_+,\ t\in[0,T],\\
	\cP_T(\theta)=-\hat{h}_{xx},\ \ \theta\in\bR_+.
	\end{dcases}
\end{equation}
\item[(iii)]
For each $r\in[0,T]$, the pair $(\sP^r,\sQ^r)$ of the $\mu$-equivalence classes of progressively measurable maps $\sP^r,\sQ^r:\Omega\times[0,r]\times\bR_+\to\bR^{n\times n}$ defined by
\begin{equation}\label{app_eq_PQ-r}
\begin{split}
	&\sP^r_t(\theta):=e^{-\theta(T-t)}\bE_t\big[P^\V_2(r)\big]^\top+e^{-\theta(r-t)}\bE_t\big[P^\V_3(r)\big]+\int^T_re^{-\theta(s-t)}\bE_t\big[P^\V_4(s,r)\big]^\top\diff s\\
	&\hspace{3cm}+\int^r_te^{-\theta(s-t)}\bE_t\big[P^\V_4(r,s)\big]\,\diff s,\\
	&\sQ^r_t(\theta):=e^{-\theta(T-t)}Q^\V_2(r,t)^\top+e^{-\theta(r-t)}Q^\V_3(r,t)+\int^T_re^{-\theta(s-t)}Q^\V_4(s,r,t)^\top\diff s\\
	&\hspace{3cm}+\int^r_te^{-\theta(s-t)}Q^\V_4(r,s,t)\,\diff s,\ \ \ (\omega,t,\theta)\in\Omega\times[0,r]\times\bR_+,
\end{split}
\end{equation}
is the solution of the following BSEE on $[0,r]$:
\begin{equation}\label{app_eq_BSEE-r}
	\begin{dcases}
	\diff\sP^r_t(\theta)=\theta\sP^r_t(\theta)\,\diff t-\big\{\hat{b}_x(t)^\top\mu[M_b^\top\sP^r_t]+\hat{\sigma}_x(t)^\top\mu[M_\sigma^\top\sQ^r_t]\big\}\,\diff t+\sQ^r_t(\theta)\,\diff W_t,\\
	\hspace{7cm}\theta\in\bR_+,\ t\in[0,r],\\
	\sP^r_r(\theta)=\big\langle\mu[M_b^\top\hat{p}_r],\hat{b}_{xx}(r)\big\rangle+\big\langle\mu[M_\sigma^\top\hat{q}_r],\hat{\sigma}_{xx}(r)\big\rangle-\hat{f}_{xx}(r)+\hat{\sigma}_x(r)^\top\mu^{\otimes2}[M_\sigma^\top\hat{P}_rM_\sigma]\hat{\sigma}_x(r)\\
	\hspace{3cm}+\mu[\hat{P}_r(\theta,\cdot)M_b]\hat{b}_x(r)+\mu[\hat{Q}_r(\theta,\cdot)M_\sigma]\hat{\sigma}_x(r),\ \ \theta\in\bR_+.
	\end{dcases}
\end{equation}
\end{itemize}
\item[(2-B)]
Conversely, assume that $(\hat{P},\hat{Q})$ is the solution of the BSEE \eqref{MP_eq_adeq2}, that $(\cP,\cQ)$ is the solution of the BSEE \eqref{app_eq_BSEE-T}, and that $(\sP^r,\sQ^r)$ is the solution of the BSEE \eqref{app_eq_BSEE-r} for each $r\in[0,T]$ such that $(\omega,t,\theta,r)\mapsto\sP^r_t(\theta)\1_{[0,r]}(t),\sQ^r_t(\theta)\1_{[0,r]}(t)\in\bR^{n\times n}$ are progressively measurable. Then, the tuple $(P^\V_1,Q^\V_1,P^\V_2,Q^\V_2,P^\V_3,Q^\V_3,P^\V_4,Q^\V_4)$ of progressively measurable maps defined by
\begin{equation}\label{app_eq_P^V}
\begin{split}
	&P^\V_1(t):=-\bE_t\big[\hat{h}_{xx}\big],\ \ t\in[0,T],\\
	&P^\V_2(t):=\hat{b}_x(t)^\top\mu[M_b^\top\cP_t]+\hat{\sigma}_x(t)^\top\mu[M_\sigma^\top\cQ_t],\ \ t\in[0,T],\\
	&P^\V_3(t):=\big\langle\mu[M_b^\top\hat{p}_t],\hat{b}_{xx}(t)\big\rangle+\big\langle\mu[M_\sigma^\top\hat{q}_t],\hat{\sigma}_{xx}(t)\big\rangle-\hat{f}_{xx}(t)+\hat{\sigma}_x(t)^\top\mu^{\otimes2}[M_\sigma^\top\hat{P}_tM_\sigma]\hat{\sigma}_x(t),\ \ t\in[0,T],\\
	&P^\V_4(r,t):=\hat{b}_x(t)^\top\mu[M_b^\top\sP^r_t]+\hat{\sigma}_x(t)^\top\mu[M_\sigma^\top\sQ^r_t],\ \ 0\leq t\leq r\leq T,
\end{split}
\end{equation}
together with the martingale representations \eqref{app_eq_BSVIE2-M} and \eqref{app_eq_BSVIE2-trivial}, is the adapted M-solution of the BSVIE system \eqref{app_eq_BSVIE2}.
\end{itemize}
\end{prop}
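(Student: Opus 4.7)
The proof strategy rests on a single observation: for each fixed parameter value $\theta\in\bR_+$ (respectively $(\theta_1,\theta_2)\in\bR_+^2$), every BSEE appearing in the statement is a finite-dimensional linear BSDE whose parameter-dependence in the drift is limited to the multiplicative term $\theta\hat{p}_t(\theta)$ (resp.\ $(\theta_1+\theta_2)\hat{P}_t(\theta_1,\theta_2)$). The integrating factor $e^{-\theta t}$ (resp.\ $e^{-(\theta_1+\theta_2)t}$) thus yields an explicit Feynman--Kac type representation of the solution as a conditional expectation of the terminal datum plus an $e^{-\theta(\cdot-t)}$-weighted integral of the generator. This is the single bridge that translates between the BSEE and the BSVIE formulations.

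Concretely for (1-B), the integrating-factor argument applied to \eqref{MP_eq_adeq1} gives
\begin{equation*}
\hat{p}_t(\theta)=e^{-\theta(T-t)}\bE_t\big[-\hat{h}_x^\top\big]+\int^T_t e^{-\theta(s-t)}\bE_t\big[g_s\big]\,\diff s,
\end{equation*}
with $g_s:=\hat{b}_x(s)^\top\mu[M_b^\top\hat{p}_s]+\hat{\sigma}_x(s)^\top\mu[M_\sigma^\top\hat{q}_s]-\hat{f}_x(s)^\top$. Setting $p^\V_1(t):=-\bE_t[\hat{h}_x]^\top$ and $p^\V_2(t):=g_t$ reproduces \eqref{app_eq_pq}, while the M-solution constraint \eqref{app_eq_BSVIE1-M} defines $q^\V_1,q^\V_2$ via martingale representation. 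Substituting \eqref{app_eq_pq} back into the defining formula for $p^\V_2(t)$ and interchanging the $\mu(\diff\theta)$- and $\diff s$-integrations by Fubini, together with the kernel identities $K_b(r)^\top=\mu[e^{-\theta r}M_b^\top]$ and $K_\sigma(r)^\top=\mu[e^{-\theta r}M_\sigma^\top]$ obtained from \eqref{MP_eq_kernel}, yields \eqref{app_eq_BSVIE1}; the required integrability is already established in the proof of \cref{MP_prop_J12-small}. For the reverse direction (1-A), starting from \eqref{app_eq_pq} one applies It\^o's formula term-by-term, using the stochastic Fubini theorem to handle the martingale part $\int^t_0 q^\V_2(s,r)\,\diff W_r$ of $\bE_t[p^\V_2(s)]$ that appears via \eqref{app_eq_BSVIE1-M}, and invokes the same kernel identities to collapse $\mu[M_b^\top\hat{p}_t]$ and $\mu[M_\sigma^\top\hat{q}_t]$ into the $K_b$- and $K_\sigma$-weighted integrals of \eqref{app_eq_BSVIE1}.

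For (2-A) and (2-B), the analogous integrating-factor representations for the BSEEs \eqref{MP_eq_adeq2}, \eqref{app_eq_BSEE-T} and \eqref{app_eq_BSEE-r} produce \eqref{app_eq_PQ}, the formulas for $(\cP,\cQ)$ in (ii), and \eqref{app_eq_PQ-r} respectively. The key algebraic step is to decompose the generator of each BSEE linearly in its exponential $(\theta_1,\theta_2)$-factors. The factors $e^{-(\theta_1+\theta_2)(T-t)}$, $e^{-\theta_i(T-t)}e^{-\theta_j(s-t)}$, $e^{-(\theta_1+\theta_2)(r-t)}$ and $e^{-\theta_i(r-t)}e^{-\theta_j(s-t)}$ that appear in \eqref{app_eq_PQ} correspond one-to-one to the four unknowns $P^\V_1,P^\V_2,P^\V_3,P^\V_4$ of the BSVIE system \eqref{app_eq_BSVIE2}, and the Fubini interchange with $\mu^{\otimes2}$ converts products of scalar exponentials into the kernel factors $K_b(\cdot-t)$, $K_\sigma(\cdot-t)$ on the right-hand side of \eqref{app_eq_BSVIE2}. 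The converse direction (2-B) proceeds by first solving \eqref{MP_eq_adeq2}, then \eqref{app_eq_BSEE-T}, and finally the family \eqref{app_eq_BSEE-r} parametrized by $r$, defining $(P^\V_1,P^\V_2,P^\V_3,P^\V_4)$ via \eqref{app_eq_P^V} and recovering the M-integrands $(Q^\V_j)$ by martingale representation \eqref{app_eq_BSVIE2-M}.

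The main obstacle will be the bookkeeping for (2-A)(iii) and its converse. The BSEE \eqref{app_eq_BSEE-r} has a \emph{time-$r$} terminal condition that itself contains the BSEE solutions $(\hat{p}_r,\hat{q}_r)$ and $(\hat{P}_r,\hat{Q}_r)$, so that $\sP^r$ implicitly couples all layers of the second-order system; matching its four-term representation \eqref{app_eq_PQ-r} against the BSVIE identity for $P^\V_4(r,t)$ requires carefully splitting the relevant integrals over $[t,r]$, the point $\{r\}$ and $[r,T]$, and attributing each piece to the appropriate kernel factor $K_b(\cdot-t)$ or $K_\sigma(\cdot-t)$. In (2-B), one must additionally argue the joint progressive measurability of $(r,t)\mapsto\sP^r_t$ and $(r,t)\mapsto\sQ^r_t$ on $\{t\leq r\}$ (hypothesized in the statement), which can be extracted from the $L^2$-stability of the BSEE solution in its terminal datum furnished by \cref{BSEE_theo_BSEE} applied to the family \eqref{app_eq_BSEE-r}.
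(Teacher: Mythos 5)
Your proposal follows the same overall route as the paper: for each fixed parameter, write the BSEE as an $\mathbb R^n$- or $\mathbb R^{n\times n}$-valued linear BSDE, integrate with $e^{-\theta t}$ (resp.\ $e^{-(\theta_1+\theta_2)t}$) to obtain a Feynman--Kac representation, then interchange $\mu$-integration with $\diff s$/$\diff W_s$-integration and invoke the kernel identity $K_b(r)=\int_{\bR_+}e^{-\theta r}M_b(\theta)\dmu$ to collapse back to the BSVIE. Parts (1-A), (1-B) and (2-A) are handled essentially as in the paper.

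There is, however, a gap in your treatment of (2-B). You propose to ``solve \eqref{MP_eq_adeq2}, then \eqref{app_eq_BSEE-T}, then the family \eqref{app_eq_BSEE-r}, define $P^\V_j$ via \eqref{app_eq_P^V} and recover $Q^\V_j$ by martingale representation,'' but you do not explain why the BSVIE equations \eqref{app_eq_BSVIE2} then hold. The difficulty is one of consistency: $P^\V_3$ in \eqref{app_eq_P^V} is built from $(\hat P,\hat Q)$, while $P^\V_2$ and $P^\V_4$ are built from $(\cP,\cQ)$ and $(\sP^r,\sQ^r)$ respectively, yet the third and fourth BSVIE equations couple all of these through $K_b$- and $K_\sigma$-weighted integrals. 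What makes them fit together is a uniqueness argument: form
\begin{equation*}
\overline P_t(\theta_1,\theta_2):=e^{-\theta_2(T-t)}\cP_t(\theta_1)+\int_t^T e^{-\theta_2(r-t)}\sP^r_t(\theta_1)\,\diff r
\end{equation*}
(and the analogous $\overline Q$), verify via the stochastic Fubini theorem that $(\overline P,\overline Q)$ solves the same BSEE \eqref{MP_eq_adeq2} as $(\hat P,\hat Q)$, and invoke the uniqueness part of \cref{BSEE_theo_BSEE} to conclude $\overline P=\hat P$, $\overline Q=\hat Q$ a.e.\ This identity is precisely what gives the formulas \eqref{app_eq_PQ-mu1} and \eqref{app_eq_PQ-mu2} expressing $\mu[M_b^\top\hat P_t(\cdot,\theta_2)]$ etc.\ in terms of $P^\V_2,P^\V_4$, and it is needed to verify the third and fourth equations of \eqref{app_eq_BSVIE2}. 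Without it, your computation would not close. Separately, you list the joint progressive measurability of $(r,t)\mapsto\sP^r_t$ as something to establish in (2-B), but this is hypothesized in the statement, not proven.
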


%% Remark

\begin{rem}
The assumption that \eqref{MP_eq_M-integrable} holds with $\alpha=0$ is required in order to ensure the square-integrability condition (with respect to the time parameter) for the adapted M-solution of the BSVIE system \eqref{app_eq_BSVIE2}. Actually, \cref{app_prop_BSVIE} holds true even in the general (singular) case with $\alpha\in(0,1)$ if we replaced the square-integrability requirement in the definition of the adapted M-solution of the BSVIEs to a suitable ``weighted square-integrability condition'' in a similar manner as in the estimate \eqref{BSEE_eq_BSEE-estimate}. However, since the main finding in \cref{app_prop_BSVIE} is the structural relationships between BSEEs and BSVIEs, we do not state such generalization here for the sake of simplicity of presentation.
\end{rem}

%% Proof

\begin{proof}[Proof of \cref{app_prop_BSVIE}]
In this proof, we use the (stochastic) Fubini theorem several times. The required integrability conditions for the applications of the (stochastic) Fubini theorem can be easily checked noting the assumption that \eqref{MP_eq_M-integrable} holds with $\alpha=0$, and we omit the proofs of them. Instead, we focus on the structural relationships.

\underline{(1-A): From $(p^\V_1,q^\V_1,p^\V_2,q^\V_2)$ to $(\hat{p},\hat{q})$.} Let $(p^\V_1,q^\V_1,p^\V_2,q^\V_2)$ be the adapted M-solution of the BSVIE \eqref{app_eq_BSVIE1}, and define $(\hat{p},\hat{q})$ by \eqref{app_eq_pq}. Noting $p^\V_1(t)=\bE_t[p^\V_1(T)]$ and the martingale representations \eqref{app_eq_BSVIE1-M} and \eqref{app_eq_BSVIE1-trivial}, by the same argument as in the proof of \cref{BSEE_lemm_trivialBSEE}, we see that $(\hat{p},\hat{q})$ is the solution to the following BSEE:
\begin{equation}\label{app_eq_BSEE1^V}
	\begin{dcases}
	\diff\hat{p}_t(\theta)=\theta\hat{p}_t(\theta)\,\diff t-p^\V_2(t)\,\diff t+\hat{q}_t(\theta)\,\diff W_t,\ \ \theta\in\bR_+,\ t\in[0,T],\\
	\hat{p}_T(\theta)=p^\V_1(T),\ \ \theta\in\bR_+.
	\end{dcases}
\end{equation}
Hence, noting that $p^\V_1(T)=-\hat{h}_x^\top$, in order to show that $(\hat{p},\hat{q})$ is the solution to the BSEE \eqref{MP_eq_adeq1}, it suffices to show that
\begin{equation}\label{app_eq_p^V_2}
	p^\V_2(t)=\hat{b}_x(t)^\top\mu[M_b^\top\hat{p}_t]+\hat{\sigma}_x(t)^\top\mu[M_\sigma^\top\hat{q}_t]-\hat{f}_x(t)^\top.
\end{equation}
However, thanks to the representation formulas \eqref{MP_eq_kernel} for the kernels $K_b$ and $K_\sigma$, together with the definitions \eqref{app_eq_pq} of $\hat{p}$ and $\hat{q}$, using Fubini's theorem, we have
\begin{equation}\label{app_eq_pq-mu}
\begin{split}
	&\mu[M_b^\top\hat{p}_t]=K_b(T-t)^\top p^\V_1(t)+\int^T_tK_b(s-t)^\top\bE_t\big[p^\V_2(s)\big]\,\diff s,\\
	&\mu[M_\sigma^\top\hat{q}_t]=K_\sigma(T-t)^\top q^\V_1(t)+\int^T_tK_\sigma(s-t)^\top q^\V_2(s,t)\,\diff s.
\end{split}
\end{equation}
From these expressions, together with the second equation in the BSVIE \eqref{app_eq_BSVIE1}, we see that \eqref{app_eq_p^V_2} holds. Hence, $(\hat{p},\hat{q})$ is the solution of the BSEE \eqref{MP_eq_adeq1}.

\underline{(1-B): From $(\hat{p},\hat{q})$ to $(p^\V_1,q^\V_1,p^\V_2,q^\V_2)$.} Let $(\hat{p},\hat{q})$ be the solution of the BSEE \eqref{MP_eq_adeq1}. Define $(p^\V_1,q^\V_1,p^\V_2,q^\V_2)$ by \eqref{app_eq_p^V}, together with the martingale representations \eqref{app_eq_BSVIE1-M} and \eqref{app_eq_BSVIE1-trivial}. By the expression \eqref{app_eq_p^V}, the BSEE \eqref{MP_eq_adeq1} for $(\hat{p},\hat{q})$ can be written as \eqref{app_eq_BSEE1^V}. Using It\^{o}'s formula, for each $\theta\in\bR_+$,
\begin{align*}
	&\hat{p}_t(\theta)+\int^T_te^{-\theta(s-t)}\hat{q}_s(\theta)\,\diff W_s\\
	&=e^{-\theta(T-t)}p^\V_1(T)+\int^T_te^{-\theta(s-t)}p^\V_2(s)\,\diff s\\
	&=e^{-\theta(T-t)}\left\{p^\V_1(t)+\int^T_tq^\V_1(s)\,\diff W_s\right\}+\int^T_te^{-\theta(s-t)}\left\{\bE_t\big[p^\V_2(s)\big]+\int^s_tq^\V_2(s,r)\,\diff W_r\right\}\,\diff s\\
	&=e^{-\theta(T-t)}p^\V_1(t)+\int^T_te^{-\theta(s-t)}\bE_t\big[p^\V_2(s)\big]\,\diff s\\
	&\hspace{1.5cm}+\int^T_te^{-\theta(s-t)}\left\{e^{-\theta(T-s)}q^\V_1(s)+\int^T_se^{-\theta(r-s)}q^\V_2(r,s)\,\diff r\right\}\,\diff W_s,
\end{align*}
where we used the martingale representations \eqref{app_eq_BSVIE1-M} and \eqref{app_eq_BSVIE1-trivial} in the third equality and the stochastic Fubini theorem in the fourth equality. From this, we get the equalities in form of \eqref{app_eq_pq}. Hence, as before, thanks to the representation formulas \eqref{MP_eq_kernel} for the kernels $K_b$ and $K_\sigma$, using Fubini's theorem, we get \eqref{app_eq_pq-mu}. Inserting the expressions in \eqref{app_eq_pq-mu} to the definition \eqref{app_eq_p^V} of $p_2^\V$, we see that $(p^\V_1,q^\V_1,p^\V_2,q^\V_2)$ is the adapted M-solution of the BSVIE \eqref{app_eq_BSVIE1}.

\underline{(2-A): From $(P^\V_1,Q^\V_1,P^\V_2,Q^\V_2,P^\V_3,Q^\V_3,P^\V_4,Q^\V_4)$ to $(\hat{P},\hat{Q})$, $(\cP,\cQ)$ and $(\sP^r,\sQ^r)$.} Suppose that we are given the adapted M-solution $(P^\V_1,Q^\V_1,P^\V_2,Q^\V_2,P^\V_3,Q^\V_3,P^\V_4,Q^\V_4)$ of the BSVIE system \eqref{app_eq_BSVIE2}.

First, we show the assertion (i) in (2-A). Define $(\hat{P},\hat{Q})$ by \eqref{app_eq_PQ}. Notice that
\begin{align*}
	&\hat{P}_t(\theta_1,\theta_2)=e^{-(\theta_1+\theta_2)(T-t)}\bE_t\big[P^\V_1(T)\big]+\int^T_te^{-(\theta_1+\theta_2)(s-t)}\bE_t\big[G^\V_s(\theta_1,\theta_2)\big]\,\diff s,\\
	&\hat{Q}_t(\theta_1,\theta_2)=e^{-(\theta_1+\theta_2)(T-t)}Q^\V_1(t)+\int^T_te^{-(\theta_1+\theta_2)(s-t)}Z^\V_t(s,\theta_1,\theta_2)\,\diff s,
\end{align*}
where
\begin{align*}
	G^\V_s(\theta_1,\theta_2)&=e^{-\theta_1(T-s)}P^\V_2(s)^\top+e^{-\theta_2(T-s)}P^\V_2(s)+P^\V_3(s)\\
	&\hspace{2cm}+\int^T_se^{-\theta_1(r-s)}P^\V_4(r,s)^\top\,\diff r+\int^T_se^{-\theta_2(r-s)}P^\V_4(r,s)\,\diff r
\end{align*}
and
\begin{align*}
	Z^\V_t(s,\theta_1,\theta_2)&=e^{-\theta_1(T-s)}Q^\V_2(s,t)^\top+e^{-\theta_2(T-s)}Q^\V_2(s,t)+Q^\V_3(s,t)\\
	&\hspace{2cm}+\int^T_se^{-\theta_1(r-s)}Q^\V_4(r,s,t)^\top\,\diff r+\int^T_se^{-\theta_2(r-s)}Q^\V_4(r,s,t)\,\diff r.
\end{align*}
By the martingale representations \eqref{app_eq_BSVIE2-M} and \eqref{app_eq_BSVIE2-trivial}, together with the stochastic Fubini theorem, we see that
\begin{equation*}
	P^\V_1(T)=\bE\big[P^\V_1(T)\big]+\int^T_0Q^\V_1(t)\,\diff W_t
\end{equation*}
and
\begin{equation*}
	G^\V_s(\theta_1,\theta_2)=\bE\big[G^\V_s(\theta_1,\theta_2)\big]+\int^s_0Z^\V_t(s,\theta_1,\theta_2)\,\diff W_t.
\end{equation*}
Hence, by the same argument as in the proof of \cref{BSEE_lemm_trivialBSEE}, we see that $(\hat{P},\hat{Q})$ is the solution to the following BSEE:
\begin{equation*}
	\begin{dcases}
	\diff\hat{P}_t(\theta_1,\theta_2)=(\theta_1+\theta_2)\hat{P}_t(\theta_1,\theta_2)\,\diff t-G^\V_t(\theta_1,\theta_2)\,\diff t+\hat{Q}_t(\theta_1,\theta_2)\,\diff W_t,\ \ (\theta_1,\theta_2)\in\bR_+^2,\ t\in[0,T],\\
	\hat{P}_T(\theta_1,\theta_2)=P^\V_1(T),\ \ (\theta_1,\theta_2)\in\bR_+^2.
	\end{dcases}
\end{equation*}
Therefore, noting that $P^\V_1(T)=-\hat{h}_{xx}$, in order to show that $(\hat{P},\hat{Q})$ is the solution to the BSEE \eqref{MP_eq_adeq2}, it suffices to show that
\begin{equation}\label{app_eq_G^V_t}
\begin{split}
	G^\V_t(\theta_1,\theta_2)&=\hat{b}_x(t)^\top\mu[M_b^\top\hat{P}_t(\cdot,\theta_2)]+\mu[\hat{P}_t(\theta_1,\cdot)M_b]\hat{b}_x(t)+\hat{\sigma}_x(t)^\top\mu^{\otimes2}[M_\sigma^\top\hat{P}_t M_\sigma]\hat{\sigma}_x(t)\\
	&\hspace{0.5cm}+\hat{\sigma}_x(t)^\top\mu[ M_\sigma^\top\hat{Q}_t(\cdot,\theta_2)]+\mu[\hat{Q}_t(\theta_1,\cdot)M_\sigma]\hat{\sigma}_x(t)\\
	&\hspace{0.5cm}+\langle\mu[M_b^\top\hat{p}_t],\hat{b}_{xx}(t)\rangle+\langle\mu[M_\sigma^\top\hat{q}_t],\hat{\sigma}_{xx}(t)\rangle-\hat{f}_{xx}(t).
\end{split}
\end{equation}
Thanks to the representation formulas \eqref{MP_eq_kernel} for $K_b$ and $K_\sigma$, together with the definitions \eqref{app_eq_PQ} of $\hat{P}$ and $\hat{Q}$, using Fubini's theorem, we have
\begin{align*}
	&\mu[M_b^\top\hat{P}_t(\cdot,\theta_2)]=e^{-\theta_2(T-t)}\left\{K_b(T-t)^\top P^\V_1(t)+\int^T_tK_b(s-t)^\top\bE_t\big[P^\V_2(s)\big]\,\diff s\right\}\\
	&\hspace{2cm}+\int^T_te^{-\theta_2(r-t)}\left\{K_b(T-t)^\top\bE_t\big[P^\V_2(r)\big]^\top+K_b(r-t)^\top\bE_t\big[P^\V_3(r)\big]\vphantom{\int^T_s}\right.\\
	&\left.\hspace{5cm}+\int^r_tK_b(s-t)^\top\bE_t\big[P^\V_4(r,s)\big]\,\diff s+\int^T_rK_b(s-t)^\top\bE_t\big[P^\V_4(s,r)\big]^\top\diff s\right\}\,\diff r
\end{align*}
and
\begin{align*}
	&\mu[M_\sigma^\top\hat{Q}_t(\cdot,\theta_2)]=e^{-\theta_2(T-t)}\left\{K_\sigma(T-t)^\top Q^\V_1(t)+\int^T_tK_\sigma(s-t)^\top Q^\V_2(s,t)\,\diff s\right\}\\
	&\hspace{2cm}+\int^T_te^{-\theta_2(r-t)}\left\{K_\sigma(T-t)^\top Q^\V_2(r,t)^\top+K_\sigma(r-t)^\top Q^\V_3(r,t)\vphantom{\int^T_s}\right.\\
	&\left.\hspace{5cm}+\int^r_tK_\sigma(s-t)^\top Q^\V_4(r,s,t)\,\diff s+\int^T_rK_\sigma(s-t)^\top Q^\V_4(s,r,t)^\top\diff s\right\}\,\diff r.
\end{align*}
From the above two expressions, together with the second and fourth equations in the BSVIE system \eqref{app_eq_BSVIE2}, we see that
\begin{equation}\label{app_eq_PQ-mu1}
	\hat{b}_x(t)^\top\mu[M_b^\top\hat{P}_t(\cdot,\theta_2)]+\hat{\sigma}_x(t)^\top\mu[M_\sigma^\top\hat{Q}_t(\cdot,\theta_2)]=e^{-\theta_2(T-t)}P^\V_2(t)+\int^T_te^{-\theta_2(r-t)}P^\V_4(r,t)\,\diff r.
\end{equation}
By similar calculations, we can show that
\begin{equation}\label{app_eq_PQ-mu2}
	\mu[\hat{P}_t(\theta_1,\cdot)M_b]\hat{b}_x(t)+\mu[\hat{Q}_t(\theta_1,\cdot)M_\sigma]\hat{\sigma}_x(t)=e^{-\theta_1(T-t)}P^\V_2(t)^\top+\int^T_te^{-\theta_1(r-t)}P^\V_4(r,t)^\top\diff r.
\end{equation}
Furthermore, again by the representation formula \eqref{MP_eq_kernel} for $K_\sigma$ and the definition \eqref{app_eq_PQ} of $\hat{P}$, using Fubini's theorem, we have
\begin{equation}\label{app_eq_P-mumu}
\begin{split}
	&\mu^{\otimes 2}[M_\sigma^\top\hat{P}_tM_\sigma]\\
	&=K_\sigma(T-t)^\top P^\V_1(t)K_\sigma(T-t)\\
	&\hspace{1cm}+\int^T_t\Big\{K_\sigma(T-t)^\top\bE_t\big[P^\V_2(s)\big]^\top K_\sigma(s-t)+K_\sigma(s-t)^\top\bE_t\big[P^\V_2(s)\big]K_\sigma(T-t)\Big\}\,\diff s\\
	&\hspace{1cm}+\int^T_tK_\sigma(s-t)^\top\bE_t\big[P^\V_3(s)\big]K_\sigma(s-t)\,\diff s\\
	&\hspace{1cm}+\int^T_t\int^T_s\Big\{K_\sigma(r-t)^\top\bE_t\big[P^\V_4(r,s)\big]^\top K_\sigma(s-t)+K_\sigma(s-t)^\top\bE_t\big[P^\V_4(r,s)\big]K_\sigma(r-t)\Big\}\,\diff r\,\diff s.
\end{split}
\end{equation}
From \eqref{app_eq_P-mumu} and the relation \eqref{app_eq_pq-mu} between $(\hat{p},\hat{q})$ and $(p^\V_1,q^\V_1,p^\V_2,q^\V_2)$, together with the third equation in the BSVIE system \eqref{app_eq_BSVIE2}, we see that
\begin{equation}\label{app_eq_PQ-mu3}
	\langle\mu[M_b^\top\hat{p}_t],\hat{b}_{xx}(t)\rangle+\langle\mu[M_\sigma^\top\hat{q}_t],\hat{\sigma}_{xx}(t)\rangle-\hat{f}_{xx}(t)+\hat{\sigma}_x(t)^\top\mu^{\otimes2}[M_\sigma^\top\hat{P}_tM_\sigma]\hat{\sigma}_x(t)=P^\V_3(t).
\end{equation}
By \eqref{app_eq_PQ-mu1}, \eqref{app_eq_PQ-mu2} and \eqref{app_eq_PQ-mu3}, we obtain \eqref{app_eq_G^V_t}. Hence, $(\hat{P},\hat{Q})$ is the solution of the BSEE \eqref{MP_eq_adeq2}.

The assertion (ii) in (2-A) can be proved by the same argument as in (1-A) above, and we omit it.

Concerning with the assertion (iii) in (2-A), notice that $(\sP^r,\sQ^r)$ defined by \eqref{app_eq_PQ-r} can be rewritten as
\begin{align*}
	&\sP^r_t(\theta)=e^{-\theta(r-t)}\bE_t\big[\varphi^\V(r,\theta)\big]+\int^r_te^{-\theta(s-t)}\bE_t\big[P^\V_4(r,s)\big]\,\diff s,\\
	&\sQ^r_t(\theta)=e^{-\theta(r-t)}\eta^\V_t(r,\theta)+\int^r_te^{-\theta(s-t)}Q^\V_4(r,s,t)\,\diff s,
\end{align*}
where
\begin{align*}
	&\varphi^\V(r,\theta):=e^{-\theta(T-r)}P^\V_2(r)^\top+P^\V_3(r)+\int^T_re^{-\theta(s-r)}P^\V_4(s,r)^\top\diff s,\\
	&\eta^\V_t(r,\theta):=e^{-\theta(T-r)}Q^\V_2(r,t)^\top+Q^\V_3(r,t)+\int^T_re^{-\theta(s-r)}Q^\V_4(s,r,t)^\top\diff s.
\end{align*}
By the martingale representations \eqref{app_eq_BSVIE2-M} and \eqref{app_eq_BSVIE2-trivial}, together with the stochastic Fubini theorem, we see that
\begin{equation*}
	\varphi^\V(r,\theta)=\bE\big[\varphi^\V(r,\theta)\big]+\int^r_0\eta^\V_t(r,\theta)\,\diff W_t,\ \ P^\V_4(r,s)=\bE\big[P^\V_4(r,s)\big]+\int^s_0Q^\V_4(r,s,t)\,\diff W_t.
\end{equation*}
Hence, by the same argument as in the proof of \cref{BSEE_lemm_trivialBSEE}, we see that $(\sP^r,\sQ^r)$ is the solution to the following BSEE on $[0,r]$:
\begin{equation*}
	\begin{dcases}
	\diff\sP^r_t(\theta)=\theta\sP^r_t(\theta)\,\diff t-P^\V_4(r,t)\,\diff t+\sQ^r_t(\theta)\,\diff W_t,\ \ \theta\in\bR_+,\ t\in[0,r],\\
	\sP^r_r(\theta)=\varphi^\V(r,\theta),\ \ \theta\in\bR_+.
	\end{dcases}
\end{equation*}
Furthermore, by \eqref{app_eq_PQ-mu2} and \eqref{app_eq_PQ-mu3}, we have
\begin{align*}
	\varphi^\V(r,\theta)&=\big\langle\mu[M_b^\top\hat{p}_r],\hat{b}_{xx}(r)\big\rangle+\big\langle\mu[M_\sigma^\top\hat{q}_r],\hat{\sigma}_{xx}(r)\big\rangle-\hat{f}_{xx}(r)+\hat{\sigma}_x(r)^\top\mu^{\otimes2}[M_\sigma^\top\hat{P}_rM_\sigma]\hat{\sigma}_x(r)\\
	&\hspace{0.5cm}+\mu[\hat{P}_r(\theta,\cdot)M_b]\hat{b}_x(r)+\mu[\hat{Q}_r(\theta,\cdot)M_\sigma]\hat{\sigma}_x(r).
\end{align*}

Hence, in order to show that $(\sP^r,\sQ^r)$ is the solution of the BSEE \eqref{app_eq_BSEE-r}, it suffices to show that
\begin{equation}\label{app_eq_P^V_4}
	P^\V_4(r,t)=\hat{b}_x(t)^\top\mu[M_b^\top\sP^r_t]+\hat{\sigma}_x(t)^\top\mu[M_\sigma^\top\sQ^r_t].
\end{equation}
However, thanks to the representation formulas \eqref{MP_eq_kernel} for the kernels $K_b$ and $K_\sigma$, together with the definitions \eqref{app_eq_PQ-r} of $\sP^r$ and $\sQ^r$, using Fubini's theorem, we have
\begin{equation}\label{app_eq_PQ^r-mu}
\begin{split}
	&\mu[M_b^\top\sP^r_t]=K_b(T-t)^\top\bE_t\big[P^\V_2(r)\big]^\top+K_b(r-t)^\top\bE_t\big[P^\V_3(r)\big]+\int^T_rK_b(s-t)^\top\bE_t\big[P^\V_4(s,r)\big]^\top\,\diff s\\
	&\hspace{3cm}+\int^r_tK_b(s-t)^\top\bE_t\big[P^\V_4(r,s)\big]\,\diff s,\\
	&\mu[M_\sigma^\top\sQ^r_t]=K_\sigma(T-t)Q^\V_2(r,t)^\top+K_\sigma(r-t)^\top Q^\V_3(r,t)+\int^T_rK_\sigma(s-t)^\top Q^\V_4(s,r,t)^\top\,\diff s\\
	&\hspace{3cm}+\int^r_tK_\sigma(s-t)^\top Q^\V_4(r,s,t)\,\diff s.
\end{split}
\end{equation}
From these expressions, together with the fourth equation in the BSVIE system \eqref{app_eq_BSVIE2}, we see that \eqref{app_eq_P^V_4} holds. Hence, $(\sP^r,\sQ^r)$ is the solution of the BSEE \eqref{app_eq_BSEE-r}.

\underline{(2-B): From $(\hat{P},\hat{Q})$, $(\cP,\cQ)$ and $(\sP^r,\sQ^r)$ to $(P^\V_1,Q^\V_1,P^\V_2,Q^\V_2,P^\V_3,Q^\V_3,P^\V_4,Q^\V_4)$.} Assume that $(\hat{P},\hat{Q})$ is the solution of the BSEE \eqref{MP_eq_adeq2}, that $(\cP,\cQ)$ is the solution of the BSEE \eqref{app_eq_BSEE-T}, and that $(\sP^r,\sQ^r)$ is the solution of the BSEE \eqref{app_eq_BSEE-r} for each $r\in[0,T]$ such that $(\omega,t,\theta,r)\mapsto\sP^r_t(\theta)\1_{[0,r]}(t),\sQ^r_t(\theta)\1_{[0,r]}(t)\in\bR^{n\times n}$ are progressively measurable. Define $(P^\V_1,Q^\V_1,P^\V_2,Q^\V_2,P^\V_3,Q^\V_3,P^\V_4,Q^\V_4)$ by \eqref{app_eq_P^V}, together with the martingale representations \eqref{app_eq_BSVIE2-M} and \eqref{app_eq_BSVIE2-trivial}. By the same argument as in (1-B) above, we see that $(P^\V_1,Q^\V_1,P^\V_2,Q^V_2)$ solves the first and second equations in the BSVIE system \eqref{app_eq_BSVIE2}. It remains to show that $(P^\V_3,Q^\V_3,P^\V_4,Q^\V_4)$ solves the third and fourth equations in \eqref{app_eq_BSVIE2}.

From the BSEE \eqref{app_eq_BSEE-T} for $(\cP,\cQ)$, using It\^{o}'s formula, we see that, for each $\theta\in\bR_+$,
\begin{align*}
	\cP_t(\theta)&=-e^{-\theta(T-t)}\hat{h}_{xx}+\int^T_te^{-\theta(s-t)}\Big\{\hat{b}_x(s)^\top\mu[M_b^\top\cP_s]+\hat{\sigma}_x(s)^\top\mu[M_\sigma^\top\cQ_s]\Big\}\,\diff s-\int^T_te^{-\theta(s-t)}\cQ_s(\theta)\,\diff W_s,\\
	&\hspace{7cm}t\in[0,T].
\end{align*}
Similarly, from the BSEE \eqref{app_eq_BSEE-r} for $(\sP^r,\sQ^r)$, we have
\begin{equation}\label{app_eq_P^r-Ito}
\begin{split}
	\sP^r_t(\theta)&=e^{-\theta(r-t)}\Big\{\big\langle\mu[M_b^\top\hat{p}_r],\hat{b}_{xx}(r)\big\rangle+\big\langle\mu[M_\sigma^\top\hat{q}_r],\hat{\sigma}_{xx}(r)\big\rangle-\hat{f}_{xx}(r)+\hat{\sigma}_x(r)^\top\mu^{\otimes2}[M_\sigma^\top\hat{P}_rM_\sigma]\hat{\sigma}_x(r)\\
	&\hspace{3cm}+\mu[\hat{P}_r(\theta,\cdot)M_b]\hat{b}_x(r)+\mu[\hat{Q}_r(\theta,\cdot)M_\sigma]\hat{\sigma}_x(r)\Big\}\\
	&\hspace{1cm}+\int^r_te^{-\theta(s-t)}\Big\{\hat{b}_x(s)^\top\mu[M_b^\top\sP^r_s]+\hat{\sigma}_x(s)^\top\mu[M_\sigma^\top\sQ^r_s]\Big\}\,\diff s-\int^r_te^{-\theta(s-t)}\sQ^r_s(\theta)\,\diff W_s,\\
	&\hspace{7cm}0\leq t\leq r\leq T.
\end{split}
\end{equation}
Define $\overline{P},\overline{Q}:\Omega\times[0,T]\times\bR_+^2\to\bR^{n\times n}$ by
\begin{align*}
	&\overline{P}_t(\theta_1,\theta_2):=e^{-\theta_2(T-t)}\cP_t(\theta_1)+\int^T_te^{-\theta_2(r-t)}\sP^r_t(\theta_1)\,\diff r,\\
	&\overline{Q}_t(\theta_1,\theta_2):=e^{-\theta_2(T-t)}\cQ_t(\theta_1)+\int^T_te^{-\theta_2(r-t)}\sQ^r_t(\theta_1)\,\diff r.
\end{align*}
Then, the stochastic Fubini theorem yields that, for each $(\theta_1,\theta_2)\in\bR_+^2$,
\begin{align*}
	&\overline{P}_t(\theta_1,\theta_2)\\
	&=-e^{-(\theta_1+\theta_2)(T-t)}\hat{h}_{xx}\\
	&\hspace{1cm}+\int^T_te^{-(\theta_1+\theta_2)(s-t)}\Big\{\big\langle\mu[M_b^\top\hat{p}_s],\hat{b}_{xx}(s)\big\rangle+\big\langle\mu[M_\sigma^\top\hat{q}_s],\hat{\sigma}_{xx}(s)\big\rangle-\hat{f}_{xx}(s)\\
	&\hspace{3cm}+\hat{\sigma}_x(s)^\top\mu^{\otimes2}[M_\sigma^\top\hat{P}_sM_\sigma]\hat{\sigma}_x(s)+\mu[\hat{P}_s(\theta_1,\cdot)M_b]\hat{b}_x(s)+\mu[\hat{Q}_s(\theta_1,\cdot)M_\sigma]\hat{\sigma}_x(s)\\
	&\hspace{3cm}+\hat{b}_x(s)^\top\mu[M_b^\top\overline{P}_s(\cdot,\theta_2)]+\hat{\sigma}_x(s)^\top\mu[M_\sigma^\top\overline{Q}_s(\cdot,\theta_2)]\Big\}\,\diff s\\
	&\hspace{1cm}-\int^T_te^{-(\theta_1+\theta_2)(s-t)}\overline{Q}_s(\theta_1,\theta_2)\,\diff W_s,\ \ t\in[0,T].
\end{align*}
From this, we see that $(\overline{P},\overline{Q})$ and $(\hat{P},\hat{Q})$ solve the same BSEE \eqref{MP_eq_adeq2}. Thanks to the uniqueness of the solution, we see that $\overline{P}_t(\theta_1,\theta_2)=\hat{P}_t(\theta_1,\theta_2)$ and $\overline{Q}_t(\theta_1,\theta_2)=\hat{Q}_t(\theta_1,\theta_2)$ for $\mu^{\otimes2}$-a.e.\ $(\theta_1,\theta_2)\in\bR_+^2$ a.s.\ for a.e.\ $t\in[0,T]$. In particular, we have
\begin{align*}
	&\mu[M_b^\top\hat{P}_r(\cdot,\theta)]=\mu[M_b^\top\overline{P}_r(\cdot,\theta)]=e^{-\theta(T-r)}\mu[M_b^\top\cP_r]+\int^T_re^{-\theta(s-r)}\mu[M_b^\top\sP^s_r]\diff s,\\
	&\mu[M_\sigma^\top\hat{Q}_r(\cdot,\theta)]=\mu[M_\sigma^\top\overline{Q}_r(\cdot,\theta)]=e^{-\theta(T-r)}\mu[M_\sigma^\top\cQ_r]+\int^T_re^{-\theta(s-r)}\mu[M_\sigma^\top\sQ^s_r]\diff s,
\end{align*}
for $\mu$-a.e.\ $\theta\in\bR_+$ a.s.\ for a.e.\ $r\in[0,T]$. From these relations, together with the definitions \eqref{app_eq_P^V} of $P^\V_2$ and $P^\V_4$, we get the equalities in form of \eqref{app_eq_PQ-mu1} and \eqref{app_eq_PQ-mu2}.

Inserting \eqref{app_eq_PQ-mu2} and the definitions \eqref{app_eq_P^V} of $P^\V_3$ and $P^\V_4$ to \eqref{app_eq_P^r-Ito}, we get
\begin{align*}
	&\sP^r_t(\theta)+\int^r_te^{-\theta(s-t)}\sQ^r_s(\theta)\,\diff W_s\\
	&=e^{-\theta(T-t)}P^\V_2(r)^\top+e^{-\theta(r-t)}P^\V_3(r)+\int^T_re^{-\theta(s-t)}P^\V_4(s,r)^\top\diff s+\int^r_te^{-\theta(s-t)}P^\V_4(r,s)\,\diff s\\
	&=e^{-\theta(T-t)}\bE_t\big[P^\V_2(r)\big]^\top+e^{-\theta(r-t)}\bE_t\big[P^\V_3(r)\big]+\int^T_re^{-\theta(s-t)}\bE_t\big[P^\V_4(s,r)\big]^\top\diff s+\int^r_te^{-\theta(s-t)}\bE_t\big[P^\V_4(r,s)\big]\,\diff s\\
	&\hspace{0.5cm}+\int^r_te^{-\theta(s-t)}\Big\{e^{-\theta(T-s)}Q^\V_2(r,s)^\top+e^{-\theta(r-s)}Q^\V_3(r,s)\\
	&\hspace{4cm}+\int^T_re^{-\theta(\tau-s)}Q^\V_4(\tau,r,s)^\top\,\diff\tau+\int^r_se^{-\theta(\tau-s)}Q^\V_4(r,\tau,s)\,\diff\tau\Big\}\,\diff W_s,
\end{align*}
where we used the martingale representation \eqref{app_eq_BSVIE2-M} and the stochastic Fubini theorem in the second equality. From this, we get the equalities in form of \eqref{app_eq_PQ-r}. Hence, as before, thanks to the representation formulas \eqref{MP_eq_kernel} for the kernels $K_b$ and $K_\sigma$, using Fubini's theorem, we get \eqref{app_eq_PQ^r-mu}. Inserting the expressions \eqref{app_eq_PQ^r-mu} to the definition of $P^\V_4$ in \eqref{app_eq_P^V}, we see that the fourth equation in \eqref{app_eq_BSVIE2} holds.

Furthermore, by the BSEE \eqref{MP_eq_adeq2} for $(\hat{P},\hat{Q})$ and the definitions \eqref{app_eq_P^V} of $P^\V_1$ and $P^\V_3$, together with the formulas \eqref{app_eq_PQ-mu1} and \eqref{app_eq_PQ-mu2}, we have
\begin{align*}
	&\hat{P}_t(\theta_1,\theta_2)=\bE_t\big[\hat{P}_t(\theta_1,\theta_2)\big]\\
	&=\bE_t\Big[-e^{-(\theta_1+\theta_2)(T-t)}\hat{h}_{xx}\vphantom{\int^T_t}\\
	&\hspace{0.5cm}+\int^T_te^{-(\theta_1+\theta_2)(s-t)}\Big\{\big\langle\mu[M_b^\top\hat{p}_s],\hat{b}_{xx}(s)\big\rangle+\big\langle\mu[M_\sigma^\top\hat{q}_s],\hat{\sigma}_{xx}(s)\big\rangle-\hat{f}_{xx}(s)+\hat{\sigma}_x(s)^\top\mu^{\otimes2}[M_\sigma^\top\hat{P}_sM_\sigma]\hat{\sigma}_x(s)\vphantom{\int^T_t}\\
	&\hspace{1cm}+\hat{b}_x(s)^\top\mu[M_b^\top\hat{P}_s(\cdot,\theta_2)]+\hat{\sigma}_x(s)^\top\mu[M_\sigma^\top\hat{Q}_s(\cdot,\theta_2)]+\mu[\hat{P}_s(\theta_1,\cdot)M_b]\hat{b}_x(s)+\mu[\hat{Q}_s(\theta_1,\cdot)M_\sigma]\hat{\sigma}_x(s)\Big\}\,\diff s\Big]\\
	&=e^{-(\theta_1+\theta_2)(T-t)}P^\V_1(t)+\int^T_te^{-(\theta_1+\theta_2)(s-t)}\bE_t\big[P^\V_3(s)\big]\,\diff s\\
	&\hspace{0.5cm}+e^{-\theta_2(T-t)}\int^T_te^{-\theta_1(s-t)}\bE_t\big[P^\V_2(s)\big]\diff s+\int^T_te^{-\theta_2(r-t)}\int^r_te^{-\theta_1(s-t)}\bE_t\big[P^\V_4(r,s)\big]\,\diff s\,\diff r\\
	&\hspace{0.5cm}+e^{-\theta_1(T-t)}\int^T_te^{-\theta_2(s-t)}\bE_t\big[P^\V_2(s)\big]^\top\,\diff s+\int^T_te^{-\theta_1(r-t)}\int^r_te^{-\theta_2(s-t)}\bE_t\big[P^\V_4(r,s)\big]^\top\,\diff s\,\diff r.
\end{align*}
Namely, we have the equality in form of \eqref{app_eq_PQ} for $\hat{P}$. Hence, as before, thanks to the representation formula \eqref{MP_eq_kernel} for $K_\sigma$, using Fubini's theorem, we get \eqref{app_eq_P-mumu}. Inserting the expressions \eqref{app_eq_P-mumu} and \eqref{app_eq_pq-mu} to the definition of $P^\V_3$ in \eqref{app_eq_P^V}, we see that the third equation in \eqref{app_eq_BSVIE2} holds.

Consequently, $(P^\V_1,Q^\V_1,P^\V_2,Q^\V_2,P^\V_3,Q^\V_3,P^\V_4,Q^\V_4)$ is the adapted M-solution of the BSVIE system \eqref{app_eq_BSVIE2}. This completes the proof.
\end{proof}

%%%%%%%%%%%%%%%%%%%%%%%%%%%%
%%%%%% Acknowledgements
%%%%%%%%%%%%%%%%%%%%%%%%%%%%

\section*{Acknowledgments}

The author would like to thank Tianxiao Wang and Jiayin Gong for stimulating discussions and kind hospitality during the author's stay in Chengdu.

%%%%%%%%%%%%%%%%%%%%%%%%%%%%
%%%%%% References
%%%%%%%%%%%%%%%%%%%%%%%%%%%%


\begin{thebibliography}{99}

\bibitem{AbiJaMiPh21}
\textsc{E. Abi Jaber}, \textsc{E. Miller}, and \textsc{H. Pham}.
Linear-quadratic control for a class of stochastic Volterra equations: solvability and approximation.
{\it Ann. Appl. Probab.}, 31(5), 2244--2274, 2021.

\bibitem{AbiJaMiPh21+}
\textsc{E. Abi Jaber}, \textsc{E. Miller}, and \textsc{H. Pham}.
Integral operator Riccati equations arising in stochastic Volterra control problems.
{\it SIAM J. Control Optim.}, 59(2), 1581--1603, 2021.

\bibitem{AgOk15}
\textsc{N. Agram} and \textsc{B. {\O}ksendal}.
Malliavin calculus and optimal control of stochastic Volterra equations.
{\it J. Optim. Theory Appl.}, 167, 1070--1094, 2015.

\bibitem{Ag04}
\textsc{O.P. Agrawal}.
A general formulation and solution scheme for fractional optimal control problems.
{\it Nonlinear Dynam.}, 38, 323--337, 2004.

\bibitem{BeBo20}
\textsc{M. Bergounioux} and \textsc{L. Bourdin}.
Pontryagin maximum principle for general Caputo fractional optimal control problems with Bolza cost and terminal constraints.
{\it ESAIM Control Optim. Calc. Var.}, 26(35), 2020.

\bibitem{CaCo98}
\textsc{P. Carmona} and \textsc{L. Coutin}.
Fractional Brownian motion and the Markov property.
\textit{Electron. Commun. Probab.}, 3, 95--107, 1998.

\bibitem{ChKiKi15}
\textsc{Z.-Q. Chen}, \textsc{K.-H. Kim}, and \textsc{P. Kim}.
Fractional time stochastic partial differential equations.
{\it Stochastic Process. Appl.}, 125(4), 1470--1499, 2015.

\bibitem{CuTe19}
\textsc{C. Cuchiero} and \textsc{J. Teichmann}.
Markovian lifts of positive semidefinite affine Volterra-type processes.
\textit{Decis. Econ. Finance}, 42, 407--448, 2019.

\bibitem{CuTe20}
\textsc{C. Cuchiero} and \textsc{J. Teichmann}.
Generalized Feller processes and Markovian lifts of stochastic Volterra processes: the affine case.
\textit{J. Evol. Equ.}, 20, 1301--1348, 2020.

\bibitem{Go20}
\textsc{M.I. Gomoyunov}.
Dynamic programming principle and Hamilton--Jacobi--Bellman equations for fractional-order systems.
{\it SIAM J. Control Optim.}, 58(6), 3185--3211, 2020.

\bibitem{Ha21}
\textsc{Y. Hamaguchi}.
Infinite horizon backward stochastic Volterra integral equations and discounted control problems.
{\it ESAIM Control Optim. Calc. Var.}, 27(101), 2021.

\bibitem{Ha23}
\textsc{Y. Hamaguchi}.
On the maximum principle for optimal control problems of stochastic Volterra integral equations with delay.
{\it Appl. Math. Optim.}, 87(42), 2023.

\bibitem{Ha24}
\textsc{Y. Hamaguchi}.
Markovian lifting and asymptotic log-Harnack inequality for stochastic Volterra integral equations.
{\it Stochastic Process. Appl.}, 178, 104482, 2024.

\bibitem{Ha25}
\textsc{Y. Hamaguchi}.
Weak well-posedness of stochastic Volterra equations with completely monotone kernels and non-degenerate noise.
{\it Ann. Appl. Probab.}, to appear, arXiv:2310.16030.

\bibitem{HaSt19}
\textsc{P. Harms} and \textsc{D. Stefanovits}.
Affine representations of fractional processes with applications in mathematical finance.
\textit{Stochastic Process. Appl.}, 4, 1185--1228, 2019.

\bibitem{HiPh57}
\textsc{E. Hille} and \textsc{R.S. Phillips}.
\textit{Functional analysis and semi-groups}.
American Mathematical Society, 1957.

\bibitem{HuPe91}
\textsc{Y. Hu} and \textsc{S. Peng}.
Adapted solution of a backward semilinear stochastic evolution equation.
{\it Stoch. Anal. Appl.}, 9(4), 445--459, 1991.

\bibitem{HuTa18}
\textsc{Y. Hu} and \textsc{S. Tang}.
Nonlinear backward stochastic evolutionary equations driven by a space-time white noise.
{\it Math. Control Relat. Fields}, 8(3-4), 739--751, 2018.

\bibitem{JePe09}
\textsc{Z.D. Jelicic} and \textsc{N. Petrovacki}.
Optimality conditions and a solution scheme for fractional optimal control problems.
{\it Struct. Multidiscip. Optim.}, 38, 571--581, 2009.

\bibitem{Ka14}
\textsc{R. Kamocki}.
Pontryagin maximum principle for fractional ordinary optimal control problems.
{\it Math. Methods Appl. Sci.}, 37, 1668--1686, 2014.

\bibitem{LiYo20}
\textsc{P. Lin} and \textsc{J. Yong}.
Controlled singular Volterra integral equations and Pontryagin maximum principle.
{\it SIAM J. Control Optim.}, 58(1), 136--164, 2020.

\bibitem{LiRoSi18}
\textsc{W. Liu}, \textsc{M. R\"{o}ckner}, and \textsc{J.L. da Silva}.
Quasi-linear (stochastic) partial differential equations with time-fractional derivatives.
{\it SIAM J. Math. Anal.}, 50(3), 2588--2607, 2018.

\bibitem{LiZh20}
\textsc{W. Liu} and \textsc{R. Zhu}.
Well-posedness of backward stochastic partial differential equations with Lyapunov condition.
{\it Forum Math.}, 32(3), 723--738, 2020.

\bibitem{LoRo19}
\textsc{S.V. Lototsky} and \textsc{B.L. Rozovsky}.
Classical and generalized solutions of fractional stochastic differential equations.
{\it Stoch. Partial Differ. Equ.: Anal. Comput.}, 8, 761--786, 2020.

\bibitem{MeSh13}
\textsc{Q. Meng} and \textsc{P. Shi}.
Stochastic optimal control for backward stochastic partial differential systems.
{\it J. Math. Anal. Appl.}, 402, 758--771, 2013.

\bibitem{MeShWaZh25}
\textsc{W. Meng}, \textsc{J. Shi}, \textsc{T. Wang}, and \textsc{J.-F. Zhang}.
A general maximum principle for optimal control of stochastic differential delay systems.
{\it SIAM J. Control Optim.}, 63(1), 175--205, 2025.


\bibitem{Pe90}
\textsc{S. Peng}.
A general stochastic maximum principle for optimal control problems.
{\it SIAM J. Control Optim.}, 28(4), 966--979, 1990.

\bibitem{Pe92}
\textsc{S. Peng}.
Stochastic Hamilton--Jacobi--Bellman equations.
{\it SIAM J. Control Optim.}, 30(2), 284--304, 1992.

\bibitem{ScSoVo12}
\textsc{R.L. Schilling}, \textsc{R. Song}, and \textsc{Z.\ Vondra\v{c}ek}.
{\it Bernstein Functions: Theory and Applications}.
Walter de Gruyter, 2009.

\bibitem{ShWaYo13}
\textsc{Y. Shi}, \textsc{T. Wang}, and \textsc{J. Yong}.
Mean-field backward stochastic Volterra integral equations.
{\it Discrete Contin. Dyn. Syst. Ser. B}, 18(7), 1929--1967, 2013.

\bibitem{ShWaYo15}
\textsc{Y. Shi}, \textsc{T. Wang}, and \textsc{J. Yong}.
Optimal control problems of forward-backward stochastic Volterra integral equations.
{\it Math. Control Relat. Fields}, 5(3), 613--649, 2015.

\bibitem{ShWeXi20}
\textsc{Y. Shi}, \textsc{J. Wen}, and \textsc{J. Xiong}.
Backward doubly stochastic Volterra integral equations and their applications.
{\it J. Differential Equations}, 269(9), 6492--6528, 2020.

\bibitem{Ve11}
\textsc{M. Veraar}.
The stochastic Fubini theorem revisited.
{\it Stochastics}, 84(4), 543--551, 2011.

\bibitem{Wa20}
\textsc{T. Wang}.
Necessary conditions of Pontraygin’s type for general controlled stochastic Volterra integral equations.
{\it ESAIM Control Optim. Calc. Var.}, 26(16), 2020.

\bibitem{WaYo23}
\textsc{T. Wang} and \textsc{J. Yong}.
Spike variations for stochastic Volterra integral equations.
{\it SIAM J. Control Optim.}, 61(6), 3608--3634, 2023.

\bibitem{WaZh24}
\textsc{T. Wang} and \textsc{M. Zheng}.
Singular backward stochastic Volterra integral equations in infinite dimensional spaces.
{\it J. Differential Equations}, 25, 1--56, 2024.

\bibitem{Yo06}
\textsc{J. Yong}.
Backward stochastic Volterra integral equations and some related problems.
{\it Stochastic Process. Appl.}, 116, 779--795, 2006.

\bibitem{Yo08}
\textsc{J. Yong}.
Well-posedness and regularity of backward stochastic Volterra integral equations.
{\it Probab. Theory Related Fields}, 142(1-2), 2--77, 2008.

\bibitem{YoZh99}
\textsc{J. Yong} and \textsc{X.Y. Zhou}.
{\it Stochastic Controls: Hamiltonian Systems and HJB Equations}.
Springer, 1999.

\bibitem{Zh17}
\textsc{J. Zhang}.
{\it Backward Stochastic Differential Equations: From Linear to Fully Nonlinear Theory}.
Springer, 2017.

\bibitem{Zh10}
\textsc{X. Zhang}.
Stochastic Volterra equations in Banach spaces and stochastic partial differential equation.
{\it J. Funct. Anal.}, 258(4), 1361--1425, 2010.

\bibitem{Zh92}
\textsc{X.Y. Zhou}.
A duality analysis on stochastic partial differential equations.
{\it J. Funct. Anal.}, 103(2), 275--293, 1992.

\end{thebibliography}
\end{document}